\xpatchcmd{\proof}{\itshape}{\normalfont\proofnamefont}{}{}
\newcommand{\proofnamefont}{\bfseries}
\let\oldbibliography\thebibliography
\renewcommand{\thebibliography}[1]{\oldbibliography{#1}\setlength{\itemsep}{0pt}}
\newcommand{\citep}[2][]{\cite[#1]{#2}}
\newcommand{\citet}[2][]{\cite[#1]{#2}}
\newcolumntype{C}{>{\centering\arraybackslash}X}
\newcolumntype{R}{>{\raggedleft\arraybackslash}X}
\newcolumntype{L}{>{\raggedright\arraybackslash}X}
\newtheorem{proposition}{Proposition}
\newtheorem{remark}{Remark}
\newtheorem{theorem}{Theorem}
\newtheorem{corollary}{Corollary}
\newtheorem{lemma}{Lemma}
\newtheorem{example}{Example}
\newcommand{\norm}[1]{\lVert#1\rVert}
\newcommand{\cvar}{\mathbb{P}\text{-CVaR}}
\newcommand{\abs}[1]{\lvert#1\rvert}
\newcommand{\inner}[2]{\langle#1,#2\rangle}
\newcommand{\mb}[1]{\bm{#1}}
\newcommand{\one}{\bm{\mathrm{e}}}
\newcommand{\eye}{\mathbf{I}}
\newcommand{\vecmat}[1]{\mathop{\text{vec}}(#1)}
\newcommand{\conv}[1]{\mathop{\mathrm{cl\,conv}}(#1)}
\renewcommand{\i}{\mathrm{i}}
\DeclareMathOperator{\E}{\mathbb{E}}
\renewcommand{\P}{\mathbb{P}}
\newcommand{\re}[1]{{#1}}
\title{Data-Driven Two-Stage Conic Optimization with Zero-One Uncertainties}
\author[1]{Anirudh Subramanyam}
\author[2]{Mohamed El Tonbari}
\author[1]{Kibaek Kim}
\affil[1]{\small Argonne National Laboratory, Lemont, IL}
\affil[2]{\small Georgia Institute of Technology, Atlanta, GA}
\date{\today}
\begin{document}

\maketitle

\begin{abstract}
    We address high-dimensional zero-one random parameters in two-stage convex conic optimization problems.
    Such parameters typically represent failures of network elements and constitute rare, high-impact random events in several applications.
    Given a sparse training dataset of the parameters, we motivate and study a distributionally robust formulation of the problem %
    using a Wasserstein ambiguity set
    centered at the empirical distribution.
    We present a simple, tractable, and conservative approximation of this problem that can be efficiently computed and iteratively improved.
    Our method relies on a reformulation that optimizes over the convex hull of a mixed-integer conic programming representable set, followed by an approximation of this convex hull using lift-and-project techniques.
    We illustrate the practical viability and strong out-of-sample performance of our method on nonlinear optimal power flow \re{and multi-commodity network design problems} that are affected by random contingencies, and we report improvements of up to 20\% over existing \re{sample average approximation and two-stage robust optimization} methods.
\end{abstract}

\section{Motivation}

This work is motivated by optimization problems arising in applications that are affected by an extremely large, yet finite, number of rare, high-impact random events.
In particular, we are motivated by applications in which the decision-relevant random events consist of high-dimensional binary outcomes.
Such applications are ubiquitous in network optimization, where the uncertain events amount to failures of the nodes or edges of the underlying network.
For example, in electric power networks, random node and edge failures have been used to model losses of physical components such as substations, transmission lines, generators, and transformers~\citep{BienstockVerma2010,nercStandard}. %
Similarly, in natural gas~\citep{praks2017monte}, wireless communication~\citep{doshi1999optical}, and transportation networks~\citep{berche2009resilience}, they can be used to model failures of compressors and gas pipelines, antennas, and road links, respectively.

The challenges in modeling and solving such uncertainty-affected optimization problems are threefold.
First, the number of possible failure states grows \emph{exponentially} as the size of the input increases.
For example, the number of failure states in a network with $n$ failure-prone nodes is $2^n$.
Second, network failures are extremely \emph{rare} but \emph{critical}, and historical records are often not rich enough to include observations for every possible failure state. %
Third, failures of individual network elements are unlikely to be independent of each other.
For example, transmission line failures in electric power systems often have a cascading effect that triggers the failure of other transmission lines.
Because of this \emph{high dimensionality}, the true distribution governing the random parameters is often unknown and difficult to estimate with a small number of historical observations.

In this high-dimensional context, we focus on \emph{two-stage optimization} problems under uncertainty.
Suppose that there are $M$ uncertain parameters $\tilde{\xi}_1, \ldots, \tilde{\xi}_M$, and that $\Xi \subseteq \{0, 1\}^M$ is the support of the underlying distribution of these parameters.
If the distribution
$\mathbb{P}$ is known,
then the two-stage problem takes the form
\begin{equation*}\label{eq:conceptual}
\mathop{\text{minimize}}_{\bm{x} \in \mathcal{X}} \;
c(\bm{x})
+
\mathbb{E}_{\mathbb{P}} \left[
\mathcal{Q}(\bm{x}, \tilde{\bm{\xi}})
\right],
\end{equation*}
where $\bm{x}$ represents the \emph{first-stage} decisions that must be made agnostically to the realization of the random parameters, $\mathcal{X} \subseteq \mathbb{R}^{N_1}$ is a convex compact set of feasible first-stage decisions, $c: \mathcal{X} \mapsto \mathbb{R}$ is a convex function representing
the deterministic cost associated with these decisions, and $\mathcal{Q}(\bm{x}, \bm{\xi})$ is the \emph{random loss} (or \emph{second-stage cost}) corresponding to decisions $\bm{x}$ and a fixed realization $\bm{\xi} \in \Xi$ of the random parameters.
We assume that the loss can be computed by solving a convex conic optimization problem of the form
\begin{equation}\label{eq:second_stage_value_function}
\mathcal{Q}(\bm{x}, \bm{\xi}) =
\inf_{\bm{y} \in \mathcal{Y}}
\left\{
\bm{q}(\bm{\xi})^\top \bm{y} : \bm{W}(\bm{\xi}) \bm{y} \geq 
\bm{T}(\bm{x}) \bm{\xi} + \bm{h}(\bm{x})
\right\},
\end{equation}
where $\bm{y}$ denotes the \textit{second-stage} decisions that can be made after the realization $\bm{\xi}$ is known; $\mathcal{Y} \subseteq \mathbb{R}^{N_2}$ is a \textit{proper} (closed, convex, pointed, and full-dimensional) cone; %
$\bm{q} : \Xi \mapsto \mathbb{R}^{N_2}$ and $\bm{W} : \Xi \mapsto \mathbb{R}^{L \times N_2}$ are vector- and matrix-valued affine functions respectively, while $\bm{h} : \mathcal{X} \mapsto \mathbb{R}^{L}$ and $\bm{T} : \mathcal{X} \mapsto \mathbb{R}^{L \times M}$ are componentwise closed, proper, convex vector- and matrix-valued functions, respectively.
We allow uncertainty to affect only the affine constraints of the problem and, similarly, the first- and second-stage decisions to interact only via the affine part.

Since the true underlying distribution $\mathbb{P}$ is unknown, this two-stage optimization formulation is ill-posed.
Nevertheless, $\mathbb{P}$ is typically observable through a finite amount of historical data.
We assume that we have access to $N$ such \re{independent and identically distributed} observations, which we denote by $\{\hat{\bm{\xi}}^{(1)}, \ldots, \hat{\bm{\xi}}^{(N)}\}$.
We also assume that generating additional data (e.g., via Monte Carlo computer simulations) is either costly or impossible. Thus, it is imperative to use the given data most efficiently.

A popular approach for solving the two-stage problem in such cases is to use \emph{sample average approximation}~\citep{shapiro2009lectures}.
This approach replaces the true distribution with the empirical distribution $\hat{\mathbb{P}}_N = \frac{1}{N}\sum_{i = 1}^N \delta_{\hat{\bm{\xi}}^{(i)}}$, where $\delta_{\hat{\bm{\xi}}^{(i)}}$ denotes the Dirac distribution at $\hat{\bm{\xi}}^{(i)}$.
In the context of rare events, however, obtaining accurate estimates of the true distribution and, hence, the optimal solution of the true two-stage problem may require unrealistically large amounts of data.
The following example illustrates this phenomenon even if there are only $M=3$ uncertain parameters.

\begin{figure}[!b]
    \centering
\begin{tikzpicture}[scale=0.8,%
                    every label/.append style={rectangle, font=\scriptsize},
                    ed/.style = {-Latex},%
                    cr/.style = {circle, draw, minimum size = 0.5},%
                    crX/.style = {cr,pattern={north east lines},pattern color=gray!80}]
\scriptsize
\node[cr]                   (1) at (0,  1) {\phantom{1}};
\node[cr]                   (2) at (0, -1) {\phantom{2}};
\node[crX,label={$0.25\%$}] (3) at (2,  2) {\phantom{3}};
\node[crX,label={$1\%$}]    (4) at (2,  0) {\phantom{4}};
\node[crX,label={$0.25\%$}] (5) at (2, -2) {\phantom{5}};
\node[cr]                   (6) at (4,  1) {\phantom{6}};
\node[cr]                   (7) at (4, -1) {\phantom{7}};
\node (o1) at (-1.5,  1) {100};
\node (o2) at (-1.5, -1) {100};
\node (d6) at ( 5.5,  1) {100};
\node (d7) at ( 5.5, -1) {100};
\draw[ed] (o1) -- (1);
\draw[ed] (o2) -- (2);
\draw[ed] (1) --node[above] {$1$} (3);
\draw[ed] (1) --node[above] {$1-\epsilon$} (4);
\draw[ed] (2) --node[above] {$1-\epsilon$} (4);
\draw[ed] (2) --node[above] {$1$} (5);
\draw[ed] (3) --node[above] {$1$} (6);
\draw[ed] (4) --node[above] {$1-\epsilon$} (6);
\draw[ed] (4) --node[above] {$1-\epsilon$} (7);
\draw[ed] (5) --node[above] {$1$} (7);
\draw[ed] (6) -- (d6);
\draw[ed] (7) -- (d7);
\end{tikzpicture}
\caption{Illustrative network flow instance.\label{figure:network_flow}}
\end{figure}
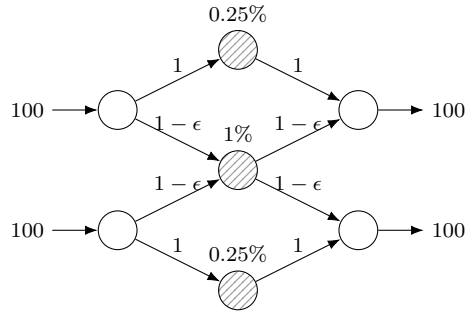

\begin{example}\label{ex:network}
    Consider the network shown in Figure~\ref{figure:network_flow}. Let $A$ denote the set of arcs. The goal is to decide arc capacities $\bm{x} \in \mathbb{R}^{\abs{A}}_{+}$ so as to route flow originating from the left layer of nodes to satisfy demand in the right layer of nodes. The per-unit cost $\bm{c} \in \mathbb{R}^{\abs{A}}$ of installing capacity on each arc is denoted above the arc, and the first-stage deterministic cost is given by $\bm{c}^\top \bm{x}$. The middle layer of nodes can fail independently of each other, and the numbers above the nodes denote their failure probabilities.
    If node~$i$ fails (indicated by ${\xi}_i = 1$), then all arcs incident to that node become unusable, and any resulting supply shortfall is penalized at a cost of $1,000$ per unit.
    For a given realization $\bm{\xi} \in \{0, 1\}^3$ of the node failures, the loss function $\mathcal{Q}(\bm{x}, \bm{\xi})$ is simply the total penalty cost under that realization and can be modeled as the optimal objective value of a linear program.

We can show that the optimal solution assigns a capacity of 100 units to every arc \re{(see Appendix \ref{sec:examaple_1})}.
In practice, however, the true failure probabilities are unknown.
We therefore estimate the distribution using a sample average approximation.
Figure~\ref{figure:network_flow_saa} shows the performance of the solutions obtained by replacing the true distribution $\mathbb{P}$ with the empirical distribution $\hat{\mathbb{P}}_N$ resulting from different sample sizes $N$.
In particular, the figure shows the difference between the total expected costs (computed under the true distribution) of the sample average solution and the true optimal solution.
We observe that despite the small dimensionality $(M = 3)$, $N \geq 1000$ samples are required for the sample average solution to estimate the true solution to an accuracy of 5\%.

\begin{figure}[!htb]
    \begin{subfigure}[t]{0.45\textwidth}
        \centering
        \begin{tikzpicture}[scale=0.8,%
                            every label/.append style={rectangle, font=\scriptsize},
                            ed/.style = {-Latex},%
                            cr/.style = {circle, draw, minimum size = 0.5},%
                            crX/.style = {cr,pattern={north east lines},pattern color=gray!80}]
            \scriptsize
            \node[cr]  (1) at (0,  1) {\phantom{1}};
            \node[cr]  (2) at (0, -1) {\phantom{2}};
            \node[crX] (3) at (2,  2) {\phantom{3}};
            \node[crX] (4) at (2,  0) {\phantom{4}};
            \node[crX] (5) at (2, -2) {\phantom{5}};
            \node[cr]  (6) at (4,  1) {\phantom{6}};
            \node[cr]  (7) at (4, -1) {\phantom{7}};
            \node (o1) at (-1.5,  1) {100};
            \node (o2) at (-1.5, -1) {100};
            \node (d6) at ( 5.5,  1) {100};
            \node (d7) at ( 5.5, -1) {100};
            \draw[ed] (o1) -- (1);
            \draw[ed] (o2) -- (2);
            \draw[ed] (1) --node[above] {$100$} (4);
            \draw[ed] (2) --node[above] {$100$} (4);
            \draw[ed] (4) --node[above] {$100$} (6);
            \draw[ed] (4) --node[above] {$100$} (7);
            \draw[ed] (6) -- (d6);
            \draw[ed] (7) -- (d7);
        \end{tikzpicture}
        \caption{\footnotesize The optimal arc capacities determined by the sample average approximation when the empirical distribution $\hat{\mathbb{P}}_N$ puts all weight on the realization $\bm{\xi} = \bm{0}$, where no nodes fail \re{(see Appendix \ref{sec:examaple_1})}.}
        \label{figure:network_flow_saa:a}
    \end{subfigure}%
    ~
    \begin{subfigure}[t]{0.55\textwidth}
        \centering
        \includegraphics[width=0.8\linewidth]{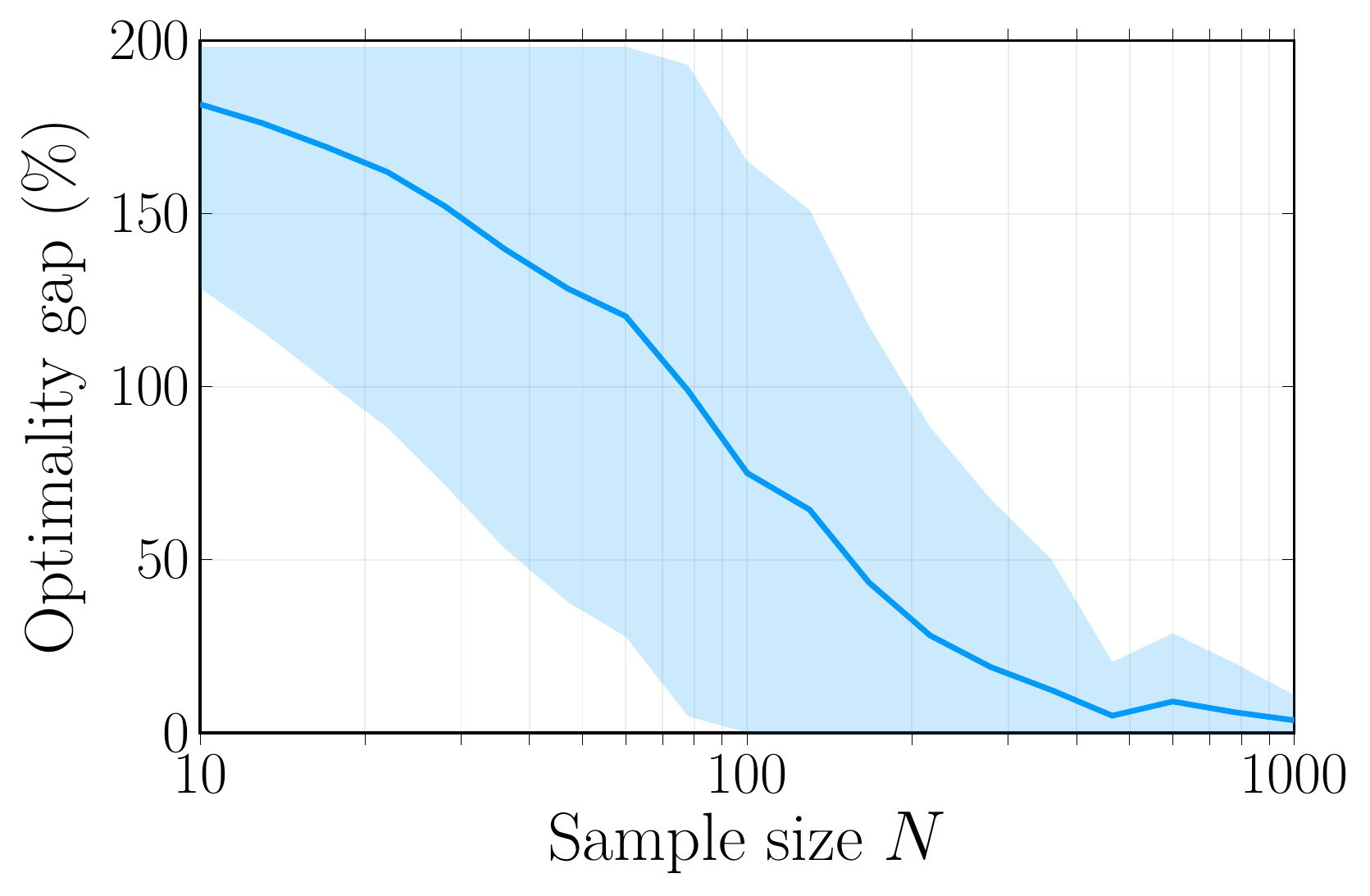}
        \caption{\footnotesize The optimality gap of the sample average approximation with respect to true optimal objective for increasing sample size. The mean (solid line) and standard deviation (shaded) are estimated by using 1,000 statistically independent sets of samples.}
        \label{figure:network_flow_saa:b}
    \end{subfigure}
    \caption{Performance of the sample average approximation on the network flow instance from Figure~\ref{figure:network_flow}.\label{figure:network_flow_saa}}
\end{figure}
\end{example}

\section{Distributionally Robust Approach for Discrete Rare Events}

The high dimensionality and rare occurrence of failure states render accurate estimation of the underlying distribution difficult.
To remedy this situation, we adopt a \emph{distributionally robust} approach, and construct an \emph{ambiguity set} $\mathcal{P}$ of possible distributions that are consistent with the observed data. We then minimize the \emph{worst-case} expected costs over all distributions in the ambiguity set.
Specifically, we consider distributionally robust two-stage conic optimization problems, of the form
\begin{equation}\label{eq:two_stage_dro}
\mathop{\text{minimize}}_{\mb{x} \in \mathcal{X}} \;
c(\mb{x})
+
\sup_{\mathbb{P} \in \mathcal{P}}
\mathbb{E}_{\mathbb{P}} \left[
\mathcal{Q}(\mb{x}, \tilde{\mb{\xi}})
\right].
\end{equation}
The ambiguity set $\mathcal{P}$ must be chosen such that it contains the true distribution with high confidence or, at the very least, distributions that assign nonzero probability to the rare events.
We focus on the Wasserstein ambiguity set, defined as the set of distributions that are close to the empirical distribution $\hat{\mathbb{P}}_N$ with respect to the Wasserstein distance $d_W$:
\begin{equation}\label{eq:ambiguity_set}
\mathcal{P} =
\left\{
\mathbb{Q} \in \mathcal{M}(\Xi) : d_W (\mathbb{Q}, \hat{\mathbb{P}}_N) \leq \varepsilon
\right\}.
\end{equation}
Here, $\mathcal{M}(\Xi)$ denotes the set of all distributions supported on $\Xi$, and $\varepsilon \geq 0$ is the radius of the Wasserstein ball.
Given any underlying metric $d(\cdot, \cdot)$ on the support set $\Xi$, the Wasserstein distance $d_W(\mathbb{P}, \mathbb{P}')$ between two distributions $\mathbb{P}, \mathbb{P}'$ can be defined as follows:
\begin{equation*}
d_W(\mathbb{P}, \mathbb{P}')
=
\min_{\Pi \in \mathcal{M}(\Xi \times \Xi)}
\left\{
\sum_{\mb{\xi} \in \Xi}\sum_{\mb{\xi}' \in \Xi} d(\mb{\xi}, \mb{\xi}') \Pi(\mb{\xi}, \mb{\xi}') :
\begin{array}{l}
\Pi \text{ is a coupling of $\mathbb{P}$ and $\mathbb{P}'$}
\end{array}
\right\},
\end{equation*}
We motivate the choice of Wasserstein ambiguity sets in Section~\ref{sec:advantages_wasserstein}.
The crucial role of the metric $d(\cdot, \cdot)$ and the radius $\varepsilon$ of the Wasserstein ball $\mathcal{P}$ is discussed in Section~\ref{sec:choice_of_metric_and_radius}

\subsection{Advantages of Wasserstein ambiguity sets for discrete rare events}\label{sec:advantages_wasserstein}
One can construct several ambiguity sets of distributions that are consistent with observed data.
Broadly, these are either sets of distributions that satisfy constraints on their moments~\citep{delage2010distributionally,wiesemann2014distributionally} or those defined as balls centered on a reference distribution with respect to a metric such as the $\phi$-divergence~\citep{ben2013robust,bayraksan2015data} or the Wasserstein distance~\citep{wozabal2012framework,esfahani2018data,gao2016distributionally,zhao2018data}.
Note that the high dimensionality and sparsity of the training data in the case of rare events prevent us from obtaining reliable estimates of moments, ruling out moment-based sets.
In contrast, metric-based sets have the attractive feature of tying directly with available data; indeed, the empirical distribution corresponding to the training data is a natural choice for the reference distribution.

Ambiguity sets based on $\phi$-divergence, especially Kullback-Liebler divergence, have certain shortcomings that are not shared by their Wasserstein counterparts.
First, the former can exclude the true distribution while including pathological distributions; in fact, they can fail to represent confidence sets for the unknown true distribution~\citep{gao2016distributionally}.
In contrast, the latter contains the true distribution with high confidence for an appropriate choice of $\varepsilon$, and hence the optimal value of the corresponding distributionally robust problem provides an upper confidence bound on the true out-of-sample cost~\citep{esfahani2018data}.
Second, Kullback-Liebler and other $\phi$-divergence based sets contain only those distributions that are absolutely continuous with respect to reference distribution; that is, these distributions can assign positive probability only to those realizations for which the empirical distribution also assigns positive mass~\citep{bayraksan2015data}.
This situation is undesirable for applications where uncertain events are rare, since the majority of possible uncertain states are unlikely to have been observed empirically.
In contrast, the Wasserstein ball of any positive radius $\varepsilon > 0$ and corresponding to any finite-valued metric $d(\cdot, \cdot)$ includes distributions that assign nonzero probability to any arbitrary realization.
Indeed, this property ensures that solutions of the distributionally robust problem~\eqref{eq:two_stage_dro} are robust to the occurrence of rare events.
We illustrate this via Example~\ref{ex:network}.

\setcounter{example}{0}
\begin{example}[continued]
Consider again the network in Figure~\ref{figure:network_flow}.
In addition to the sample average approximation, Figure~\ref{figure:network_flow_dro:b} now compares the performance of solutions computed by using the distributionally robust formulation~\eqref{eq:two_stage_dro} with a Wasserstein ambiguity set $\mathcal{P}$ of radius $\varepsilon = 10^{-3}$, centered around the empirical distribution $\hat{\mathbb{P}}_N$ resulting from different sample sizes $N$.
Here, we use the metric $d(\bm{\xi}, \bm{\xi}') = \norm{\bm{\xi} - \bm{\xi}'}_1$ induced by the 1-norm.
The figure shows the difference in total expected costs (computed under the true distribution) of the distributionally robust and sample average solutions with respect to the true optimal solution.
We observe that the distributionally robust solution strongly outperforms the sample average solution while being more stable to changes in the training data (i.e., its performance has a smaller variance for a fixed $N$).
\re{The performance of the solutions computed using a $\phi$-divergence ambiguity set are presented in the next subsection.}

\begin{figure}[!htb]
    \begin{subfigure}[t]{0.45\textwidth}
        \centering
        \begin{tikzpicture}[scale=0.8,%
                            every label/.append style={rectangle, font=\scriptsize},
                            ed/.style = {-Latex},%
                            cr/.style = {circle, draw, minimum size = 0.5},%
                            crX/.style = {cr,pattern={north east lines},pattern color=gray!80}]
        \scriptsize
        \node[cr]  (1) at (0,  1) {\phantom{1}};
        \node[cr]  (2) at (0, -1) {\phantom{2}};
        \node[crX] (3) at (2,  2) {\phantom{3}};
        \node[crX] (4) at (2,  0) {\phantom{4}};
        \node[crX] (5) at (2, -2) {\phantom{5}};
        \node[cr]  (6) at (4,  1) {\phantom{6}};
        \node[cr]  (7) at (4, -1) {\phantom{7}};
        \node (o1) at (-1.5,  1) {100};
        \node (o2) at (-1.5, -1) {100};
        \node (d6) at ( 5.5,  1) {100};
        \node (d7) at ( 5.5, -1) {100};
        \draw[ed] (o1) -- (1);
        \draw[ed] (o2) -- (2);
        \draw[ed] (1) --node[above] {$\frac23 100$} (3);
        \draw[ed] (1) --node[above] {$\frac13 100$} (4);
        \draw[ed] (2) --node[above] {$\frac13 100$} (4);
        \draw[ed] (2) --node[above] {$\frac23 100$} (5);
        \draw[ed] (3) --node[above] {$\frac23 100$} (6);
        \draw[ed] (4) --node[above] {$\frac13 100$} (6);
        \draw[ed] (4) --node[above] {$\frac13 100$} (7);
        \draw[ed] (5) --node[above] {$\frac23 100$} (7);
        \draw[ed] (6) -- (d6);
        \draw[ed] (7) -- (d7);
        \end{tikzpicture}
        \caption{\footnotesize The optimal arc capacities determined by the distributionally robust formulation when the empirical distribution $\hat{\mathbb{P}}_N$ puts all its weight on $\bm{\xi} = \bm{0}$, where none of the nodes fail \re{(see Appendix \ref{sec:examaple_1})}.}
        \label{figure:network_flow_dro:a}
    \end{subfigure}%
    ~
    \begin{subfigure}[t]{0.55\textwidth}
        \centering
        \includegraphics[width=0.8\linewidth]{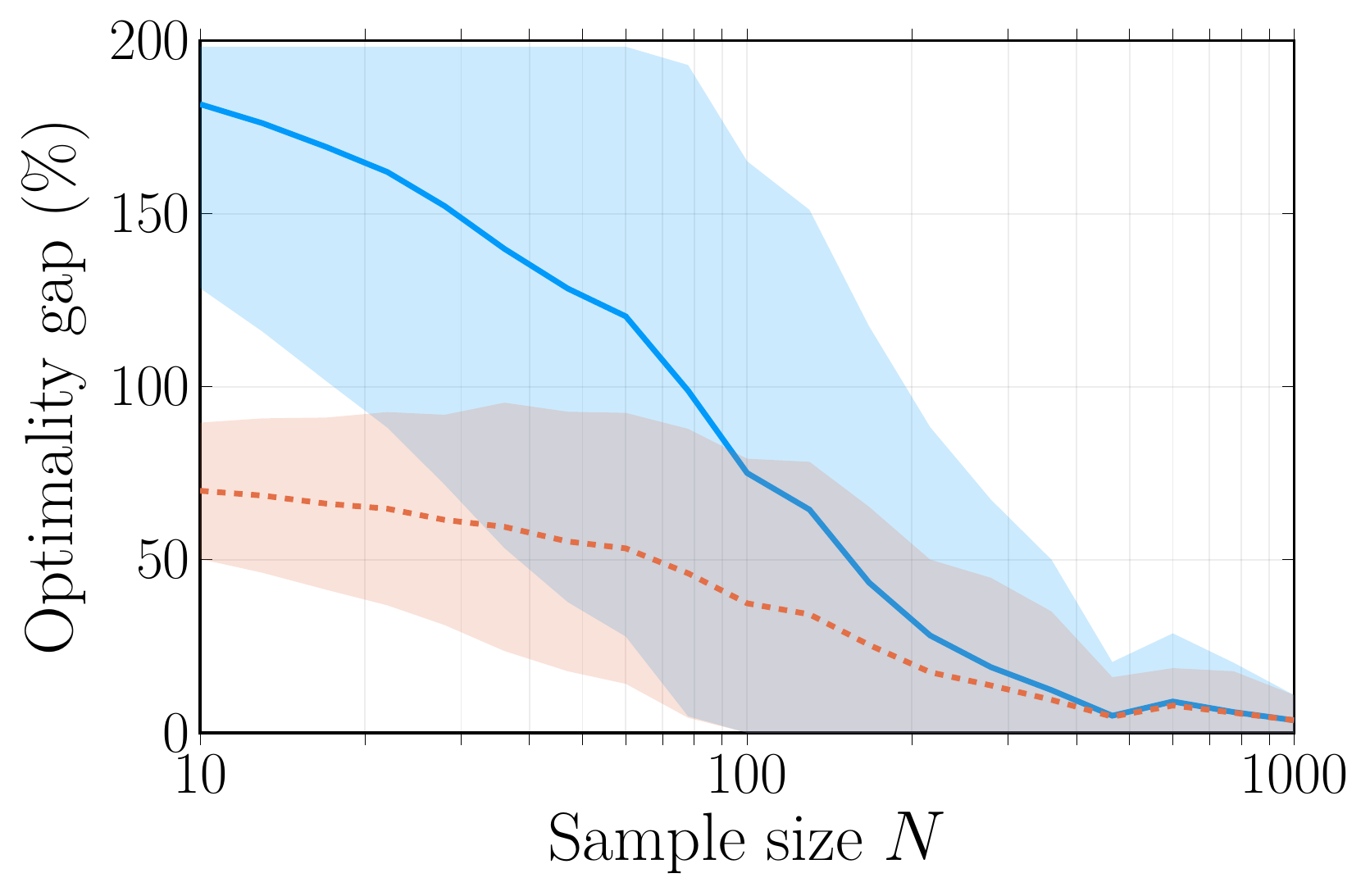}
        \caption{\footnotesize Comparison of the optimality gap of the distributionally robust formulation (dashed orange) with the sample average approximation (solid blue) for increasing sample size.}
        \label{figure:network_flow_dro:b}
    \end{subfigure}
    \caption{Performance of the distributionally robust formulation with a Wasserstein ambiguity set of radius $\varepsilon = 10^{-3}$ on the network flow instance from Figure~\ref{figure:network_flow}.\label{figure:network_flow_dro}}
\end{figure}
\end{example}

\subsection{Choice of the underlying metric and radius of the Wasserstein ball}\label{sec:choice_of_metric_and_radius}
The underlying metric $d(\cdot, \cdot)$ should ideally have the following properties: \textit{(i)} if $d(\bm{\xi}', \bm{\xi}'')$ is small, then the probabilities of occurrence of the two realizations $\bm{\xi}', \bm{\xi}'' \in \Xi$ should be similar; and \textit{(ii)} if $\bm{\xi}''$ is rarer than $\bm{\xi}'$, then for some fixed (say nominal) realization $\bm{\xi} \in \Xi$ (e.g., in a network, this could be the realization where none of the elements fail),  we must have $d(\bm{\xi}, \bm{\xi}') \leq d(\bm{\xi}, \bm{\xi}'')$.
These properties can be satisfied whenever $d(\bm{\xi}, \bm{\xi'}) = \norm{\bm{\xi} - \bm{\xi'}}$ is induced by a norm on $\mathbb{R}^{M}$ and by using an appropriate bit representation of the sample space.
Throughout the paper, we will therefore assume that the metric $d(\cdot, \cdot)$ is induced by an arbitrary norm, although our results also apply when $d(\cdot, \cdot)$ is any mixed-integer conic-programming-representable metric.

A noteworthy example of a metric that \textit{does not} satisfy the above requirement is the \emph{discrete metric}, defined as $d(\bm{\xi}', \bm{\xi}'') = 1$ whenever $\bm{\xi}' \neq \bm{\xi}''$ and $0$ otherwise.
In this case, the Wasserstein distance is equivalent to the \emph{total variation distance}, and the distributions that solve the inner supremum in~\eqref{eq:two_stage_dro} can assign positive probability only to the training samples and the worst-case realization~\citep{jiang2018,rahimian2019identifying}.
In a network with rare failures, this means that only past observed realizations and the realization corresponding to complete network failure are taken into account, resulting in poor out-of-sample performance.
\setcounter{example}{0}
\begin{example}[continued]
    Suppose that we define the metric $d(\cdot, \cdot)$ to be the discrete metric. Then, the performance of solutions of the distributionally robust problem~\eqref{eq:two_stage_dro} with a total variation ambiguity set $\mathcal{P}$ of \emph{any} radius $\varepsilon \geq 0$ is equal to or worse than that of the sample average solution.
\end{example}

For a given choice of the underlying metric, the radius $\varepsilon$ of the ambiguity set allows us to control the level of conservatism of solutions of~\eqref{eq:two_stage_dro}.
Specifically, given a confidence level $\beta \in (0, 1)$, one can choose the radius as a function of $\beta$ and the number of observations $N$ such that the true distribution $\mathbb{P}$ is contained in the ambiguity set with high probability:
\begin{equation}\label{eq:epsilon_choice}
\mathbb{P}^N\left[d_W(\mathbb{P}, \hat{\mathbb{P}}_N) \leq \varepsilon_N(\beta) \right] \geq 1 - \beta.
\end{equation}
Here, $\mathbb{P}^N$ is the product distribution that governs the observed data $\{\hat{\bm{\xi}}^{(1)}, \ldots, \hat{\bm{\xi}}^{(N)}\}$.
It was shown in~\citet{esfahani2018data} that~\eqref{eq:epsilon_choice} holds if we select $\varepsilon_N(\beta) = c_0 \left({N^{-1}\log \beta^{-1}}\right)^{1/M}$, where $c_0$ is a problem-dependent constant.
\re{
    Since this choice can lead to unnecessarily large values for the radius, the authors suggest solving the two-stage problem~\eqref{eq:two_stage_dro} for several \textit{a priori} fixed choices of $\varepsilon$ and then using cross-validation (e.g., $k$-fold cross-validation) to pick a good value.
    In the following, we show that Sanov's theorem can be used to obtain stronger finite sample guarantees; that is, less conservative values for $\varepsilon_N(\beta)$, by exploiting the finiteness of the support $\Xi$.
    \begin{theorem}[Finite sample guarantee]\label{thm:finite_sample_guarantee}
        For every fixed sample size $N > 0$, and confidence level $\beta \in (0, 1)$, the probabilistic guarantee~\eqref{eq:epsilon_choice} holds whenever
        \begin{equation}\label{eq:epsilon_finite_sample}
        \varepsilon_N(\beta) \geq D \sqrt{(2N)^{-1}\left( \abs{\Xi} \log(N+1) + \log \beta^{-1} \right)},
        \end{equation}
        where $D \coloneqq \max_{\bm{\xi}, \bm{\xi}' \in \Xi} d(\bm{\xi}, \bm{\xi}')$ is the diameter of $\Xi$ with respect to the metric $d$.
    \end{theorem}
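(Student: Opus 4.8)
The plan is to dominate the Wasserstein distance by a constant multiple of the total variation distance and then to control the total variation deviation of the empirical measure using the method of types (the combinatorial core of Sanov's theorem) together with Pinsker's inequality. \textbf{Step 1 (reduction to total variation).} For any two distributions $\mathbb{P}, \mathbb{Q} \in \mathcal{M}(\Xi)$ and any coupling $\Pi$ of them, $\sum_{\bm\xi, \bm\xi'} d(\bm\xi,\bm\xi')\Pi(\bm\xi,\bm\xi') = \sum_{\bm\xi \neq \bm\xi'} d(\bm\xi,\bm\xi')\Pi(\bm\xi,\bm\xi') \leq D \sum_{\bm\xi\neq\bm\xi'}\Pi(\bm\xi,\bm\xi')$, using $d(\bm\xi,\bm\xi) = 0$ and $d(\bm\xi,\bm\xi') \leq D$. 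Minimizing over couplings and invoking the coupling characterization of the total variation distance, $d_{\mathrm{TV}}(\mathbb{P},\mathbb{Q}) = \tfrac12 \norm{\mathbb{P}-\mathbb{Q}}_1 = \min_\Pi \Pi(\{\bm\xi\neq\bm\xi'\})$, yields $d_W(\mathbb{P},\mathbb{Q}) \leq D\, d_{\mathrm{TV}}(\mathbb{P},\mathbb{Q})$. Hence it suffices to show that, for $\varepsilon_N(\beta)$ as in~\eqref{eq:epsilon_finite_sample}, $\mathbb{P}^N[d_{\mathrm{TV}}(\mathbb{P},\hat{\mathbb{P}}_N) \leq \varepsilon_N(\beta)/D] \geq 1-\beta$.

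\textbf{Step 2 (method of types and Pinsker).} Here I would use the finiteness of $\Xi$ essentially. Every realization of $\hat{\mathbb{P}}_N$ is an empirical type with denominator $N$ on an alphabet of size $\abs{\Xi}$, so $\hat{\mathbb{P}}_N$ takes at most $(N+1)^{\abs{\Xi}}$ distinct values; moreover, for each such type $\mathbb{Q}$ one has the standard bound $\mathbb{P}^N[\hat{\mathbb{P}}_N = \mathbb{Q}] \leq \exp(-N\,\mathrm{KL}(\mathbb{Q}\,\|\,\mathbb{P}))$. For any $t > 0$, union-bounding over the types $\mathbb{Q}$ with $d_{\mathrm{TV}}(\mathbb{Q},\mathbb{P}) > t$ and applying Pinsker's inequality $\mathrm{KL}(\mathbb{Q}\,\|\,\mathbb{P}) \geq 2\,d_{\mathrm{TV}}(\mathbb{Q},\mathbb{P})^2 > 2t^2$ to each such $\mathbb{Q}$ gives $\mathbb{P}^N[d_{\mathrm{TV}}(\mathbb{P},\hat{\mathbb{P}}_N) > t] \leq (N+1)^{\abs{\Xi}}\exp(-2Nt^2)$.

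\textbf{Step 3 (solve for the radius).} Requiring $(N+1)^{\abs{\Xi}}\exp(-2Nt^2) \leq \beta$ is equivalent to $t \geq \sqrt{(2N)^{-1}(\abs{\Xi}\log(N+1) + \log\beta^{-1})}$. Choosing $t = \varepsilon_N(\beta)/D$ with $\varepsilon_N(\beta)$ as in~\eqref{eq:epsilon_finite_sample} then gives $\mathbb{P}^N[d_{\mathrm{TV}}(\mathbb{P},\hat{\mathbb{P}}_N) \leq \varepsilon_N(\beta)/D] \geq 1-\beta$, and by Step~1 the event $\{d_{\mathrm{TV}}(\mathbb{P},\hat{\mathbb{P}}_N) \leq \varepsilon_N(\beta)/D\}$ is contained in $\{d_W(\mathbb{P},\hat{\mathbb{P}}_N) \leq \varepsilon_N(\beta)\}$, so the guarantee~\eqref{eq:epsilon_choice} follows; any larger value of $\varepsilon_N(\beta)$ only enlarges the event, which accounts for the ``whenever'' in the statement.

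I expect the only delicate point to be the bookkeeping in Step~2: justifying the $(N+1)^{\abs{\Xi}}$ bound on the number of types and the exponential per-type probability bound (both classical facts from the method of types), and checking that the strict versus non-strict inequalities line up so that the infimum of $\mathrm{KL}(\mathbb{Q}\,\|\,\mathbb{P})$ over the relevant types can be replaced by its Pinsker lower bound $2t^2$. Everything else is a routine chain of inequalities, and it is precisely the finiteness of $\Xi$ — guaranteed by $\Xi \subseteq \{0,1\}^M$ — that makes the type-counting argument, and hence the improvement over the dimension-dependent rate $\varepsilon_N(\beta) \sim N^{-1/M}$, go through.
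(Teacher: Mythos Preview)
Your proposal is correct and follows essentially the same route as the paper: both arguments chain $d_W \leq D\, d_{\mathrm{TV}}$ with Pinsker's inequality and then invoke the finite-sample Sanov bound $(N+1)^{\abs{\Xi}}\exp(-N\inf d_{\mathrm{KL}})$ coming from the method of types. The only cosmetic difference is that the paper packages your Step~2 as a direct citation to Sanov's theorem (the combinatorial inequality~2.1.12 in Dembo--Zeitouni) applied to the set $\{\mathbb{Q}: d_W(\mathbb{Q},\mathbb{P}) > \varepsilon_N(\beta)\}$, whereas you spell out the union bound over types explicitly.
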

    \begin{proof}
        See Appendix~\ref{sec:proofs}.
    \end{proof}
    The right-hand side of~\eqref{eq:epsilon_finite_sample} indicates that for a fixed choice of the support $\Xi$ and confidence level $\beta$, $\varepsilon_N(\beta)$ is roughly proportional to $\sqrt{N^{-1}\log (N+1)}$.
    For such choices, the optimal value of the corresponding distributionally robust two-stage problem~\eqref{eq:two_stage_dro} can be expected to provide an upper bound on the true (unknown) out-of-sample cost.
    We empirically verify this upper bound in Section~\ref{sec:computational_experiments}, when we vary $\varepsilon = \nu \sqrt{N^{-1}\log(N+1)}$ as a function of a scalar $\nu$.
}
Theorem~\ref{thm:finite_sample_guarantee} also indicates that $\varepsilon_N(\beta) \to 0$ as the sample size becomes large, $N \to \infty$, and that $\varepsilon_N(\beta) \to \infty$ if we are overly conservative, $\beta \to 0$.
This observation also elucidates that the distributionally robust formulation~\eqref{eq:two_stage_dro} generalizes both classical stochastic and robust optimization.
\begin{remark}[Reduction to two-stage stochastic and robust optimization]\label{rem:reduction_to_ro_and_sp}
    The distributionally robust two-stage problem~\eqref{eq:two_stage_dro} reduces to the classical sample average approximation whenever the radius of the ambiguity set $\varepsilon = 0$, since $\mathcal{P} = \{\hat{\mathbb{P}}_N\}$ reduces to a singleton in this case.
    Similarly, it reduces to a classical two-stage robust optimization problem whenever $\varepsilon \geq \max_{\bm{\xi}, \bm{\xi}' \in \Xi} d(\bm{\xi}, \bm{\xi}')$ since $\Xi$ is compact and $\mathcal{P}$ contains all Dirac distributions, $\delta_{\bm{\xi}}$, $\bm{\xi} \in \Xi$, in this case. Therefore, the worst-case expectation in~\eqref{eq:two_stage_dro} reduces to $\max_{\bm{\xi} \in \Xi} \mathcal{Q}(\bm{x}, \bm{\xi})$. %
\end{remark}

\subsection{Contributions}
This paper addresses the relatively unexplored topic of rare high-impact uncertainties through the lens of data-driven distributionally robust optimization.
Existing methods for addressing rare high-impact uncertainties~\citep{barrera2016chance,budhiraja2019minimization} are few, and they are all based on variants of Monte Carlo methods (e.g., importance sampling), which require the existence of a probability distribution that can be sampled to generate additional observations.

In contrast, our techniques fall within the scope of distributionally robust optimization~\citep{shapiro2018tutorial,rahimian2019distributionally}.
A surge in the popularity of Wasserstein ambiguity sets has coccurred in this area because of recent results~\citep{gao2016distributionally,esfahani2018data,bertsimas2018robustSAA,blanchet2019robust,kuhn2019wasserstein} that have established \textit{(i)} strong finite sample and asymptotic guarantees of their formulations, \textit{(ii)} connections with function regularization in machine learning, and \textit{(iii)} tractable reformulations for various classes of loss functions $\mathcal{Q}(\bm{x}, \bm{\xi})$ and metric spaces $(\Xi, d)$.
Most of the tractability results are for one-stage problems, however, with piecewise linear loss functions and continuous random parameters.
Existing tractability results for two-stage problems are limited to the case where $\mathcal{Q}(\bm{x}, \bm{\xi})$ is the optimal value of a linear program, $\Xi$ is a polyhedron, and $d$ is induced by the $1$-norm, see~\citet{esfahani2018data,hanasusanto2018conic}.
\re{Sufficient conditions for zero-one supports $\Xi$ and type-$\infty$ Wasserstein ambiguity sets have been established in~\citet{xie2019}.}
In the absence of sufficient conditions that ensure tractability, one resorts either to iterative global optimization methods (e.g., see~\citet{wozabal2012framework,zhao2014data,luo2017decomposition}) or tractable approximations.
The latter commonly include discretization schemes (of which sample average approximation is a special case)~\citep{luedtke2008sample,calafiore2005uncertain} and decision rule methods~\citep{ben2004adjustable,Georghiou2018,bertsimas2019SampleRobustOptimization}.

In this paper, we extend the state of the art in data-driven optimization by studying two-stage conic programs %
with a particular focus on high-dimensional zero-one uncertainties.
This is crucial because existing reformulations and algorithms for distributionally robust optimization with finitely supported distributions~\citep{postek2016computationally,ben2013robust,bertsimas2018robustSAA,bansal2018decomposition} scale with the size of the support set $\abs{\Xi}$, which can grow exponentially large in such cases.
We circumvent this exponential growth by utilizing tractable conservative approximations inspired by lift-and-project convexification techniques in global optimization~\citep{lovasz1991cones,sherali1990hierarchy,lasserre2001global}.

Closest in spirit to our work are the papers of~\citet{hanasusanto2018conic,Xu2018,ardestani2017linearized} who consider the case where the second-stage loss $\mathcal{Q}(\bm{x}, \bm{\xi})$ is the optimal value of a linear program with uncertain right-hand sides and the support set $\Xi$ is a polytope.
In this setting, \citet{hanasusanto2018conic,Xu2018} reformulate~\eqref{eq:two_stage_dro} as a copositive cone program and then approximate this using semidefinite programming, whereas \citet{ardestani2017linearized} provide approximations by leveraging reformulation-linearization techniques from bilinear programming.
\re{Although some extensions of these approaches to the case of zero-one support sets $\Xi$ have been made~\citep{mittal2020robust,jiang2019data}, problems where $\mathcal{Q}(\bm{x}, \bm{\xi})$ is the optimal value of a conic program have not been addressed.
This is partly because such extensions lead to so-called \emph{generalized copsitive programs} or \emph{set-semidefinite programs} (e.g., see \citet{burer2012representing,eichfelder2008set}), and relatively little is known about their tractable approximations.
In contrast, generalizations of lift-and-project techniques to mixed zero-one conic problems are fairly well known (e.g., see \cite{Stubbs1999ABM,Cezik2005}), and we exploit these to derive tractable approximations for distributionally robust optimization.
The relationship between convexification hierarchies based on copositive programming and lift-and-project techniques (for specific problem classes) has been explored in~\cite{povh2009copositive,burer2012non}.
}

We highlight the following main contributions:
\begin{enumerate}
    \item %
    By exploiting ideas from penalty methods and bilinear programming, we develop reformulations of the Wasserstein distributionally robust two-stage problem~\eqref{eq:two_stage_dro} that reduce its solution to optimization problems over the convex hulls of mixed-integer conic representable sets. Extensions to conditional value-at-risk are also presented.
    
    \item We provide sufficient conditions for our convex hull reformulations and hence the distributionally robust two-stage problem~\eqref{eq:two_stage_dro} to be tractable. We also show that they are generically NP-hard, however, even if there are no first-stage decisions and the second-stage loss function is the optimal value of a two-dimensional linear program with uncertain objective coefficients.
    
    \item By using lift-and-project hierarchies to approximate the convex hull of the mixed-integer conic representable sets, we derive tractable conservative approximations of the distributionally robust two-stage problem~\eqref{eq:two_stage_dro}, and we provide practical guidelines to compute them efficiently. The approximations \re{are tractable irrespective of the aforementioned sufficient conditions, and they} become exact as the Wasserstein radius $\varepsilon$ shrinks to zero.
    
    \item We demonstrate the practical viability of our method and its out-of-sample performance on challenging nonlinear optimal power flow \re{and multi-commodity network design problems} that are affected by rare network contingencies, and we study its behavior as a function of the rarity and impact of these contingencies, \re{illustrating improvements over classical sample average and two-stage robust optimization formulations.}
\end{enumerate}

The rest of the paper is organized as follows.
Section~\ref{sec:reformulation} derives the mixed-integer conic programming representation of interest,
Section~\ref{sec:lift_and_project} derives their lift-and-project approximations, and Section~\ref{sec:computational_experiments} reports numerical results. %
For ease of exposition, the complexity analysis as well as proofs of all assertions is deferred to the appendix.

\paragraph{Notation.}
Vectors and matrices are printed in bold lowercase and bold uppercase letters, respectively, while scalars are printed in regular font. 
The set of non-negative integers and reals is denoted by $\mathbb{Z}_{+}$ and $\mathbb{R}_{+}$, respectively.
For any positive integer $N$, we define $[N]$ as the index set $\left\{1,\ldots, N\right\}$. 
We use $\one_k$ to denote the $k^\text{th}$ unit basis vector, $\one$ to denote the vector of ones, $\eye$ to denote the identity matrix, and $\bm{0}$ to denote the vector or matrix of zeros,
respectively; their dimensions will be clear from the context.
For a matrix $\bm{A}$, we use $\vecmat{\bm{A}}$ to denote the vector obtained by stacking the columns of $\bm{A}$ in order.
The inner product between two matrices $\bm{A}, \bm{B} \in \mathbb{R}^{m \times n}$ is denoted by $\inner{\bm{A}}{\bm{B}} \coloneqq \sum_{i\in[m]}\sum_{j \in [n]}A_{ij} B_{ij}$. %
We use $\mathcal{C}^{n} = \left\{(\bm{x}, t) \in \mathbb{R}^{n-1} \times \mathbb{R}: \norm{\bm{x}} \leq t \right\}$ to denote the norm cone associated with the norm $\norm{\cdot}$.
For a logical expression $\mathcal{E}$, we define $\mathbb{I}[\mathcal{E}]$ as the indicator function which takes a value of $1$ if $\mathcal{E}$ is true and $0$ otherwise.
\re{Throughout the paper, we refer to an optimization problem as \emph{tractable} if it can be solved in polynomial time in the size of its input data, and \emph{intractable} if it is NP-hard.}

\section{Mixed-Integer Conic Representations}\label{sec:reformulation}
Throughout the paper, we assume that the distributionally robust two-stage problem~\eqref{eq:two_stage_dro} satisfies the assumptions of \emph{complete} and \emph{sufficiently expensive recourse}.
\begin{itemize}
    \item[\textbf{(A1)}] For every realization $\mb{\xi} \in \Xi$, there exists $\bm{y}^{+} \in \mathrm{int}(\mathcal{Y})$ such that $\bm{W}(\mb{\xi})\bm{y}^{+} > 0$.
    \item[\textbf{(A2)}] For every first-stage decision $\bm{x} \in \mathcal{X}$ and every realization $\mb{\xi} \in \Xi$, the second-stage loss function $\mathcal{Q}(\bm{x}, \bm{\xi})$ is bounded.
\end{itemize}
A natural way to ensure these assumptions is to add slack variables in the formulation of the second-stage problem $\mathcal{Q}(\bm{x}, \bm{\xi})$ and penalize them in the objective function.
Whenever the assumptions are satisfied, they imply that \emph{(i)} $\mathcal{Q}(\mb{x}, \mb{\xi})$ is always strictly feasible and bounded, \emph{(ii)} the dual of $\mathcal{Q}(\mb{x}, \mb{\xi})$, given in the following, is always feasible, and \emph{(iii)} strong conic duality holds between the second-stage problem and its dual, $\mathcal{Q}(\mb{x}, \mb{\xi}) = \mathcal{Q}_d(\mb{x}, \mb{\xi})$, where
 \begin{equation}\label{eq:Q_dual}
\mathcal{Q}_d(\mb{x}, \mb{\xi}) :=
\sup_{\bm{\lambda} \in \mathbb{R}^L_{+}}
\left\{
\left[\bm{T}(\mb{x})\mb{\xi} + \bm{h}(\mb{x}) \right]^\top \bm{\lambda} : 
\bm{q}(\mb{\xi}) - \bm{W}(\mb{\xi})^\top \bm{\lambda} \in \mathcal{Y}^*
\right\}.
\end{equation}
Here, $\mathcal{Y}^*$ denotes the dual cone of $\mathcal{Y}$.
We assume that the uncertain vectors and matrices in \eqref{eq:second_stage_value_function} are affine and can be represented as
$
\bm{q}(\mb{\xi}) = \bm{q}_0 + \bm{Q} \mb{\xi}
$
and
$
\bm{W}(\mb{\xi}) = \bm{W}_0 + \sum_{j \in [M]} \xi_j \bm{W}_j,
$
where $\bm{Q} \in \mathbb{R}^{N_2 \times M}$, %
and for each $j \in \{0, 1, \ldots, M\}$, we have $\bm{W}_j \in \mathbb{R}^{L \times N_2}$. %

A consequence of the above assumptions is the following lemma, which states that computing the worst-case expectation in the two-stage problem~\eqref{eq:two_stage_dro} is equivalent to averaging $N$ worst-case values of the loss function $\mathcal{Q}(\bm{x}, \bm{\xi})$ over $\bm{\xi} \in \Xi$, each regularized by one of the training samples.
We omit the proof since it follows directly from the compactness of $\Xi$ and the definition of the Wasserstein ambiguity set $\mathcal{P}$ in~\eqref{eq:ambiguity_set}; see~\citet{gao2016distributionally,blanchet2019quantifying} for proofs in much more general settings.
\begin{lemma}\label{prop:GK}
    The distributionally robust two-stage problem~\eqref{eq:two_stage_dro} admits the following reformulation:
    \begin{equation}\label{eq:GK}
    \displaystyle\mathop{\text{minimize}}_{\mb{x} \in \mathcal{X}, \alpha \geq 0} \; \displaystyle c(\mb{x}) + \alpha \varepsilon + \frac{1}{N}\sum_{i = 1}^N \max_{\mb{\xi} \in \Xi} \left\{ \mathcal{Q}(\mb{x}, \mb{\xi}) - \alpha d(\mb{\xi}, \hat{\mb{\xi}}^{(i)}) \right\}.
    \end{equation}
\end{lemma}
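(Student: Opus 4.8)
The plan is to fix the first-stage decision $\bm{x} \in \mathcal{X}$ and work only with the inner worst-case expectation $\sup_{\mathbb{P} \in \mathcal{P}}\mathbb{E}_{\mathbb{P}}[\mathcal{Q}(\bm{x},\tilde{\bm{\xi}})]$. First I would eliminate the distribution $\mathbb{Q}$ in favor of an optimal coupling $\Pi$ against $\hat{\mathbb{P}}_N$: since the second marginal of $\Pi$ must equal $\hat{\mathbb{P}}_N = \frac{1}{N}\sum_{i=1}^N \delta_{\hat{\bm{\xi}}^{(i)}}$, the coupling is completely described by $N$ conditional distributions $\mathbb{Q}_i \in \mathcal{M}(\Xi)$ attached to the atoms $\hat{\bm{\xi}}^{(i)}$, with $\mathbb{Q} = \frac{1}{N}\sum_{i} \mathbb{Q}_i$. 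In this parametrization the objective is $\frac{1}{N}\sum_{i=1}^N \sum_{\bm{\xi}\in\Xi}\mathcal{Q}(\bm{x},\bm{\xi})\,\mathbb{Q}_i(\bm{\xi})$ and the Wasserstein-ball constraint $d_W(\mathbb{Q},\hat{\mathbb{P}}_N)\le\varepsilon$ reduces to the single linear inequality $\frac{1}{N}\sum_{i=1}^N \sum_{\bm{\xi}\in\Xi} d(\bm{\xi},\hat{\bm{\xi}}^{(i)})\,\mathbb{Q}_i(\bm{\xi}) \le \varepsilon$. Because $\Xi$ is finite, the resulting problem is an ordinary finite-dimensional linear program in the variables $\mathbb{Q}_i(\bm{\xi})$.

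Next I would take the Lagrangian dual of this linear program, dualizing only the Wasserstein inequality with a multiplier $\alpha \ge 0$ and leaving the simplex constraints $\mathbb{Q}_i \in \mathcal{M}(\Xi)$ implicit. Using that the supremum of a linear functional over the probability simplex on the finite set $\Xi$ is attained at a vertex — hence equals the pointwise maximum — this gives $\sup_{\mathbb{P}\in\mathcal{P}}\mathbb{E}_{\mathbb{P}}[\mathcal{Q}(\bm{x},\tilde{\bm{\xi}})] = \inf_{\alpha\ge0}\bigl\{\alpha\varepsilon + \frac{1}{N}\sum_{i=1}^N \max_{\bm{\xi}\in\Xi}\bigl(\mathcal{Q}(\bm{x},\bm{\xi}) - \alpha\, d(\bm{\xi},\hat{\bm{\xi}}^{(i)})\bigr)\bigr\}$, where the equality (no duality gap, dual optimum attained) follows from strong linear programming duality: the primal is feasible since $\mathbb{Q}_i = \delta_{\hat{\bm{\xi}}^{(i)}}$ has Wasserstein cost $0 \le \varepsilon$, and the primal value is finite because $\mathcal{Q}(\bm{x},\cdot)$ is bounded on $\Xi$ by Assumption (A2) and $\Xi$ is compact. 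When $\varepsilon = 0$ one checks directly that $\mathcal{P} = \{\hat{\mathbb{P}}_N\}$ and that the right-hand side collapses to $\frac{1}{N}\sum_i \mathcal{Q}(\bm{x},\hat{\bm{\xi}}^{(i)})$ as $\alpha\to\infty$, so the identity still holds. Adding $c(\bm{x})$ and minimizing jointly over $\bm{x}\in\mathcal{X}$ and $\alpha\ge0$ then yields exactly~\eqref{eq:GK}.

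The only genuinely delicate point is the absence of a duality gap; here it is mild because the finiteness of $\Xi$ turns the inner problem into a linear program possessing a Slater point, but one may equally well cite the general strong-duality results for Wasserstein distributionally robust expectations in~\citet{gao2016distributionally,blanchet2019quantifying}, which handle far more general supports and require only measurability and a growth condition on the loss that~(A2) trivially supplies. Everything else — the reparametrization of couplings, the reduction of the inner supremum over $\mathcal{M}(\Xi)$ to a maximum over $\Xi$, and the bookkeeping of the $\varepsilon$ and $\alpha$ terms — is routine.
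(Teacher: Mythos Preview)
Your proposal is correct and follows the same route the paper points to: the paper omits the proof entirely, noting only that it ``follows directly from the compactness of $\Xi$ and the definition of the Wasserstein ambiguity set'' and citing \citet{gao2016distributionally,blanchet2019quantifying} for the general case. You have simply spelled out the finite-$\Xi$ specialization of that standard argument --- disintegrate the coupling along the empirical atoms, reduce to a finite LP, dualize the single transport-budget constraint --- and you invoke the very same references for the strong-duality step, so there is no substantive difference.
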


The remainder of this section establishes that the inner maximization in~\eqref{eq:GK} is equivalent to optimizing a linear function over the convex hull of the feasible region of a \emph{mixed-integer conic program} (MICP).
The following theorem is key to establishing this result.
\begin{theorem}[Convex hull reformulation]\label{thm:convex_reformulation}
    The distributionally robust two-stage problem~\eqref{eq:two_stage_dro} admits the following convex hull reformulation:
    \begin{equation}\label{eq:two_stage_dro_reform}
    \displaystyle\mathop{\text{minimize}}_{\bm{x} \in \mathcal{X}, \alpha \geq 0} \;\; \displaystyle c(\mb{x}) + \alpha \varepsilon + \frac{1}{N} \sum_{i = 1}^N Z_i(\mb{x}, \alpha),
    \end{equation}
    where, for each $i \in [N]$, we define the function $Z_i :\mathcal{X} \times \mathbb{R}_{+} \mapsto \mathbb{R}$ and the set $\mathcal{Z}_i$ as follows:
    \begin{subequations}
    \begin{align}
    &\displaystyle Z_i(\mb{x}, \alpha) = 
    \mathop{\text{maximize}}_{(\bm{\xi}, \bm{\lambda}, \bm{\Lambda}, \tau) \in \mathop{\mathrm{cl\,conv}}(\mathcal{Z}_i)}  \displaystyle \left\{\inner{\bm{T}(\bm{x})}{\bm{\Lambda}} + \bm{h}(\bm{x})^\top \bm{\lambda}
    - \alpha \tau \right\} \label{eq:Z_function_definition} \\
    &\displaystyle \mathcal{Z}_i = \left\{
    (\bm{\xi}, \bm{\lambda}, \bm{\Lambda}, \tau) \in \Xi \times \mathbb{R}^L_{+} \times \mathbb{R}^{L \times M} \times \mathbb{R}_{+}: 
    \begin{array}{l}
    \displaystyle \bm{\Lambda} = \bm{\lambda} \mb{\xi}^\top, \; (\mb{\xi} - \hat{\mb{\xi}}^{(i)}, \tau) \in \mathcal{C}^{M+1} \\
    \displaystyle \bm{q}_0 + \bm{Q}\mb{\xi} - \bm{W}_0^\top \bm{\lambda} - \sum_{j \in [M]} \bm{W}_j^\top \bm{\Lambda} \one_{j}  \in \mathcal{Y}^*
    \end{array}
    \right\}. \label{eq:Z_set_definition}
    \end{align}
    \end{subequations}
\end{theorem}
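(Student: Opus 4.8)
The plan is to prove, for every fixed $i \in [N]$, $\mb{x} \in \mathcal{X}$ and $\alpha \geq 0$, the scalar identity
\[
\max_{\mb{\xi} \in \Xi}\left\{ \mathcal{Q}(\mb{x}, \mb{\xi}) - \alpha\, d(\mb{\xi}, \hat{\mb{\xi}}^{(i)}) \right\} \;=\; Z_i(\mb{x}, \alpha),
\]
after which substituting it into the reformulation~\eqref{eq:GK} of Lemma~\ref{prop:GK} yields~\eqref{eq:two_stage_dro_reform} immediately. The first step is to eliminate the inner infimum defining $\mathcal{Q}$: under assumptions~\textbf{(A1)}--\textbf{(A2)} strong conic duality holds, so $\mathcal{Q}(\mb{x},\mb{\xi}) = \mathcal{Q}_d(\mb{x},\mb{\xi})$ for each $\mb{\xi} \in \Xi$, with the dual supremum in~\eqref{eq:Q_dual} finite and attained. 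Hence the left-hand side becomes a joint maximization over $\mb{\xi} \in \Xi$ and $\bm{\lambda} \in \mathbb{R}^L_{+}$ of $[\bm{T}(\mb{x})\mb{\xi} + \bm{h}(\mb{x})]^\top\bm{\lambda} - \alpha\, d(\mb{\xi},\hat{\mb{\xi}}^{(i)})$ subject to $\bm{q}(\mb{\xi}) - \bm{W}(\mb{\xi})^\top\bm{\lambda} \in \mathcal{Y}^*$.

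Next I would rewrite the distance term through its conic epigraph. Since $d$ is induced by a norm, $d(\mb{\xi},\hat{\mb{\xi}}^{(i)}) = \norm{\mb{\xi}-\hat{\mb{\xi}}^{(i)}}$, and introducing $\tau \geq 0$ with $(\mb{\xi} - \hat{\mb{\xi}}^{(i)}, \tau) \in \mathcal{C}^{M+1}$ and replacing $d(\mb{\xi},\hat{\mb{\xi}}^{(i)})$ by $\tau$ leaves the optimal value unchanged, because $\alpha \geq 0$ drives the optimal $\tau$ down to $\norm{\mb{\xi}-\hat{\mb{\xi}}^{(i)}}$. I would then linearize the bilinear products using the affine data $\bm{q}(\mb{\xi}) = \bm{q}_0 + \bm{Q}\mb{\xi}$ and $\bm{W}(\mb{\xi}) = \bm{W}_0 + \sum_{j\in[M]}\xi_j\bm{W}_j$: with the auxiliary matrix $\bm{\Lambda} = \bm{\lambda}\mb{\xi}^\top$ one has $[\bm{T}(\mb{x})\mb{\xi}]^\top\bm{\lambda} = \inner{\bm{T}(\mb{x})}{\bm{\Lambda}}$ and $\bm{W}(\mb{\xi})^\top\bm{\lambda} = \bm{W}_0^\top\bm{\lambda} + \sum_{j\in[M]}\bm{W}_j^\top\bm{\Lambda}\one_j$, since the $j$-th column of $\bm{\Lambda}$ equals $\xi_j\bm{\lambda}$. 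These substitutions are exact, so the maximization reads $\max\{\inner{\bm{T}(\mb{x})}{\bm{\Lambda}} + \bm{h}(\mb{x})^\top\bm{\lambda} - \alpha\tau : (\mb{\xi},\bm{\lambda},\bm{\Lambda},\tau)\in\mathcal{Z}_i\}$ with $\mathcal{Z}_i$ exactly as in~\eqref{eq:Z_set_definition}.

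The last step is to pass to the closed convex hull. Since the objective $\inner{\bm{T}(\mb{x})}{\bm{\Lambda}} + \bm{h}(\mb{x})^\top\bm{\lambda} - \alpha\tau$ is linear in $(\mb{\xi},\bm{\lambda},\bm{\Lambda},\tau)$ (with $\mb{x}$ fixed), its supremum over $\mathcal{Z}_i$ equals its supremum over $\mathrm{conv}(\mathcal{Z}_i)$ and, by continuity of a linear functional, over $\conv{\mathcal{Z}_i}$; this common value is precisely $Z_i(\mb{x},\alpha)$ from~\eqref{eq:Z_function_definition}. I expect the only genuine subtlety to lie in this step: $\mathcal{Z}_i$ is typically unbounded in $(\bm{\lambda},\bm{\Lambda})$, so to justify writing $\max$ rather than $\sup$ one must invoke assumption~\textbf{(A2)} to conclude that for each of the finitely many $\mb{\xi}\in\Xi$ the value $\mathcal{Q}_d(\mb{x},\mb{\xi}) - \alpha\norm{\mb{\xi}-\hat{\mb{\xi}}^{(i)}}$ is finite and attained, so the maximum over $\Xi$ is attained on $\mathcal{Z}_i \subseteq \conv{\mathcal{Z}_i}$, and then note that a linear objective attains this same value on $\mathrm{conv}(\mathcal{Z}_i)$ and on its closure. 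Everything else is a routine rearrangement of the substitutions above.
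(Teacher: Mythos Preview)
Your proposal is correct and follows essentially the same route as the paper: invoke Lemma~\ref{prop:GK}, replace $\mathcal{Q}$ by its conic dual $\mathcal{Q}_d$ via assumptions~\textbf{(A1)}--\textbf{(A2)}, introduce the epigraph variable $\tau$ for the norm, linearize the bilinear terms through $\bm{\Lambda} = \bm{\lambda}\mb{\xi}^\top$, and then pass to $\conv{\mathcal{Z}_i}$ using linearity of the objective. Your discussion of why the maximum is actually attained on $\conv{\mathcal{Z}_i}$ (finiteness of $\Xi$ together with~\textbf{(A2)}) is slightly more explicit than the paper's own proof, which simply asserts the convex-hull replacement without elaborating on attainment.
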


\begin{proof}
    See Appendix~\ref{sec:proofs}.
\end{proof}

The inner optimization problem~\eqref{eq:Z_function_definition} is over the closed convex hull of the set $\mathcal{Z}_i$, which couples the binary uncertain parameters $\bm{\xi}$ with the continuous dual variables $\bm{\lambda}$ via the bilinear equation $\bm{\Lambda} = \bm{\lambda} \bm{\xi}^\top$.
This set is, therefore, not the feasible region of an MICP.
We propose two approaches to ensure MICP representability.
The first is to linearize the bilinear equation $\bm{\Lambda} = \bm{\lambda} \bm{\xi}^\top$ using McCormick inequalities, which requires \emph{a priori} upper bounds on the dual variables $\bm{\lambda}$; we briefly discuss this in Section~\ref{sec:linearization}.
The second is to reformulate the loss function $\mathcal{Q}(\bm{x}, \bm{\xi})$ using ideas from penalty methods %
that circumvents any bilinear terms;
this approach is the subject of Section~\ref{sec:penalty_based_formulation}.
We compare the two approaches and summarize their merits in Section~\ref{sec:micp_discussion}.
We note that our results also apply if the risk-neutral expectation in the objective function of the two-stage problem~\eqref{eq:two_stage_dro} is replaced with the conditional value-at-risk; for ease of exposition, we defer this analysis to the appendix.

\subsection{Linearized reformulation}\label{sec:linearization}
The decision variables $\bm{\lambda}$ of the dual problem $\mathcal{Q}_d(\bm{x}, \bm{\xi})$ must be necessarily bounded for any fixed value of $\bm{x} \in \mathcal{X}$ and $\bm{\xi} \in \Xi$.
Indeed, under assumptions (A1) and (A2), the value of $\mathcal{Q}_d(\bm{x}, \bm{\xi})$ is bounded for any fixed $\bm{x} \in \mathcal{X}$ and $\bm{\xi} \in \Xi$; and since $\mathcal{X}$ and $\Xi$ are compact sets, the variables $\bm{\lambda}$ must also be necessarily bounded. 
Suppose that $\bar{\bm{\lambda}} \in \mathbb{R}^L_{+}$ are \emph{a priori} known upper bounds (independent of $\bm{x}$ and $\bm{\xi}$) on these variables.
\re{Such bounds may be analytically known whenever we explicitly add slack variables to ensure feasibility of the second-stage problem or if the latter has some structure (e.g., see~\citet{gabrel2014robust}).
Whenever such bounds are known}, we can exactly linearize the bilinear equation $\bm{\Lambda} = \bm{\lambda} \bm{\xi}^\top$ using McCormick inequalities \re{since $\bm{\xi}$ is binary-valued (e.g., see~\cite{glover1975improved})}, and reformulate the set $\mathcal{Z}_i$ in~\eqref{eq:Z_set_definition} as the feasible region of an MICP:
\begin{equation}
\displaystyle \mathcal{Z}_i = \left\{
(\bm{\xi}, \bm{\lambda}, \bm{\Lambda}, \tau) \in \Xi \times \mathbb{R}^L_{+} \times \mathbb{R}^{L \times M}_{+} \times \mathbb{R}_{+}: 
\begin{array}{l}
\displaystyle (\mb{\xi} - \hat{\mb{\xi}}^{(i)}, \tau) \in \mathcal{C}^{M+1} \\
\displaystyle \bm{\Lambda} - \bm{\lambda} \one^\top + \bar{\bm{\lambda}} (\one - \bm{\xi})^\top \in \mathbb{R}^{L\times M}_{+} \\
\displaystyle \bm{\lambda} \one^\top - \bm{\Lambda} \in \mathbb{R}^{L\times M}_{+}, \; \bar{\bm{\lambda}} \bm{\xi}^\top - \bm{\Lambda} \in \mathbb{R}^{L\times M}_{+} \\
\displaystyle \bm{q}_0 + \bm{Q}\mb{\xi} - \bm{W}_0^\top \bm{\lambda} - \sum_{j \in [M]} \bm{W}_j^\top \bm{\Lambda}\one_{j}  \in \mathcal{Y}^*
\end{array}
\right\}. \label{eq:Z_set_definition_linearized}\tag{\ref*{eq:Z_set_definition}-$\ell$}
\end{equation}
The MICP representation~\eqref{eq:Z_set_definition_linearized} adds $O(ML)$ variables and constraints for each $\mathcal{Z}_i$, $i \in [N]$, \re{which can be prohibitively large.
Moreover, analytical upper bounds $\bar{\bm{\lambda}} \in \mathbb{R}^L_{+}$ on the dual variables, which are independent of (and valid for all) $\bm{x}$ and $\bm{\xi}$, may be unavailable.
The following section shows that we can ensure MICP representability without \emph{a priori} knowledge of these bounds}, and at the expense of adding far fewer variables and constraints.

\subsection{Penalty reformulation}\label{sec:penalty_based_formulation}
Our goal in this section will be to move the uncertainty to the objective function of the second-stage problem, $\mathcal{Q}(\bm{x}, \bm{\xi})$.
This approach is motivated by the following corollary to Theorem~\ref{thm:convex_reformulation} that shows that MICP-representability is guaranteed if only the objective function of $\mathcal{Q}(\bm{x}, \bm{\xi})$ is uncertain.
\begin{corollary}[Convex hull reformulation for objective uncertainty]\label{coro:two_stage_dro_reform_obj_uncertainty}
    Suppose that only the objective function of the second-stage problem $\mathcal{Q}(\bm{x}, \bm{\xi})$ is uncertain: $\bm{W}(\bm{\xi}) = \bm{W}_0$ and $\bm{T}(\bm{x}) = \bm{0}$.
    Then the distributionally robust two-stage problem~\eqref{eq:two_stage_dro} admits reformulation~\eqref{eq:two_stage_dro_reform} where, for each $i \in [N]$, we define the function $Z_i :\mathcal{X} \times \mathbb{R}_{+} \mapsto \mathbb{R}$ and the MICP-representable set $\mathcal{Z}_i$ as follows:
    \begin{subequations}
        \begin{align}
        &\displaystyle Z_i(\mb{x}, \alpha) = 
        \mathop{\text{maximize}}_{(\bm{\xi}, \bm{\lambda}, \tau) \in \mathop{\mathrm{cl\,conv}}(\mathcal{Z}_i)}  \displaystyle \left\{\bm{h}(\bm{x})^\top \bm{\lambda}
        - \alpha \tau \right\} \label{eq:Z_function_definition_obj_unc} \\ %
        &\displaystyle \mathcal{Z}_i = \left\{
        (\bm{\xi}, \bm{\lambda}, \tau) \in \Xi \times \mathbb{R}^L_{+} \times  \mathbb{R}_{+}: 
        \displaystyle (\mb{\xi} - \hat{\mb{\xi}}^{(i)}, \tau) \in \mathcal{C}^{M+1}, \;\;
        \displaystyle \bm{q}_0 + \bm{Q}\mb{\xi} - \bm{W}_0^\top \bm{\lambda} \in \mathcal{Y}^*
        \right\}. \label{eq:Z_set_definition_obj_unc} %
        \end{align}
    \end{subequations}
\end{corollary}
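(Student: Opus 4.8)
The plan is to obtain the corollary directly from Theorem~\ref{thm:convex_reformulation} by inserting the structural hypotheses into~\eqref{eq:two_stage_dro_reform}--\eqref{eq:Z_set_definition} and then projecting out the matrix variable $\bm{\Lambda}$, which turns out to be inert. First, since $\bm{W}(\bm{\xi})=\bm{W}_0$ forces $\bm{W}_j=\bm{0}$ for all $j\in[M]$, the term $\sum_{j\in[M]}\bm{W}_j^\top\bm{\Lambda}\one_j$ drops out and the conic membership in~\eqref{eq:Z_set_definition} collapses to $\bm{q}_0+\bm{Q}\bm{\xi}-\bm{W}_0^\top\bm{\lambda}\in\mathcal{Y}^*$, which no longer involves $\bm{\Lambda}$. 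Similarly, $\bm{T}(\bm{x})=\bm{0}$ removes the term $\inner{\bm{T}(\bm{x})}{\bm{\Lambda}}$ from the objective of $Z_i(\bm{x},\alpha)$ in~\eqref{eq:Z_function_definition}, leaving the objective $\bm{h}(\bm{x})^\top\bm{\lambda}-\alpha\tau$. After these substitutions, $\bm{\Lambda}$ occurs only through the defining identity $\bm{\Lambda}=\bm{\lambda}\bm{\xi}^\top$.

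Second, I would eliminate $\bm{\Lambda}$ by projection. For any $(\bm{\xi},\bm{\lambda})$ the choice $\bm{\Lambda}=\bm{\lambda}\bm{\xi}^\top$ satisfies the only constraint that still mentions $\bm{\Lambda}$, so the image of the specialized set $\mathcal{Z}_i$ under the linear map $\pi:(\bm{\xi},\bm{\lambda},\bm{\Lambda},\tau)\mapsto(\bm{\xi},\bm{\lambda},\tau)$ is precisely the set displayed in~\eqref{eq:Z_set_definition_obj_unc}. It remains to check that the inner maximum in~\eqref{eq:Z_function_definition} equals the one in~\eqref{eq:Z_function_definition_obj_unc}. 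This follows from a short chain of equalities of optimal values: the specialized objective is linear in $(\bm{\lambda},\tau)$, so its supremum over the closed convex hull of $\mathcal{Z}_i$ equals its supremum over $\mathcal{Z}_i$; since the objective depends only on coordinates that $\pi$ retains, this equals its supremum over $\pi(\mathcal{Z}_i)$, that is, over the set~\eqref{eq:Z_set_definition_obj_unc}; and, by linearity once more, this equals its supremum over the closed convex hull of~\eqref{eq:Z_set_definition_obj_unc}. Finiteness and attainment of these maxima are inherited from Theorem~\ref{thm:convex_reformulation}, the boundedness of $\bm{\lambda}$ recorded in Section~\ref{sec:linearization}, and the sign of the penalty term $-\alpha\tau$ with $\alpha\ge 0$.

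Finally I would verify MICP-representability of the projected set~\eqref{eq:Z_set_definition_obj_unc}: it is the intersection of the binary restriction $\bm{\xi}\in\Xi$, the nonnegativity constraints $\bm{\lambda}\ge\bm{0}$, $\tau\ge 0$, the norm-cone constraint $(\bm{\xi}-\hat{\bm{\xi}}^{(i)},\tau)\in\mathcal{C}^{M+1}$, and the conic constraint $\bm{q}_0+\bm{Q}\bm{\xi}-\bm{W}_0^\top\bm{\lambda}\in\mathcal{Y}^*$ with $\mathcal{Y}^*$ a proper cone; in particular, the bilinear coupling $\bm{\Lambda}=\bm{\lambda}\bm{\xi}^\top$ that obstructed MICP-representability of $\mathcal{Z}_i$ in Theorem~\ref{thm:convex_reformulation} has been removed. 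I expect the only mildly delicate point to be the interchange, in the second step, of the closed-convex-hull operation with the projection; this is routine precisely because the objective is linear and free of $\bm{\Lambda}$, and I would take care to spell out the chain of optimal-value identities rather than invoke the more subtle general fact that projections need not commute with closures.
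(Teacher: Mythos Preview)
Your proposal is correct and follows exactly the approach the paper intends: the corollary is an immediate specialization of Theorem~\ref{thm:convex_reformulation}, obtained by substituting $\bm{W}_j=\bm{0}$ for $j\in[M]$ and $\bm{T}(\bm{x})=\bm{0}$, after which the auxiliary variable $\bm{\Lambda}$ appears neither in the objective nor in any nontrivial constraint and can be projected out. The paper does not supply a separate proof for this corollary, and your derivation fills in precisely the details one would expect.
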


\re{For the remainder of the paper, we shall make the following additional assumption of \emph{fixed recourse} which will be key to achieving our goal.}
\begin{itemize}
    \item[\textbf{(A3)}] For every realization $\mb{\xi} \in \Xi$ and every first-stage decision $\bm{x} \in \mathcal{X}$, we have $\bm{W}(\bm{\xi}) = \bm{W}_0$ and $\bm{T}(\bm{x}) = \bm{T}_0$, respectively.
\end{itemize}
We note that since $\bm{\xi}$ is binary valued, this assumption is without loss of generality if the primal decision variables $\bm{x}$ and $\bm{y}$ are bounded.
Indeed, in such cases, we can exactly linearize any products of uncertain parameters and decisions in the second-stage problem $\mathcal{Q}(\bm{x}, \bm{\xi})$ using McCormick inequalities~\citep{glover1975improved}, to ensure that it satisfies the assumption of fixed recourse.
Under this additional assumption, the following theorem states that the second-stage problem $\mathcal{Q}(\bm{x}, \bm{\xi})$ with constraint uncertainty can be equivalently reformulated as one with objective uncertainty.
\begin{theorem}[Penalty reformulation of the loss function]\label{thm:two_stage_dro_obj}
    There exists a sufficiently large, yet \emph{finite}, penalty parameter $\rho > 0$ such that the second-stage loss function $\mathcal{Q}(\bm{x}, \bm{\xi})$ in~\eqref{eq:second_stage_value_function} is equivalent to
    \begin{equation}\label{eq:two_stage_dro_obj}
    \mathcal{Q}^{\rho}(\bm{x}, \bm{\xi}) :=
    \inf_{\bm{y} \in \mathcal{Y}, \bm{z} \in [0, 1]^M}
    \left\{
    \bm{q}(\bm{\xi})^\top \bm{y} + \rho \left((\bm{\mathrm{e}} - 2 \bm{\xi})^\top \bm{z} + \bm{\mathrm{e}}^\top \bm{\xi}\right): 
    \bm{W}_0 \bm{y} \geq \bm{T}_0 \bm{z} + \bm{h}(\bm{x})
    \right\}.
    \end{equation}
\end{theorem}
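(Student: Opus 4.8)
The plan is to read~\eqref{eq:two_stage_dro_obj} as an \emph{exact penalty} reformulation in which the new variable $\bm z \in [0,1]^M$ is a relaxed ``copy'' of the binary realization $\bm\xi$, and the extra term forces $\bm z = \bm\xi$ once $\rho$ is large enough. First I would record the elementary coordinatewise identity that, for $\xi_j \in \{0,1\}$ and $z_j \in [0,1]$, $(1 - 2\xi_j) z_j + \xi_j = \abs{z_j - \xi_j}$ (check $\xi_j = 0$ and $\xi_j = 1$ separately), so that the penalty term equals $\norm{\bm z - \bm\xi}_1$. Introducing $g(\bm z) := \inf_{\bm y \in \mathcal Y}\{\bm q(\bm\xi)^\top \bm y : \bm W_0 \bm y \geq \bm T_0 \bm z + \bm h(\bm x)\}$, assumption (A3) gives $\mathcal Q(\bm x, \bm\xi) = g(\bm\xi)$ and $\mathcal Q^{\rho}(\bm x, \bm\xi) = \inf_{\bm z \in [0,1]^M}\{g(\bm z) + \rho \norm{\bm z - \bm\xi}_1\}$. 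The inequality $\mathcal Q^{\rho}(\bm x, \bm\xi) \leq \mathcal Q(\bm x, \bm\xi)$ is then immediate for every $\rho \geq 0$, since $\bm z = \bm\xi \in [0,1]^M$ is feasible and annihilates the penalty term.

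For the reverse inequality I would avoid establishing Lipschitz continuity of $g$ directly and instead use a dual bound. By (A1)--(A3) and strong conic duality, the recourse problem defining $g(\bm\xi)$ is strictly feasible and bounded, so its dual $\mathcal Q_d(\bm x,\bm\xi)$ admits an optimal solution $\bm\lambda^{\star} \in \mathbb R^L_{+}$ with $\bm q(\bm\xi) - \bm W_0^\top \bm\lambda^{\star} \in \mathcal Y^{*}$ and $g(\bm\xi) = (\bm T_0 \bm\xi + \bm h(\bm x))^\top \bm\lambda^{\star}$. Weak duality applied to the perturbed problem at an arbitrary $\bm z \in [0,1]^M$ gives $g(\bm z) \geq (\bm T_0 \bm z + \bm h(\bm x))^\top \bm\lambda^{\star} = g(\bm\xi) + (\bm\lambda^{\star})^\top \bm T_0 (\bm z - \bm\xi) \geq g(\bm\xi) - \norm{\bm T_0^\top \bm\lambda^{\star}}_{\infty}\norm{\bm z - \bm\xi}_1$. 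Hence, for any $\rho \geq \norm{\bm T_0^\top \bm\lambda^{\star}}_{\infty}$, every feasible $\bm z$ satisfies $g(\bm z) + \rho\norm{\bm z - \bm\xi}_1 \geq g(\bm\xi)$, and taking the infimum over $\bm z$ yields $\mathcal Q^{\rho}(\bm x, \bm\xi) \geq \mathcal Q(\bm x, \bm\xi)$; together with the easy direction, this proves the equivalence.

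It remains to exhibit a single finite $\rho$ valid for all $\bm x \in \mathcal X$ and all $\bm\xi \in \Xi$, i.e.\ to bound $\norm{\bm T_0^\top \bm\lambda^{\star}}_{\infty} \leq \bigl(\max_{i,j}\abs{(\bm T_0)_{ij}}\bigr)\norm{\bm\lambda^{\star}}_1$ uniformly. For this I would construct a uniform Slater point: by (A1)/(A3) there is a fixed $\bm y^{+} \in \mathrm{int}(\mathcal Y)$ with $\bm W_0 \bm y^{+} > \bm{0}$, and since $\{\bm T_0 \bm z + \bm h(\bm x) : \bm z \in [0,1]^M,\ \bm x \in \mathcal X\}$ is bounded (compactness of $\mathcal X$ and of the box, and componentwise closed convexity of $\bm h$), a suitable positive multiple $\bm y_0$ of $\bm y^{+}$ satisfies $\bm W_0 \bm y_0 \geq \bm T_0 \bm z + \bm h(\bm x) + \gamma \bm{\mathrm{e}}$ for a fixed margin $\gamma > 0$, uniformly in $(\bm x, \bm z)$. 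Pairing $\bm y_0 \in \mathcal Y$ with dual feasibility of $\bm\lambda^{\star}$ and this margin gives $\bm q(\bm\xi)^\top \bm y_0 \geq (\bm\lambda^{\star})^\top \bm W_0 \bm y_0 \geq g(\bm\xi) + \gamma\norm{\bm\lambda^{\star}}_1$, hence $\norm{\bm\lambda^{\star}}_1 \leq \bigl(\bm q(\bm\xi)^\top \bm y_0 - \mathcal Q(\bm x, \bm\xi)\bigr)/\gamma$. The right-hand side is uniformly bounded: $\bm q(\bm\xi)^\top \bm y_0$ ranges over the finite set $\Xi$, and $\mathcal Q(\cdot, \bm\xi) = \mathcal Q_d(\cdot, \bm\xi)$ is a closed convex function (a supremum over feasible $\bm\lambda$ of the maps $\bm x \mapsto (\bm T_0 \bm\xi + \bm h(\bm x))^\top \bm\lambda$, convex by componentwise convexity of $\bm h$ and $\bm\lambda \geq \bm{0}$) that is finite on $\mathcal X$ by (A2), hence bounded below on the compact set $\mathcal X$. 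Taking $\rho$ equal to $\max_{i,j}\abs{(\bm T_0)_{ij}}$ times this uniform bound finishes the argument. I expect this last paragraph to be the main obstacle: (A1)--(A3) give a dual optimizer and a zero gap pointwise, but converting this into a bound on the dual optimizer that is uniform over the continuum $\mathcal X$ requires the uniform Slater margin together with compactness of $\mathcal X$ and finiteness of $\Xi$; the conic (rather than polyhedral) structure enters only through the invocation of conic strong duality.
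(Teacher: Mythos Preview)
Your proof is correct. You and the paper both start from the same coordinatewise identity $(\bm e-2\bm\xi)^\top\bm z+\bm e^\top\bm\xi=\norm{\bm z-\bm\xi}_1$, but the ensuing arguments differ in style. The paper rewrites $\mathcal Q(\bm x,\bm\xi)$ as the constrained problem with the explicit inequality $\norm{\bm z-\bm\xi}_1\leq 0$ appended, Lagrange-dualizes this single scalar constraint to obtain $\mathcal Q=\sup_{\rho\geq 0}\mathcal Q^\rho$ via strong duality, observes that $\rho\mapsto\mathcal Q^\rho$ is nondecreasing (the penalty term is always nonnegative), and then simply appeals to compactness of $\mathcal X$ and $\Xi$ to assert the existence of a uniform finite multiplier. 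You instead bound the perturbation $g(\bm z)-g(\bm\xi)$ directly through weak duality with the fixed dual optimizer $\bm\lambda^\star$ of the original second-stage problem, and then establish the uniform bound on $\norm{\bm\lambda^\star}_1$ constructively via a uniform Slater point. The paper's route is shorter and more abstract; yours is longer but furnishes an explicit expression for a valid $\rho$ and makes precise exactly what the compactness invocation is buying---a feature that is in the spirit of the subsequent Theorem~\ref{thm:rho_computation}, where an explicit penalty value is eventually computed anyway.
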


\begin{proof}
    See Appendix~\ref{sec:proofs}.
\end{proof}

In conjunction with Corollary~\ref{coro:two_stage_dro_reform_obj_uncertainty}, Theorem~\ref{thm:two_stage_dro_obj} implies that the distributionally robust two-stage problem~\eqref{eq:two_stage_dro} admits a convex hull reformulation of the form~\eqref{eq:two_stage_dro_reform}, 
where the function $Z_i :\mathcal{X} \times \mathbb{R}_{+} \mapsto \mathbb{R}$ and the MICP-representable set $\mathcal{Z}_i$ for each $i\in [N]$ are given as follows:
    \begin{align*}
    &\displaystyle Z_i(\mb{x}, \alpha) = 
    \mathop{\text{maximize}}_{(\bm{\xi}, \bm{\lambda}, \bm{\mu}, \tau) \in \mathop{\mathrm{cl\,conv}}(\mathcal{Z}_i)}  \displaystyle \left\{\bm{h}(\bm{x})^\top \bm{\lambda} + \rho(\bm{\mathrm{e}}^\top \bm{\xi}) - \bm{\mathrm{e}}^\top \bm{\mu}
    - \alpha \tau \right\} \label{eq:Z_function_definition_penalty} \tag{\ref*{eq:Z_function_definition}-$\rho$} \\
    &\displaystyle \mathcal{Z}_i = \left\{
    (\bm{\xi}, \bm{\lambda}, \bm{\mu}, \tau) \in \Xi \times \mathbb{R}^L_{+} \times \mathbb{R}^{M}_{+} \times \mathbb{R}_{+}: 
    \begin{array}{l}
    \displaystyle (\mb{\xi} - \hat{\mb{\xi}}^{(i)}, \tau) \in \mathcal{C}^{M+1} \\
    \displaystyle \bm{q}_0 + \bm{Q}\mb{\xi} - \bm{W}_0^\top \bm{\lambda} \in \mathcal{Y}^* \\
    \displaystyle \rho(\bm{\mathrm{e}} - 2\bm{\xi}) + \bm{T}_0^\top \bm{\lambda} + \bm{\mu} \in \mathbb{R}^M_{+}
    \end{array}
    \right\}. \label{eq:Z_set_definition_penalty} \tag{\ref*{eq:Z_set_definition}-$\rho$}
    \end{align*}
In contrast to the linearized reformulation~\eqref{eq:Z_set_definition_linearized}, the MICP representation~\eqref{eq:Z_set_definition_penalty} adds only $O(M+L)$ variables and constraints  for each $\mathcal{Z}_i$, $i \in [N]$.
\re{%
}

The reformulation~\eqref{eq:two_stage_dro_obj} is reminiscent of penalty methods in nonlinear programming.
However, in contrast to the latter, which suffer from numerical issues because the penalty parameter $\rho$ must be driven to $\infty$, a finite value for $\rho$ can be precomputed in our case.
This process only requires solving the classical robust optimization formulation over any support $\Xi^0 \supseteq \Xi$; for example, some choices are $\Xi^0 = \Xi$ or $\Xi^0 = \{0, 1\}^M$.
The procedure is as follows. We compute the classical robust solution $\bm{x}^r$ and a corresponding worst-case realization, as per~\eqref{eq:classical_ro} shown below, and then set $\rho^r$ to be an optimal Lagrange multiplier of the last inequality in~\eqref{eq:rho_computation}.%
\begin{subequations}
    \begin{gather}
    \bm{x}^r \in \mathop{\arg\min}_{\bm{x} \in \mathcal{X}} \;
    \left\{c(\bm{x})
    +
    \max_{\bm{\xi} \in \Xi^0}
    \mathcal{Q}(\bm{x}, \bm{\xi}) \right\}, \;
    \bm{\xi}^r \in \mathop{\arg\max}_{\bm{\xi} \in \Xi^0} \;
    \mathcal{Q}(\bm{x}^r, \bm{\xi}), \label{eq:classical_ro}\\
    \inf_{\bm{y} \in \mathcal{Y}, \bm{z} \in [0, 1]^M}
    \left\{
    \bm{q}(\bm{\xi}^r)^\top \bm{y}: 
    \bm{W}_0 \bm{y} \geq \bm{T}_0 \bm{z} + \bm{h}(\bm{x}^r), \; (\bm{\mathrm{e}} - 2 \bm{\xi}^r)^\top \bm{z} + \bm{\mathrm{e}}^\top \bm{\xi}^r \leq 0
    \right\}. \label{eq:rho_computation}
    \end{gather}
\end{subequations}
\re{%
}%
We next prove that our procedure is valid: the optimal Lagrange multiplier $\rho^r$ is indeed an \emph{exact value} for the penalty parameter.
\begin{theorem}[Exact penalty reformulation]\label{thm:rho_computation}
    The optimal value of the distributionally robust two-stage problem~\eqref{eq:two_stage_dro} remains unchanged if we replace the second-stage loss function $\mathcal{Q}(\bm{x}, \bm{\xi})$ with its penalty reformulation $\mathcal{Q}^{\rho^r}(\bm{x}, \bm{\xi})$ as defined in~\eqref{eq:two_stage_dro_obj} with penalty parameter $\rho^r$.
\end{theorem}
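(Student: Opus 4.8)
The plan is to establish the stronger, \emph{pointwise} statement that $\mathcal{Q}^{\rho^r}(\bm{x}, \bm{\xi}) = \mathcal{Q}(\bm{x}, \bm{\xi})$ for every $\bm{x} \in \mathcal{X}$ and every $\bm{\xi} \in \Xi$ (in fact for every $\bm{\xi}$ in the superset $\Xi^0 \supseteq \Xi$ over which $\rho^r$ is computed). Since every distribution in the ambiguity set $\mathcal{P}$ is supported on $\Xi$, this pointwise identity makes the two instances of problem~\eqref{eq:two_stage_dro} --- the one with loss $\mathcal{Q}$ and the one with loss $\mathcal{Q}^{\rho^r}$ --- literally the same optimization problem, so their optimal values coincide and the theorem follows. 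One of the two inequalities is free and holds for \emph{every} $\rho \ge 0$: the point $\bm{z} = \bm{\xi}$ is feasible in~\eqref{eq:two_stage_dro_obj} because $\bm{\xi} \in \{0,1\}^M \subseteq [0,1]^M$, it makes the penalty term $(\bm{\mathrm{e}} - 2\bm{\xi})^\top \bm{z} + \bm{\mathrm{e}}^\top \bm{\xi}$ vanish, and --- under fixed recourse \textbf{(A3)} --- it collapses the constraint $\bm{W}_0 \bm{y} \ge \bm{T}_0 \bm{z} + \bm{h}(\bm{x})$ back to the original constraint of~\eqref{eq:second_stage_value_function}; hence $\mathcal{Q}^{\rho}(\bm{x}, \bm{\xi}) \le \mathcal{Q}(\bm{x}, \bm{\xi})$. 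The content is therefore the reverse inequality $\mathcal{Q}^{\rho^r}(\bm{x}, \bm{\xi}) \ge \mathcal{Q}(\bm{x}, \bm{\xi})$.

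For the reverse inequality I would dualize the linear constraint of~\eqref{eq:two_stage_dro_obj} with a multiplier $\bm{\lambda} \in \mathbb{R}^L_+$ --- strong conic duality is available under \textbf{(A1)}--\textbf{(A3)} exactly as in the derivation of $\mathcal{Q}_d$ in~\eqref{eq:Q_dual} --- and then carry out the remaining minimization over $\bm{z} \in [0,1]^M$ in closed form, since for fixed $\bm{\lambda}$ it is a separable box-constrained linear program. This writes $\mathcal{Q}^{\rho}(\bm{x}, \bm{\xi})$ as the supremum, over $\{\bm{\lambda} \ge 0 : \bm{q}(\bm{\xi}) - \bm{W}_0^\top \bm{\lambda} \in \mathcal{Y}^*\}$, of $\bm{h}(\bm{x})^\top \bm{\lambda} + \rho\, \bm{\mathrm{e}}^\top \bm{\xi} + \sum_{j \in [M]} \min\{0,\, (\bm{T}_0^\top \bm{\lambda})_j + \rho(1 - 2\xi_j)\}$, to be compared with $\mathcal{Q}(\bm{x}, \bm{\xi}) = \mathcal{Q}_d(\bm{x}, \bm{\xi})$, which is the same supremum with the last sum replaced by $(\bm{T}_0 \bm{\xi})^\top \bm{\lambda}$. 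A term-by-term comparison (using $\rho + \min\{0, a - \rho\} = a$ for $a \le \rho$ on the coordinates with $\xi_j = 1$, and $\min\{0, a + \rho\} = 0$ for $a \ge -\rho$ on those with $\xi_j = 0$) shows the two objectives agree at any dual optimizer $\bm{\lambda}^\star$ of $\mathcal{Q}_d(\bm{x}, \bm{\xi})$ as soon as $\rho \ge (2\xi_j - 1)(\bm{T}_0^\top \bm{\lambda}^\star)_j$ for all $j \in [M]$; combined with $\mathcal{Q}^{\rho} \le \mathcal{Q}$, this gives exactness at $(\bm{x},\bm{\xi})$. Consequently the \emph{minimal} exact penalty at $(\bm{x}, \bm{\xi})$ equals $\rho^\star(\bm{x}, \bm{\xi}) := \max\bigl\{0,\ \min_{\bm{\lambda}^\star} \max_{j \in [M]} (2\xi_j - 1)(\bm{T}_0^\top \bm{\lambda}^\star)_j\bigr\}$, the inner minimum being over optimal dual solutions of $\mathcal{Q}_d(\bm{x},\bm{\xi})$, which form a compact set so the minimum is attained.

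It then remains to verify that the scalar $\rho^r$ produced by~\eqref{eq:classical_ro}--\eqref{eq:rho_computation} satisfies $\rho^r \ge \rho^\star(\bm{x}, \bm{\xi})$ \emph{simultaneously} for all $\bm{x} \in \mathcal{X}$ and $\bm{\xi} \in \Xi$; this is the step I expect to demand the most care. Applying the very same dualization to problem~\eqref{eq:rho_computation} identifies $\rho^r$ as exactly the quantity $\max\{0, \max_j (2\xi^r_j - 1)(\bm{T}_0^\top \bm{\lambda}^r)_j\}$ attached to an optimal dual $\bm{\lambda}^r$ of $\mathcal{Q}(\bm{x}^r, \bm{\xi}^r)$, i.e.\ $\rho^r = \rho^\star(\bm{x}^r, \bm{\xi}^r)$ for an admissible choice of optimal multiplier. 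To upgrade this to a \emph{uniform} bound I would exploit that $(\bm{x}^r, \bm{\xi}^r)$ is a min--max point of $c(\bm{x}) + \max_{\bm{\xi} \in \Xi^0} \mathcal{Q}(\bm{x}, \bm{\xi})$ over $\mathcal{X} \times \Xi^0$ with $\Xi^0 \supseteq \Xi$: the ``steepest marginal gain'' obtainable by relaxing the coupling $\bm{z} = \bm{\xi}$ at any pair $(\bm{x}, \bm{\xi})$ --- which is precisely what $\rho^\star$ measures via the sensitivity of the perturbed recourse value --- cannot exceed the value certified at the robust worst case $(\bm x^r, \bm\xi^r)$, since otherwise one could perturb the associated recourse solution to contradict either the worst-case optimality of $\bm{\xi}^r$ for $\bm{x}^r$ or the robust optimality of $\bm{x}^r$. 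Making this comparison rigorous --- i.e.\ relating shadow-price bounds of the parametric recourse problem across $\Xi$ to the robust optimality system --- is the crux; by contrast the reduction, conic strong duality, the box minimization, and the term-by-term comparison are all routine. As a safety net, Theorem~\ref{thm:two_stage_dro_obj} already guarantees that \emph{some} finite penalty is uniformly exact, so the only genuinely new ingredient is this domination bound on the constructed value $\rho^r$.
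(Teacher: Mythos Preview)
Your strategy aims at the \emph{pointwise} identity $\mathcal{Q}^{\rho^r}(\bm{x},\bm{\xi})=\mathcal{Q}(\bm{x},\bm{\xi})$ for every $(\bm{x},\bm{\xi})\in\mathcal{X}\times\Xi$, which is strictly stronger than what Theorem~\ref{thm:rho_computation} asserts, and the crux step---that $\rho^r=\rho^\star(\bm{x}^r,\bm{\xi}^r)$ dominates $\rho^\star(\bm{x},\bm{\xi})$ uniformly---is not established. The perturbation heuristic does not work: robust optimality of $(\bm{x}^r,\bm{\xi}^r)$ is a statement about \emph{values} $c(\bm{x})+\max_{\bm{\xi}}\mathcal{Q}(\bm{x},\bm{\xi})$, not about shadow prices $\bm{T}_0^\top\bm{\lambda}^\star$. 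There is no mechanism that forces the Lagrange multiplier of the aggregated constraint in~\eqref{eq:rho_computation} to dominate the corresponding multipliers at every other $(\bm{x},\bm{\xi})$; at a non-optimal $\bm{x}'\in\mathcal{X}$ where the recourse constraint $\bm{W}_0\bm{y}\geq\bm{T}_0\bm{\xi}+\bm{h}(\bm{x}')$ binds with a large dual $\bm{\lambda}^\star$, the threshold $\rho^\star(\bm{x}',\bm{\xi})$ can exceed $\rho^r$ even though $\bm{x}'$ is suboptimal for the robust problem. Concretely, if the robust optimal $\bm{x}^r$ happens to make the recourse constraint slack at $\bm{\xi}^r$, then $\rho^r$ can be zero while $\rho^\star(\bm{x}',\bm{\xi})>0$ elsewhere. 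So the pointwise route, as stated, has a genuine gap.

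The paper avoids this entirely by working at the level of optimal values. Using $\mathcal{Q}(\bm{x},\bm{\xi})=\max_{\rho\geq 0}\mathcal{Q}^\rho(\bm{x},\bm{\xi})$ and Lemma~\ref{prop:GK}, it writes the DRO objective as $\min_{\bm{x},\alpha}\{c(\bm{x})+\alpha\varepsilon+\sum_i\max_{\rho}f_i(\bm{x},\alpha,\rho)\}$ with each $f_i$ concave and nondecreasing in $\rho$; a short monotonicity lemma (Lemma~\ref{lem:equality}) pulls the $\max_\rho$ outside the sum, and a min--max interchange then shows the DRO value equals $\max_{\rho\geq 0}\min_{\bm{x},\alpha}\{\cdots\}$. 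Finally, the observation that this inner minimization is dominated by the classical robust objective (for every $\rho$), together with monotonicity of $\rho\mapsto\mathcal{Q}^\rho$, is used to argue that the multiplier $\rho^r$ extracted from the robust problem already attains the outer maximum. The point is that the argument never needs $\mathcal{Q}^{\rho^r}=\mathcal{Q}$ at \emph{every} $(\bm{x},\bm{\xi})$---it only needs equality of the optimized values---and that is precisely why the pointwise domination you flag as ``the crux'' is not required.
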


\begin{proof}
    See Appendix~\ref{sec:proofs}.
\end{proof}

The classical robust formulation~\eqref{eq:classical_ro} is tractable if we choose $\Xi^0 = \{0, 1\}^M$ and the loss function $\mathcal{Q}(\bm{x}, \bm{\xi})$ exhibits a down-monotone (or up-monotone) property with respect to the random parameters $\bm{\xi}$; that is, $\mathcal{Q}(\bm{x}, \bm{\xi}') \geq \mathcal{Q}(\bm{x}, \bm{\xi})$ whenever $\bm{\xi}' \geq \bm{\xi}$.
In particular, this is the case for network optimization problems where
removing network elements is never advantageous.
In such cases, the classical robust formulation reduces to a deterministic problem under the worst-case realization of the uncertain parameters.
Moreover, this worst-case realization is often independent of the optimal robust solution $\bm{x}^r$.
For example, suppose $\bm{\xi} \in \{0, 1\}^M$ is a random vector denoting which of $M$ arcs in a network have been ``disrupted'' and arc-flow variables $\bm{y}$ satisfy $0 \leq y_a \leq \xi_a \bar{y}_a$ (i.e., the flow on arc $a$ is zero whenever it is disrupted $\xi_a = 0$, and is bounded between $0$ and $\bar{y}_a$ otherwise).
The worst-case realization is to disrupt all arcs in the network, $\bm{\xi}^r = \bm{0}$, independent of the optimal robust solution $\bm{x}^r$.
Notably, this also implies that we can circumvent the computation of~\eqref{eq:classical_ro} when determining the value of $\rho^r$.
We generalize this observation in the following proposition.

\begin{proposition} \label{lem:disrupt_all_lines}
    Assume without loss of generality that $\mathcal{Y} \subseteq \mathbb{R}_{+}^{N_2}$.
    Suppose also that for all $j \in [M]$, we have either $\bm{T}_0\bm{\mathrm{e}}_j, \bm{Q}\bm{\mathrm{e}}_j \in \mathbb{R}^L_{+}$ or $-\bm{T}_0\bm{\mathrm{e}}_j, -\bm{Q}\bm{\mathrm{e}}_j \in \mathbb{R}^L_{+}$.
    Then, the classical robust optimization problem
    $\min_{\bm{x} \in \mathcal{X}} \;
    \Big\{c(\bm{x})
    +
    \max_{\bm{\xi} \in \{0, 1\}^M}
    \mathcal{Q}(\bm{x}, \bm{\xi}) \Big\}$
    reduces to a deterministic problem
    $\min_{\bm{x} \in \mathcal{X}} \;
    \left\{c(\bm{x})
    +
    \mathcal{Q}(\bm{x}, \bm{\xi}^r) \right\}$,
    where for each $j \in [M]$ we have $\xi^r_j = 1$ if $\bm{T}_0\bm{\mathrm{e}}_j, \bm{Q}\bm{\mathrm{e}}_j \in \mathbb{R}^L_{+}$ and $\xi^r_j = 0$ otherwise.
\end{proposition}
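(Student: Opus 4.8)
The plan is to show that, under assumption (A3) and the componentwise sign conditions on the columns $\bm{T}_0\bm{\mathrm{e}}_j$ and $\bm{Q}\bm{\mathrm{e}}_j$, the map $\bm{\xi}\mapsto\mathcal{Q}(\bm{x},\bm{\xi})$ is, for every fixed $\bm{x}\in\mathcal{X}$, separately monotone in each coordinate $\xi_j$, with a direction of monotonicity that does not depend on $\bm{x}$. Since the inner maximization in the classical robust problem ranges over the box $\{0, 1\}^M$, coordinate-wise monotonicity forces its maximizer to be the vertex $\bm{\xi}^r$ described in the statement, uniformly over $\bm{x}$; the claimed reduction then follows by substituting $\max_{\bm{\xi}\in\{0, 1\}^M}\mathcal{Q}(\bm{x},\bm{\xi}) = \mathcal{Q}(\bm{x},\bm{\xi}^r)$ into the outer minimization over $\bm{x}$.

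First I would use (A3) to write $\mathcal{Q}(\bm{x},\bm{\xi}) = \inf\{(\bm{q}_0 + \bm{Q}\bm{\xi})^\top\bm{y} : \bm{y}\in\mathcal{Y},\ \bm{W}_0\bm{y}\geq\bm{T}_0\bm{\xi}+\bm{h}(\bm{x})\}$ and denote its feasible set by $\mathcal{F}(\bm{x},\bm{\xi})$. Fix $\bm{x}$ and an index $j$, freeze the remaining coordinates $\bm{\xi}_{-j}$, and compare the value at $\xi_j=0$ with the value at $\xi_j=1$. When $\bm{T}_0\bm{\mathrm{e}}_j\geq\bm{0}$ and $\bm{Q}\bm{\mathrm{e}}_j\geq\bm{0}$: (i) increasing $\xi_j$ raises the right-hand side $\bm{T}_0\bm{\xi}+\bm{h}(\bm{x})$ by $\bm{T}_0\bm{\mathrm{e}}_j\geq\bm{0}$, shrinking $\mathcal{F}(\bm{x},\cdot)$, so the infimum of the fixed objective $(\bm{q}_0+\bm{Q}\bm{\xi})^\top\bm{y}$ over it can only increase; and (ii) increasing $\xi_j$ raises the objective coefficients by $\bm{Q}\bm{\mathrm{e}}_j\geq\bm{0}$, so---since every $\bm{y}$ feasible at $\xi_j=1$ lies in $\mathcal{Y}\subseteq\mathbb{R}_{+}^{N_2}$ and hence is nonnegative---the objective value at each such $\bm{y}$, and therefore the infimum over them, can only increase. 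Composing (i) and (ii) gives $\mathcal{Q}(\bm{x},(\bm{\xi}_{-j},0))\leq\mathcal{Q}(\bm{x},(\bm{\xi}_{-j},1))$, i.e.\ the worst completion sets $\xi_j=1=\xi^r_j$; when $-\bm{T}_0\bm{\mathrm{e}}_j\geq\bm{0}$ and $-\bm{Q}\bm{\mathrm{e}}_j\geq\bm{0}$ the reversed argument shows the worst completion sets $\xi_j=0=\xi^r_j$. Only the signs of $\bm{T}_0\bm{\mathrm{e}}_j$, $\bm{Q}\bm{\mathrm{e}}_j$ and the inclusion $\mathcal{Y}\subseteq\mathbb{R}_{+}^{N_2}$ enter, so the worst choice of $\xi_j$ is the same for every $\bm{x}\in\mathcal{X}$; assumption (A2) guarantees all these infima are finite.

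Applying this to each coordinate in turn, every replacement $\xi_j\mapsto\xi^r_j$ weakly increases $\mathcal{Q}(\bm{x},\cdot)$, hence $\mathcal{Q}(\bm{x},\bm{\xi})\leq\mathcal{Q}(\bm{x},\bm{\xi}^r)$ for all $\bm{\xi}\in\{0, 1\}^M$; since $\bm{\xi}^r\in\{0, 1\}^M$ this yields $\max_{\bm{\xi}\in\{0, 1\}^M}\mathcal{Q}(\bm{x},\bm{\xi}) = \mathcal{Q}(\bm{x},\bm{\xi}^r)$ for all $\bm{x}$, and substituting into $\min_{\bm{x}\in\mathcal{X}}\{c(\bm{x})+\max_{\bm{\xi}\in\{0, 1\}^M}\mathcal{Q}(\bm{x},\bm{\xi})\}$ produces the stated deterministic problem. (If both sign conditions hold for some $j$, then $\mathcal{Q}$ is constant in $\xi_j$ and the tie-break $\xi^r_j=1$ is immaterial.) The only point requiring care is that the two monotonicity effects---shrinkage of the feasible region and growth of the objective coefficients---must point in the same direction; this is precisely what the \emph{joint} sign condition on $\bm{T}_0\bm{\mathrm{e}}_j$ and $\bm{Q}\bm{\mathrm{e}}_j$, combined with $\mathcal{Y}\subseteq\mathbb{R}_{+}^{N_2}$, secures, and it is why the natural hypothesis is a per-coordinate ``either/or'' rather than a single global sign restriction.
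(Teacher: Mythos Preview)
Your proof is correct and follows essentially the same approach as the paper's: both arguments use that the sign hypotheses on $\bm{T}_0\bm{\mathrm{e}}_j$ and $\bm{Q}\bm{\mathrm{e}}_j$ together with $\mathcal{Y}\subseteq\mathbb{R}_{+}^{N_2}$ make the feasible region shrink and the objective increase as $\bm{\xi}$ moves toward $\bm{\xi}^r$, so that $\mathcal{Q}(\bm{x},\bm{\xi}^r)\geq\mathcal{Q}(\bm{x},\bm{\xi})$ for all $\bm{\xi}$ and all $\bm{x}$. The only cosmetic difference is that the paper compares $\bm{\xi}^r$ to an arbitrary $\bm{\xi}$ in one step (observing directly that $\bm{T}_0\bm{\xi}^r\geq\bm{T}_0\bm{\xi}$ and $\bm{Q}\bm{\xi}^r\geq\bm{Q}\bm{\xi}$), whereas you establish coordinate-wise monotonicity and iterate; the content is the same.
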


\begin{proof}
    See Appendix~\ref{sec:proofs}.
\end{proof}

\re{\subsubsection{Penalty reformulation for indicator constraints}
The penalty reformulation $\mathcal{Q}^{\rho}(\bm{x}, \bm{\xi})$ in~\eqref{eq:two_stage_dro_obj} introduces $M$ additional variables $\bm{z} \in [0, 1]^M$ in the second-stage loss function.
This can be avoided by exploiting a structure that is common in network optimization problems.
    In several such applications, an uncertain parameter $\xi_j \in \{0, 1\}$ may switch on and off a single constraint $f_j(\bm{y}) \geq 0$, leading to the following definition of the loss function:
        \begin{equation}\label{eq:second_stage_value_function_indicator}
            \mathcal{Q}_\mathrm{ind}(\bm{x}, \bm{\xi}) =
            \inf_{\bm{y} \in \mathcal{Y}}
            \left\{
            \bm{q}(\bm{\xi})^\top \bm{y} :
            \begin{array}{l}
                \displaystyle \bm{W}_0 \bm{y} \geq \bm{h}(\bm{x}) \\
                \displaystyle \xi_j = 1 \implies \left[f_j(\bm{y}) = 0\right], \; j \in [M] \\
                \displaystyle \xi_j = 0 \implies \left[f_j(\bm{y}) \geq 0\right], \; j \in [M]
            \end{array}
            \right\},
    \end{equation}
    where $f_j:\mathcal{Y}\mapsto \mathbb{R}$ is an affine function for each $j \in [M]$.
    For example, in a network, $\xi_j = 1$ may indicate that link~$j$ has failed and the corresponding flow variable $f_j(\bm{y}) = y_j$ must be set to $0$, whereas $\xi_j = 0$ may indicate that the flow variable $y_j$ can take nonzero values. Such constraints are generally written as $f_j(\bm{y}) \leq \bar{f}(1-\xi_j)$, where $\bar{f}$ is a big-M upper bound on $f_j(\bm{y})$.
    Obtaining tight estimates on $\bar{f}$ may be non-trivial and at the same time, an overestimation of $\bar{f}$ can lead to numerical issues.
    In such cases, we can avoid both \emph{(i)} estimating $\bar{f}$, and \emph{(ii)} introducing auxiliary variables $\bm{z}$ in the penalty reformulation~\eqref{eq:two_stage_dro_obj}, by simply retaining the constraint $f_j(\bm{y}) \geq 0$ and adding $+\rho \xi_j f(\bm{y}_j)$ to the objective, leading to a tighter penalty reformulation.
    \begin{corollary}[Penalty reformulation of the loss function with indicator constraints]\label{coro:two_stage_dro_obj_indicator}
        There exists a sufficiently large, yet \emph{finite}, penalty parameter $\rho > 0$ such that the second-stage loss function $\mathcal{Q}_\mathrm{ind}(\bm{x}, \bm{\xi})$ in~\eqref{eq:second_stage_value_function_indicator} is equivalent to
        \begin{equation}\label{eq:two_stage_dro_obj_indicator}
            \mathcal{Q}^\rho_\mathrm{ind}(\bm{x}, \bm{\xi}) =
            \inf_{\bm{y} \in \mathcal{Y}}
            \left\{
            \bm{q}(\bm{\xi})^\top \bm{y} + \rho \bm{\xi}^\top \bm{f}(\bm{y}) :
            \bm{W}_0 \bm{y} \geq \bm{h}(\bm{x}), \; \bm{f}(\bm{y}) \geq \bm{0}
            \right\}.
        \end{equation}
    \end{corollary}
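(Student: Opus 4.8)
The plan is to prove $\mathcal{Q}_\mathrm{ind}(\bm{x},\bm{\xi}) = \mathcal{Q}^\rho_\mathrm{ind}(\bm{x},\bm{\xi})$ by a two-sided argument, in the same exact-penalty spirit as Theorem~\ref{thm:two_stage_dro_obj}, the only structural difference being that here only a \emph{one-sided} penalty is needed. One direction is immediate and holds for every $\rho \ge 0$: if $\bm{y}$ is feasible for $\mathcal{Q}_\mathrm{ind}(\bm{x},\bm{\xi})$ then $f_j(\bm{y}) = 0$ whenever $\xi_j = 1$ and $f_j(\bm{y}) \ge 0$ whenever $\xi_j = 0$, so $\bm{f}(\bm{y}) \ge \bm{0}$ and $\bm{y}$ is feasible for $\mathcal{Q}^\rho_\mathrm{ind}(\bm{x},\bm{\xi})$; moreover $\xi_j f_j(\bm{y}) = 0$ for all $j$, so the two objectives agree at $\bm{y}$. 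Taking the infimum over such $\bm{y}$ gives $\mathcal{Q}^\rho_\mathrm{ind}(\bm{x},\bm{\xi}) \le \mathcal{Q}_\mathrm{ind}(\bm{x},\bm{\xi})$.

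For the reverse inequality, fix $\bm{x} \in \mathcal{X}$ and $\bm{\xi} \in \Xi$, put $S = \{ j \in [M] : \xi_j = 1 \}$, and let $\mathcal{Y}_0(\bm{x}) = \{ \bm{y} \in \mathcal{Y} : \bm{W}_0 \bm{y} \ge \bm{h}(\bm{x}),\ \bm{f}(\bm{y}) \ge \bm{0} \}$ be the feasible set common to both problems. Since $f_j(\bm{y}) \ge 0$ is already imposed in $\mathcal{Y}_0(\bm{x})$, the indicator constraints are equivalent to $f_j(\bm{y}) \le 0$ for $j \in S$, so $\mathcal{Q}_\mathrm{ind}(\bm{x},\bm{\xi})$ equals the value of the convex conic program $\inf \{ \bm{q}(\bm{\xi})^\top \bm{y} : \bm{y} \in \mathcal{Y}_0(\bm{x}),\ f_j(\bm{y}) \le 0\ \forall j \in S \}$. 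Consider its perturbation function $v_{\bm{x},\bm{\xi}}(\delta) = \inf \{ \bm{q}(\bm{\xi})^\top \bm{y} : \bm{y} \in \mathcal{Y}_0(\bm{x}),\ \textstyle\sum_{j \in S} f_j(\bm{y}) \le \delta \}$, which is convex and nonincreasing in $\delta$ with $v_{\bm{x},\bm{\xi}}(0) = \mathcal{Q}_\mathrm{ind}(\bm{x},\bm{\xi})$. Under the maintained recourse assumptions (specialized to $\mathcal{Q}_\mathrm{ind}$), $\mathcal{Q}_\mathrm{ind}(\bm{x},\bm{\xi})$ is finite and its conic dual is solvable with no duality gap, so $v_{\bm{x},\bm{\xi}}$ has a finite subgradient at $\delta = 0$; equivalently, there is a finite $\kappa(\bm{x},\bm{\xi}) \ge 0$ with $v_{\bm{x},\bm{\xi}}(\delta) \ge \mathcal{Q}_\mathrm{ind}(\bm{x},\bm{\xi}) - \kappa(\bm{x},\bm{\xi})\,\delta$ for all $\delta \ge 0$. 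Now take any $\bm{y} \in \mathcal{Y}_0(\bm{x})$ and set $\delta = \bm{\xi}^\top\bm{f}(\bm{y}) = \sum_{j\in S} f_j(\bm{y}) \ge 0$; then $\bm{q}(\bm{\xi})^\top \bm{y} + \rho\,\bm{\xi}^\top\bm{f}(\bm{y}) = \bm{q}(\bm{\xi})^\top \bm{y} + \rho\delta \ge v_{\bm{x},\bm{\xi}}(\delta) + \rho\delta \ge \mathcal{Q}_\mathrm{ind}(\bm{x},\bm{\xi}) + (\rho - \kappa(\bm{x},\bm{\xi}))\delta \ge \mathcal{Q}_\mathrm{ind}(\bm{x},\bm{\xi})$ whenever $\rho \ge \kappa(\bm{x},\bm{\xi})$. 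Taking the infimum over $\bm{y} \in \mathcal{Y}_0(\bm{x})$ yields $\mathcal{Q}^\rho_\mathrm{ind}(\bm{x},\bm{\xi}) \ge \mathcal{Q}_\mathrm{ind}(\bm{x},\bm{\xi})$ for such $\rho$.

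The remaining step is to pass from the instance-wise modulus $\kappa(\bm{x},\bm{\xi})$ to a single finite $\rho$ valid for all $\bm{x} \in \mathcal{X}$ and all $\bm{\xi} \in \Xi$, and this is the part I expect to demand the most care. Finiteness of the support $\Xi$ removes the dependence on $\bm{\xi}$, so it suffices to bound $\sup_{\bm{x} \in \mathcal{X}} \kappa(\bm{x},\bm{\xi})$ for each fixed $\bm{\xi}$. I would establish this either (i) abstractly, arguing — from continuity of $\bm{h}(\cdot)$, compactness of $\mathcal{X}$, and nonemptiness and boundedness of the optimal dual solution set guaranteed by assumptions (A1)--(A2) — that the optimal-multiplier correspondence is bounded over $\mathcal{X}$; or (ii) constructively, as in the recipe preceding Theorem~\ref{thm:rho_computation}, reading $\rho$ off as an optimal Lagrange multiplier of a single classical robust counterpart taken over a superset $\Xi^0 \supseteq \Xi$, exploiting the uniform boundedness of the dual variables $\bm{\lambda}$ noted in Section~\ref{sec:linearization}. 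Fixing $\rho$ at the resulting finite bound and combining with the first inequality gives $\mathcal{Q}_\mathrm{ind}(\bm{x},\bm{\xi}) = \mathcal{Q}^\rho_\mathrm{ind}(\bm{x},\bm{\xi})$ for all $\bm{x} \in \mathcal{X}$, $\bm{\xi} \in \Xi$. The conceptual reason a \emph{one-sided} penalty $\rho\,\bm{\xi}^\top\bm{f}(\bm{y})$ suffices — in contrast to the two-sided $\ell_1$ penalty of Theorem~\ref{thm:two_stage_dro_obj} and without any big-$M$ bound $\bar f$ — is exactly that the constraint $\bm{f}(\bm{y}) \ge \bm{0}$ is retained in $\mathcal{Q}^\rho_\mathrm{ind}$, so violations of $f_j(\bm{y}) \le 0$ for $j \in S$ can occur only ``upward'' and are all penalized with the correct sign.
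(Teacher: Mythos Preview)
Your proposal is correct and follows essentially the same approach the paper would take: the paper omits the proof of this corollary, noting only that it ``is similar to that of Theorem~\ref{thm:rho_computation}'' (and hence Theorem~\ref{thm:two_stage_dro_obj}), namely, dualize the single aggregated constraint $\bm{\xi}^\top\bm{f}(\bm{y}) \le 0$ over the feasible region with $\bm{f}(\bm{y}) \ge \bm{0}$ retained, invoke strong duality under (A1)--(A2) to obtain a finite exact multiplier for each $(\bm{x},\bm{\xi})$, and then pass to a uniform $\rho$ via compactness of $\mathcal{X}$ and finiteness of $\Xi$. Your perturbation-function/subgradient phrasing is an equivalent way to encode the exact-penalty step, and your explicit observation that the retained constraint $\bm{f}(\bm{y}) \ge \bm{0}$ is what allows a one-sided penalty (and avoids the big-$M$ bound $\bar f$) matches the paper's commentary exactly.
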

    The penalty term $+\rho \xi_j f(\bm{y}_j)$ ensures that $f_j(\bm{y})$ is driven to $0$ whenever $\xi_j = 1$ for large values of $\rho$. Note that we no longer need to estimate the big-M upper bounds $\bar{f}$ since the constraint $f_j(\bm{y}) \leq \bar{f}(1 - \xi_j)$ is not required anymore.
    
    Our previous results continue to be valid as long as we replace each occurrence of the penalty term $(\bm{\mathrm{e}} - 2 \bm{\xi})^\top \bm{z} + \bm{\mathrm{e}}^\top \bm{\xi}$ that multiples $\rho$ in the objective function of \eqref{eq:two_stage_dro_obj} with this modification.
    For example, suppose that $\bm{f}(\bm{y}) = \bm{f}_0 + \bm{F}\bm{y}$.
    Then, Corollaries~\ref{coro:two_stage_dro_reform_obj_uncertainty} and~\ref{coro:two_stage_dro_obj_indicator} imply that the distributionally robust two-stage problem~\eqref{eq:two_stage_dro} admits a convex hull reformulation of the form~\eqref{eq:two_stage_dro_reform}, 
    where the function $Z_i :\mathcal{X} \times \mathbb{R}_{+} \mapsto \mathbb{R}$ and the MICP-representable set $\mathcal{Z}_i$ for each $i\in [N]$ are given as follows:
    \begin{align*}
        &\displaystyle Z_i(\mb{x}, \alpha) = 
        \mathop{\text{maximize}}_{(\bm{\xi}, \bm{\lambda}, \bm{\mu}, \tau) \in \mathop{\mathrm{cl\,conv}}(\mathcal{Z}_i)}  \displaystyle \left\{\bm{h}(\bm{x})^\top \bm{\lambda} + \rho \bm{f}_0^\top \bm{\xi} - \bm{f}_0^\top\bm{\mu}
        - \alpha \tau \right\}  \\
        &\displaystyle \mathcal{Z}_i = \left\{
        (\bm{\xi}, \bm{\lambda}, \bm{\mu}, \tau) \in \Xi \times \mathbb{R}^L_{+} \times \mathbb{R}^{M}_{+} \times \mathbb{R}_{+}: 
        \begin{array}{l}
            \displaystyle (\mb{\xi} - \hat{\mb{\xi}}^{(i)}, \tau) \in \mathcal{C}^{M+1} \\
            \displaystyle \bm{q}_0 + \bm{Q}\mb{\xi} + \rho \bm{F}^\top \bm{\xi} - \bm{W}_0^\top \bm{\lambda} - \bm{F}^\top \bm{\mu} \in \mathcal{Y}^*
        \end{array}
        \right\}.
    \end{align*}
Similarly, a value for the penalty parameter $\rho$ can be computed as follows.
First, we compute the classical robust solution $\bm{x}^r$ and a corresponding worst-case realization by solving the problems:
\[
\bm{x}^r \in \mathop{\arg\min}_{\bm{x} \in \mathcal{X}} \;
\left\{c(\bm{x})
+
\max_{\bm{\xi} \in \Xi^0}
\mathcal{Q}_\mathrm{ind}(\bm{x}, \bm{\xi}) \right\}, \;
\bm{\xi}^r \in \mathop{\arg\max}_{\bm{\xi} \in \Xi^0} \;
\mathcal{Q}_\mathrm{ind}(\bm{x}^r, \bm{\xi}).
\]
Next, we set $\rho^r$ to be an optimal Lagrange multiplier of the last inequality of the following problem:
\[
    \inf_{\bm{y} \in \mathcal{Y}}
    \left\{
    \bm{q}(\bm{\xi}^r)^\top \bm{y}: 
    \bm{W}_0 \bm{y} \geq \bm{h}(\bm{x}^r), \; \bm{f}(\bm{y}) \geq \bm{0}, \; (\bm{\xi}^r)^\top \bm{f}(\bm{y}) \leq 0
    \right\}
\]
The proof for the validity of $\mathcal{Q}_\mathrm{ind} = \mathcal{Q}^{\rho^r}_\mathrm{ind}$ is similar to that of Theorem~\ref{thm:rho_computation} and we omit it for the sake of brevity.
Finally, note that one can avoid the computation of $\bm{x}^r$ and $\bm{\xi}^r$ (as before) by examining the structure of $\bm{f}(\bm{y})$ and $\bm{q}(\bm{\xi})$, see Proposition~\ref{lem:disrupt_all_lines}.
}

\subsection{Summary and comparison}\label{sec:micp_discussion}
Table~\ref{table:micp_comparison} summarizes the main differences between the linearized and penalty-based MICP reformulations of the sets $\mathcal{Z}_i$, $i \in [N]$ appearing in the convex hull reformulation~\eqref{eq:Z_function_definition}--\eqref{eq:Z_set_definition}.
Notably, the penalty reformulation adds far fewer variables and constraints.
However, it also requires additional assumptions and computations.
In particular, it requires computing a value for the penalty parameter $\rho$, which may further entail the solution of a classical robust optimization problem; see~\eqref{eq:classical_ro}.
We do not expect this to be a limitation, however, because the latter will likely reduce to a deterministic optimization problem for most practical applications.

\begin{table}[!htbp]
    \caption{Summary of the MICP representations of $\mathcal{Z}_i$ based on the linearized and penalty reformulations.\label{table:micp_comparison}}
    \begin{tabularx}{\textwidth}{llllL}
        \toprule
        Reformulation & Size & $\bm{W}(\bm{\xi})$ & $\bm{T}(\bm{x})$ & Requirements \\ \midrule
        Linearized~\eqref{eq:Z_set_definition_linearized} & $O(ML)$ & affine & convex & \emph{a priori} bounds on $\bm{\lambda}$ in $\mathcal{Q}_d(\bm{x}, \bm{\xi})$ \\ 
        Penalty~\eqref{eq:Z_set_definition_penalty} & $O(M + L)^*$ & constant$^\dagger$ & constant$^\dagger$ & computation of worst-case realization over any superset of $\Xi^\sharp$ \\
        \bottomrule
        \multicolumn{5}{l}{\footnotesize $^*$Can be further reduced; see Corollary~\ref{coro:two_stage_dro_obj_indicator}} \\
        \multicolumn{5}{l}{\footnotesize $^\dagger$Can be relaxed; see discussion following assumption~(A3).} \\
        \multicolumn{5}{l}{\footnotesize $^\sharp$Can be done in closed form; see Lemma~\ref{lem:disrupt_all_lines} and preceding discussion.}
    \end{tabularx}
\end{table}

\section{Lift-and-Project Approximations}\label{sec:lift_and_project}
The key challenge in solving the convex hull reformulation~\eqref{eq:two_stage_dro_reform} is the inner optimization~\eqref{eq:Z_function_definition} over the convex hull of the MICP-representable set $\mathcal{Z}_i$, $i \in [N]$.
Appendix~\ref{sec:complexity} shows that although one can tractably compute these convex hulls in several cases, the problem remains NP-hard even in benign settings.
Therefore, Appendix~\ref{sec:benders} presents a Benders scheme, similar to the ones proposed in~\citet{Thiele2009:Benders,zhao2014data}, to tackle the convex hull constraints.
This scheme iteratively refines an \emph{inner approximation} of the MICP representation of $\mathcal{Z}_i$.
\re{An alternative to solving the convex hull reformulation~\eqref{eq:two_stage_dro_reform} is direct solution of the original reformulation~\eqref{eq:GK} using a column-and-constraint generation scheme~\citep{zeng2013solving}.
In contrast to the Benders scheme, the latter models the second-stage loss function $\mathcal{Q}(\mb{x}, \mb{\xi})$ via explicit second-stage variables and constraints by implicitly enumerating $\bm{\xi} \in \Xi$.}
In both schemes, however, each iteration requires the solution of $N$ global MICP problems and therefore, they can become computationally prohibitive. %
Moreover, intermediate solutions obtained from early termination provide no guarantees whatsoever since they bound the optimal value of the distributionally robust problem~\eqref{eq:two_stage_dro} from below, which itself is an upper bound on the true (unknown) optimal value.

These observations motivate the development of \emph{tractable outer approximations} of the convex hulls of $\mathcal{Z}_i$, $i \in [N]$, that provide not only \textit{(i)} upper bounds on the optimal value of~\eqref{eq:two_stage_dro} but also \textit{(ii)} guarantees of polynomial time solvability.
Our approximations are based on the following key observations.
First, if we suppose that $\Xi = \{\bm{\xi} \in \mathbb{Z}^{M}_{+}: \bm{E}\bm{\xi} \leq \bm{f}\}$ has a given outer description, then Section~\ref{sec:reformulation} establishes that each of the sets $\mathcal{Z}_i$, $i \in [N]$, can be represented as the feasible region of an MICP as follows:
\begin{equation}\label{eq:micp_set}
\mathcal{Z} = \left\{\bm{z} \in \mathbb{R}^n: \bm{A} \bm{z} - \bm{b} \in \mathcal{K} , \;\; z_{j} \in \{0, 1\}\;  j \in [M] \right\},
\end{equation}
where, for ease of exposition, we have dropped the subscript $i$ and included the bounds, $\bm{z} \geq \bm{0}$ and $1 \geq z_{j} \coloneqq \xi_j$, $j \in [M]$ in $\bm{A} \bm{z} - \bm{b} \in \mathcal{K}$.
For example, in case of~\eqref{eq:Z_set_definition_linearized}, we have $\bm{z} = (\bm{\xi}, \bm{\lambda}, \vecmat{\bm{\Lambda}}, \tau)$, $n = M + L + ML + 1$, $\mathcal{K} = \tilde{\mathcal{K}} \times \mathbb{R}^n_{+}$, where $\tilde{\mathcal{K}} = \mathbb{R}_{+}^F \times_{i = 1}^3 \mathbb{R}^{LM}_{+} \times \mathcal{C}^{M+1} \times \mathcal{Y}^*$ and $F$ is the dimension of $\bm{f} \in \mathbb{R}^F$, and $\bm{A} = [\tilde{\bm{A}}^\top \, \eye]^\top$ and $\bm{b} = [\tilde{\bm{b}}^\top \, \bm{0}^\top]^\top$ model the right-hand half of~\eqref{eq:Z_set_definition_linearized}.
Similarly, in case of~\eqref{eq:Z_set_definition_penalty}, we have $\bm{z} = (\bm{\xi}, \bm{\lambda}, \bm{\mu}, \tau)$, $n = 2M + L + 1$, $\mathcal{K} = \tilde{\mathcal{K}} \times  \mathbb{R}^n_{+}$, where $\tilde{\mathcal{K}} = \mathbb{R}_{+}^F \times \mathbb{R}^{M}_{+} \times \mathcal{C}^{M+1} \times \mathcal{Y}^*$, and $\bm{A} = [\tilde{\bm{A}}^\top \, \eye]^\top$, $\bm{b} = [\tilde{\bm{b}}^\top \, \bm{0}^\top]^\top$ for suitable matrices $\tilde{\bm{A}}$ and $\tilde{\bm{b}}$.

Second, given an MICP representation such as the above, its convex hull can be approximated by a hierarchy of increasingly tight \emph{convex relaxations},
\[
\mathcal{Z}^{0} \supseteq \mathcal{Z}^{1} \supseteq \ldots \supseteq \mathcal{Z}^{M} = \conv{\mathcal{Z}},
\]
that converge to the convex hull in $M$ iterations.
Here, $\mathcal{Z}^0 \coloneqq \left\{\bm{z} \in \mathbb{R}^n :  \bm{A} \bm{z} - \bm{b} \in \mathcal{K} \right\}$ is the continuous relaxation of $\mathcal{Z}$.
Several such \emph{sequential convexification} hierarchies are known, the most popular ones being those of~\citet{lovasz1991cones,balas1993lift,sherali1990hierarchy,lasserre2001global}.
They are based on the concept of \emph{lift-and-project} and represent $\conv{\mathcal{Z}}$ as the \emph{projection} of another convex set lying in a higher-dimensional space.
These hierarchies were originally proposed for (pure or mixed-) integer linear sets and later extended to mixed-integer convex sets in~\citet{Stubbs1999ABM,Cezik2005}.
Our proposal is to use an intermediate relaxation $\mathcal{Z}^{t}$ of any such hierarchy to outer approximate $\conv{\mathcal{Z}}$, which results in an outer approximation of the convex hull reformulation~\eqref{eq:two_stage_dro_reform} and, hence, \re{a conservative approximation} of the distributionally robust two-stage problem~\eqref{eq:two_stage_dro}.
Notably, since we can optimize an objective function over the level-$t$ relaxation in time $n^{O(t)}$ (which is polynomial for fixed $t$), we can also obtain tight approximations of the original problem~\eqref{eq:two_stage_dro} in polynomial time.
The approximation can be refined, if desired, by using higher values of $t$.

Third, the approximation of $\conv{\mathcal{Z}}$ when used in the convex hull reformulation allows us to dualize the inner optimization in~\eqref{eq:Z_function_definition} using conic duality.
The result is a single-stage convex conic optimization model that can be solved with off-the-shelf solvers.
Notably, we can prove that the resulting approximations of the distributionally robust two-stage problem~\eqref{eq:two_stage_dro}, obtained by replacing $\conv{\mathcal{Z}}$ with any of the relaxations $\mathcal{Z}^0, \ldots, \mathcal{Z}^M$, become exact if the radius $\varepsilon$ of the Wasserstein ambiguity set $\mathcal{P}$ shrinks to zero \re{with increasing sample size $N$}.
\begin{theorem}[Lift-and-project approximation quality]\label{thm:exactness_for_zero_radius}
    \re{Suppose that $N\varepsilon_N \to 0$ as $N \to \infty$. Then, }
    the optimal value of the distributionally robust two-stage problem~\eqref{eq:two_stage_dro} coincides with that of the convex hull reformulation~\eqref{eq:two_stage_dro_reform} even if we approximate each $\conv{\mathcal{Z}_i}$, $i \in [N]$, with $\mathcal{Z}_i^0$ in~\eqref{eq:Z_function_definition}.
\end{theorem}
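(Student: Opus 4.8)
The plan is to sandwich the optimal value $v_N$ of the convex hull reformulation~\eqref{eq:two_stage_dro_reform} (equivalently of~\eqref{eq:two_stage_dro} and~\eqref{eq:GK}, by Lemma~\ref{prop:GK} and Theorem~\ref{thm:convex_reformulation}) and the optimal value $\hat v_N$ of its relaxation, in which each $\conv{\mathcal{Z}_i}$ is replaced by $\mathcal{Z}_i^0$, between the sample-average value $v_N^{\mathrm{saa}}:=\min_{\bm{x}\in\mathcal{X}}\{c(\bm{x})+\tfrac1N\sum_{i=1}^N\mathcal{Q}(\bm{x},\hat{\bm{\xi}}^{(i)})\}$ and $v_N^{\mathrm{saa}}+\bar\alpha\varepsilon_N$ for a data-dependent constant $\bar\alpha<\infty$. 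Since $\mathcal{Z}_i^0\supseteq\conv{\mathcal{Z}_i}$ we have $Z_i^0(\bm{x},\alpha)\ge Z_i(\bm{x},\alpha)$ pointwise, and because~\eqref{eq:two_stage_dro_reform} is a minimization this already gives $\hat v_N\ge v_N$; also $\hat{\mathbb{P}}_N\in\mathcal{P}$ makes the inner supremum in~\eqref{eq:two_stage_dro} dominate the empirical expectation, so $v_N\ge v_N^{\mathrm{saa}}$. It therefore remains to prove $\hat v_N\le v_N^{\mathrm{saa}}+\bar\alpha\varepsilon_N$, and the hypothesis is used only through $\varepsilon_N\to0$.

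The heart of the argument is the claim that there is a finite constant $\bar\alpha$, depending only on the problem data and not on $N$ or on the particular sample, such that $Z_i^0(\bm{x},\bar\alpha)=Z_i(\bm{x},\bar\alpha)=\mathcal{Q}(\bm{x},\hat{\bm{\xi}}^{(i)})$ for every $\bm{x}\in\mathcal{X}$ and $i\in[N]$. I would establish this from three facts. \emph{Uniform compactness}: at any maximizer of~\eqref{eq:Z_function_definition} the slack satisfies $\tau=\|\bm{\xi}-\hat{\bm{\xi}}^{(i)}\|\le D$, while assumption~(A1), paired against a strictly feasible second-stage point, forces the dual multipliers $\bm{\lambda}$ -- and then $\bm{\mu}$ in~\eqref{eq:Z_set_definition_penalty}, respectively $\bm{\Lambda}$ in~\eqref{eq:Z_set_definition_linearized} -- into a bounded box that is independent of $\bm{x}$, $\bm{\xi}$ and of the sample; consequently the maximization in~\eqref{eq:Z_function_definition} effectively ranges over a fixed compact set, so the ``relaxed value'' $g^0(\bm{x},\bm{\xi})$ obtained by optimizing its objective over the remaining variables at a fixed $\bm{\xi}\in[0,1]^M$ is finite, bounded above by some $G$, and Lipschitz in $\bm{\xi}$ near each $0/1$ point with a uniform modulus $K$ (standard perturbation analysis of a conic program with compact feasible set and affinely varying data). \emph{Exactness at integral $\bm{\xi}$}: at $\bm{\xi}=\hat{\bm{\xi}}^{(i)}\in\{0,1\}^M$ the relaxed constraints of $\mathcal{Z}_i^0$ reduce to those of $\mathcal{Z}_i$ -- trivially for the penalty form, and for the linearized form because the McCormick inequalities collapse to $\bm{\Lambda}=\bm{\lambda}(\hat{\bm{\xi}}^{(i)})^{\top}$ precisely when $\hat{\bm{\xi}}^{(i)}$ is $0/1$ -- so by Theorem~\ref{thm:two_stage_dro_obj} and strong conic duality (valid under~(A1)--(A2)) we get $g^0(\bm{x},\hat{\bm{\xi}}^{(i)})=\mathcal{Q}(\bm{x},\hat{\bm{\xi}}^{(i)})$. \emph{Choice of $\bar\alpha$}: the points of $\Xi$ are separated by at least $r_0:=\min\{\|\bm{\xi}-\bm{\xi}'\|:\bm{\xi}\ne\bm{\xi}'\in\Xi\}>0$, and $\mathcal{Q}$ is bounded below on $\mathcal{X}\times\Xi$ by~(A2); taking $\bar\alpha\ge\max\{K,\,(G-\min_{\mathcal{X}\times\Xi}\mathcal{Q})/r_0,\,L\}$, with $L$ the uniform Lipschitz modulus of $\mathcal{Q}(\bm{x},\cdot)$ on the finite set $\Xi$, forces the maximizer of $\sup_{\bm{\xi}}\{g^0(\bm{x},\bm{\xi})-\bar\alpha\|\bm{\xi}-\hat{\bm{\xi}}^{(i)}\|\}$, and likewise of the analogous expression defining $Z_i$, to be $\bm{\xi}=\hat{\bm{\xi}}^{(i)}$, which yields the claimed identities.

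Evaluating~\eqref{eq:two_stage_dro_reform} and its $\mathcal{Z}_i^0$-relaxation at the feasible point $\alpha=\bar\alpha$ and invoking the claim gives $\hat v_N\le\min_{\bm{x}\in\mathcal{X}}\{c(\bm{x})+\bar\alpha\varepsilon_N+\tfrac1N\sum_i\mathcal{Q}(\bm{x},\hat{\bm{\xi}}^{(i)})\}=v_N^{\mathrm{saa}}+\bar\alpha\varepsilon_N$. Together with the lower bounds this is the sandwich $v_N^{\mathrm{saa}}\le v_N\le\hat v_N\le v_N^{\mathrm{saa}}+\bar\alpha\varepsilon_N$, so $0\le\hat v_N-v_N\le\bar\alpha\varepsilon_N\to0$; hence the two optimal values coincide in the limit $N\to\infty$ (and exactly whenever $\varepsilon_N=0$). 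Since $\conv{\mathcal{Z}_i}\subseteq\mathcal{Z}_i^t\subseteq\mathcal{Z}_i^0$ for every level $t$, the same chain of inequalities proves the statement for every relaxation in the hierarchy.

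I expect the main obstacle to be the first fact above: producing a \emph{single}, sample-independent threshold $\bar\alpha$ requires the boundedness of the dual feasible set and the Lipschitz continuity of $g^0(\bm{x},\cdot)$ near integral $\bm{\xi}$ to hold \emph{uniformly} in $\bm{x}\in\mathcal{X}$ and in the shift $\hat{\bm{\xi}}^{(i)}$; this is exactly where assumptions~(A1)--(A2) (and, for the linearized representation, the a priori bounds $\bar{\bm{\lambda}}$) do the work, and one must in particular rule out that the relaxation becomes unbounded at fractional $\bm{\xi}$.
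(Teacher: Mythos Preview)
Your route differs genuinely from the paper's. Rather than constructing a universal threshold $\bar\alpha$, the paper interchanges $\inf_{\alpha\ge0}$ with the joint $\sup$ over $(\bm z_1,\ldots,\bm z_N)\in\mathcal{Z}_1^0\times\cdots\times\mathcal{Z}_N^0$ via a noncompact minimax theorem (Sion/Ha), which converts the Lagrangian penalty $-\alpha\tau_i$ into the hard constraint $\sum_i\tau_i\le N\varepsilon_N$; the hypothesis $N\varepsilon_N\to0$ then forces every $\tau_i$ (hence $\bm\xi_i$) to the corresponding sample in the limit, collapsing the relaxation to the sample-average value. Your fixed-$\bar\alpha$ argument is more elementary (no minimax machinery), delivers the nonasymptotic sandwich $0\le\hat v_N-v_N\le\bar\alpha\varepsilon_N$, and would work under the weaker hypothesis $\varepsilon_N\to0$---so it would actually prove more than the paper states.

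There is, however, a genuine gap precisely at the step you flag. Since $g^0(\bm x,\cdot)=\mathcal{Q}^\rho(\bm x,\cdot)$ by strong duality, the local Lipschitz modulus of $g^0$ at a vertex $\hat{\bm\xi}^{(i)}$ is governed by the supremum of $\|\bm Q^\top\bm y^\star-2\rho\bm z^\star+\rho\one\|_*$ over \emph{primal} optimal solutions $(\bm y^\star,\bm z^\star)$ of $\mathcal{Q}^\rho(\bm x,\hat{\bm\xi}^{(i)})$. Assumption~(A1) bounds the \emph{dual} variables $\bm\lambda$ (as you correctly exploit), but neither (A1) nor (A2) bounds the primal $\bm y^\star$. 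Concave value functions of conic programs with compact feasible set and affinely varying data can still have infinite directional derivatives at boundary parameters: for instance $\max\{\lambda_1:\lambda_1^2+\lambda_2^2\le1,\ \lambda_2\ge1-\xi\}=\sqrt{2\xi-\xi^2}$ is not Lipschitz at $\xi=0$. Hence ``standard perturbation analysis'' does not deliver a finite uniform $K$ under (A1)--(A3) alone, and no finite $\bar\alpha$ is guaranteed to exist. Your argument goes through cleanly when $\mathcal{Y}=\mathbb{R}^{N_2}_+$ (then $g^0$ is piecewise linear) or under an added strict-dual-feasibility assumption that bounds the primal optimal set; the paper's minimax route, by contrast, needs only continuity of the inner value as $\sum_i\tau_i\downarrow0$---which follows from the compactness you already established---at the cost of the stronger hypothesis $N\varepsilon_N\to0$.
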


\begin{proof}
    See Appendix~\ref{sec:proofs}.
\end{proof}

We emphasize that our method is not tied to any particular convexification technique.
This feature is important because each technique has its advantages and disadvantages.
For example, in the linear case (i.e., $\mathcal{K} = \mathbb{R}^{n'}_{+}$), it is known~\citep{laurent2003comparison} that the approximations in order of decreasing tightness are those of~\citet{lasserre2001global}, \citet{sherali1990hierarchy}, \citet{lovasz1991cones}, and~\citet{balas1993lift}; however, this ranking is reversed when they are ordered based on increasing computational complexity.
For its simplicity and tradeoff between tightness and tractability, we focus on the Lov{\'a}sz-Schrijver approximation~\citep{lovasz1991cones} in the remainder of this section.
We show how it can be used to obtain a single-stage approximation of the distributionally robust two-stage problem~\eqref{eq:two_stage_dro}, and we provide practical guidelines for its efficient computation.

\subsection{Lov{\'a}sz-Schrijver approximation}\label{sec:lift_and_project.lov_sch}

The level-$1$ Lov{\'a}sz-Schrijver approximation $\mathcal{Z}^1$ is defined as a set-valued mapping, and the level-$t$ approximation $\mathcal{Z}^t$ is defined as an iterated application of this mapping. %
For any $u \in \mathbb{R}_{+}$ and any conic representable set such as the continuous relaxation  $\mathcal{Z}^0 = \left\{\bm{z} \in \mathbb{R}^n :  \bm{A} \bm{z} - \bm{b} \in \mathcal{K} \right\}$, we denote by $\mathcal{Z}^0(u) = \left\{\bm{z} \in \mathbb{R}^n :  \bm{A} \bm{z} - \bm{b}u \in \mathcal{K} \right\}$ to be the homogenization of $\mathcal{Z}^0$ with respect to $u$.
Next, we define the following lifted set:%
\begin{equation}\label{eq:lovasz_schrijver_lifted}
\mathcal{L}(\mathcal{Z}^0) = \left\{
\bm{z}, \{\bm{z}^{j0}\}_{j \in [M]}, \{\bm{z}^{j1}\}_{j \in [M]} \in \mathbb{R}^n:
\begin{array}{l@{\;}l}
\displaystyle \exists u^{j0}, u^{j1} \geq 0, \; u^{j0} + u^{j1} = 1, & j \in [M] \\
\displaystyle \bm{z}^{j0} \in \mathcal{Z}^0(u^{j0}), \; \bm{z}^{j1} \in \mathcal{Z}^0(u^{j1}), & j \in [M] \\
\displaystyle \bm{z} = \bm{z}^{j0} + \bm{z}^{j1}, & j \in [M] \\
\displaystyle z^{j0}_j = 0, \; z^{j1}_j = u^{j1}, & j \in [M] \\
\displaystyle z^{j1}_k = z^{k1}_j, \;\;k \in [M]: k>j, & j \in [M]
\end{array}\right\}.
\end{equation}
Consider now the following set-valued map, which is the projection of $\mathcal{L}(\mathcal{Z}^0)$ onto $\mathbb{R}^n$:
\begin{equation}
\mathcal{P}(\mathcal{Z}^0) = \left\{
\bm{z} \in \mathbb{R}^n : \exists \bm{z}^{j0}, \bm{z}^{j1}, \; j\in [M] \text{ such that }
\left(\bm{z}, \{\bm{z}^{j0}\}_{j \in [M]}, \{\bm{z}^{j1}\}_{j \in [M]} \right) \in \mathcal{L}(\mathcal{Z}^0)
\right\}. \label{eq:lovasz_schrijver_projected}
\end{equation}
One can easily verify that $\mathcal{P}(\mathcal{Z}^0)$ is a convex relaxation of $\conv{\mathcal{Z}}$.
In fact, we have the following relationship \re{\cite[Theorem~1]{Cezik2005}}:
$
\conv{\mathcal{Z}}  \subseteq \mathcal{P}(\mathcal{Z}^0) \subseteq \bigcap_{j \in [M]} \conv{\left\{\bm{z} \in \mathcal{Z}^0: z_j \in \{0, 1\} \right\}} \subseteq \mathcal{Z}^0.
$
The set $\mathcal{P}(\mathcal{Z}^0)$ corresponds to the level-1 relaxations of the Lov{\'a}sz-Schrijver hierarchy.
For any $t \geq 1$, the level-$t$ relaxation is given by $\mathcal{Z}^t = \mathcal{P}(\mathcal{Z}^{t-1})$, and one can show that $\mathcal{Z}^M = \conv{\mathcal{Z}}$.
This is known as the \emph{linear} Lov{\'a}sz-Schrijver hierarchy.
One can obtain stronger relaxations by imposing positive semidefiniteness on the submatrix of $\{\bm{z}^{j1}\}_{j \in [M]}$ corresponding to the binary variables.
\begin{remark}[Positive semidefinite Lov{\'a}sz-Schrijver hierarchy]
    Consider the following set:
    \begin{equation*}
    \mathcal{P}_{+}(\mathcal{Z}^0) = \left\{
    \bm{z} \in \mathbb{R}^n :\hspace{-0.5em} \begin{array}{l}
    \exists \bm{z}^{j0}, \bm{z}^{j1}, \; j\in [M] \text{ such that }
    \left(\bm{z}, \{\bm{z}^{j0}\}_{j \in [M]}, \{\bm{z}^{j1}\}_{j \in [M]} \right) \in \mathcal{L}(\mathcal{Z}^0) \\
    \text{and } \left[\bm{\xi}^{11}  \ldots \bm{\xi}^{M1}\right] \succeq \bm{\xi}\bm{\xi}^\top, \text{where } \bm{\xi} = [z_1 \ldots z_M]^\top, \bm{\xi}^{j1} = [z^{j1}_1 \ldots z^{j1}_M]^\top
    \end{array}
    \right\}.
    \end{equation*}
    This set corresponds to the level-1 relaxation of the \emph{positive semidefinite} Lov{\'a}sz-Schrijver hierarchy and is related to its linear counterpart as follows: $\conv{\mathcal{Z}} \subseteq \mathcal{P}_{+}(\mathcal{Z}^0) \subseteq \mathcal{P}(\mathcal{Z}^0)$.
    For any $t \geq 1$, the level-$t$ relaxation in this hierarchy is given by $\mathcal{Z}^t = \mathcal{P}_{+}(\mathcal{Z}^{t-1})$.
    As before, we have $\mathcal{Z}^M = \conv{\mathcal{Z}}$.
\end{remark}

For a given MICP representation of $\mathcal{Z}_i$ and any $t \geq 1$, we can use the level-$t$ Lov{\'a}sz-Schrijver relaxation to approximate $\conv{\mathcal{Z}_i}$ in~\eqref{eq:Z_function_definition}--\eqref{eq:Z_set_definition}.
We can then convert the inner maximization~\eqref{eq:Z_function_definition} to a minimization and embed it in the first-stage problem.
The following lemma illustrates this for $t = 1$; we omit the proof since it follows from a straightforward application of conic duality.
In this lemma, $\bm{\gamma}$ represents the objective function of the inner optimization~\eqref{eq:Z_function_definition}; in the linearized MICP representation~\eqref{eq:Z_set_definition_linearized} of $\mathcal{Z}$, it is given by $\bm{\gamma} = \left[\bm{0}^\top\; \bm{h}(\bm{x})^\top\, \vecmat{\bm{T}(\bm{x})}^\top\, -\alpha \right]^\top$, whereas in the penalty MICP representation~\eqref{eq:Z_set_definition_penalty}, we have $\bm{\gamma} = \left[\rho\one^\top\; \bm{h}(\bm{x})^\top\, -\one^\top\, -\alpha \right]^\top$.
\begin{lemma}\label{lem:dualization}
    Suppose that $\mathcal{Z}$ is defined as in~\eqref{eq:micp_set} with $\mathcal{K} = \tilde{\mathcal{K}} \times  \mathbb{R}^n_{+}$, $\bm{A} = [\tilde{\bm{A}}^\top \, \eye]^\top$ and $\bm{b} = [\tilde{\bm{b}}^\top \, \bm{0}^\top]^\top$ for suitable matrices $\tilde{\bm{A}}$ and $\tilde{\bm{b}}$. If $\mathcal{Z}^1$ denotes the level-1 Lov{\'a}sz-Schrijver relaxation of $\conv{\mathcal{Z}}$, then for any $\bm{\gamma} \in \mathbb{R}^n$, we have the following strong duality result:
    \begin{equation}\label{eq:lovasz_schrijver_lifted_dual}
        \begin{array}{r@{\;\;}l}
            \displaystyle\sup_{\bm{z} \in \mathcal{Z}^1} \,\bm{\gamma}^\top \bm{z} \;=\; 
            \mathop{\text{minimize}} & \displaystyle \sum_{j \in [M]} \max \left\{ -\tilde{\bm{b}}^\top \tilde{\bm{\zeta}}^{j0}, -\tilde{\bm{b}}^\top \tilde{\bm{\zeta}}^{j1} - \beta^{j1} \right\} \\
            \text{subject to} & \displaystyle
            \sum_{j \in [M]} \bm{\delta}^{j} + \bm{\gamma} \leq \bm{0}, \\
            & \displaystyle \begin{drcases}
            \tilde{\bm{A}}^\top \tilde{\bm{\zeta}}^{j\ell} + \beta^{j\ell} \one_j - \mathbb{I}[\ell=1]\bm{Y}\one_j \leq \bm{\delta}^{j} \\
            \bm{\delta}^{j} \in \mathbb{R}^n, \; %
            \tilde{\bm{\zeta}}^{j\ell} \in \tilde{\mathcal{K}}^*, \; \beta^{j\ell} \in \mathbb{R}
            \end{drcases} \; \forall \ell \in \{0, 1\}, j \in [M], \\
            & \bm{Y} \in \mathbb{R}^{M \times M}: \bm{Y} = -\bm{Y}^\top.
        \end{array}
    \end{equation}
\end{lemma}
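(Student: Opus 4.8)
The plan is to recognize~\eqref{eq:lovasz_schrijver_lifted_dual} as the conic Lagrangian dual of the \emph{lifted} primal program $\sup\bigl\{\bm{\gamma}^\top\bm{z} : \bigl(\bm{z},\{\bm{z}^{j0}\}_{j\in[M]},\{\bm{z}^{j1}\}_{j\in[M]}\bigr)\in\mathcal{L}(\mathcal{Z}^0)\bigr\}$ and to invoke strong conic duality. The first step is to observe that this lifted program has the same value as $\sup_{\bm{z}\in\mathcal{Z}^1}\bm{\gamma}^\top\bm{z}$, since by~\eqref{eq:lovasz_schrijver_projected} the set $\mathcal{Z}^1=\mathcal{P}(\mathcal{Z}^0)$ is exactly the projection of $\mathcal{L}(\mathcal{Z}^0)$ onto the $\bm{z}$-block and the objective involves only $\bm{z}$. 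Using the assumed structure $\mathcal{K}=\tilde{\mathcal{K}}\times\mathbb{R}^n_{+}$, $\bm{A}=[\tilde{\bm{A}}^\top\;\eye]^\top$, $\bm{b}=[\tilde{\bm{b}}^\top\;\bm{0}^\top]^\top$, each membership $\bm{z}^{j\ell}\in\mathcal{Z}^0(u^{j\ell})$ in~\eqref{eq:lovasz_schrijver_lifted} splits into the conic constraint $\tilde{\bm{A}}\bm{z}^{j\ell}-\tilde{\bm{b}}u^{j\ell}\in\tilde{\mathcal{K}}$ and the sign constraint $\bm{z}^{j\ell}\geq\bm{0}$; hence the lifted program is an explicit (linear) conic program whose remaining constraints are the simplex equalities $u^{j0}+u^{j1}=1$, the averaging equalities $\bm{z}=\bm{z}^{j0}+\bm{z}^{j1}$, the coordinate equalities $z^{j0}_j=0$ and $z^{j1}_j=u^{j1}$, and the symmetry equalities $z^{j1}_k=z^{k1}_j$ for $k>j$ (the constraint $\bm{z}\geq\bm{0}$ may be carried along redundantly).

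Next I would dualize. I would attach multipliers $\tilde{\bm{\zeta}}^{j\ell}\in\tilde{\mathcal{K}}^*$ to the conic constraints, free vectors $\bm{\delta}^j\in\mathbb{R}^n$ to the averaging equalities, free scalars $\beta^{j0},\beta^{j1}$ to the coordinate equalities, a free scalar $\eta^j$ to each simplex equality, and, crucially, a single matrix $\bm{Y}$ to the symmetry equalities, taking $Y_{jk}$ (for $j<k$) as the multiplier of $z^{j1}_k=z^{k1}_j$ and extending it skew-symmetrically ($\bm{Y}=-\bm{Y}^\top$); this choice is precisely what makes the aggregate coefficient of $\bm{z}^{j1}$ arising from the symmetry terms equal to $\bm{Y}^\top\one_j=-\bm{Y}\one_j$. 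Forming the Lagrangian and maximizing over the primal variables, stationarity in $\bm{z}$ yields $\sum_{j\in[M]}\bm{\delta}^j+\bm{\gamma}\leq\bm{0}$; stationarity in $\bm{z}^{j\ell}$ yields $\tilde{\bm{A}}^\top\tilde{\bm{\zeta}}^{j\ell}+\beta^{j\ell}\one_j-\mathbb{I}[\ell=1]\bm{Y}\one_j\leq\bm{\delta}^j$; and stationarity in $u^{j0},u^{j1}$ forces $\eta^j\leq\tilde{\bm{b}}^\top\tilde{\bm{\zeta}}^{j0}$ and $\eta^j\leq\tilde{\bm{b}}^\top\tilde{\bm{\zeta}}^{j1}+\beta^{j1}$. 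Since the only constants in the Lagrangian come from the simplex right-hand sides, the dual objective is $\sum_{j}(-\eta^j)$, to be minimized; optimizing out $\eta^j$ (pushing it up to the smaller of its two upper bounds) replaces $-\eta^j$ by $\max\bigl\{-\tilde{\bm{b}}^\top\tilde{\bm{\zeta}}^{j0},\,-\tilde{\bm{b}}^\top\tilde{\bm{\zeta}}^{j1}-\beta^{j1}\bigr\}$, which reproduces exactly~\eqref{eq:lovasz_schrijver_lifted_dual}; note $\beta^{j0}$ survives only inside the constraints because the right-hand side of $z^{j0}_j=0$ is zero.

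Finally I would establish zero duality gap and dual attainment. The lifted primal is bounded above because, by Theorem~\ref{thm:convex_reformulation} and its corollaries, $Z_i(\bm{x},\alpha)$ is real-valued; and it is strictly feasible with respect to the conic constraints, since choosing each pair $(u^{j0},u^{j1})$ in the open simplex and, in each block, a $\bm{y}$-component in $\mathrm{int}(\mathcal{Y})$ with $\bm{\lambda}$-components furnished by assumption~(A1) places $\tilde{\bm{A}}\bm{z}^{j\ell}-\tilde{\bm{b}}u^{j\ell}$ in $\mathrm{int}(\tilde{\mathcal{K}})$, while the coordinate equality $z^{j0}_j=0$ merely restricts $\bm{z}^{j0}$ to a coordinate hyperplane on which this can still be arranged. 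The conic duality theorem then gives that $\sup_{\bm{z}\in\mathcal{Z}^1}\bm{\gamma}^\top\bm{z}$ equals the optimal value of~\eqref{eq:lovasz_schrijver_lifted_dual}, with the minimum attained.

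The main obstacle is less the dualization itself than the bookkeeping: encoding the $\binom{M}{2}$ symmetry equalities through a single skew-symmetric $\bm{Y}$ so that the compact term $-\mathbb{I}[\ell=1]\bm{Y}\one_j$ emerges with the correct column of $\bm{Y}$, and cleanly eliminating the simplex multipliers $\eta^j$ to produce the pointwise maxima while verifying that no residual terms involving $\beta^{j0}$ or the redundant slacks remain. A secondary, but genuine, point of care is confirming the constraint qualification behind the zero-gap claim despite the presence of the coordinate-pinning equalities $z^{j0}_j=0$.
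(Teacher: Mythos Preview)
Your proposal is correct and follows exactly the approach the paper indicates: the paper omits the proof entirely, stating only that ``it follows from a straightforward application of conic duality,'' which is precisely the Lagrangian dualization of the lifted program that you carry out in detail. Your bookkeeping for the skew-symmetric multiplier $\bm{Y}$ and the elimination of the simplex multipliers $\eta^j$ is sound; the only minor slip is in the strict-feasibility paragraph, where you refer to a ``$\bm{y}$-component in $\mathrm{int}(\mathcal{Y})$'' even though the variables in $\mathcal{Z}$ are the dual-side quantities $(\bm{\xi},\bm{\lambda},\ldots)$ and the relevant cone is $\mathcal{Y}^*$---the interior point you need there comes instead from the boundedness of $\mathcal{Q}(\bm{x},\bm{\xi})$ in~(A2), which guarantees a strictly feasible dual.
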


\re{\begin{remark}[Relationship to approximations obtained by relaxing the support]\label{rem:continuous_support_sets}
        An alternative outer approximation of the distributionally robust two-stage problem~\eqref{eq:two_stage_dro} can be obtained by simply relaxing the zero-one constraints on the support in reformulation~\eqref{eq:GK}; \textit{i.e.}, by replacing $\Xi = \{\bm{\xi} \in \mathbb{Z}^{M}_{+}: \bm{E}\bm{\xi} \leq \bm{f}\}$ in~\eqref{eq:GK} with its continuous relaxation $\{\bm{\xi} \in \mathbb{R}^{M}_{+}: \bm{E}\bm{\xi} \leq \bm{f}\}$.
        The resulting approximation is intractable in general, unless the uncertainty $\bm{\xi}$ appears only in the objective of the loss function $\mathcal{Q}(\bm{x}, \bm{\xi})$ (see discussion in Section~\ref{sec:reformulation}).
        In the latter case, it can be easily seen that the resulting approximation coincides precisely with that obtained by replacing $\conv{\mathcal{Z}_i}$ in~\eqref{eq:Z_function_definition_obj_unc}--\eqref{eq:Z_set_definition_obj_unc} with the continuous relaxation $\mathcal{Z}_i^0$.
        Therefore, our lift-and-project approximation technique can be viewed as a generalization of this approach.
        Unlike the former, however, a crucial difference is that this approach provides no formal mechanism to improve the quality of the final approximation; we illustrate this empirically in Section~\ref{sec:computational_experiments}.
\end{remark}
}

\subsection{Numerical considerations}\label{sec:lift_and_project_numerical}
Several factors can impact the numerical solution of the approximations obtained by replacing $\conv{\mathcal{Z}_i}$, $i \in [N]$, in~\eqref{eq:Z_function_definition}, with their level-1 Lov{\'a}sz-Schrijver relaxations $\mathcal{Z}^1_i$.
First, if we use the linearized MICP representation~\eqref{eq:Z_set_definition_linearized} to reformulate $\mathcal{Z}_i$, then formulation~\eqref{eq:lovasz_schrijver_lifted_dual} has $O(M(F+N_2+LM))$ variables, $O(M^2L)$ linear constraints, and $O(M(M+N_2))$ conic constraints.
If we use the penalty representation~\eqref{eq:Z_set_definition_penalty}, then these are reduced to $O(M(F+N_2+L+M))$ variables, $O(M(L+M))$ linear constraints, and $O(M(M+N_2))$ conic constraints. 
In either case, the number of conic constraints can be reduced to $O(MN_2)$ whenever the metric $d(\bm{\xi}, \bm{\xi}') = \norm{\bm{\xi} - \bm{\xi}'}_1$ is induced by the 1-norm (see argument in proof of Proposition~\ref{obs:ideal} in Appendix~\ref{sec:complexity}).

Second, observe that setting $\bm{Y} = \bm{0}$ also reduces the number of variables in~\eqref{eq:lovasz_schrijver_lifted_dual} for only a minor loss in approximation quality; indeed, the resulting relaxation is still equal to $\bigcap_{j \in [M]} \conv{\{\bm{z} \in \mathcal{Z}^0_i: z_j \in \{0, 1\} \}}$.
In fact, it is equal to $\bigcap_{j \in \mathcal{J}_i} \conv{\{\bm{z} \in \mathcal{Z}^0_i: z_j \in \{0, 1\} \}}$, where $\mathcal{J}_i \subseteq [M]$ is the index set of binary parameters whose optimal values in the left-hand side of~\eqref{eq:lovasz_schrijver_lifted_dual} are fractional.
We expect $\abs{\mathcal{J}_i}$ to be small since the optimal value of $\bm{\xi}$ in any convex relaxation of $\mathcal{Z}_i$ is unlikely to be far from the binary-valued $\hat{\bm{\xi}}^{(i)}$ (see argument in proof of Theorem~\ref{thm:exactness_for_zero_radius}).
This motivates the following iterative heuristic to identify the index sets $\mathcal{J}_i$.
Note that this procedure is independent of the MICP representation used for each $\mathcal{Z}_i$.
\begin{enumerate}
    \item Select $\texttt{tol} \in (0, 0.5)$, $\texttt{niter} \in \mathbb{Z}_{+}$. Set $\texttt{iter} \gets 1$. For each $i \in [N]$, set $\tilde{\mathcal{Z}}_i^1 \gets \mathcal{Z}^0_i$ and $\mathcal{J}_i \gets \emptyset$.
    \item For each $i \in [N]$, replace $\conv{\mathcal{Z}_i}$ with its current approximation $\tilde{\mathcal{Z}}_i^1$ and dualize the corresponding problem~\eqref{eq:Z_function_definition}. Solve the resulting convex hull approximation~\eqref{eq:two_stage_dro_reform}.
    \item For each $i \in [N]$, let $\bar{\bm{\xi}}^{[i]}$ be the optimal value of $\bm{\xi}$ in~\eqref{eq:Z_function_definition}, recovered as scaled dual multipliers. For each $j \in [M]\setminus \mathcal{J}_i$, if $\bar{{\xi}}^{[i]}_j \in [\texttt{tol}, 1 - \texttt{tol}]$, update $\mathcal{J}_i \gets \mathcal{J}_i \cup \{j\}$ and $\tilde{\mathcal{Z}}_i^1$ as follows:
    \[
    \tilde{\mathcal{Z}}_i^1 \gets \left\{
    \bm{z} \in \mathbb{R}^n:
    \begin{array}{l@{\;}l}
    \displaystyle \exists u^{j0}, u^{j1} \geq 0, \; u^{j0} + u^{j1} = 1, & j \in \mathcal{J}_i \\
    \displaystyle \exists \bm{z}^{j0} \in \mathcal{Z}^0_i(u^{j0}), \; \bm{z}^{j1} \in \mathcal{Z}^0_i(u^{j1}), & j \in \mathcal{J}_i \\
    \displaystyle \bm{z} = \bm{z}^{j0} + \bm{z}^{j1}, & j \in \mathcal{J}_i \\
    \displaystyle z^{j0}_j = 0, \; z^{j1}_j = u^{j1}, & j \in \mathcal{J}_i 
    \end{array}\right\}.
    \]
    \item If none of the index sets $\mathcal{J}_1, \ldots, \mathcal{J}_N$ were updated or if $\texttt{iter} \geq \texttt{niter}$, stop. Otherwise, update $\texttt{iter} \gets \texttt{iter} + 1$ and go to Step~2.
\end{enumerate}
Note that the successive optimizations in Step~2 can benefit from an efficient initialization of their variables by using the optimal solution from the previous solve.
Moreover, the size of these problems can be controlled by using smaller values of $\texttt{niter}$ and larger values of $\texttt{tol}$, since they directly influence the size of $\mathcal{J}_i$ and $\tilde{\mathcal{Z}}_i^1$, albeit at the expense of coarser approximations.
In our implementation, we found that a setting of $\texttt{iterlim} = 5$ and $\texttt{tol} = 10^{-2}$ achieved a good tradeoff between approximation quality and computational effort.

\section{Computational Experiments}\label{sec:computational_experiments}

We illustrate the applicability of our method to operational problems in electric power systems in Section~\ref{sec:OPF}, and to \re{design problems in multi-commodity flow networks in Section~\ref{sec:MMCF}}.
Our goals are to:
\textit{(i)} study the lift-and-project approximations $\mathcal{Z}^0$ and $\mathcal{Z}^1$ in terms of their computational effort and ability to approximate $\conv{\mathcal{Z}}$;
\textit{(ii)} compare their out-of-sample performance with the standard sample average approximation \re{and with classical two-stage robust optimization};
and, \textit{(iii)} elucidate the effect of two key parameters on the relative benefits of the distributionally robust two-stage problem~\eqref{eq:two_stage_dro} over these classical formulations: the ``rareness'' of network failures and the relative magnitude of ``impact'' when failures occur.

Our code was implemented in Julia~1.5.3, using JuMP~0.21.4. We used Mosek~9.2 for solving our lift-and-project approximations, and Gurobi~9.1.1 as the solver for the Benders and \re{column-and-constraint generation schemes} (which we compare in Sections~\ref{sec:OPF} and~\ref{sec:MMCF} respectively), since the latter performed better than the former in solving the mixed-integer subproblems in those schemes; whereas Mosek performed better in solving the conic programming relaxations.
All runs were conducted on an Intel~Xeon~2.3 GHz computer, with a limit of four cores per run.

\subsection{Optimal power flow}\label{sec:OPF}

We use our method to address the security-constrained optimal power flow problem that is fundamental to the secure operation of electric power grids and solved every fifteen minutes or so by grid operators (e.g.,see~\citet{alsac1974optimal,chiang2015solving}).
The goal is to determine voltages and generation levels of available generators so as to satisfy power demand in the network, while adhering to various physical and engineering constraints.
For example, electric power between network nodes (also known as \emph{buses}) can flow only along capacitated edges or transmission lines.
As such, the latter are failure prone, and transmission line outages can lead to an unstable power network or even complete system failure, resulting in costly blackouts.
However, such high-impact failure events are rare.
For example, between the years 2000 and 2014, fewer than 1,500 power outages have occurred that affected 50,000 or more residents in the entire United States, which is fewer than 100 events per year~\citep{electricDisturbance}.
This rarity complicates the accurate estimation of their underlying distribution.

Because electric power is governed by complex physical laws, optimal power flow is a highly nonlinear optimization problem.
Nevertheless, the underlying physics can be approximated well by using second-order cone or semidefinite programming relaxations~\citep{low2014convex}.
Although our method generalizes to any convex cone relaxation, we focus on the standard second-order cone relaxation~\citep{kocuk2016strong}, where $\mathcal{X}$ is second-order cone representable and $\mathcal{Q}(\bm{x}, \bm{\xi})$ is the optimal value of a second-order cone program.

Our two-stage optimization model is inspired by~\citet{gocompetition} and presented in Appendix~\ref{appendix:model}.
Conceptually, the first-stage problem determines minimum cost power generation levels assuming no line outages.
Upon line failure, the second-stage model seeks to adjust the power generation levels subject to physical constraints where failed lines cannot be used, with a goal of minimizing the total penalty cost of violating power balances.
This model satisfies assumptions (A1), (A2), and (A3) and allows the use of the penalty reformulation~\eqref{eq:Z_set_definition_penalty}, which also has the advantage of using fewer variables and constraints compared with the linearized reformulation (see Section~\ref{sec:micp_discussion}).

The operational state of transmission lines is modeled as a random binary vector ${\bm{\xi}}$ with support $\Xi = \{0, 1\}^M$, where ${\xi}_{l} =  1$ indicates that line $l$ has failed.
In particular, since ${\bm{\xi}}$ represent on/off switches, we can use %
Corollary~\ref{coro:two_stage_dro_obj_indicator} to get not only a smaller MICP formulation but also tighter values of the penalty parameter $\rho = \rho^r$.
The latter is computed by using Theorem~\ref{thm:rho_computation}, where the classical robust counterpart reduces to a deterministic problem (see Lemma~\ref{lem:disrupt_all_lines}); indeed, the second-stage loss function trivially attains its worst-case value when each component of $\bm{\xi}$ is one, that is, when all transmission lines fail.

\subsubsection{Test instances}\label{sec:test_instances}
We conduct our experiments on the standard IEEE 14-, 30- and 118-bus test cases from the PGLib-OPF library~\citep{babaeinejadsarookolaee2019power}.
In each case, the second-stage per-unit penalty cost for violating the power balance equations is set to be $\phi$ times the maximum per-unit first-stage generation cost.
Note that this choice depends on the economic cost of failure to meet power demand.
Since loss of power and blackouts tend to be costly and the associated penalty costs much larger than the cost of generation, we set $\phi = 100$ in Sections~\ref{sec:results_approx_quality} and \ref{sec:results_out_of_sample} and analyze its sensitivity in Section~\ref{sec:results_sensitivity_analysis}.

We generate empirical data using a Bernoulli model.
Specifically, we model each component of $\tilde{\bm{\xi}}$ as independent and identically distributed Bernoulli random variables with parameter $\psi M^{-1}$, where $M$ denotes the number of transmission lines.
Note that this choice reflects the rare nature of line failures; in particular, it implies that only $\psi\cdot100\%$ of training samples record a line failure.
We set $\psi = 0.1$ in Sections~\ref{sec:results_approx_quality} and \ref{sec:results_out_of_sample} and analyze its impact in Section~\ref{sec:results_sensitivity_analysis}.

In all our experiments, for a fixed sample size $N$ and radius $\varepsilon$, we report average results using 100 statistically independent sets of training samples, and we estimate the variance by reporting the standard deviation over these 100 runs.
In Sections~\ref{sec:results_out_of_sample} and~\ref{sec:results_sensitivity_analysis}, the out-of-sample performances of a candidate solution are estimated by using 1,000 statistically independent sets of testing samples.

\subsubsection{Approximation quality and computational effort}\label{sec:results_approx_quality}
To study the quality of our lift-and-project approximations, we compute the following quantities for each sample size $N \in \{10, 100, 1000\}$ and radius \re{$\varepsilon = \nu \sqrt{N^{-1}\log(N+1)}$},
where $\nu \in \{0, 10^{-3}, 10^{-2}, 10^{-1}\}$: \emph{(i)} the optimal value $v^\star$ of the convex hull reformulation~\eqref{eq:two_stage_dro_reform} using the Benders scheme described in Appendix~\ref{sec:benders}, and \emph{(ii)} the optimal values $v^0$, $\tilde{v}^1$ and $v^1$ of formulation~\eqref{eq:two_stage_dro_reform} when the convex hulls $\conv{\mathcal{Z}_i}$ in~\eqref{eq:Z_function_definition}--\eqref{eq:Z_set_definition} are approximated by using the continuous relaxation $\mathcal{Z}^0$ and heuristically and exactly computed level-1 Lov{\'a}sz-Schrijver relaxations $\tilde{\mathcal{Z}}^1$ and ${\mathcal{Z}}^1$, respectively (see Section~\ref{sec:lift_and_project_numerical}).
\re{The choice of the radius $\varepsilon = \nu \sqrt{N^{-1}\log(N+1)}$ is motivated from Theorem~\ref{thm:finite_sample_guarantee}, and we elaborate on it further in the next subsection.}

Figure~\ref{fig:gaps} reports the average (line plot) and standard deviation (error bar) of the optimality gaps, defined as $\left(v - v^\star\right)/{v^\star} \times 100\%$, where $v \in \{v^0, \tilde{v}^1, v^1\}$, %
based on 100 statistically independent sets of training samples.
We make the following observations.
\begin{itemize}
    \item The exact level-1 relaxation ${\mathcal{Z}}^1$ is near optimal, with optimality gaps never exceeding $10\%$, whereas the continuous relaxation $\mathcal{Z}^0$ is less accurate, especially for larger radii (\emph{e.g.}, 50\% gap for $\nu = 0.1$). The heuristically computed level-1 relaxation $\tilde{\mathcal{Z}}^1$ is also near optimal for small and large radii but performs relatively poorly for intermediate values of $\nu$.
    
    \item For a fixed sample size $N$ and decreasing radius $\nu$, the optimality gaps of all approximations decrease to $0$. %
    For increasing $\nu$, the gaps of the level-1 relaxations increase far less rapidly than that of the continuous relaxation.
    
    \item For a fixed radius $\nu$ and increasing sample size $N$, the gaps of all approximations decrease.
\end{itemize}

\begin{figure}[!htb]
    \centering
    \begin{subfigure}[b]{0.32\linewidth}
        \includegraphics[width=\linewidth]{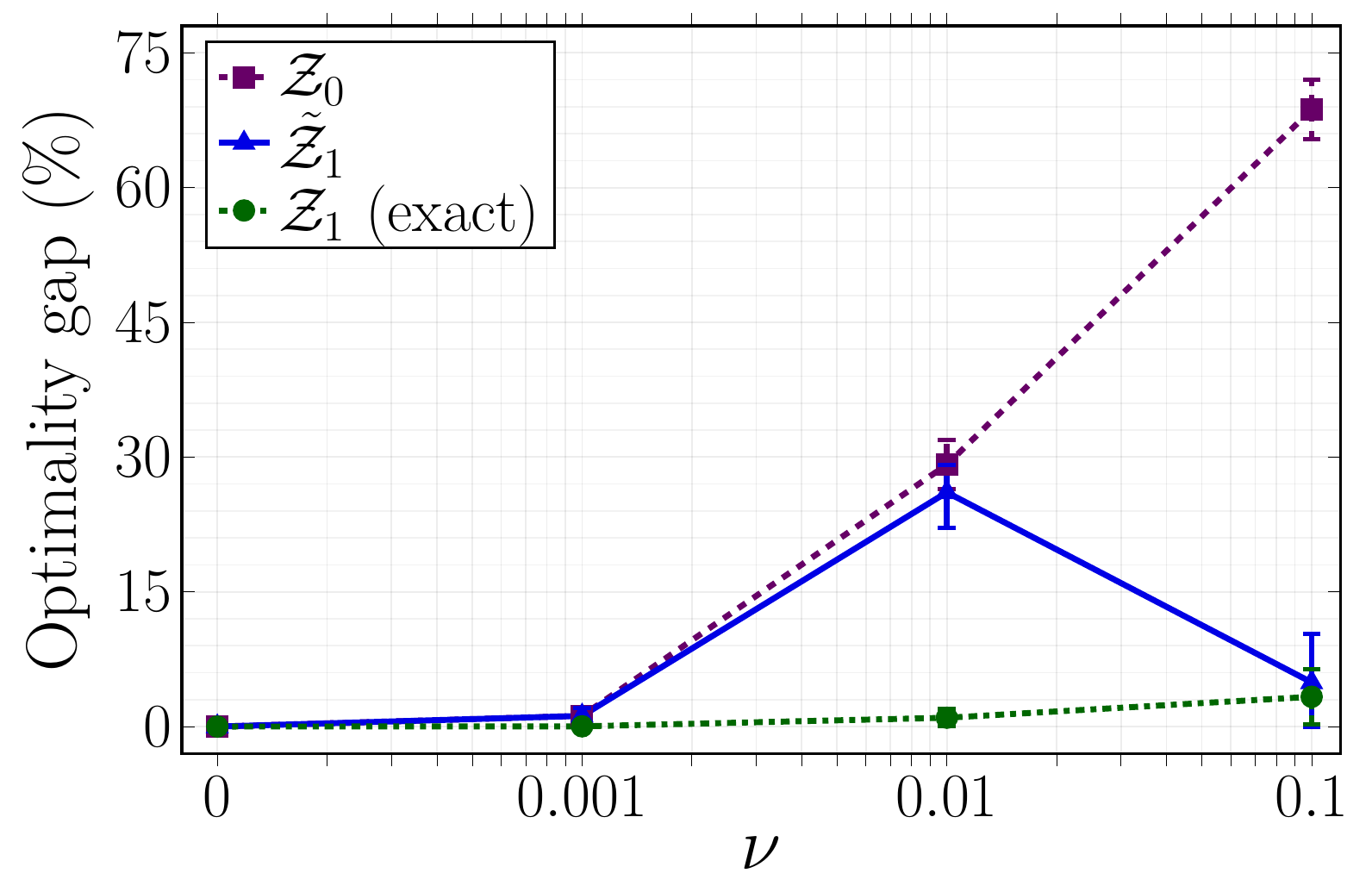}
        \caption{14-bus, $N = 10$}\label{fig:gaps_14_N10}
    \end{subfigure}\hfil
    \begin{subfigure}[b]{0.32\linewidth}
        \includegraphics[width=\linewidth]{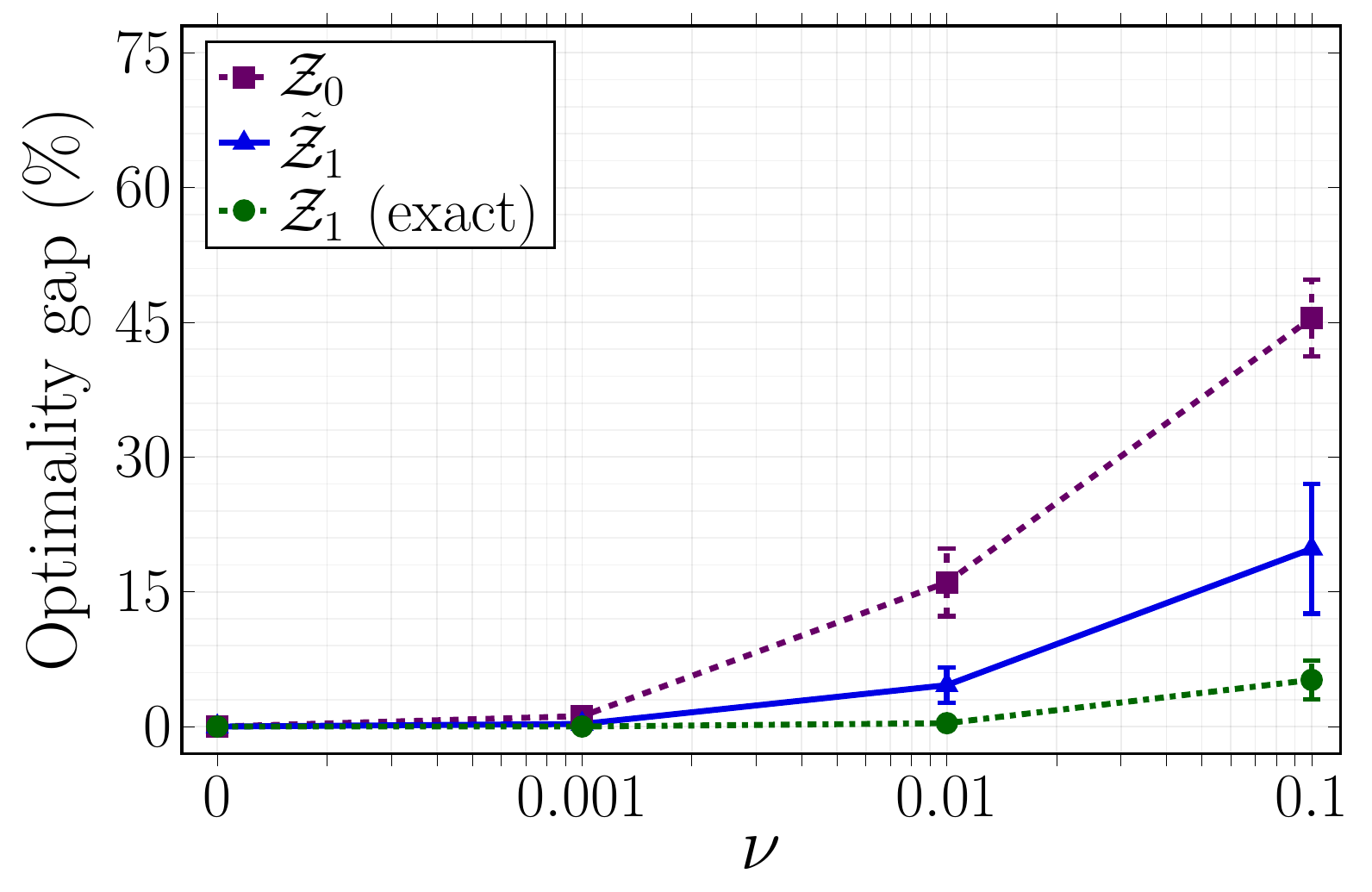}
        \caption{14-bus, $N = 100$}\label{fig:gaps_14_N100}
    \end{subfigure}\hfil
    \begin{subfigure}[b]{0.32\linewidth}
        \includegraphics[width=\linewidth]{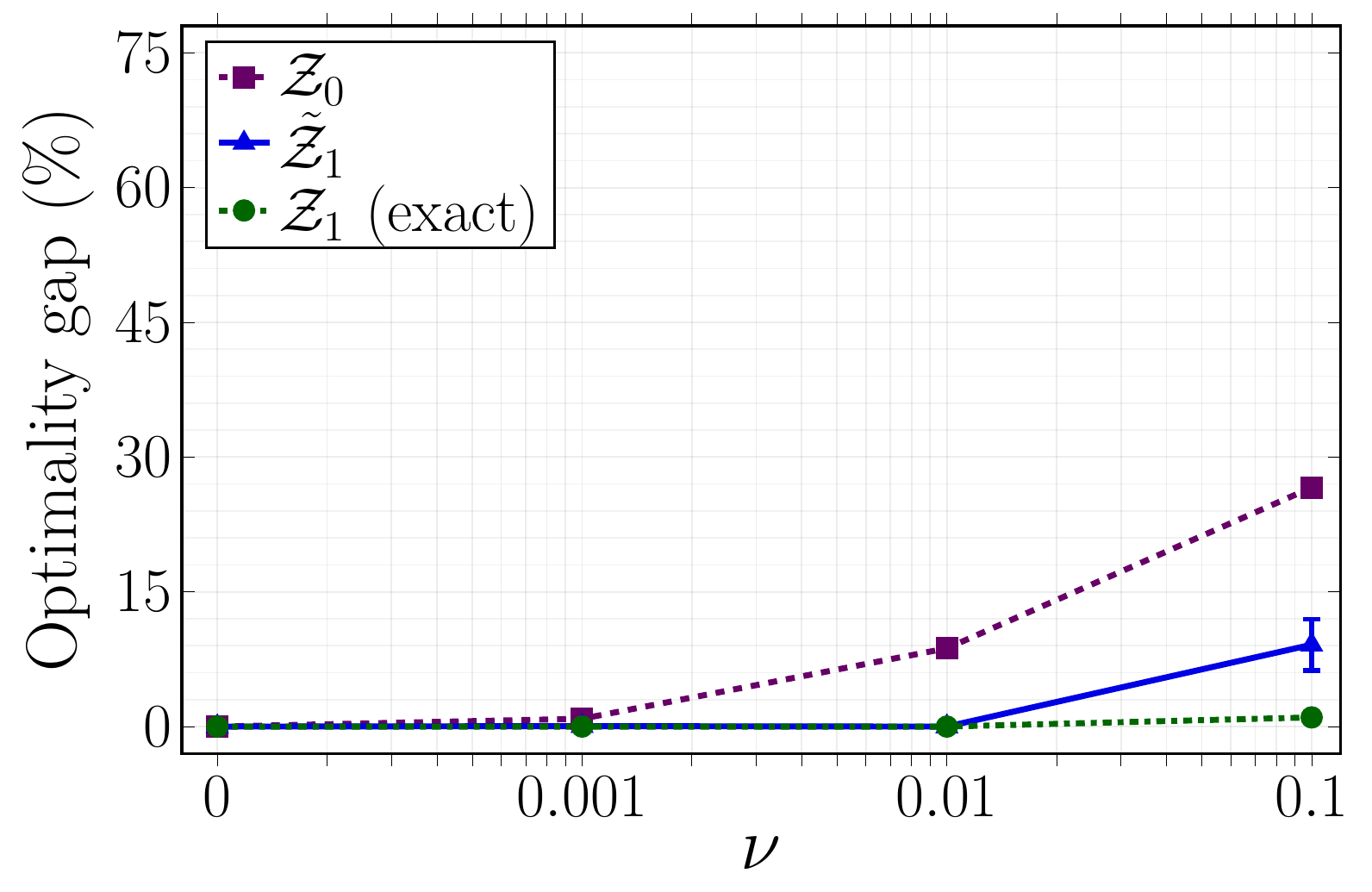}
        \caption{14-bus, $N = 1000$}\label{fig:gaps_14_N1000}
    \end{subfigure}
    \medskip
    \begin{subfigure}[b]{0.32\linewidth}
        \includegraphics[width=\linewidth]{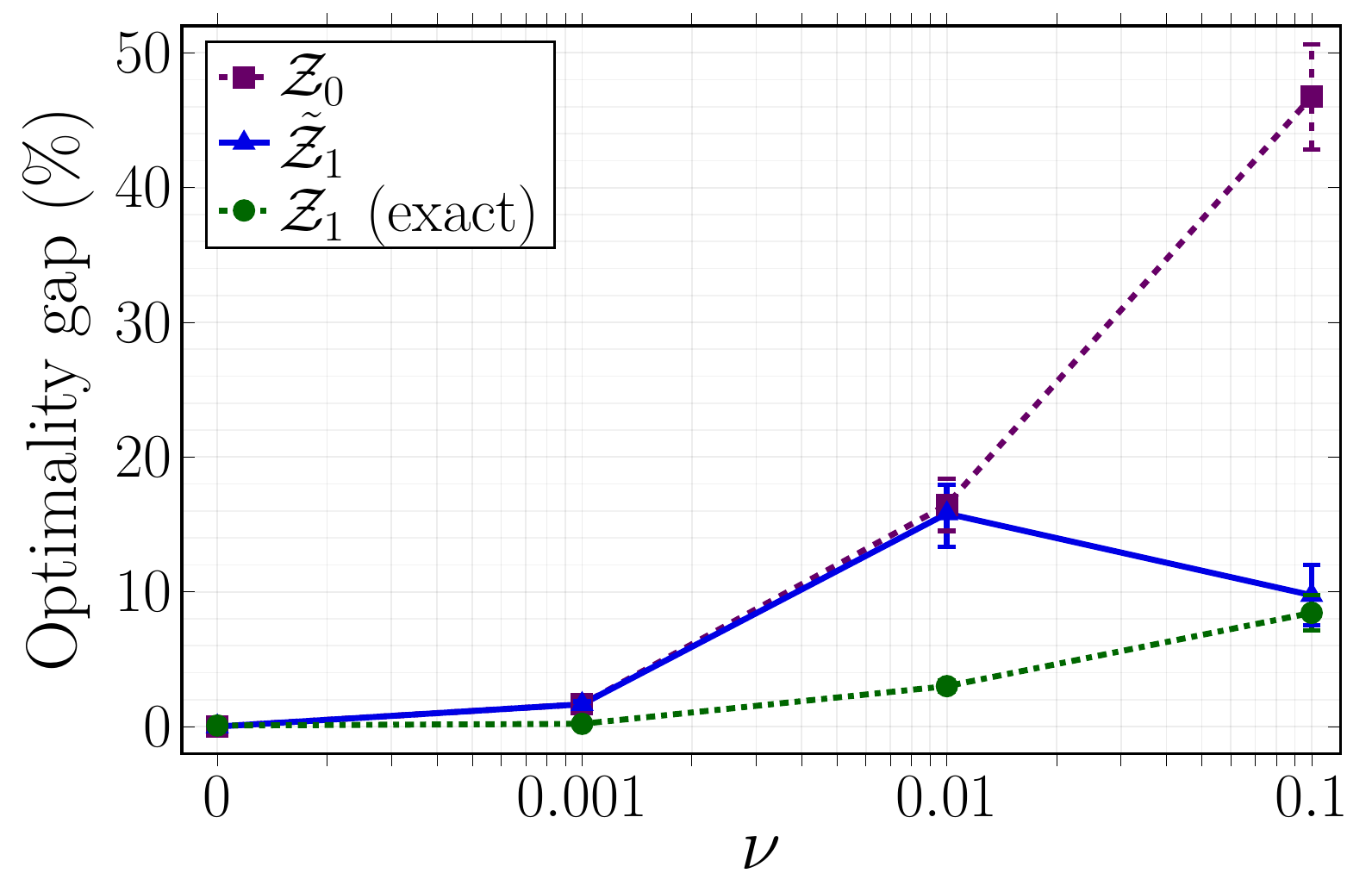}
        \caption{30-bus, $N = 10$}\label{fig:gaps_30_N10}
    \end{subfigure}\hfil
    \begin{subfigure}[b]{0.32\linewidth}
        \includegraphics[width=\linewidth]{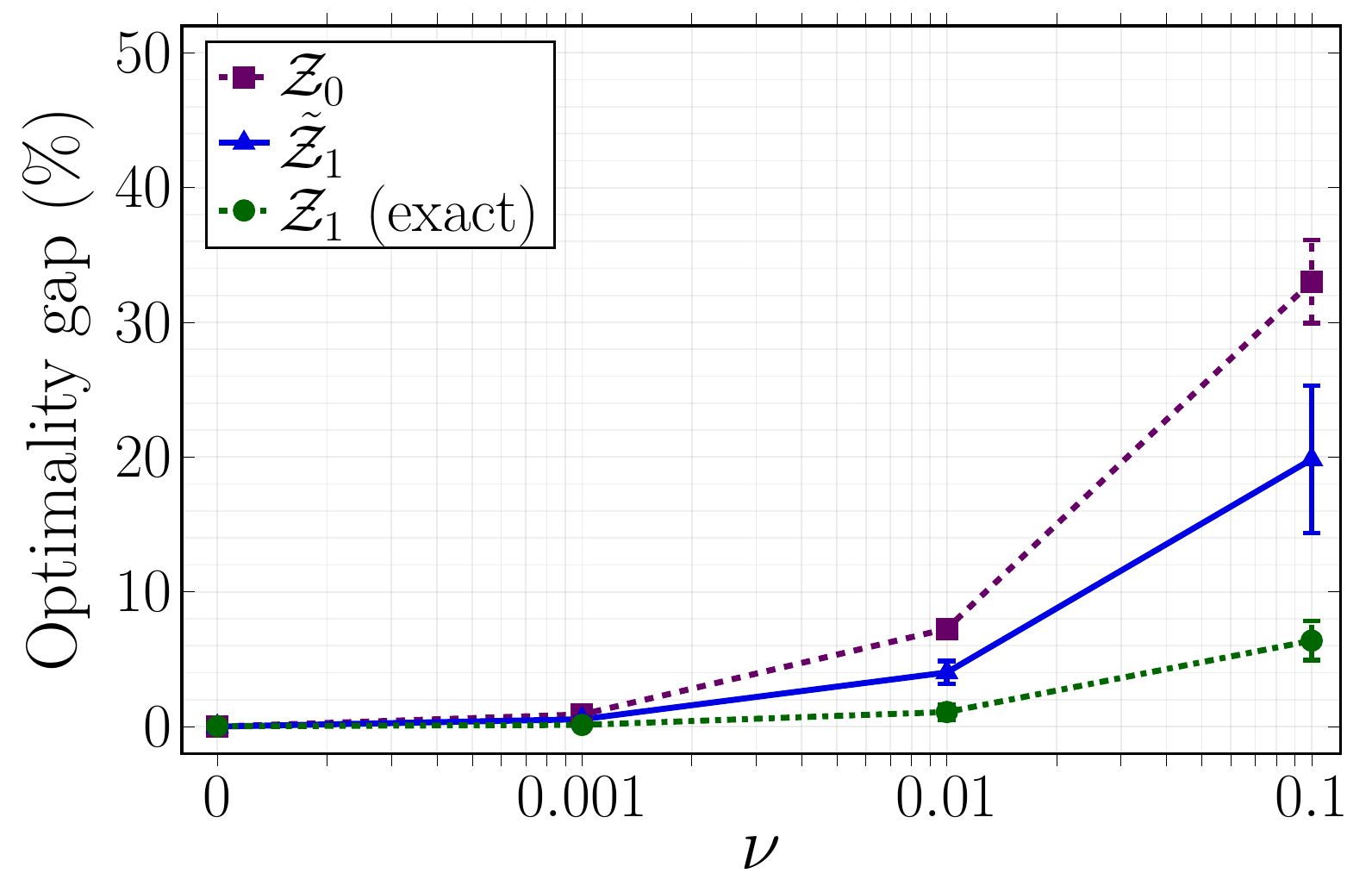}
        \caption{30-bus, $N = 100$}\label{fig:gaps_30_N100}
    \end{subfigure}\hfil
    \begin{subfigure}[b]{0.32\linewidth}
        \includegraphics[width=\linewidth]{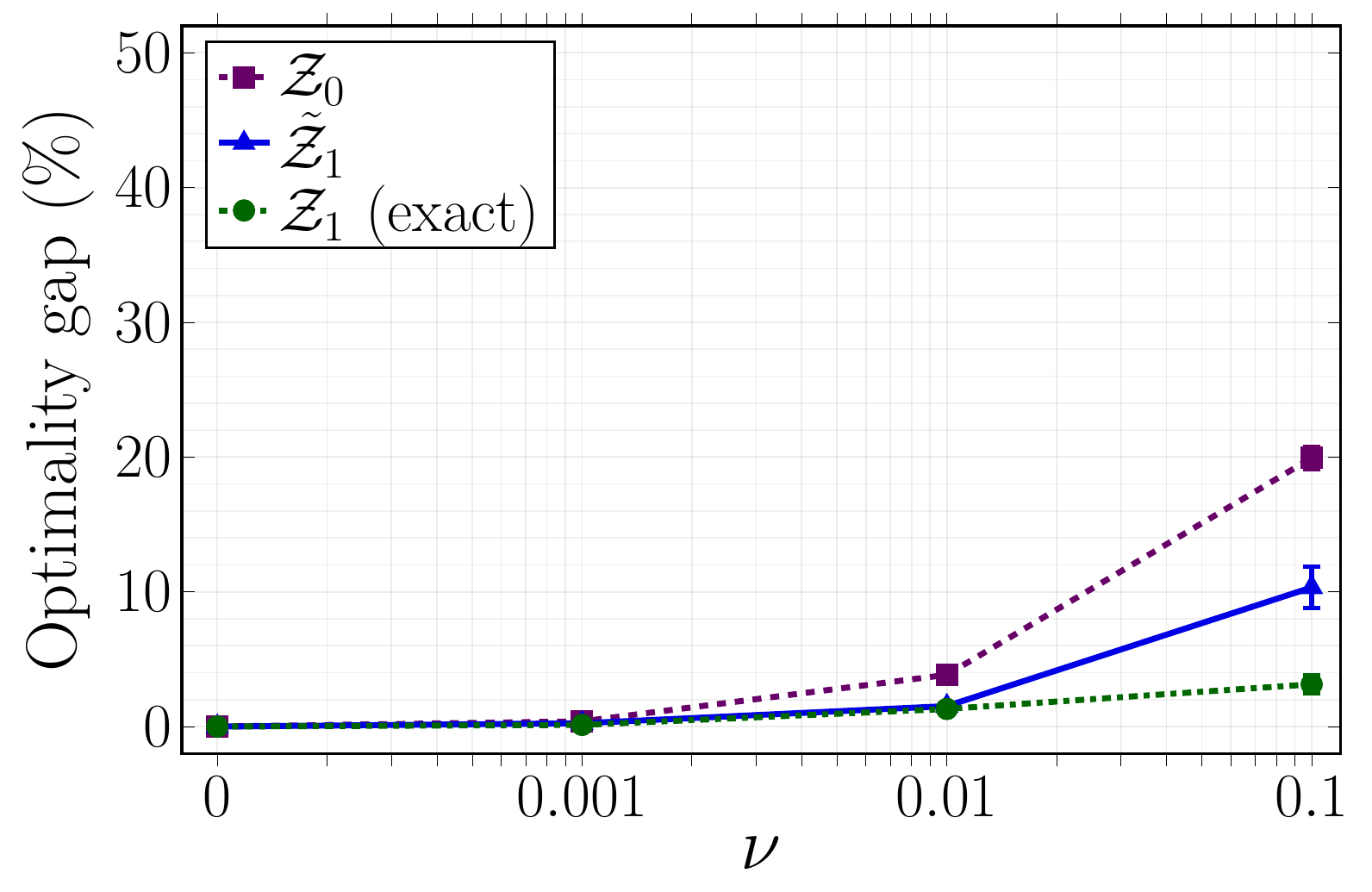}
        \caption{30-bus, $N = 1000$}\label{fig:gaps_30_N1000}
    \end{subfigure}
    \caption{Optimality gaps using the continuous relaxation $\mathcal{Z}^0$ and the heuristically and exactly computed level-1 Lov{\'a}sz-Schrijver relaxations $\tilde{\mathcal{Z}}^1$ and ${\mathcal{Z}}^1$ as a function of $\nu$ and $N$, where \re{$\varepsilon = \nu \sqrt{N^{-1}\log(N+1)}$}.\label{fig:gaps}}
\end{figure}

Figure~\ref{fig:time} reports the average computation time to solve the various approximations and compares them with that of the Benders decomposition scheme.
We offer the following comments.
\begin{itemize}
    \item The continuous relaxation $\mathcal{Z}^0$ and heuristically computed level-1 relaxations $\tilde{\mathcal{Z}}^1$ have the smallest computation times, with the former being faster for larger values of $N$ and $\nu$ and, in particular, for the larger 118-bus case where it is more than 10~times faster. When compared with the Benders scheme, the relative difference in their computation times is minor for small sample sizes $N$ but increases significantly for large sample sizes. For $N = 1000$, both approximations run 10~times faster than the Benders scheme for the 14-bus case, while $\tilde{\mathcal{Z}}^1$ runs 4~times faster and $\mathcal{Z}^0$ runs almost 100~times faster for the 30-bus case.

    \item The exact level-1 relaxation ${\mathcal{Z}}^1$ and the Benders scheme appear to be the most difficult to solve. Although not shown, for the 30-bus case the former took about 1, 2 and 10 minutes for $N = 10, 100$, and $1,000$, respectively, whereas for the larger 118-bus case neither scheme terminated within 10~minutes for $N = 10, 100$ or within 1~hour for $N = 1000$. Moreover, some of the MICP subproblems within the Benders scheme can cause slow convergence (e.g., due to search tree enumeration). This is evidenced by the fact that about 1\% of the Benders runs did not terminate within 10~minutes even for the smaller 14-bus and 30-bus cases, respectively.
    
    \item In conjunction with Figure~\ref{fig:time}, the heuristically computed level-1 relaxation $\tilde{\mathcal{Z}}^1$ appears to offer the best tradeoff in terms of approximation quality and computational effort.
    
    \item The run times of all approximations, and in particular $\tilde{\mathcal{Z}}^1$, can be significantly improved by using an efficient initialization of their variables (see Section~\ref{sec:lift_and_project_numerical}), which we did not implement.
\end{itemize}

\begin{figure}[!htb]
    \centering
    \begin{subfigure}[b]{0.32\linewidth}
        \includegraphics[width=\linewidth]{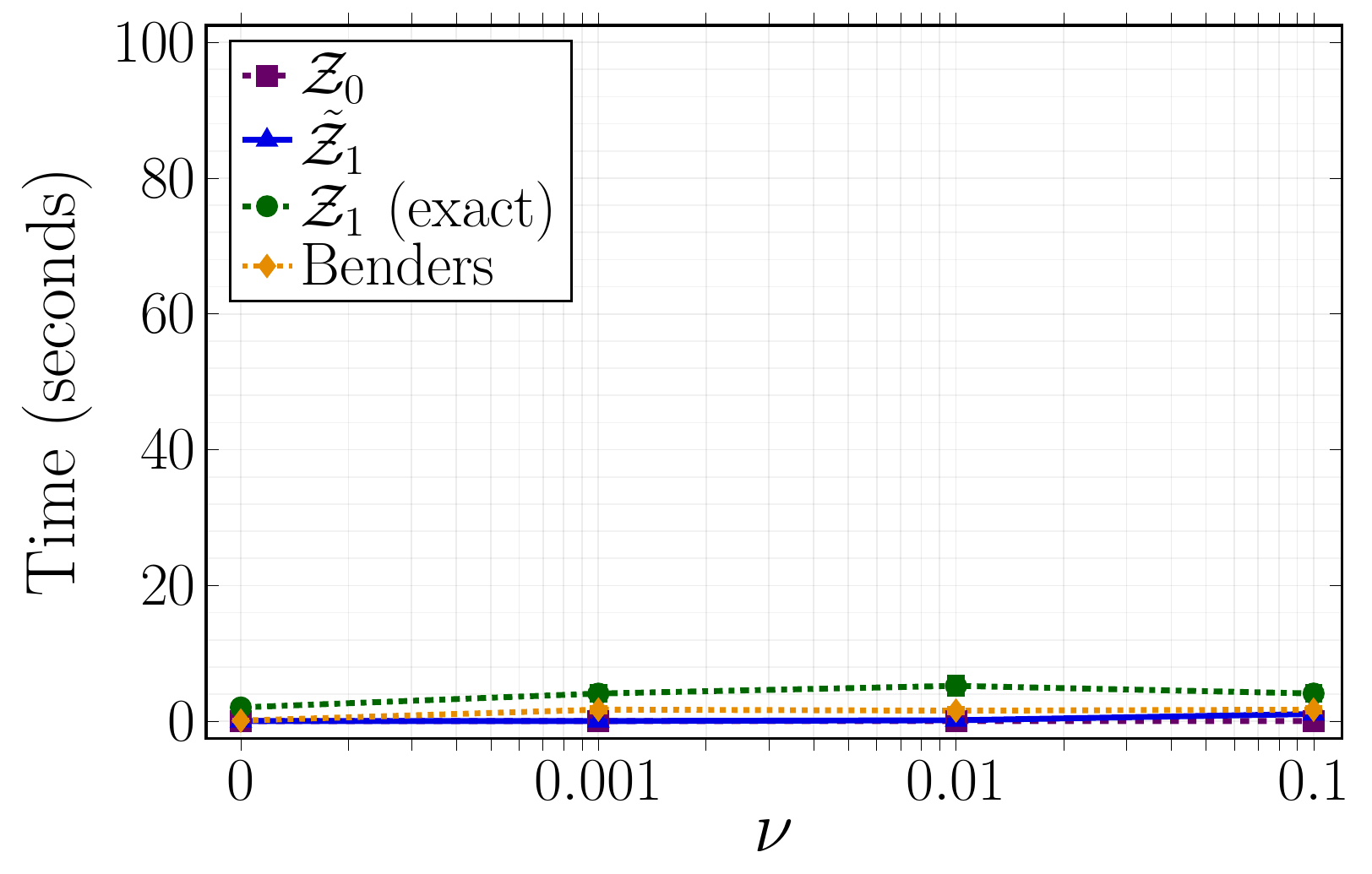}
        \caption{14-bus, $N = 10$}\label{fig:time_14_N10}
    \end{subfigure}\hfil
    \begin{subfigure}[b]{0.32\linewidth}
        \includegraphics[width=\linewidth]{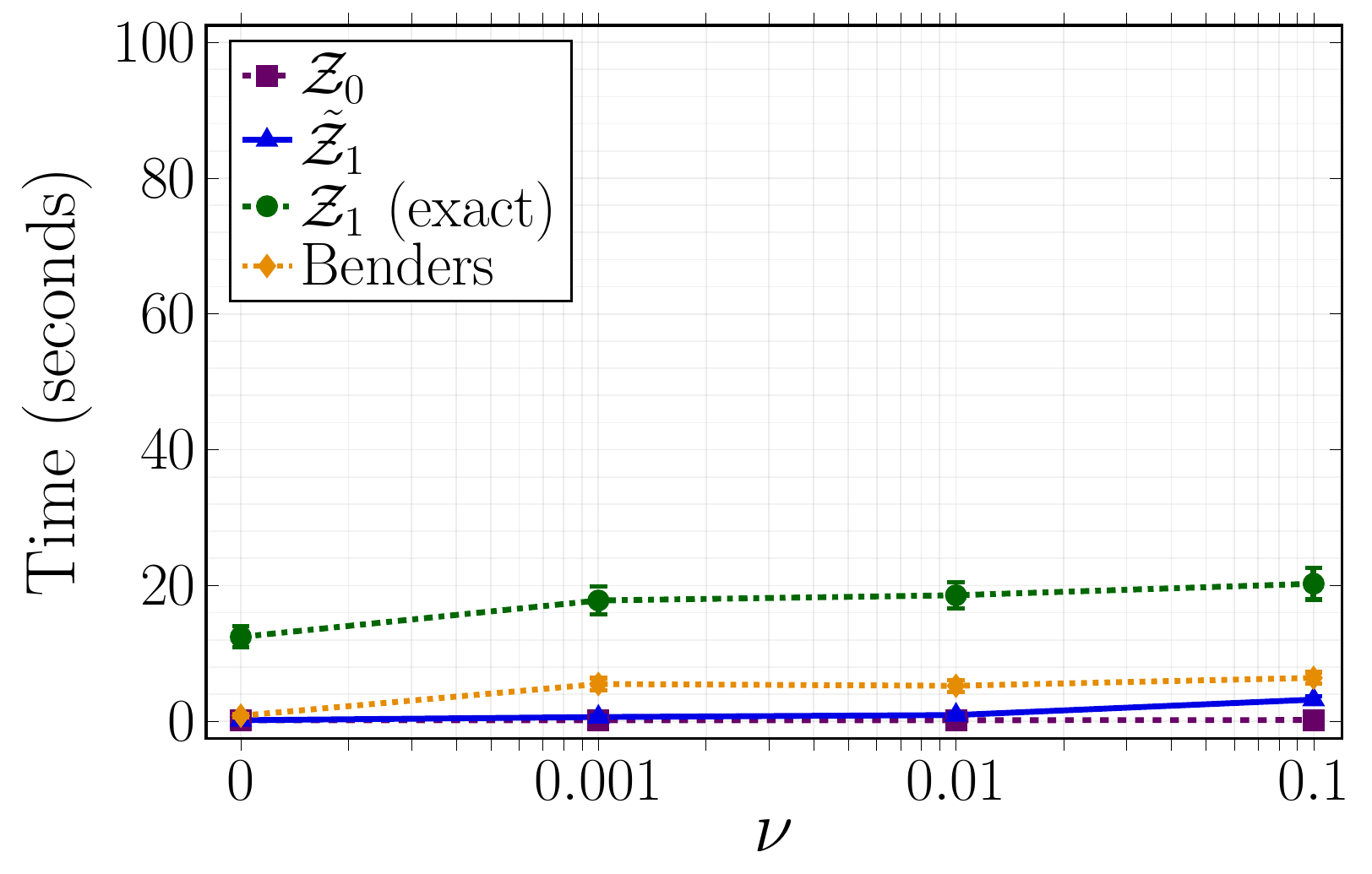}
        \caption{14-bus, $N = 100$}\label{fig:time_14_N100}
    \end{subfigure}\hfil
    \begin{subfigure}[b]{0.32\linewidth}
        \includegraphics[width=\linewidth]{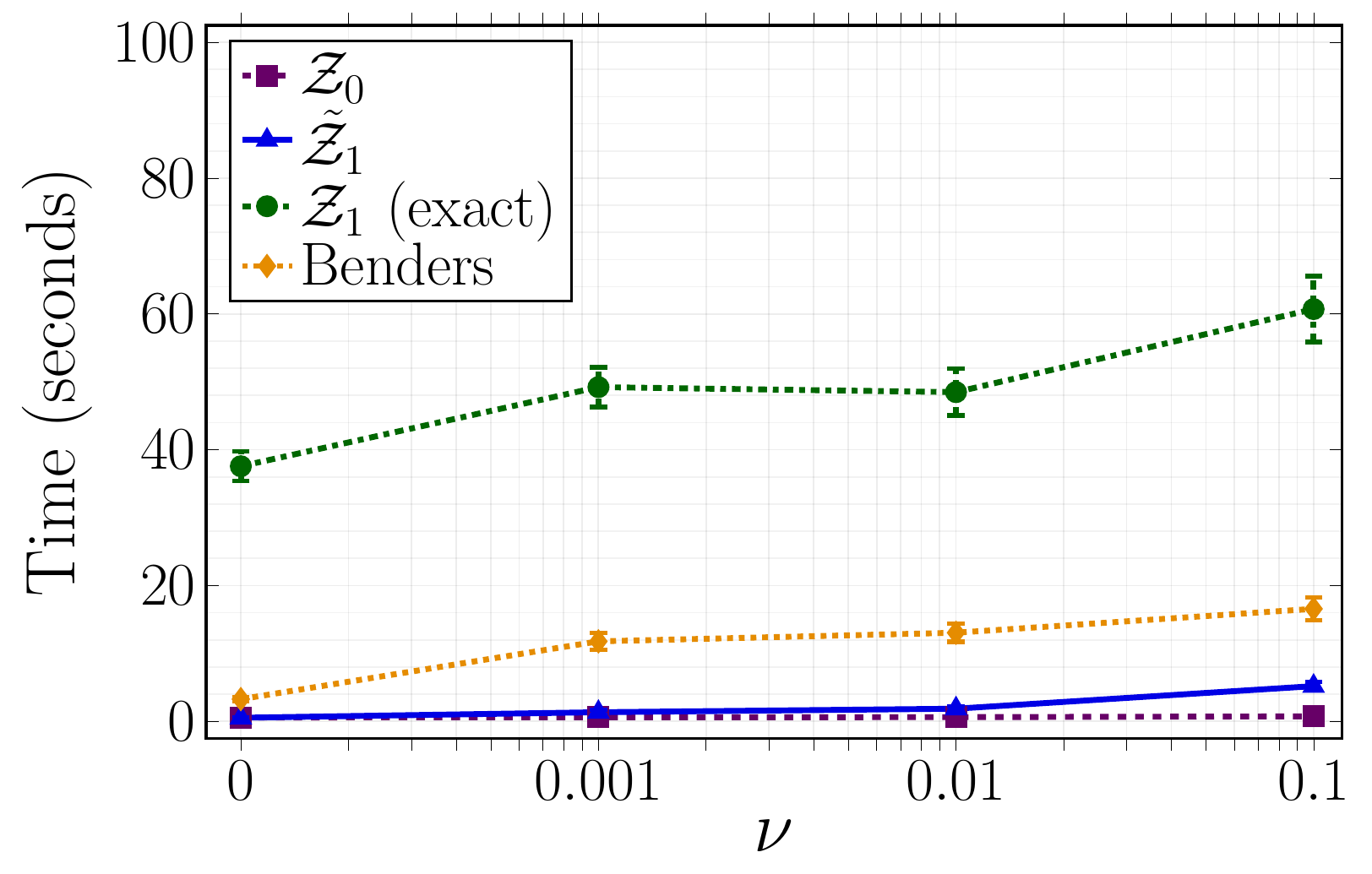}
        \caption{14-bus, $N = 1000$}\label{fig:time_14_N1000}
    \end{subfigure}
    \medskip
    \begin{subfigure}[b]{0.32\linewidth}
        \includegraphics[width=\linewidth]{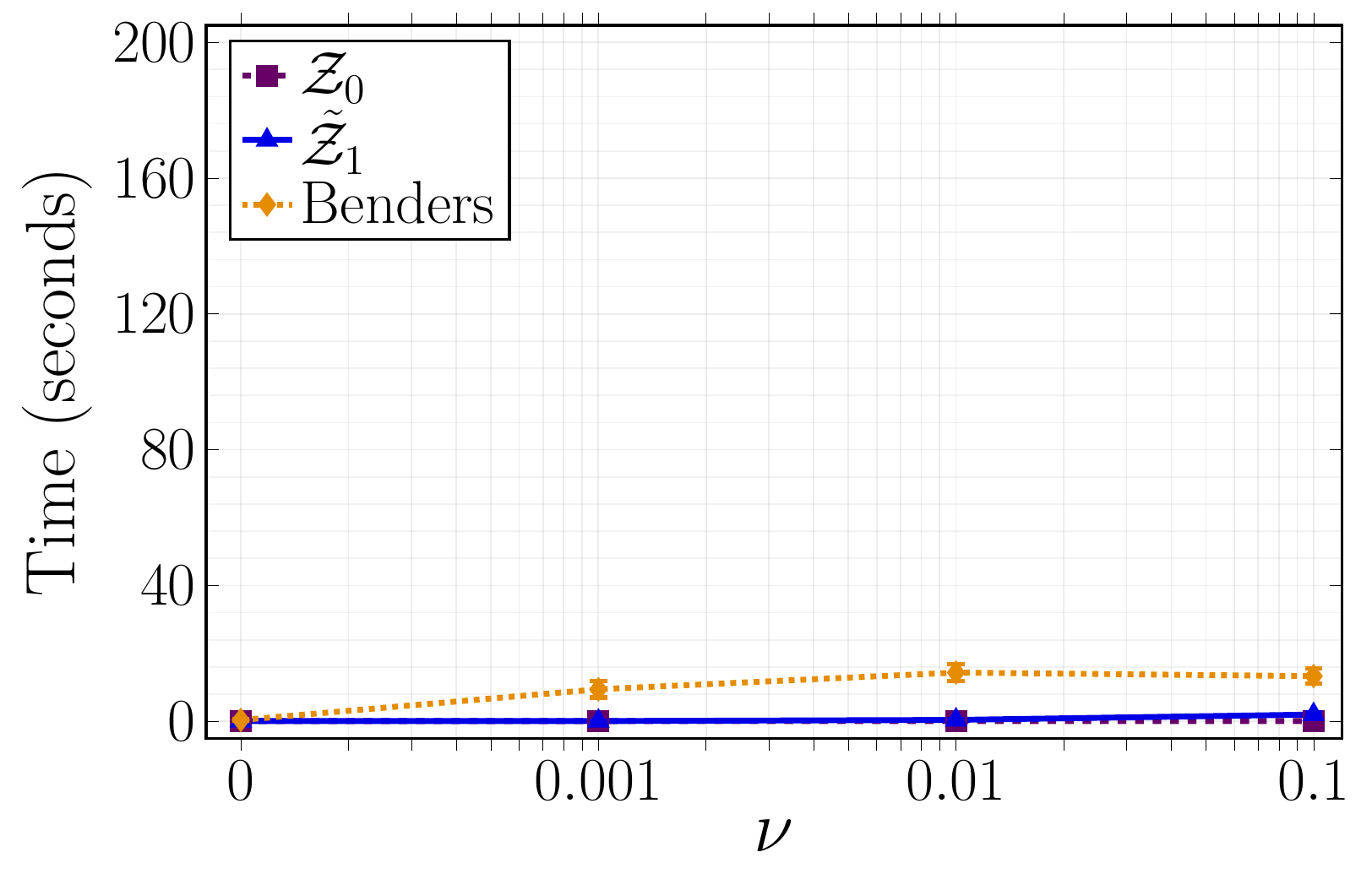}
        \caption{30-bus, $N = 10$}\label{fig:time_30_N10}
    \end{subfigure}\hfil
    \begin{subfigure}[b]{0.32\linewidth}
        \includegraphics[width=\linewidth]{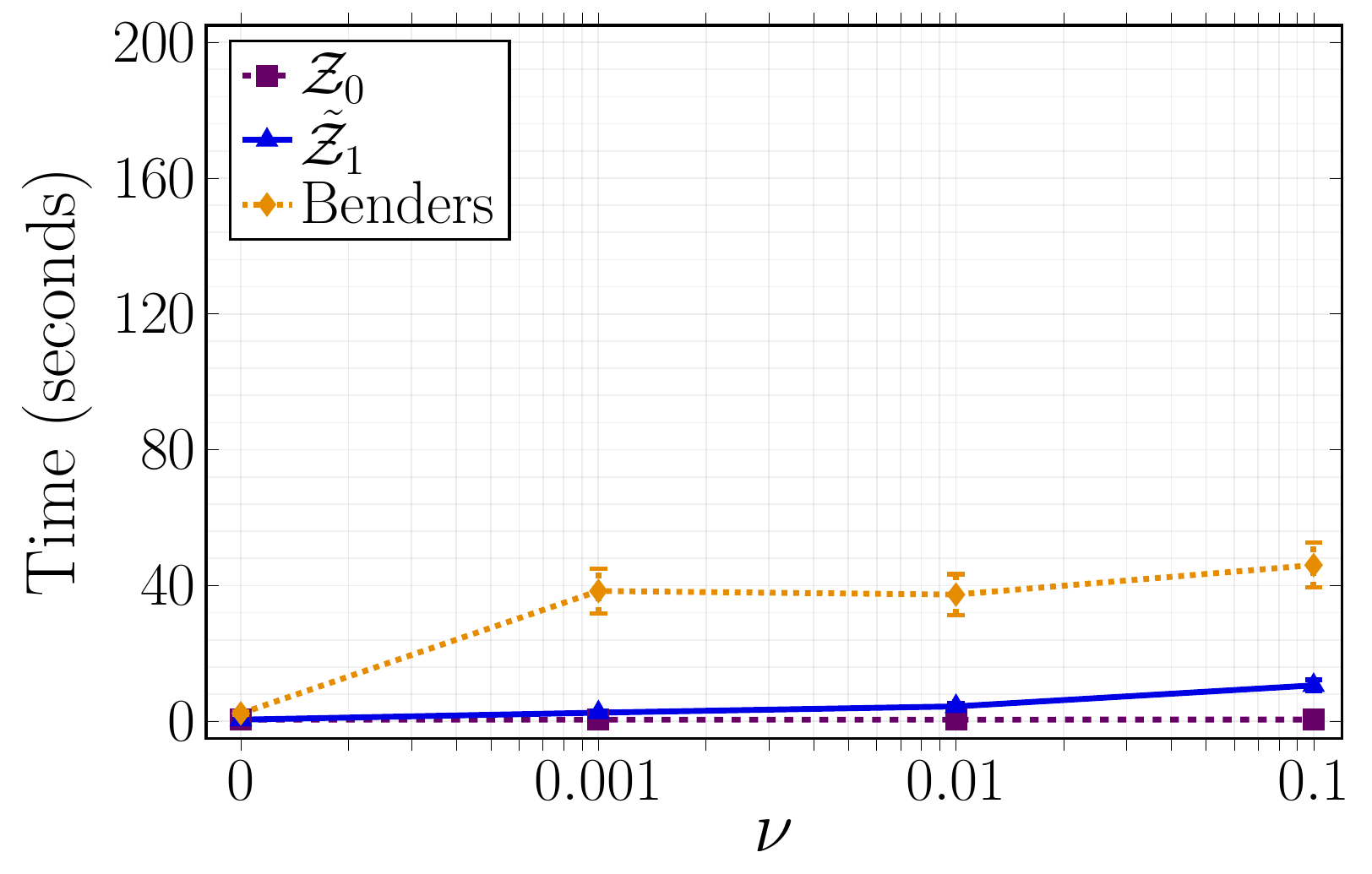}
        \caption{30-bus, $N = 100$}\label{fig:time_30_N100}
    \end{subfigure}\hfil
    \begin{subfigure}[b]{0.32\linewidth}
        \includegraphics[width=\linewidth]{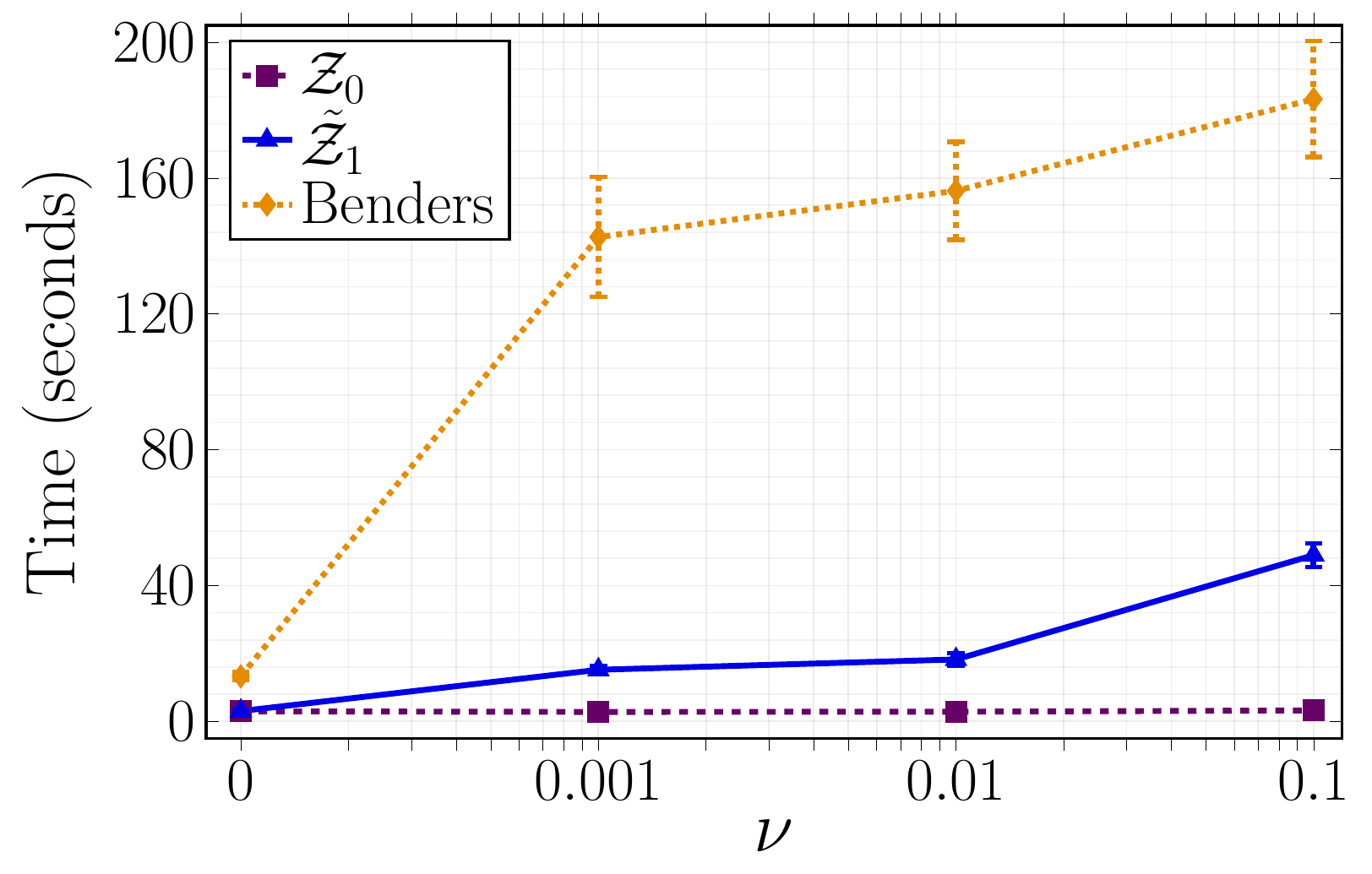}
        \caption{30-bus, $N = 1000$}\label{fig:time_30_N1000}
    \end{subfigure}
    \medskip
    \begin{subfigure}[b]{0.32\linewidth}
        \includegraphics[width=\linewidth]{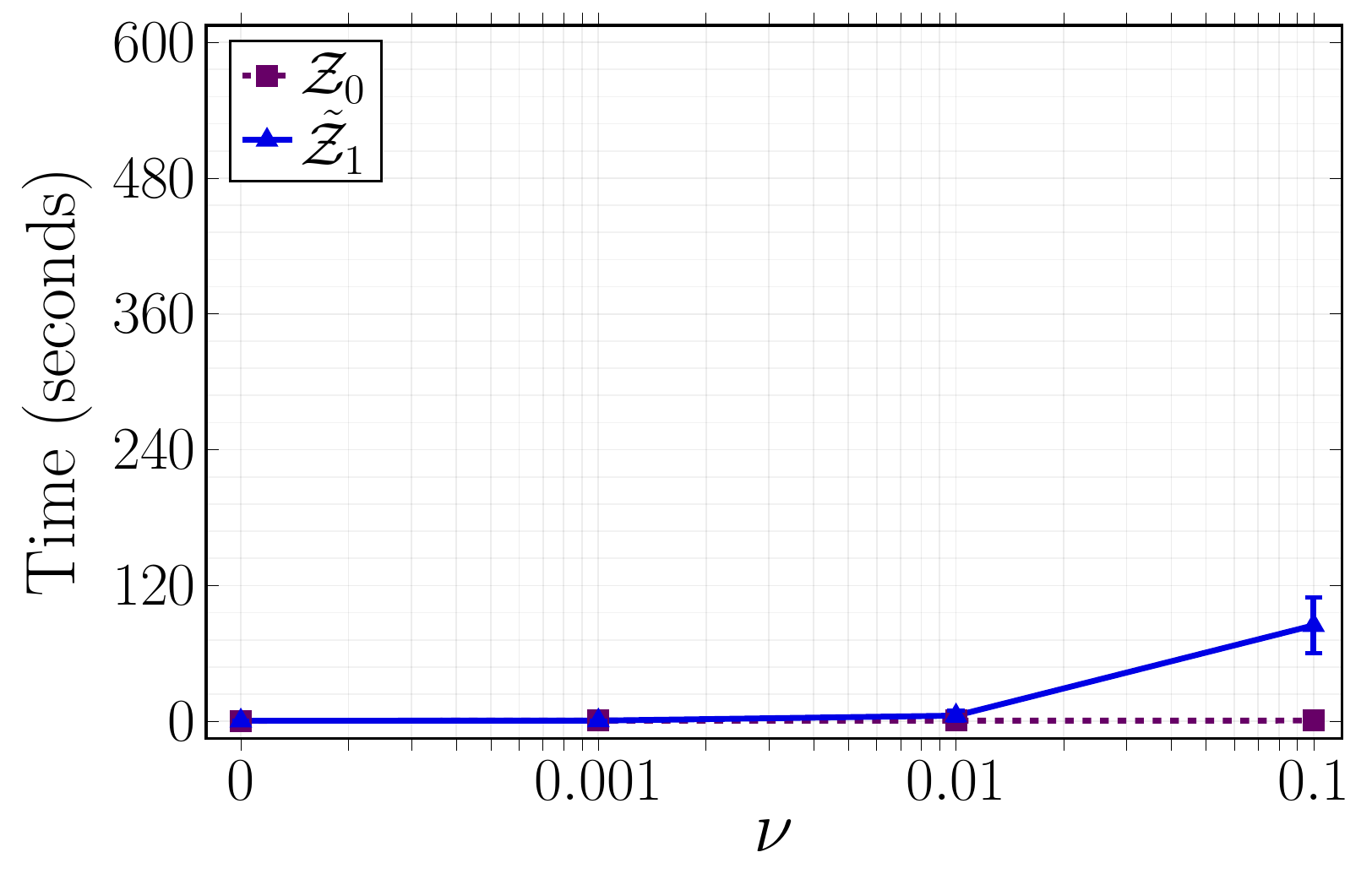}
        \caption{118-bus, $N = 10$}\label{fig:time_118_N10}
    \end{subfigure}\hfil
    \begin{subfigure}[b]{0.32\linewidth}
        \includegraphics[width=\linewidth]{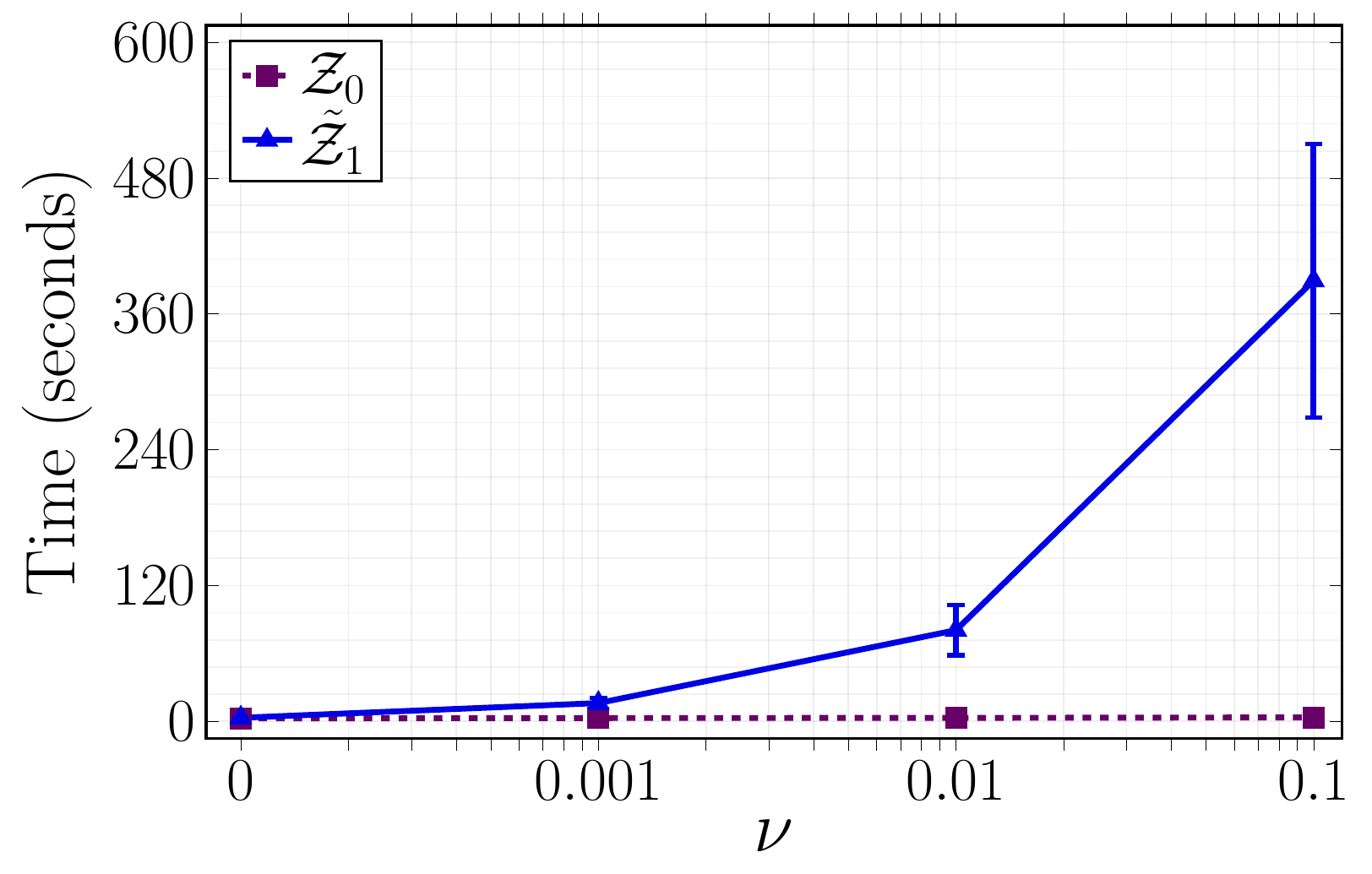}
        \caption{118-bus, $N = 100$}\label{fig:time_118_N100}
    \end{subfigure}\hfil
    \begin{subfigure}[b]{0.32\linewidth}
        \includegraphics[width=\linewidth]{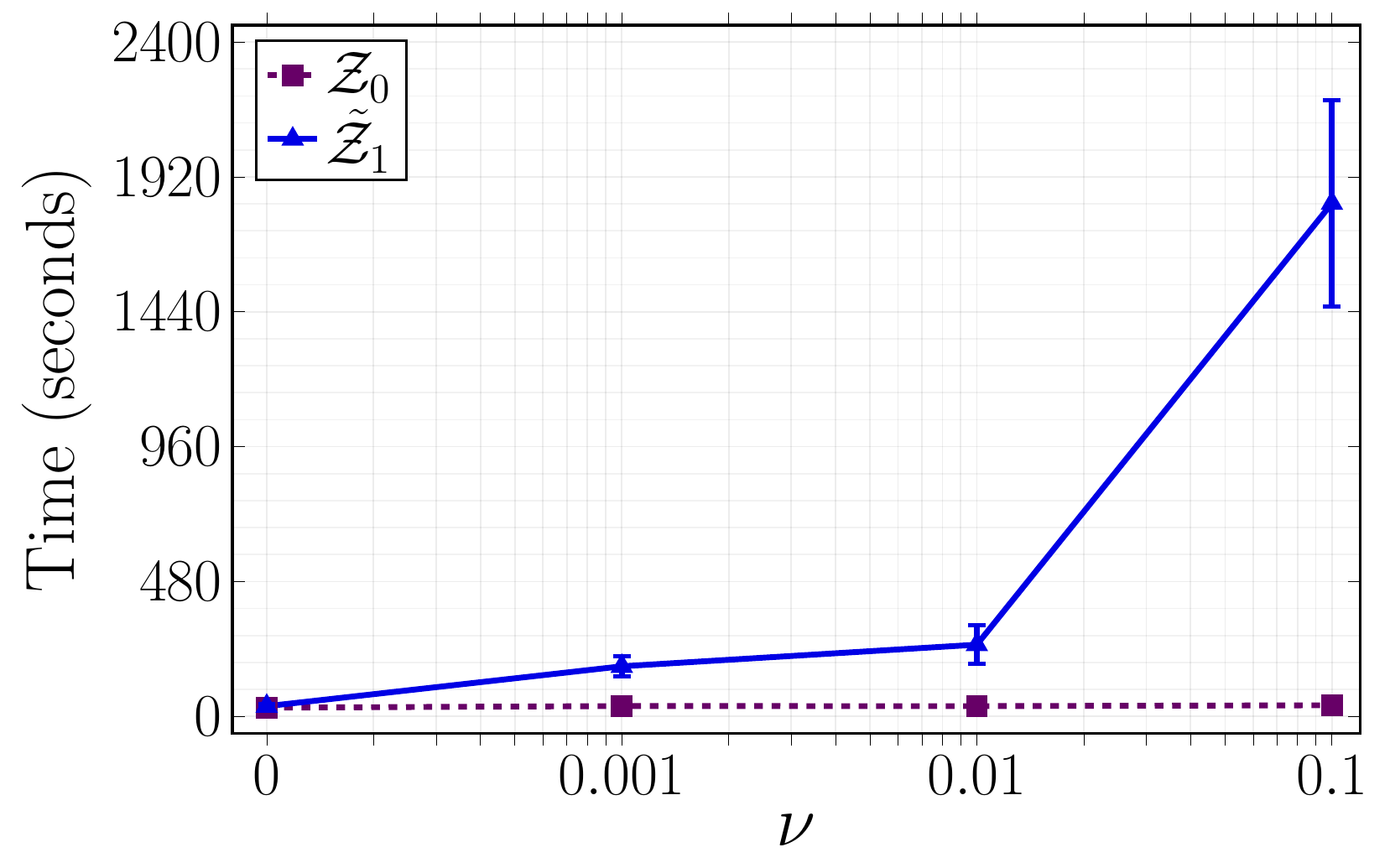}
        \caption{118-bus, $N = 1000$}\label{fig:time_118_N1000}
    \end{subfigure}
    \caption{Computation times for solving formulation~\eqref{eq:two_stage_dro_reform} using the continuous relaxation $\mathcal{Z}^0$, the heuristically computed and exact level-1 Lov{\'a}sz-Schrijver relaxations $\tilde{\mathcal{Z}}^1$ and ${\mathcal{Z}}^1$, and the Benders decomposition scheme, as a function of $\nu$ and $N$, where \re{$\varepsilon = \nu \sqrt{N^{-1}\log(N+1)}$}.\label{fig:time}}
\end{figure}

\subsubsection{Out-of-sample performance \re{and finite sample guarantee}}\label{sec:results_out_of_sample}

To understand the potential benefits of a distributionally robust approach, %
we evaluate its out-of-sample performance.
For a given training dataset of size $N$ and a given choice of $\nu$, we obtain a candidate first-stage solution $\bm{x}^\nu$ by solving formulation~\eqref{eq:two_stage_dro_reform} with the level-1 Lov{\'a}sz-Schrijver relaxation $\tilde{\mathcal{Z}}^1$ and Wasserstein radius \re{$\varepsilon = \nu \sqrt{N^{-1}\log(N+1)}$}.
We then estimate the out-of-sample performance of $\bm{x}^\nu$ by recording $z^\nu = \bm{c}(\bm{x}^\nu) + 1000^{-1} \sum_{i=1}^{1000} \mathcal{Q}(\bm{x}^\nu,\hat{\bm{\xi}}^{(i)})$, where $\hat{\bm{\xi}}^{(1)}, \ldots, \hat{\bm{\xi}}^{(1000)}$ are $1,000$ independently generated testing samples.
This entire process is repeated 100 times for statistically independent sets of $N$ training samples and 1,000 testing samples.

\re{We first justify the choice of the radius $\varepsilon = \nu \sqrt{N^{-1}\log(N+1)}$ as a function of the training sample size $N$.
This dependence is motivated by inequality~\eqref{eq:epsilon_finite_sample} in Theorem~\ref{thm:finite_sample_guarantee}.
However, the latter inequality can be loose, especially when accounting for problem-dependent constants such as the size and diameter of the support $\Xi$.
Therefore, we empirically verify the finite sample guarantee of the first-stage solutions $\bm{x}^\nu$ under the tighter parameterization $\varepsilon = \nu \sqrt{N^{-1}\log(N+1)}$ for several choices of the coefficient $\nu \in \{0, 10^{-3}, 10^{-2}, 10^{-1}\}$.
Figure~\ref{fig:reliability} reports the \emph{reliability} of $\bm{x}^\nu$, which we define as the empirical probability (over the 100 sets of training samples) that the optimal value $\tilde{v}^1$ of the level-1 Lov{\'a}sz-Schrijver relaxation $\tilde{\mathcal{Z}}^1$ is an upper bound on the out-of-sample cost $z^\nu$.
}

\re{Figure~\ref{fig:reliability} shows that, for fixed values of $N$, the reliability of $\bm{x}^\nu$ increases with increasing values of $\nu$, and this can be used to guide the choice of $\nu$.
For example, depending on their risk level, decision-makers can select the smallest value of $\nu$ with sufficiently high reliability.
In particular, for training datasets with small sample size $N$, observe that $\nu = 10^{-2}$ yields an upper bound on the out-of-sample cost with probability more than 0.5, whereas $\nu = 10^{-1}$ yields an upper bound with probability 1.0.
However, note that we cannot always access the true out-of-sample cost (and hence, the true reliability).
Nevertheless, one could estimate the out-of-sample cost by using cross validation or the holdout method (e.g, see~\cite{esfahani2018data,jiang2019data}), and then select a value for the coefficient $\nu$ that offers a good trade-off between low out-of-sample cost and high reliability.
}

\begin{figure}[!htbp]
    \centering
    \begin{subfigure}[b]{0.32\linewidth}
        \includegraphics[width=\linewidth]{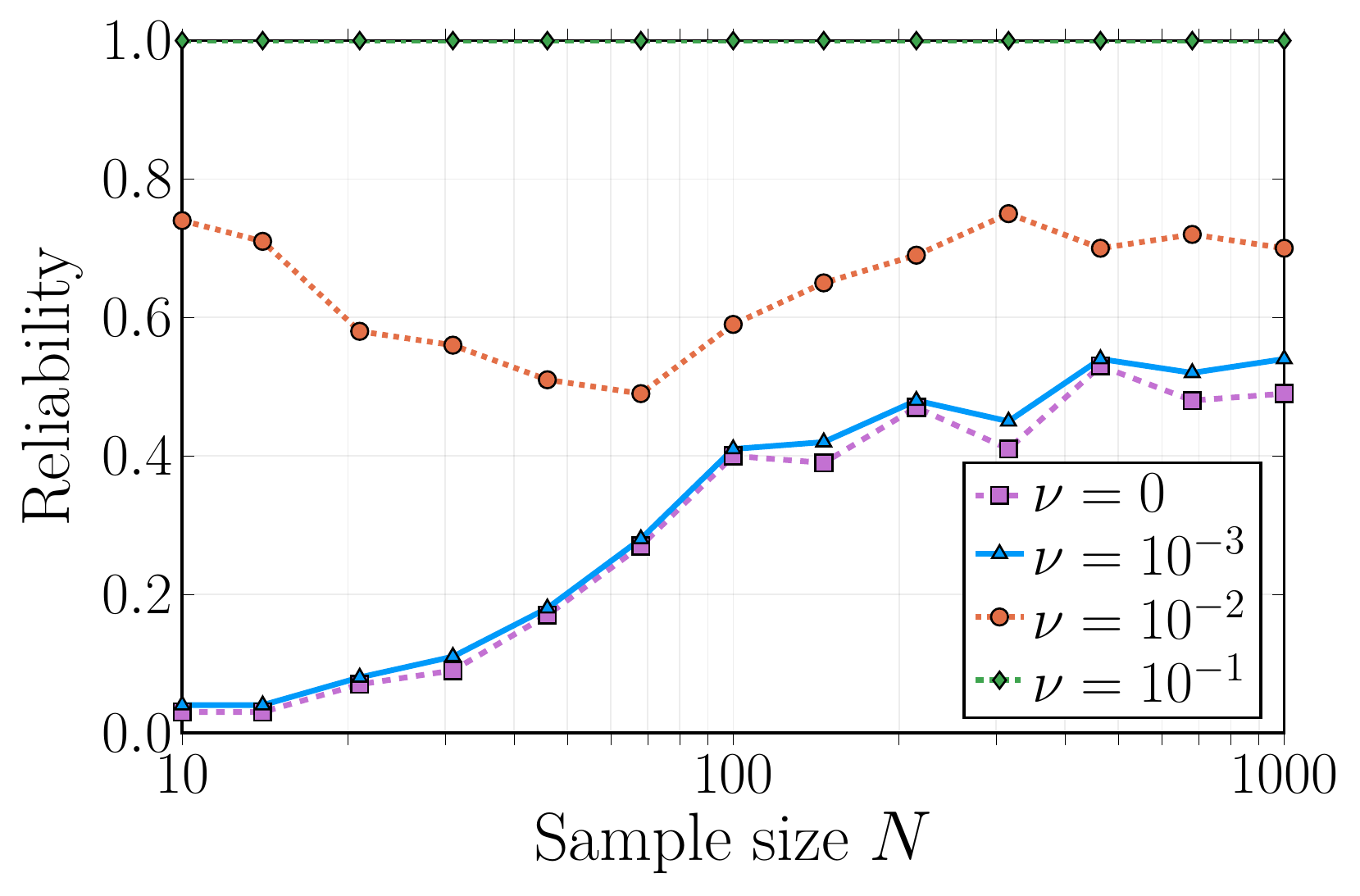}
        \caption{14-bus}\label{fig:reliability_14}
    \end{subfigure}\hfil
    \begin{subfigure}[b]{0.32\linewidth}
        \includegraphics[width=\linewidth]{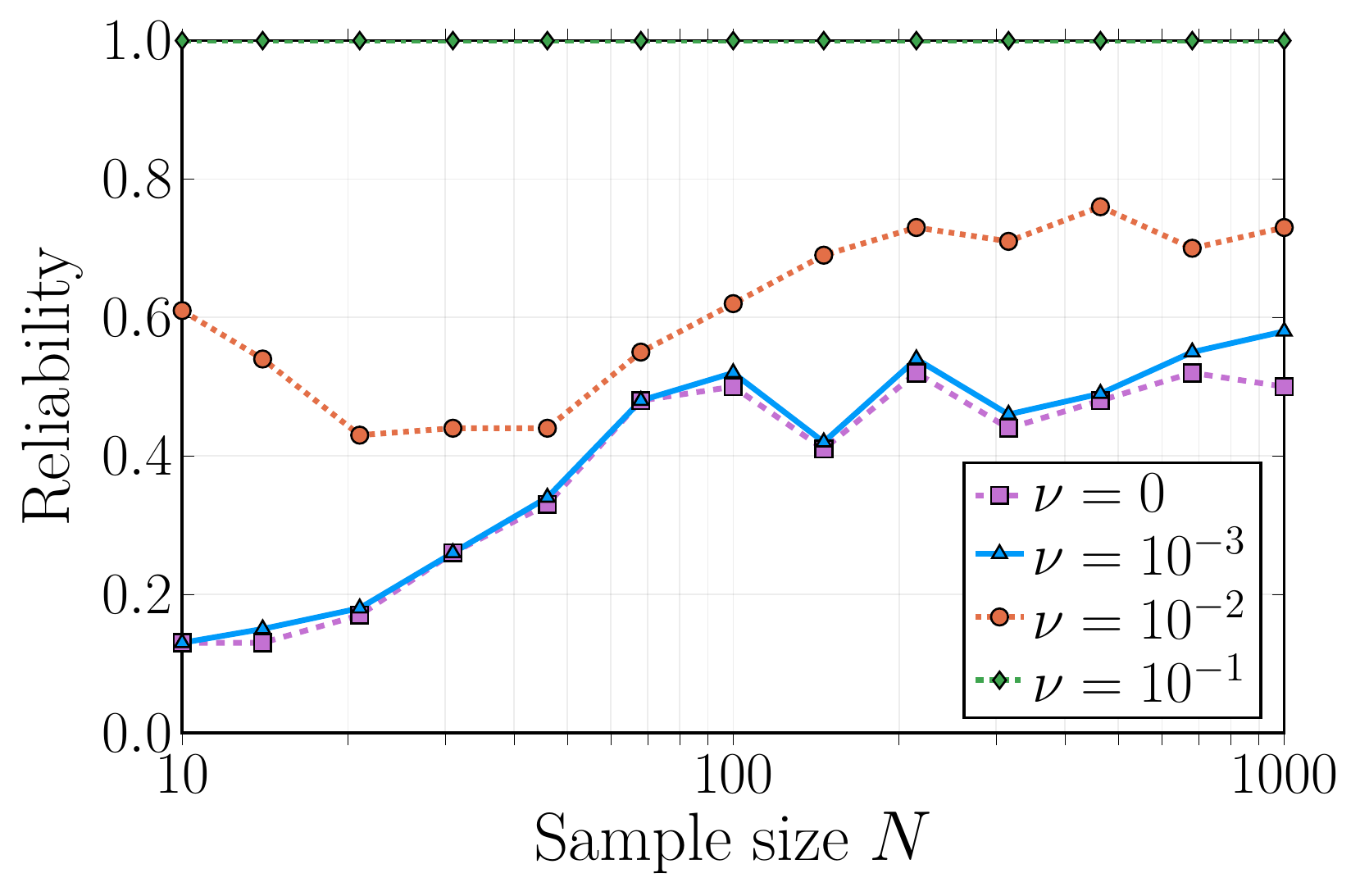}
        \caption{30-bus}\label{fig:reliability_30}
    \end{subfigure}\hfil
    \begin{subfigure}[b]{0.32\linewidth}
        \includegraphics[width=\linewidth]{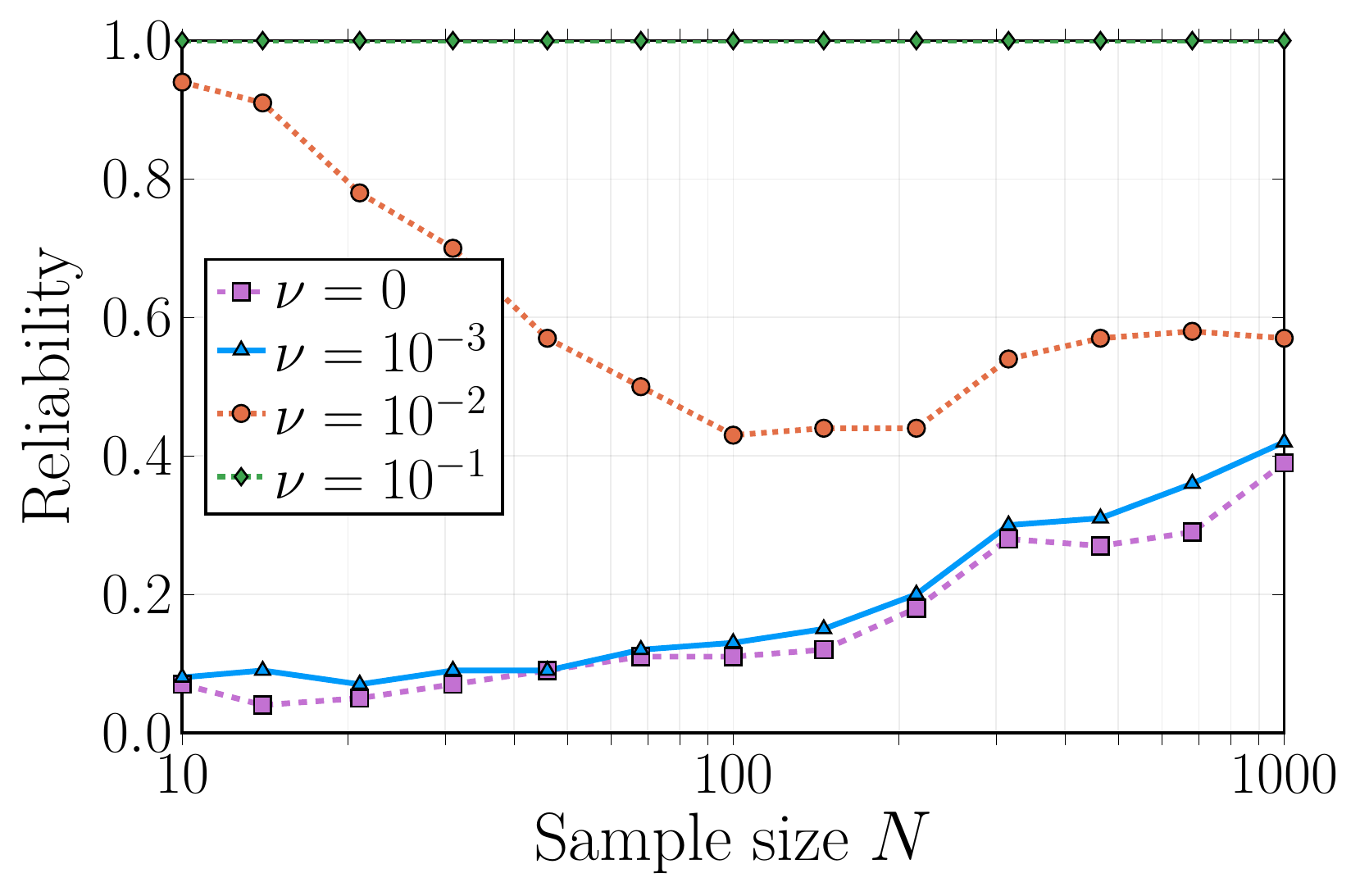}
        \caption{118-bus}\label{fig:reliability_118}
    \end{subfigure}
    \caption{\re{Reliability of the level-1 Lov{\'a}sz-Schrijver relaxation $\tilde{\mathcal{Z}}^1$, as a function of training sample size $N$.}\label{fig:reliability}}
\end{figure}

We now evaluate the benefits of our distributionally robust model over the sample average approximation, by computing the relative improvement in out-of-sample cost, which we define as $(z^0 - z^\nu)/z^0 \times 100\%$.
Figure~\ref{fig:outOfSample} reports the mean (solid line) and standard deviation (shaded ribbon) of the relative improvement over the 100 independent sets of training samples.
We make the following observations from Figure~\ref{fig:outOfSample}.
\begin{itemize}
    \item The distributionally robust model~\eqref{eq:two_stage_dro} consistently outperforms the sample average approximation, particularly for small sample sizes $N$. The magnitude of the relative improvement is instance dependent (roughly 15\%, 10\%, and 5\% for the 14-, 30-, and 118-bus cases, respectively) but consistently decreases for large values of $N$ as expected. The magnitude of the radius that leads to the best  possible improvement also is instance dependent. %
    
    \item The larger variances in improvement for smaller $N$ \re{and for larger instances} can be partially explained by the combinatorial growth in the number of truly distinct training datasets of size $N$ (i.e., those that lead to distinct first-stage solutions) that are possible under the rare event model of line outages.
    \re{The large variances for $\nu = 10^{-1}$ can also be similarly explained by the larger number of truly distinct first-stage solutions that can result from slight variations in the training dataset.}
\end{itemize}

\begin{figure}[!htbp]
    \centering
    \begin{subfigure}[b]{0.32\linewidth}
        \includegraphics[width=\linewidth]{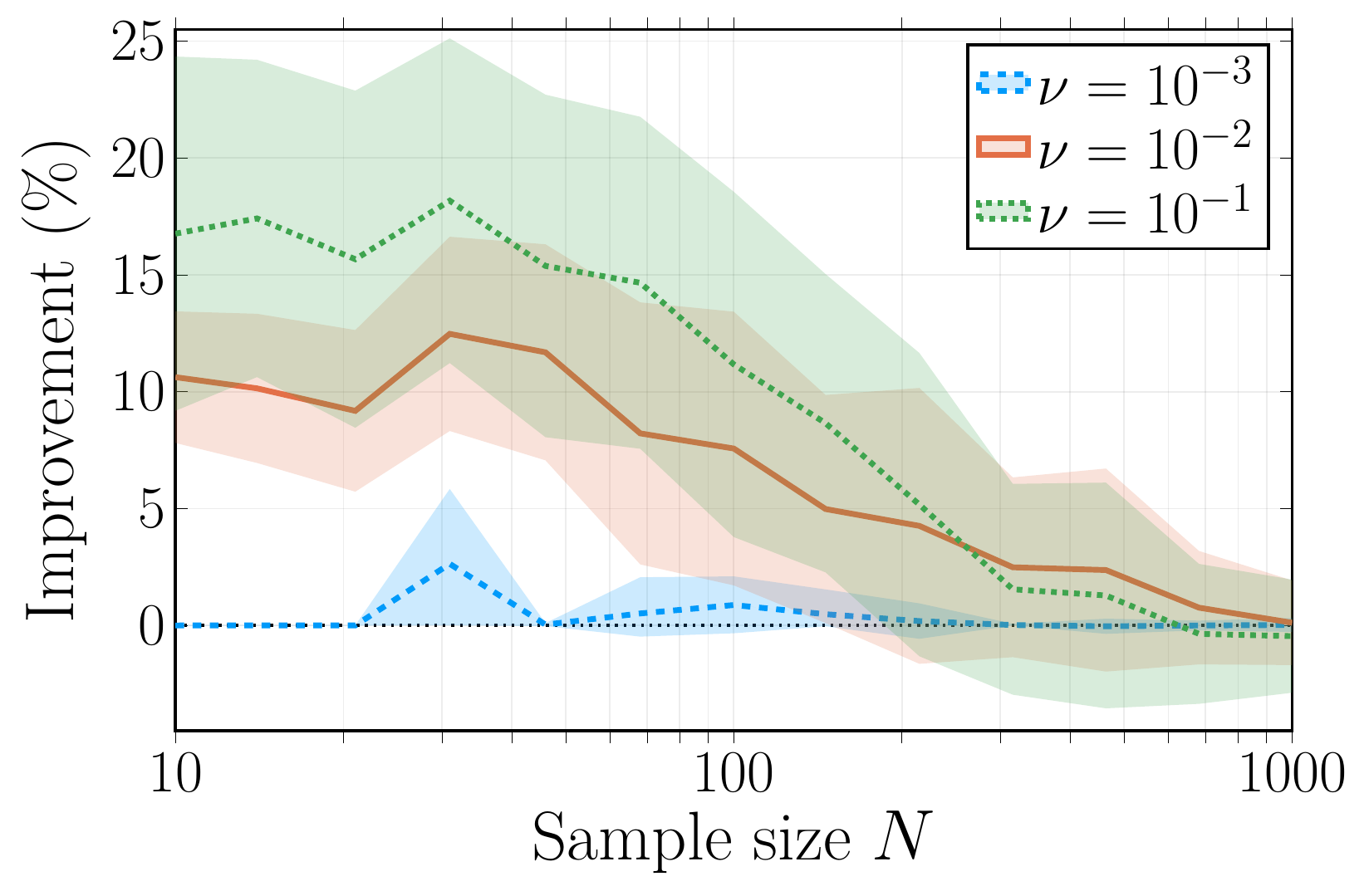}
        \caption{14-bus}\label{fig:outOfSample_14}
    \end{subfigure}\hfil
    \begin{subfigure}[b]{0.32\linewidth}
        \includegraphics[width=\linewidth]{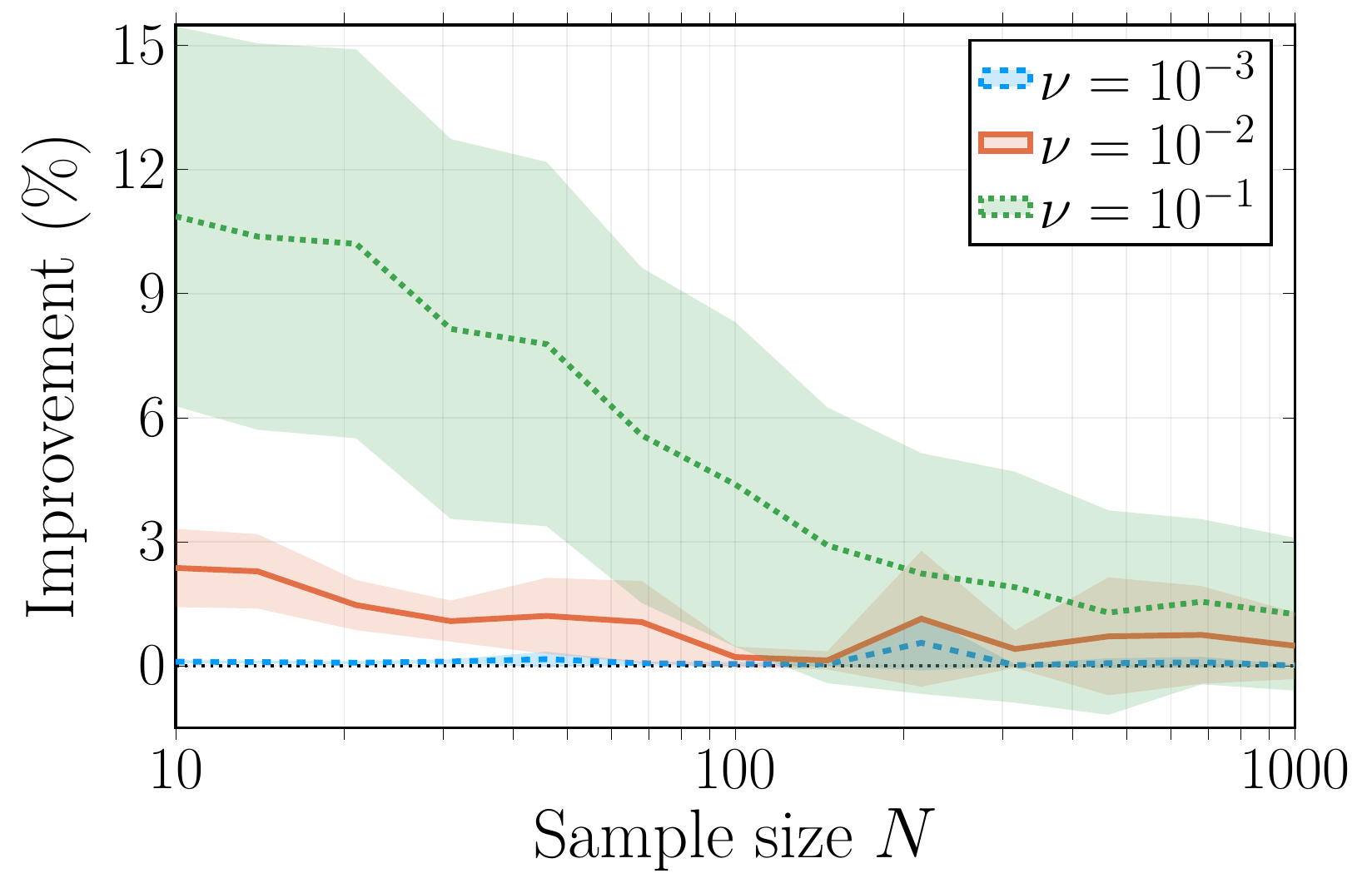}
        \caption{30-bus}\label{fig:outOfSample_30}
    \end{subfigure}\hfil
    \begin{subfigure}[b]{0.32\linewidth}
        \includegraphics[width=\linewidth]{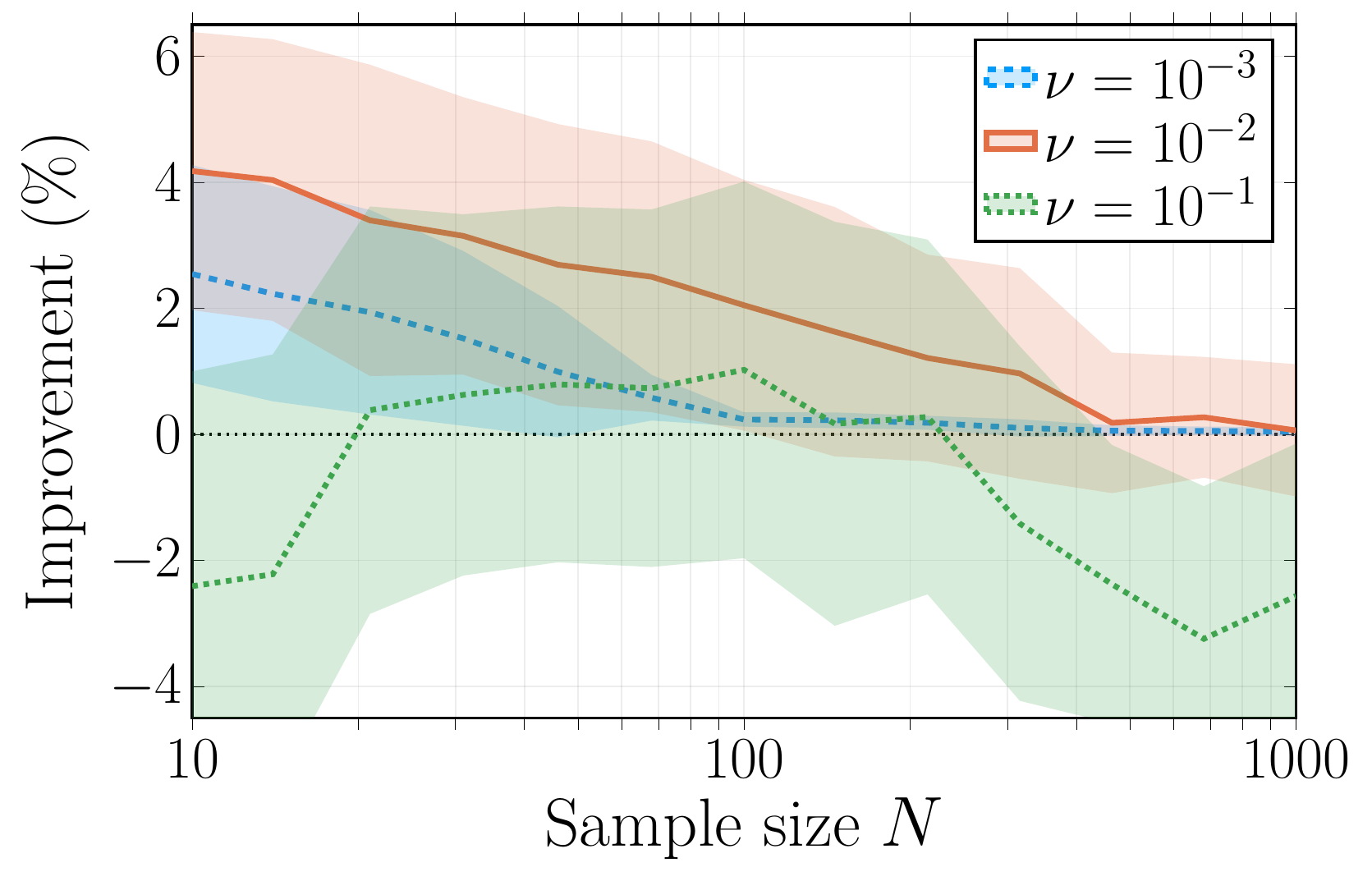}
        \caption{118-bus}\label{fig:outOfSample_118}
    \end{subfigure}
    \caption{Relative improvement in the out-of-sample performance of the distributionally robust two-stage model~\eqref{eq:two_stage_dro} when compared with the sample average approximation, as a function of sample size $N$.\label{fig:outOfSample}}
\end{figure}

\re{We now compare the out-of-sample performance of the level-1 Lov{\'a}sz-Schrijver relaxation $\tilde{\mathcal{Z}}^1$ corresponding to the best choice of $\nu$ (which is $10^{-1}$ for the 14- and 30-bus cases, and $10^{-2}$ for the 118-bus case) with \textit{(i)} the continuous relaxation $\mathcal{Z}^0$ for the same $\nu$, and \textit{(ii)} a classical two-stage robust optimization model of the following form:
\begin{equation}\label{eq:two_stage_ro}
\mathop{\text{minimize}}_{\mb{x} \in \mathcal{X}} \;
c(\mb{x})
+
\max_{\bm{\xi} \in \Xi_K}
\mathcal{Q}(\mb{x}, \bm{\xi}),
\end{equation}
where $\Xi_K = \{\bm{\xi} \in \{0, 1\}^M: \xi_1 + \ldots + \xi_M \leq K\}$ is the uncertainty set with $K$ being the \emph{budget} of uncertainty.
In particular, for each instance, we let $K \in \{0, 1, 2, 5, 10\}$ and obtain optimal first-stage solutions $\bm{x}^K$ of problem~\eqref{eq:two_stage_ro} with the Benders decomposition scheme.
As before, we estimate the out-of-sample performance of $\bm{x}^K$ as $z^K = \bm{c}(\bm{x}^K) + 1000^{-1} \sum_{i=1}^{1000} \mathcal{Q}(\bm{x}^K,\hat{\bm{\xi}}^{(i)})$, where $\hat{\bm{\xi}}^{(1)}, \ldots, \hat{\bm{\xi}}^{(1000)}$ are the same $1,000$ testing samples used to estimate $z^\nu$.
We then record the best possible $K$ yielding the lowest $z^K$, and compute the relative improvement in out-of-sample cost, which we define as $(z^K - z^\nu)/z^\nu \times 100\%$.
Figure~\ref{fig:improvement} reports the mean (solid line) and standard deviation (shaded ribbon) of the relative improvement over the 100 independent sets of training samples.
We make the following observations from Figure~\ref{fig:improvement}.
\begin{itemize}
    \item The distributionally robust model strongly outperforms its classical robust counterpart, across all instances, with relative improvements of 5\%, 1\% and 15\% for the 14-, 30- and 118-bus cases, respectively.
    In contrast to Figure~\ref{fig:outOfSample}, the relative improvements increase with increasing values of $N$; this is expected since the classical robust model~\eqref{eq:two_stage_ro} ignores all sample data and therefore, it becomes overly conservative in the presence of a moderate amount of data.
    Thus, we observe that for small to moderate values of $N$, the distributionally robust model improves upon both the sample average approximation and classical robust optimization.
    Finally, although not shown, solving the classical robust model with the Benders scheme took longer than solving the level-1 relaxation, especially for the larger 118-bus case.
    
    \item The relative improvement of the level-1 relaxation $\tilde{\mathcal{Z}}^1$ over the continuous relaxation $\mathcal{Z}^0$ is smaller, but can be as high as 4\% as seen in the 14-bus case.
    It should be noted that these quantities are necessarily upper bounded by the improvements over the sample average approximations reported in Figure~\ref{fig:outOfSample}, and can be significant for applications including optimal power flow, which are executed several times each day of the year.
    Finally, we note that the tradeoff between tighter in-sample optimality gaps (see Figure~\ref{fig:gaps}) and hence stronger finite sample reliability guarantees (see Figure~\ref{fig:reliability}) offered by the level-1 relaxation $\tilde{\mathcal{Z}}^1$, with the faster computation times for solving the continuous relaxation $\mathcal{Z}^0$ (see Figure~\ref{fig:time}), can guide the design of an algorithmic scheme.
\end{itemize}
}

\begin{figure}[!htbp]
    \centering
    \begin{subfigure}[b]{0.32\linewidth}
        \includegraphics[width=\linewidth]{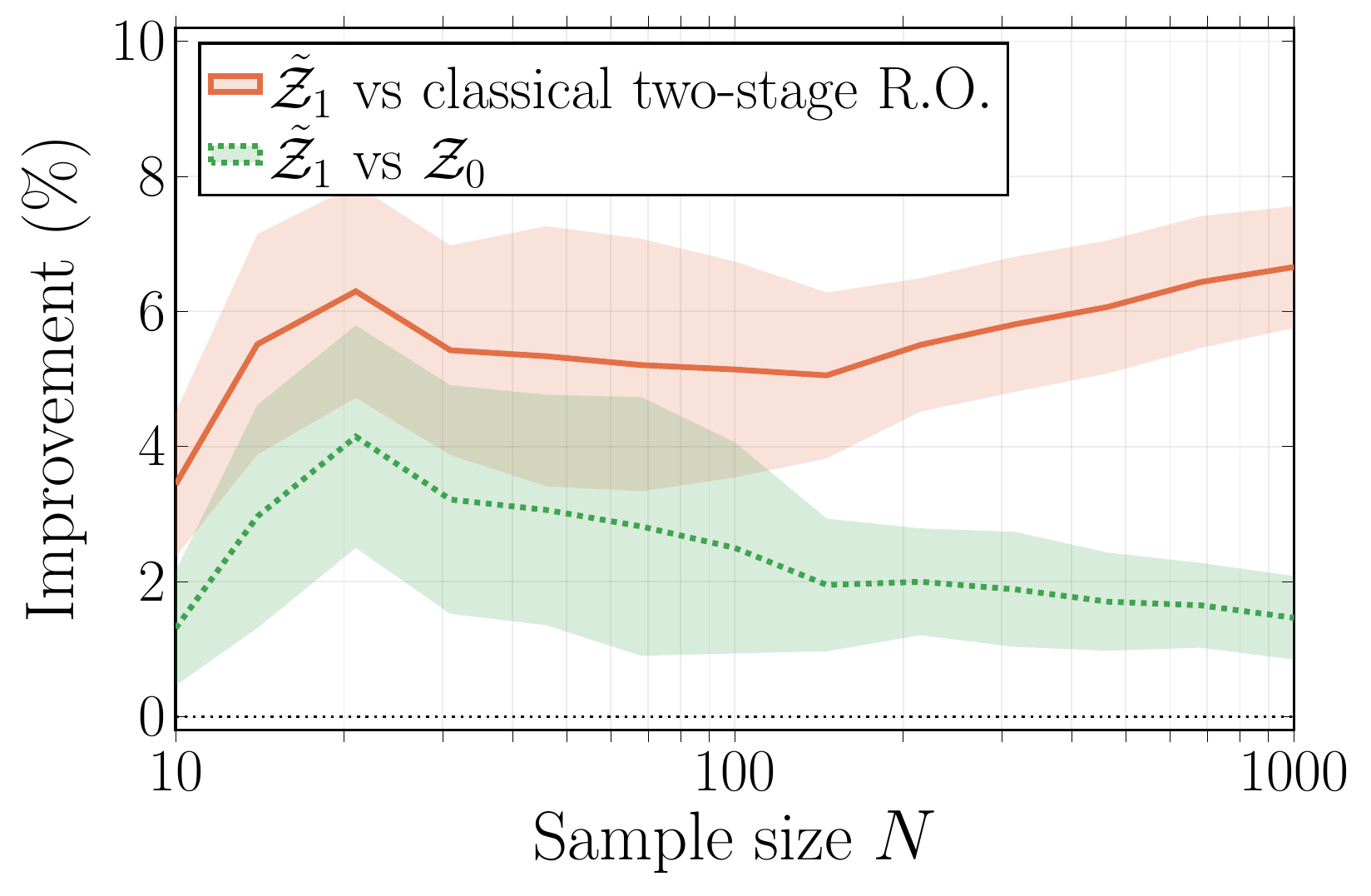}
        \caption{14-bus}\label{fig:improvement_14}
    \end{subfigure}\hfil
    \begin{subfigure}[b]{0.32\linewidth}
        \includegraphics[width=\linewidth]{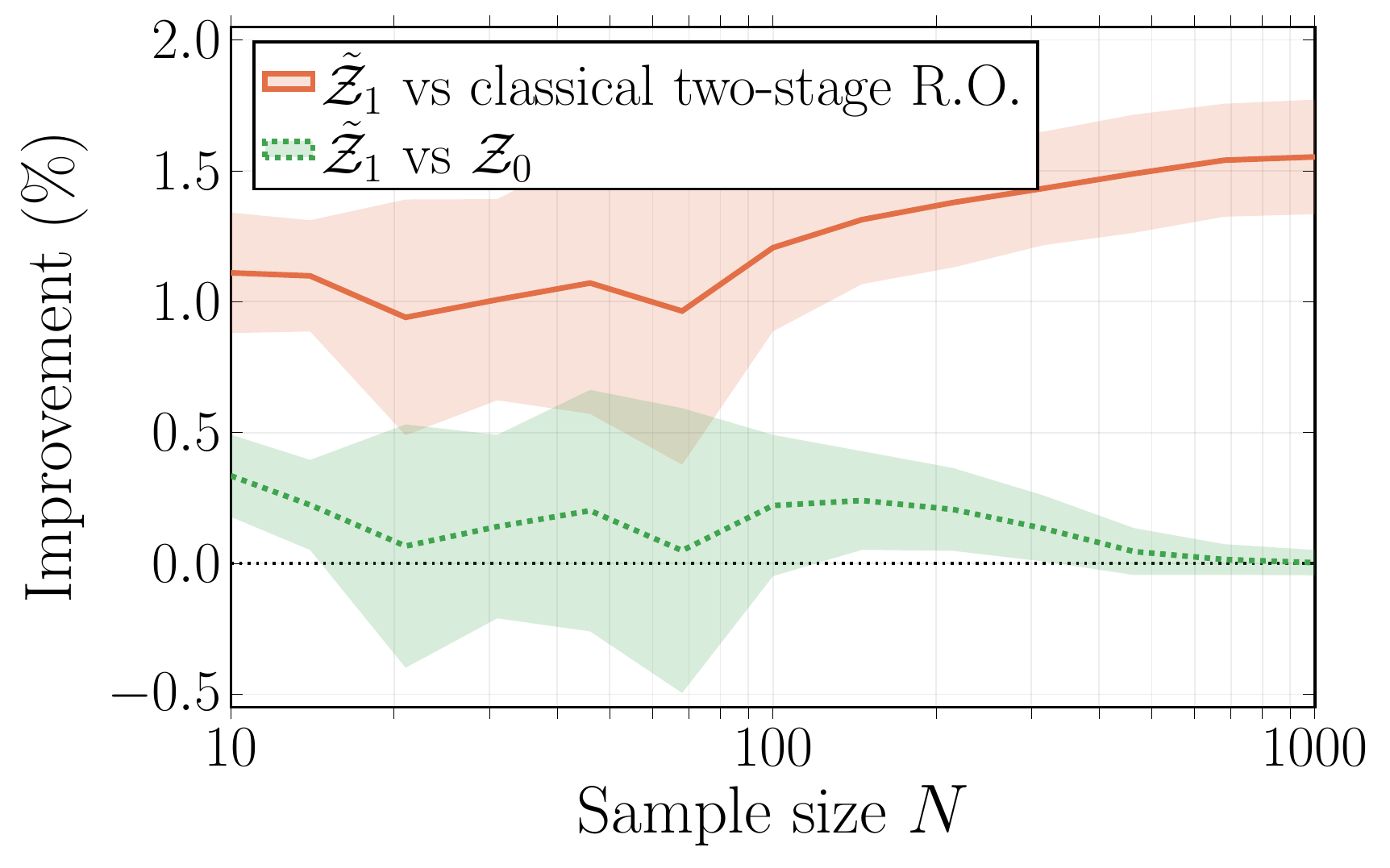}
        \caption{30-bus}\label{fig:improvement_30}
    \end{subfigure}\hfil
    \begin{subfigure}[b]{0.32\linewidth}
        \includegraphics[width=\linewidth]{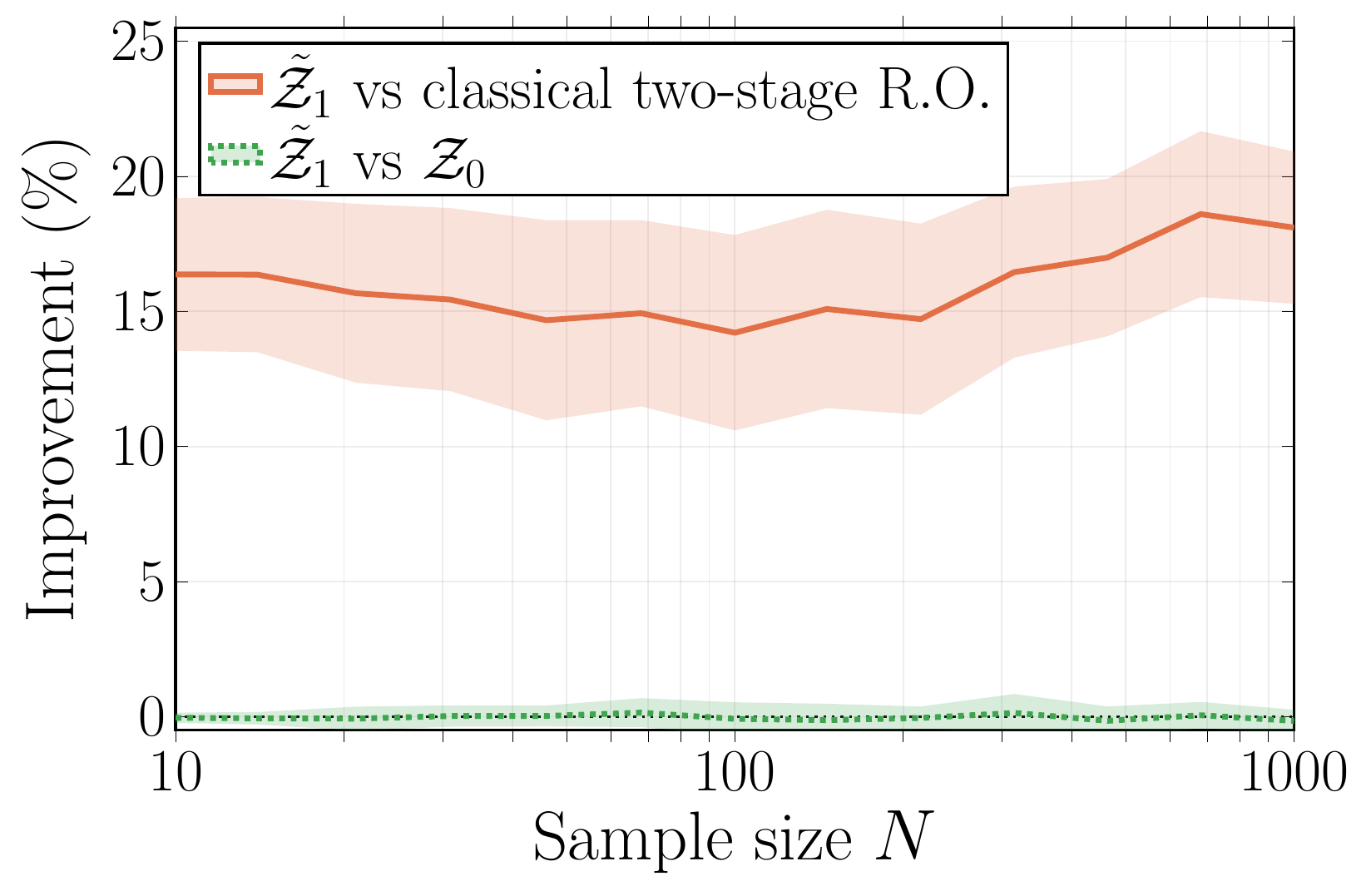}
        \caption{118-bus}\label{fig:improvement_118}
    \end{subfigure}
    \caption{\re{Relative improvement in the out-of-sample performance of the distributionally robust two-stage model~\eqref{eq:two_stage_dro} solved using the level-1 Lov{\'a}sz-Schrijver relaxation $\tilde{\mathcal{Z}}^1$, when compared with the classical two-stage robust optimization model~\eqref{eq:two_stage_ro}, and the continuous relaxation $\mathcal{Z}^0$.}\label{fig:improvement}} %
\end{figure}

\subsubsection{Sensitivity analysis}\label{sec:results_sensitivity_analysis}
The instance-dependent behavior of the out-of-sample performance from the previous subsection suggests that it might also be influenced by other parameters.
Here, we investigate the effect of the ``rareness'' $\psi$ of transmission line failures and the relative magnitude $\phi$ of ``impact'' when failures occur.
Recall from Section~\ref{sec:test_instances} that higher values of $\psi$ increase the probability of line failures, whereas higher values of $\phi$ increase the penalty cost for failing to satisfy power demand due to transmission line failures.
Figure~\ref{fig:sensitivity} shows the relative improvement of the distributionally robust two-stage model~\eqref{eq:two_stage_dro} over the sample average approximation for various choices of $\psi$ and $\phi$.
For brevity, we report results only for the 30-bus instance; the high-level insights do not change for other instances.
We make the following observations from Figure~\ref{fig:sensitivity}.
\begin{itemize}
    \item For fixed values of the line failure probability $\psi$, Figures~\ref{fig:sensitivity_phi_low}--\ref{fig:sensitivity_phi_high} show that as the impact due to failure $\phi$ increases, the relative benefits of a distributionally robust approach strongly increase. In other words, benefits increase with higher impacts of failures. \re{Interestingly, Figure~\ref{fig:sensitivity_phi_low} also shows that if failures are rare but low impact, then ignoring them (as in the sample average approximation) may not incur high out-of-sample costs, even for small values of $N$.}
    
    \item For fixed values of the magnitude of impact $\phi$, Figures~\ref{fig:sensitivity_psi_low}--\ref{fig:sensitivity_psi_high} show that as the probability of failures $\psi$ increases, the relative benefits of a distributionally robust approach increases. \re{However, observe that this does not necessarily imply that the relative benefits are small when line failure probabilities are small. Indeed, %
    we observe that the relative benefits remain as high as 10\% even when individual line failure probabilities are less than $0.05M^{-1}$.}
    
\end{itemize}

\begin{figure}[!htb]
    \centering
    \begin{subfigure}[b]{0.32\linewidth}
        \includegraphics[width=\linewidth]{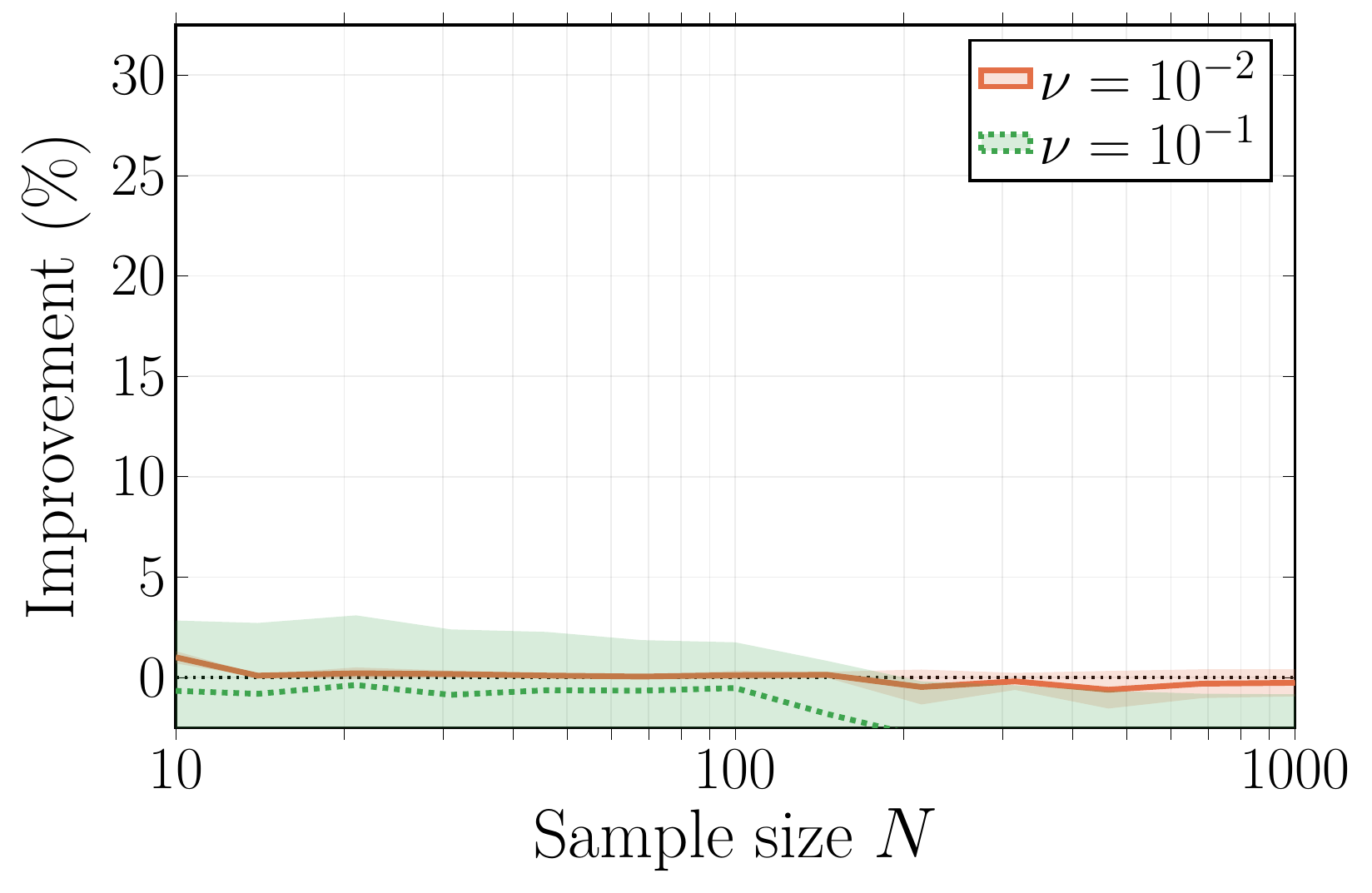} %
        \caption{$(\psi, \phi) = (0.1, 50)$}\label{fig:sensitivity_phi_low}
    \end{subfigure}\hfil
    \begin{subfigure}[b]{0.32\linewidth}
        \includegraphics[width=\linewidth]{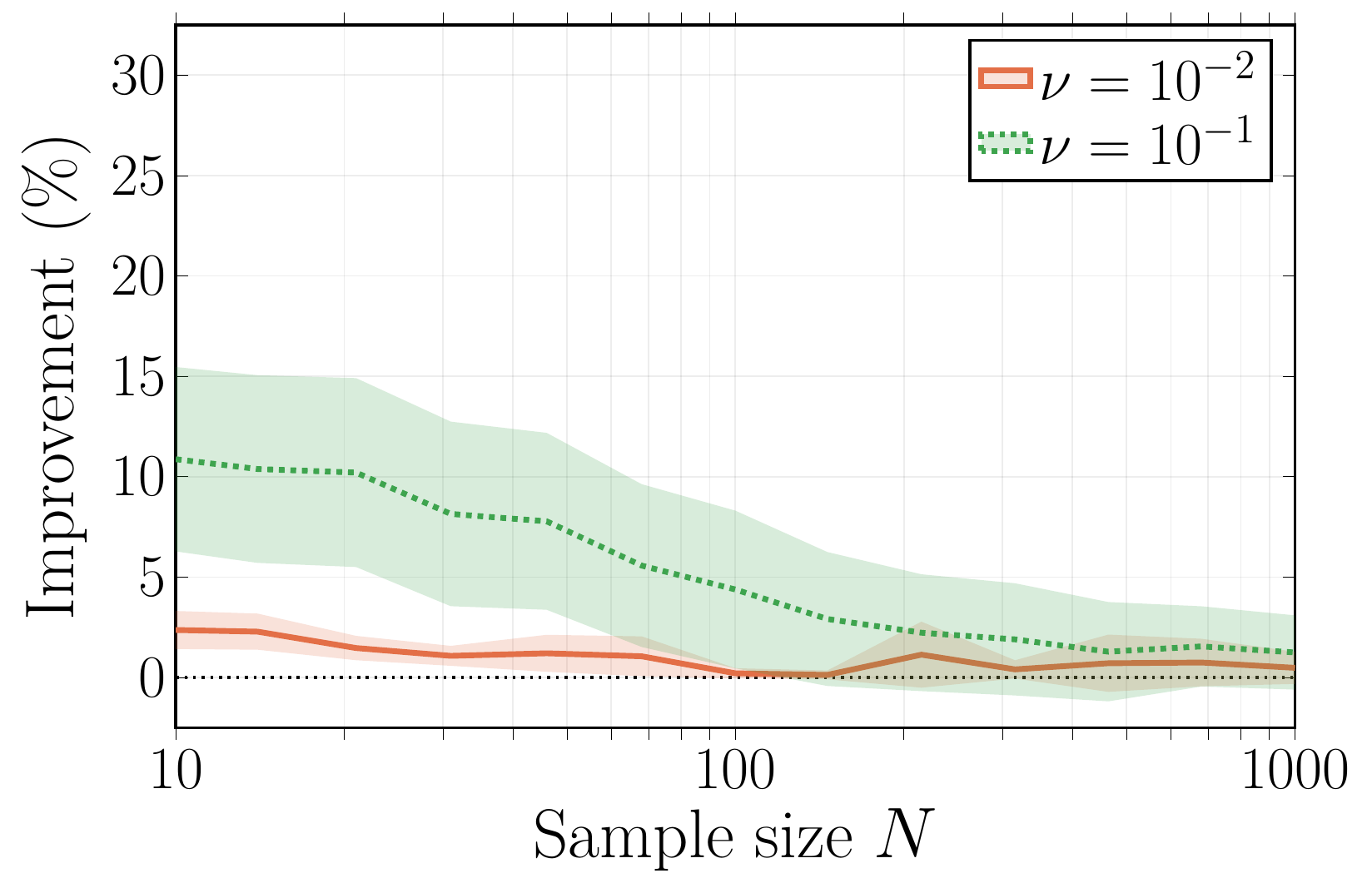} %
        \caption{$(\psi, \phi) = (0.1, 100)$}\label{fig:sensitivity_phi_med}
    \end{subfigure}\hfil
    \begin{subfigure}[b]{0.32\linewidth}
        \includegraphics[width=\linewidth]{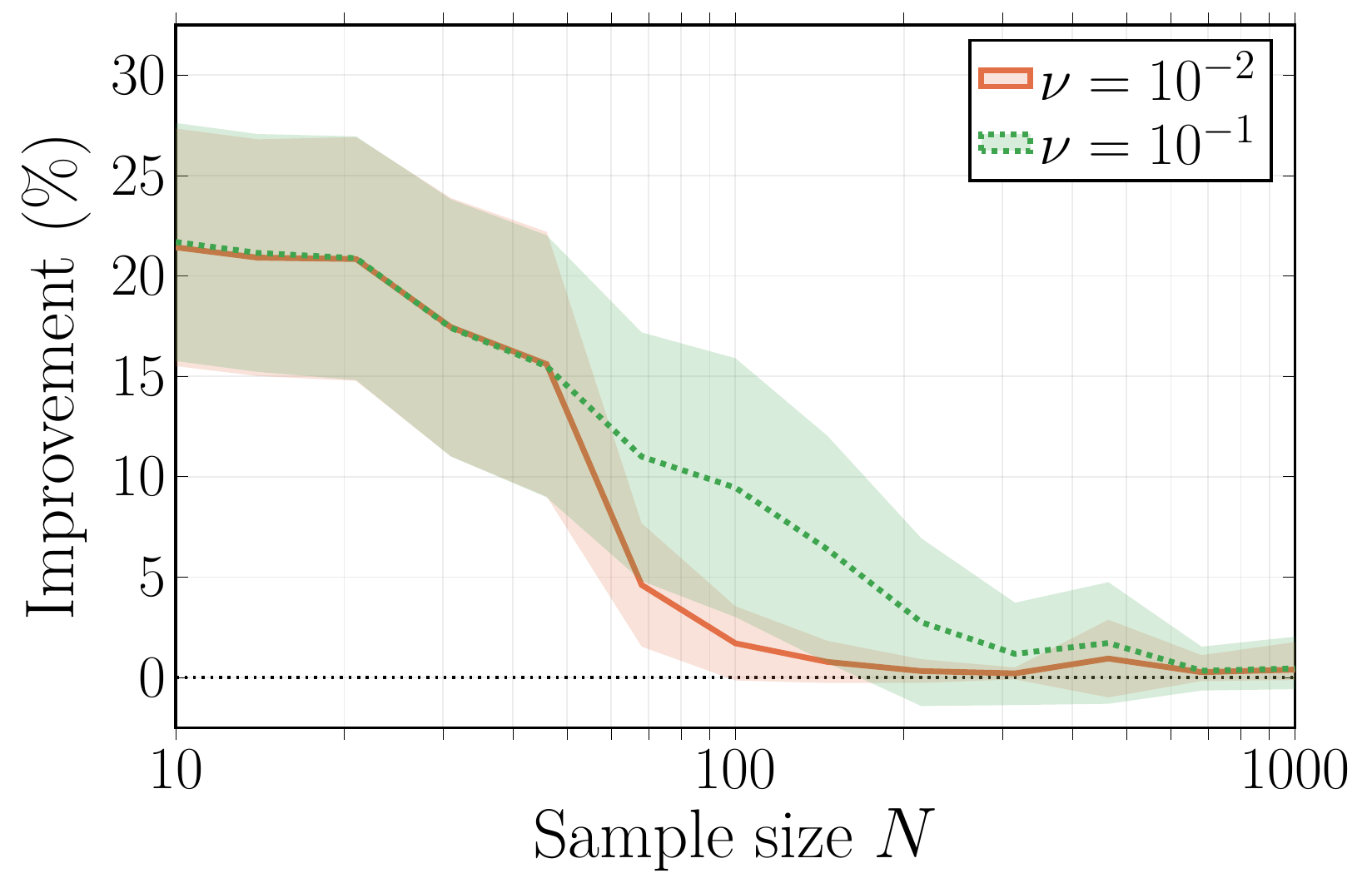} %
        \caption{$(\psi, \phi) = (0.1, 200)$}\label{fig:sensitivity_phi_high}
    \end{subfigure}
    \medskip
    \begin{subfigure}[b]{0.32\linewidth}
        \includegraphics[width=\linewidth]{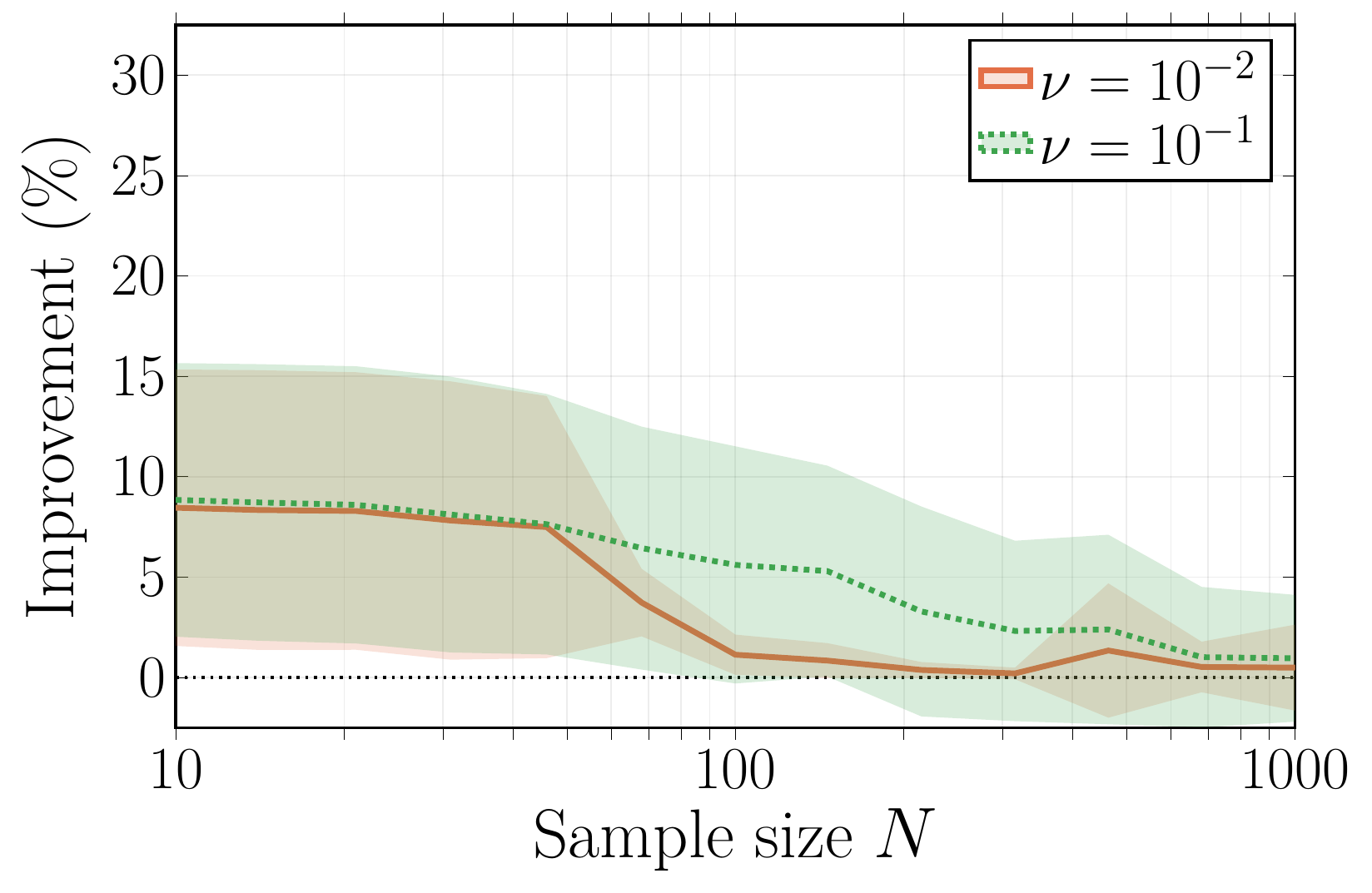} %
        \caption{\re{$(\psi, \phi) = (0.05,200)$}}\label{fig:sensitivity_psi_low}
    \end{subfigure}\hfil
    \begin{subfigure}[b]{0.32\linewidth}
        \includegraphics[width=\linewidth]{{pglib_opf_case30_ieee-phi200.0-psi0.1-outOfSample-sensitivity}} %
        \caption{\re{$(\psi, \phi) = (0.1, 200)$}}\label{fig:sensitivity_psi_med}
    \end{subfigure}\hfil
    \begin{subfigure}[b]{0.32\linewidth}
        \includegraphics[width=\linewidth]{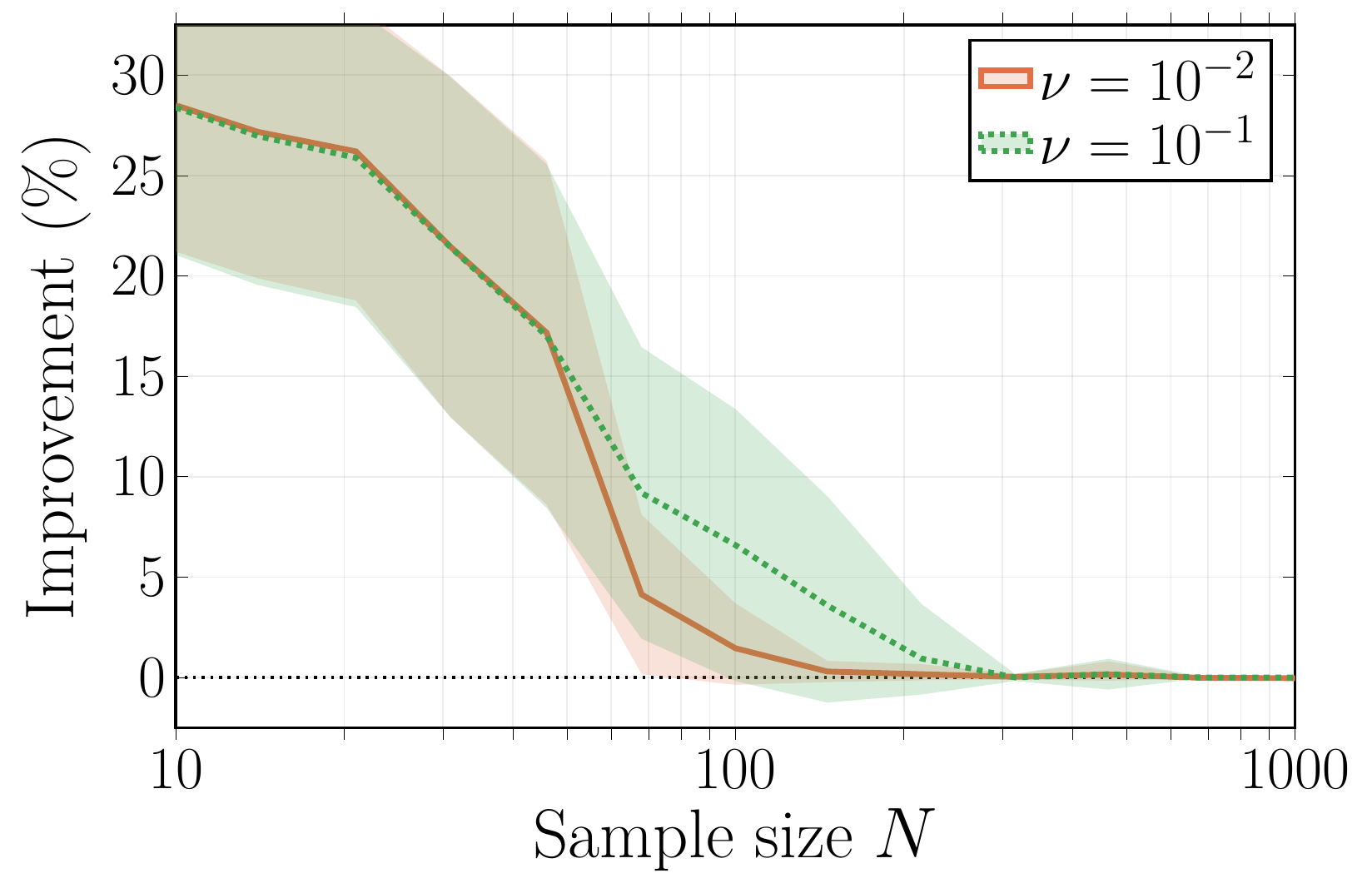} %
        \caption{\re{$(\psi, \phi) = (0.2, 200)$}}\label{fig:sensitivity_psi_high}
    \end{subfigure}
    \caption{Relative improvement in the out-of-sample performance of the distributionally robust two-stage model~\eqref{eq:two_stage_dro} when compared with the sample average approximation, for various values of $(\psi, \phi)$.\label{fig:sensitivity}}
\end{figure}

\re{
\subsection{Multi-commodity network design}\label{sec:MMCF}

We now consider multi-commodity network design problems that have applications in telecommunications, transportation, logistics and production planning, among others (e.g., see~\citet{minoux1989networks,crainic2001bundle}).
In several of these applications, it is required to send flows to satisfy known demands between multiple origin-destination pairs or commodities.
The goal is to minimize the total cost, which is the sum of fixed costs of installing arc capacities and variable costs of routing flows for each commodity.
As such, failures of network elements can lead to a reduction in its available flow capacity and a subsequent failure to meet demands.
This is particularly true in telecommunication networks where the loss of even a single (typically high-capacity) fiber-optic cable or router equipment can cause a substantial fraction of the overall flow (e.g., internet traffic) to be lost, leading to potentially huge economic impacts \citep{Markopoulou2008}.
Fortunately, these networks are typically well-engineered and therefore, such high-impact failures are rare.
At the same time, this general lack of failure data in real networks complicates the accurate estimation of their underlying distribution.

The two-stage optimization model we consider is presented in Appendix~\ref{appendix:mmcf_model}, and can be described as follows.
The first-stage problem determines the arc capacities that can be used for routing flows.
Upon failure, the second-stage model determines the routing of each commodity along the degraded network topology constrained by the first-stage arc capacities, and with the objective of minimizing the sum of variable routing costs and penalty costs for not satisfying demands.

For ease of exposition, we model only node failures, where ${\bm{\xi}}$ is supported on $\Xi = \{0, 1\}^M$ and ${\xi}_{i} =  1$ indicates that node $i$ has failed.
Since ${\bm{\xi}}$ represent on/off switches, we can employ %
Corollary~\ref{coro:two_stage_dro_obj_indicator}, and the penalty parameter $\rho = \rho^r$ can be computed using Theorem~\ref{thm:rho_computation}.
Here, the classical robust counterpart reduces to a deterministic problem (see Lemma~\ref{lem:disrupt_all_lines}), since the second-stage loss function trivially attains its worst-case value when each component of $\bm{\xi}$ is one; that is, when all nodes fail.

We conduct our experiments on the $(20,230,40,V,L)$ instance from the so-called Class~I set of instances in \citet{crainic2001bundle}.
As the name indicates, the instance has 20 nodes, 230 arcs, 40 commodities, and the letters $V$ and $L$ indicate that fixed-costs are relatively low compared to variable costs, and that the problem is loosely capacitated.
We generate empirical data by modeling each component of $\tilde{\bm{\xi}}$ as independent and identically distributed Bernoulli random variables with parameter $\psi M^{-1}$, where $\psi = 0.1$.
As before, for a fixed sample size $N$ and radius $\varepsilon$, we report average results using 100 statistically independent sets of training samples, and we estimate the variance by reporting the standard deviation over these 100 runs.
The out-of-sample performances of candidate solutions are estimated by using 1,000 statistically independent sets of testing samples.

\subsubsection{Approximation quality and computational effort}
Similar to Section~\ref{sec:results_approx_quality}, we compute the optimality gaps of the convex hull reformulation~\eqref{eq:two_stage_dro_reform} when the convex hulls $\conv{\mathcal{Z}_i}$ in~\eqref{eq:Z_function_definition}--\eqref{eq:Z_set_definition} are approximated by using the continuous relaxation $\mathcal{Z}^0$ and heuristically computed level-1 Lov{\'a}sz-Schrijver relaxation $\tilde{\mathcal{Z}}^1$, 
for sample sizes $N \in \{10, 100, 1000\}$ and radii $\varepsilon = \nu \sqrt{N^{-1}\log(N+1)}$,
$\nu \in \{0, 10^{-3}, 10^{-2}, 10^{-1}\}$.
In doing so, the true optimal value of each instance is computed by solving formulation~\eqref{eq:GK} using the column-and-constraint generation scheme.
The mixed-integer subproblems in this scheme are solved by dualizing the second-stage loss function as opposed to using its Karush-Kuhn-Tucker conditions, since it results in fewer additional variables and constraints, see~\citet{zeng2013solving}.
Any bilinear expressions involving dual variables and uncertain parameters are reformulated using indicator constraints since the lack of \textit{a priori} known upper bounds on the dual variables prohibits direct linearization using McCormick inequalities.

Figure~\ref{fig:gaps_c33} reports the average (line plot) and standard deviation (error bar) of the optimality gaps,
whereas Figure~\ref{fig:time_c33} reports the corresponding computation times,
based on 100 statistically independent sets of training samples.
Figure~\ref{fig:gaps_c33} shows that both the continuous and level-1 relaxations provide very similar and near-optimal approximations with optimality gaps never exceeding 3\% for $N=10$ and 0.5\% for $N \geq 100$.
Interestingly, the optimal first-stage decisions are different, and this can be seen from their out-of-sample performance that we present in the next subsection.

\begin{figure}[!htb]
    \centering
    \begin{subfigure}[b]{0.32\linewidth}
        \includegraphics[width=\linewidth]{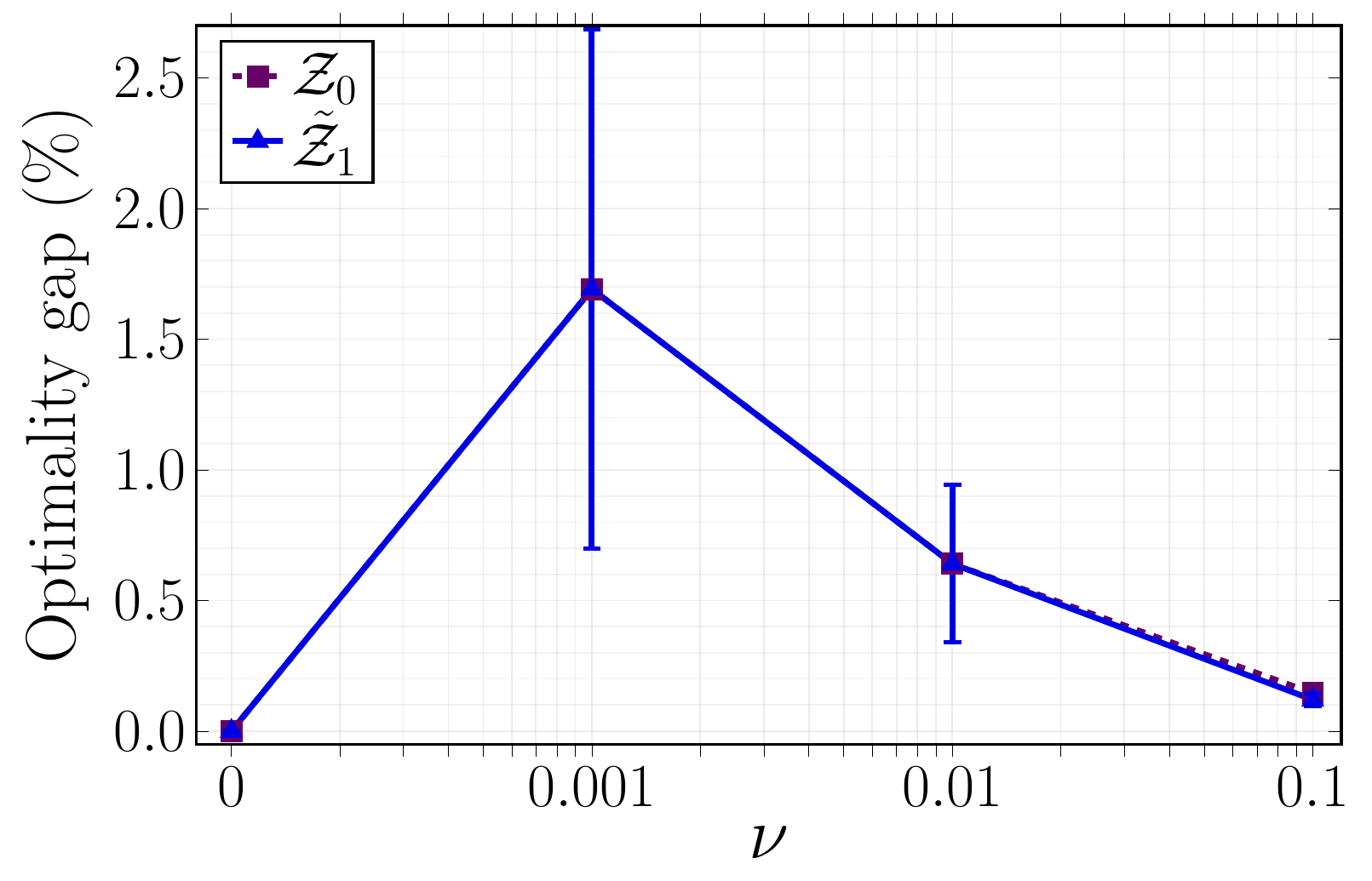}
        \caption{$N = 10$}\label{fig:gaps_c33_N10}
    \end{subfigure}\hfil
    \begin{subfigure}[b]{0.32\linewidth}
        \includegraphics[width=\linewidth]{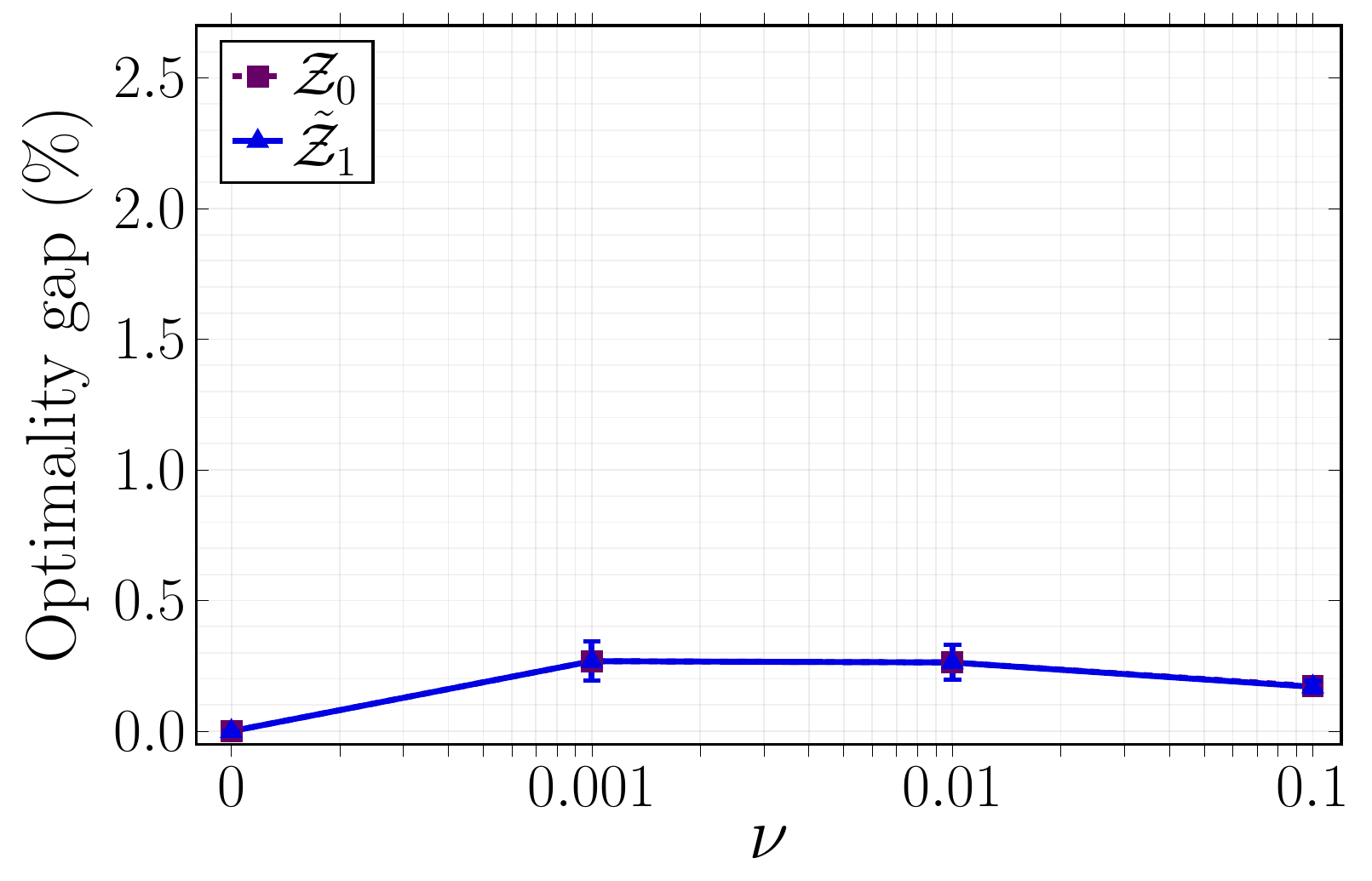}
        \caption{$N = 100$}\label{fig:gaps_c33_N100}
    \end{subfigure}\hfil
    \begin{subfigure}[b]{0.32\linewidth}
        \includegraphics[width=\linewidth]{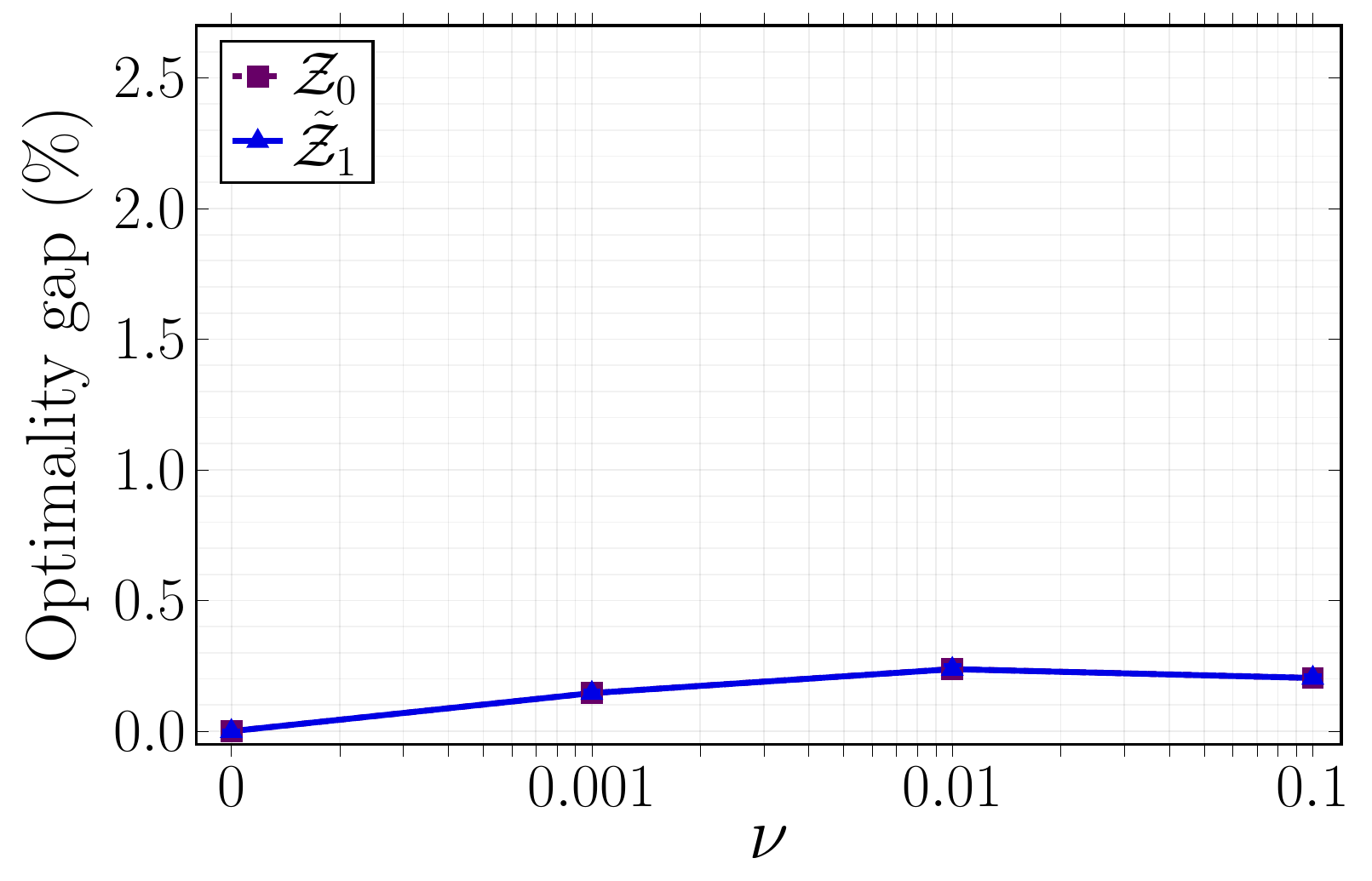}
        \caption{$N = 1000$}\label{fig:gaps_c33_N1000}
    \end{subfigure}
    \caption{\re{Optimality gaps using the continuous relaxation $\mathcal{Z}^0$ and the heuristically computed level-1 Lov{\'a}sz-Schrijver relaxation $\tilde{\mathcal{Z}}^1$, as a function of $\nu$ and $N$ where $\varepsilon = \nu \sqrt{N^{-1}\log(N+1)}$.}\label{fig:gaps_c33}}
\end{figure}

Figure~\ref{fig:time_c33} shows that both relaxations have small computation times ($<60$ seconds on average) across all $N$ and $\nu$.
When compared with the column-and-constraint generation scheme, the relative difference in their computation times is minor for small sample sizes $N$ but increases significantly for large sample sizes, where the column-and-constrain generation method can be slower by more than a factor of 10.
Similar to the Benders scheme, the mixed-integer subproblems in the column-and-constraint generation scheme can cause slow convergence, and roughly 3\% of its runs did not terminate within 10~minutes.

\begin{figure}[!htb]
    \centering
    \begin{subfigure}[b]{0.32\linewidth}
        \includegraphics[width=\linewidth]{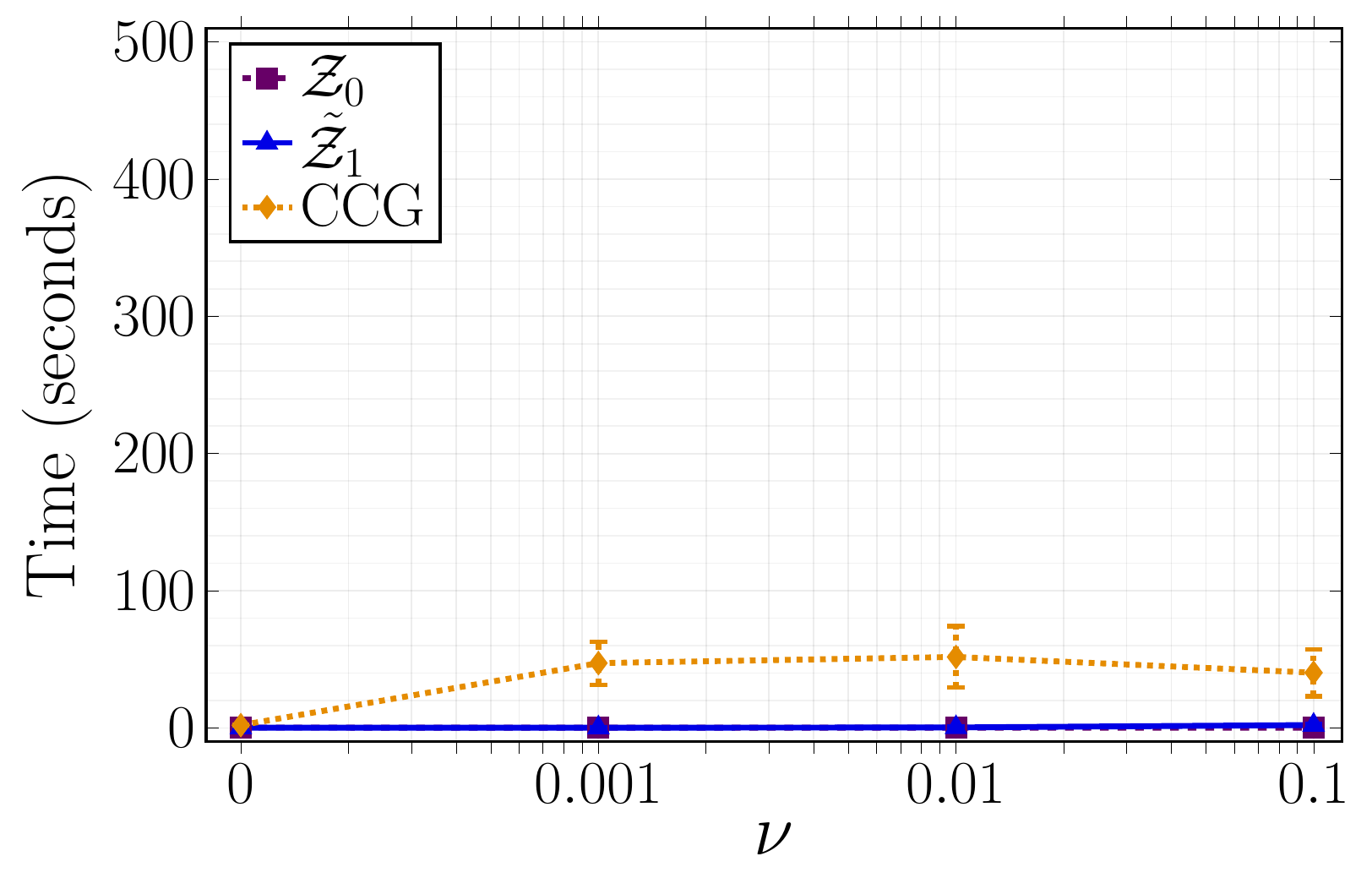}
        \caption{$N = 10$}\label{fig:time_c33_N10}
    \end{subfigure}\hfil
    \begin{subfigure}[b]{0.32\linewidth}
        \includegraphics[width=\linewidth]{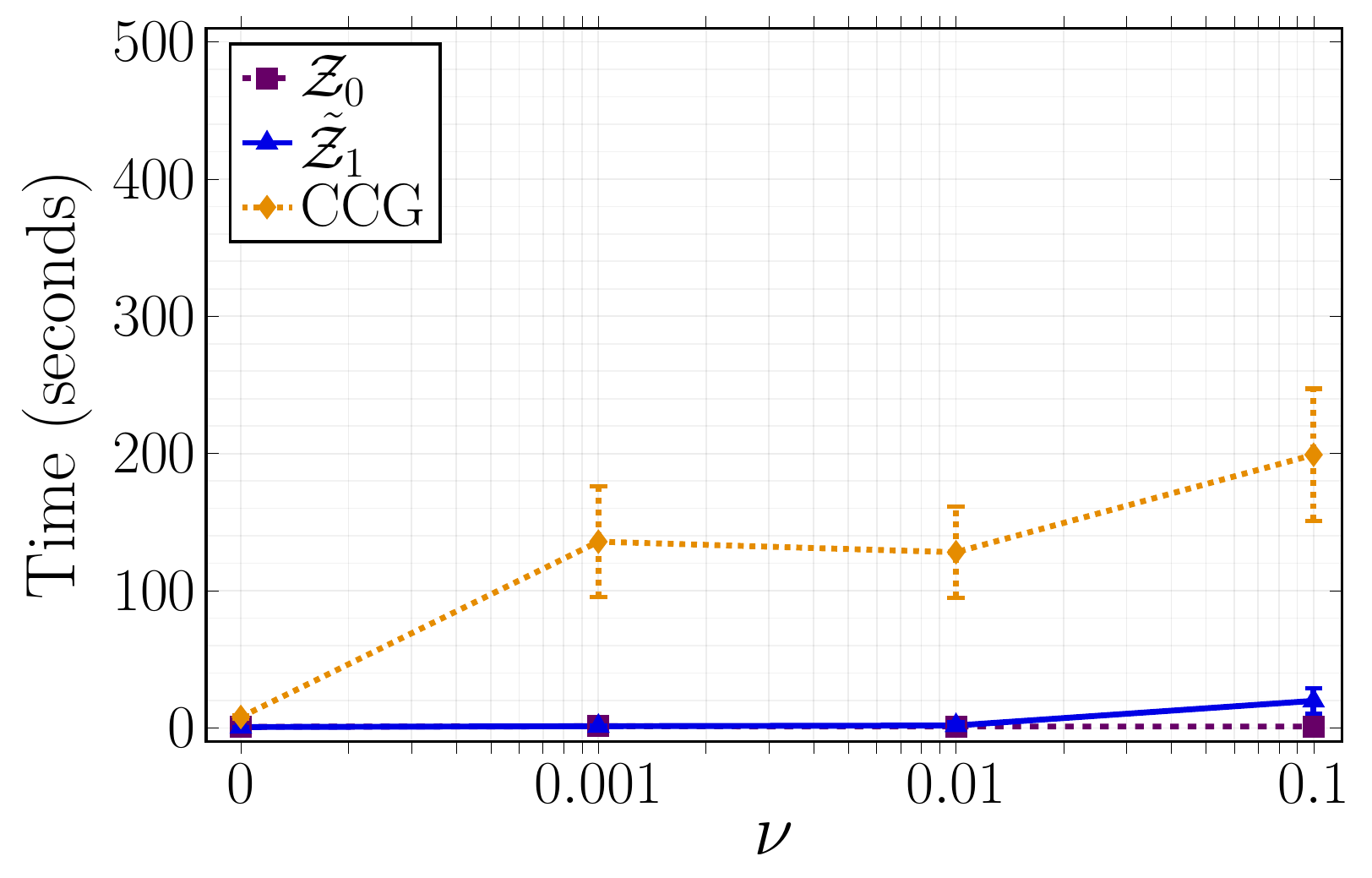}
        \caption{$N = 100$}\label{fig:time_c33_N100}
    \end{subfigure}\hfil
    \begin{subfigure}[b]{0.32\linewidth}
        \includegraphics[width=\linewidth]{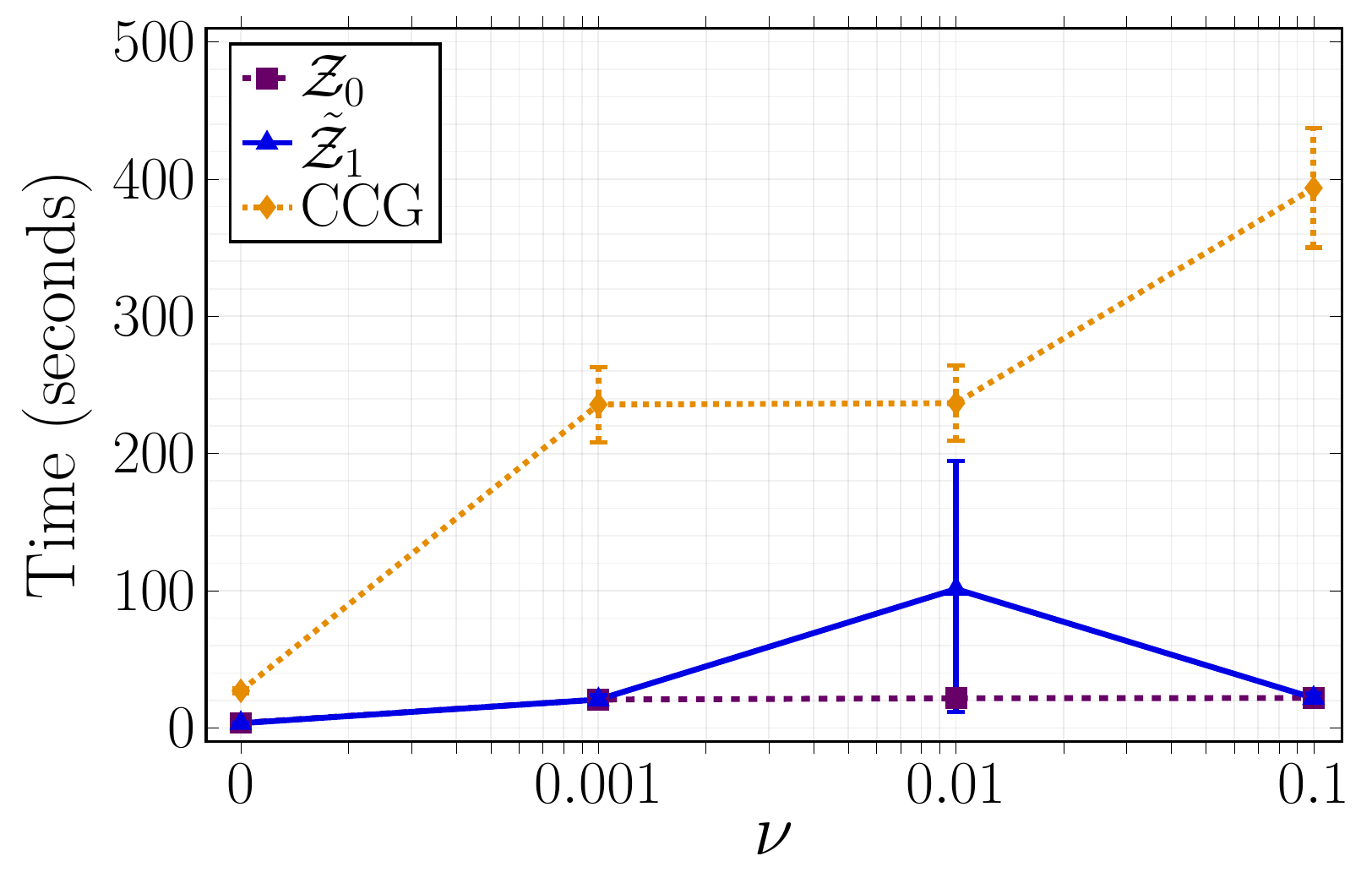}
        \caption{$N = 1000$}\label{fig:time_c33_N1000}
    \end{subfigure}
    \caption{\re{Computation times using the continuous $\mathcal{Z}^0$ and heuristically computed level-1 Lov{\'a}sz-Schrijver relaxation $\tilde{\mathcal{Z}}^1$ for solving formulation~\eqref{eq:two_stage_dro_reform}, and using the column-and-constraint generation scheme for solving formulation~\eqref{eq:GK}, as a function of $\nu$ and $N$, where $\varepsilon = \nu \sqrt{N^{-1}\log(N+1)}$}.\label{fig:time_c33}}
\end{figure}

\subsubsection{Out-of-sample performance and finite sample guarantee}

Similar to Section~\ref{sec:results_out_of_sample}, we estimate the out-of-sample performance of the first-stage solutions of the lift-and-project and sample average approximations for different sample sizes $N$ and Wasserstein radii $\varepsilon = \nu \sqrt{N^{-1}\log(N+1)}$, where $\nu \in \{0, 10^{-3}, 10^{-2}, 10^{-1}\}$.
The results are summarized in Figure~\ref{fig:c33}.

First, Figure~\ref{fig:reliability_c33} reports the {reliability} of the level-1 Lov{\'a}sz-Schrijver relaxation $\tilde{\mathcal{Z}}^1$, which is the empirical probability (over the 100 sets of training samples) that its optimal value is an upper bound on its out-of-sample cost.
We observe that the reliability increases not only with increasing values of $\nu$ (for fixed values of $N$) but also with increasing values of $N$ (for fixed values of $\nu$).
The choice $\nu = 10^{-1}$ is reliable with probability 0.8 for training datasets of small size $N$ and this increases to $>0.9$ for large values of $N$.

Figure~\ref{fig:outOfSample_c33} reports the relative improvement in out-of-sample cost of the distributionally robust model over the sample average approximation, over the 100 independent sets of training samples.
As before, we find that the distributionally robust model consistently outperforms the sample average approximation, particularly for small sample sizes $N$, where the magnitude of the relative improvement can be as high as 7\%, but decreases for large values of $N$.

Finally, Figure~\ref{fig:improvement_c33} reports the improvement in the out-of-sample performance of the level-1 Lov{\'a}sz-Schrijver relaxation $\tilde{\mathcal{Z}}^1$ for the best choice of $\nu = 10^{-1}$, relative to the continuous relaxation $\mathcal{Z}^0$ for the same $\nu$.
The relative improvement of the level-1 relaxation $\tilde{\mathcal{Z}}^1$ over the continuous relaxation $\mathcal{Z}^0$ is small yet consistently non-negative.
This is not unexpected since the latter already has tight in-sample optimality gaps, as can be seen from Figure~\ref{fig:gaps_c33}.
Nevertheless, we expect the relative improvements to be instance-dependent similar to optimal power flow, and even a few percentage points can result in long-term economic benefits.

\begin{figure}[!htbp]
    \centering
    \begin{subfigure}[b]{0.32\linewidth}
        \includegraphics[width=\linewidth]{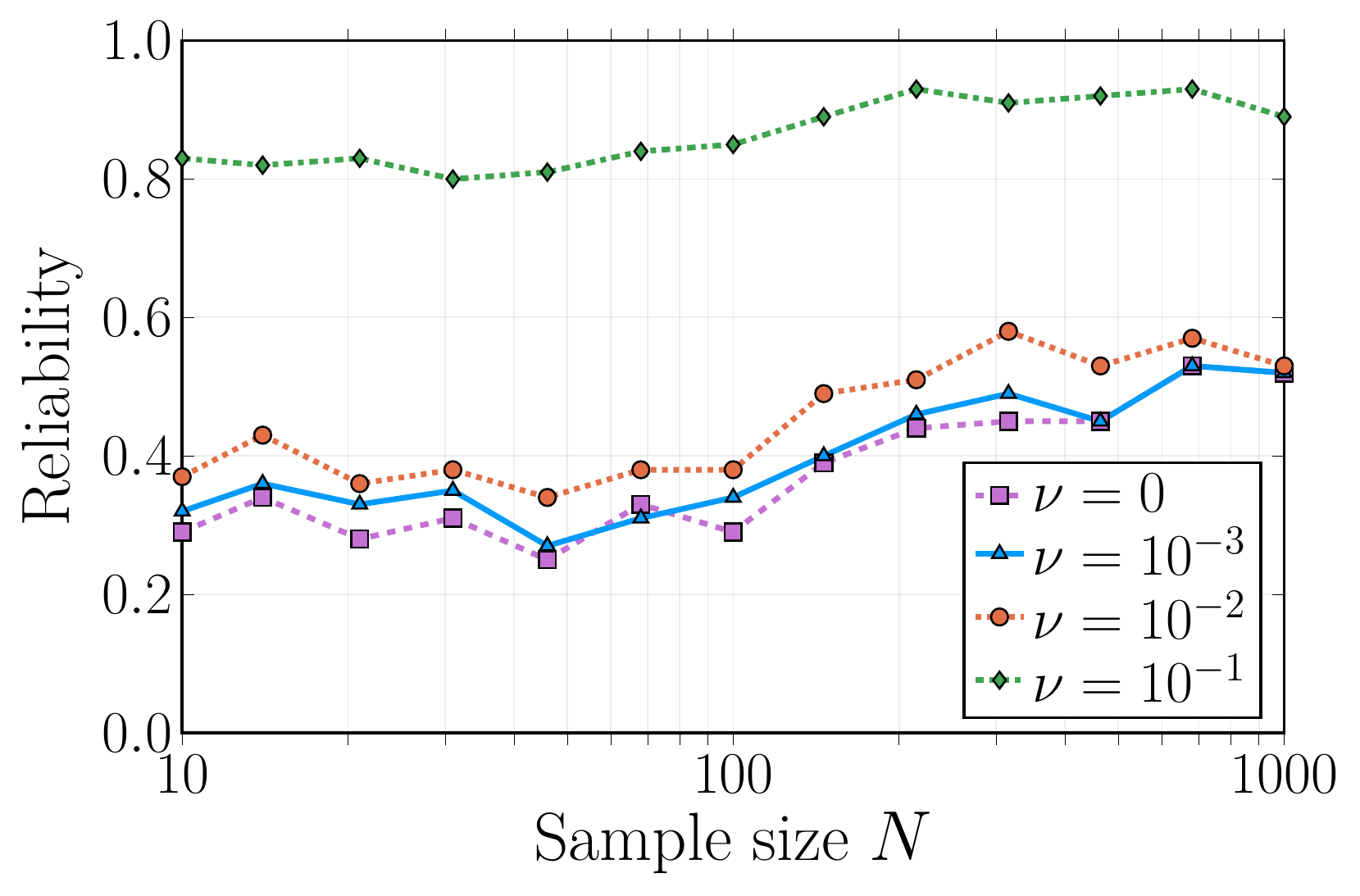}
        \caption{Reliability}\label{fig:reliability_c33}
    \end{subfigure}\hfil
    \begin{subfigure}[b]{0.32\linewidth}
        \includegraphics[width=\linewidth]{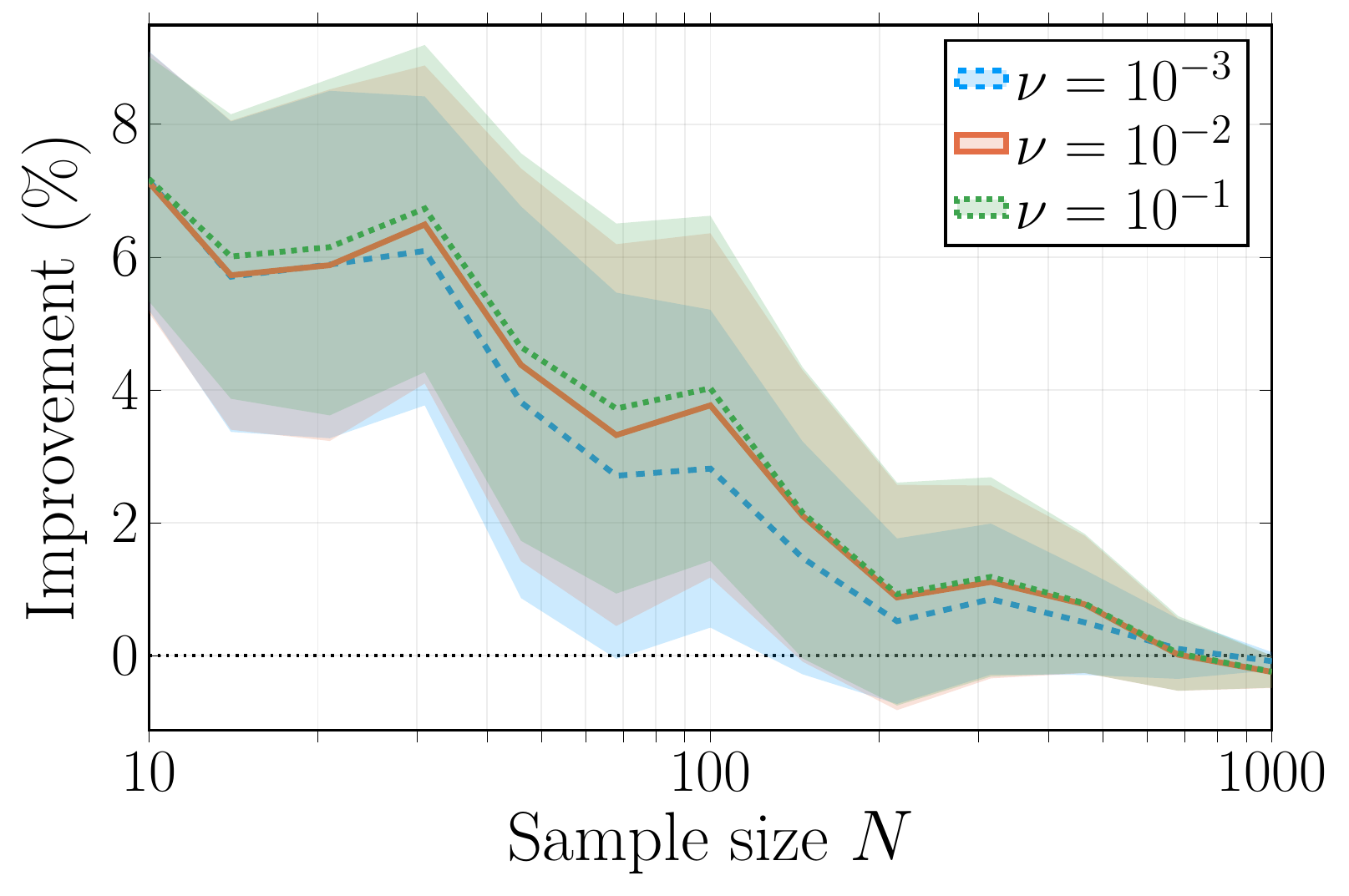}
        \caption{Out-of-sample performance}\label{fig:outOfSample_c33}
    \end{subfigure}\hfil
    \begin{subfigure}[b]{0.32\linewidth}
        \includegraphics[width=\linewidth]{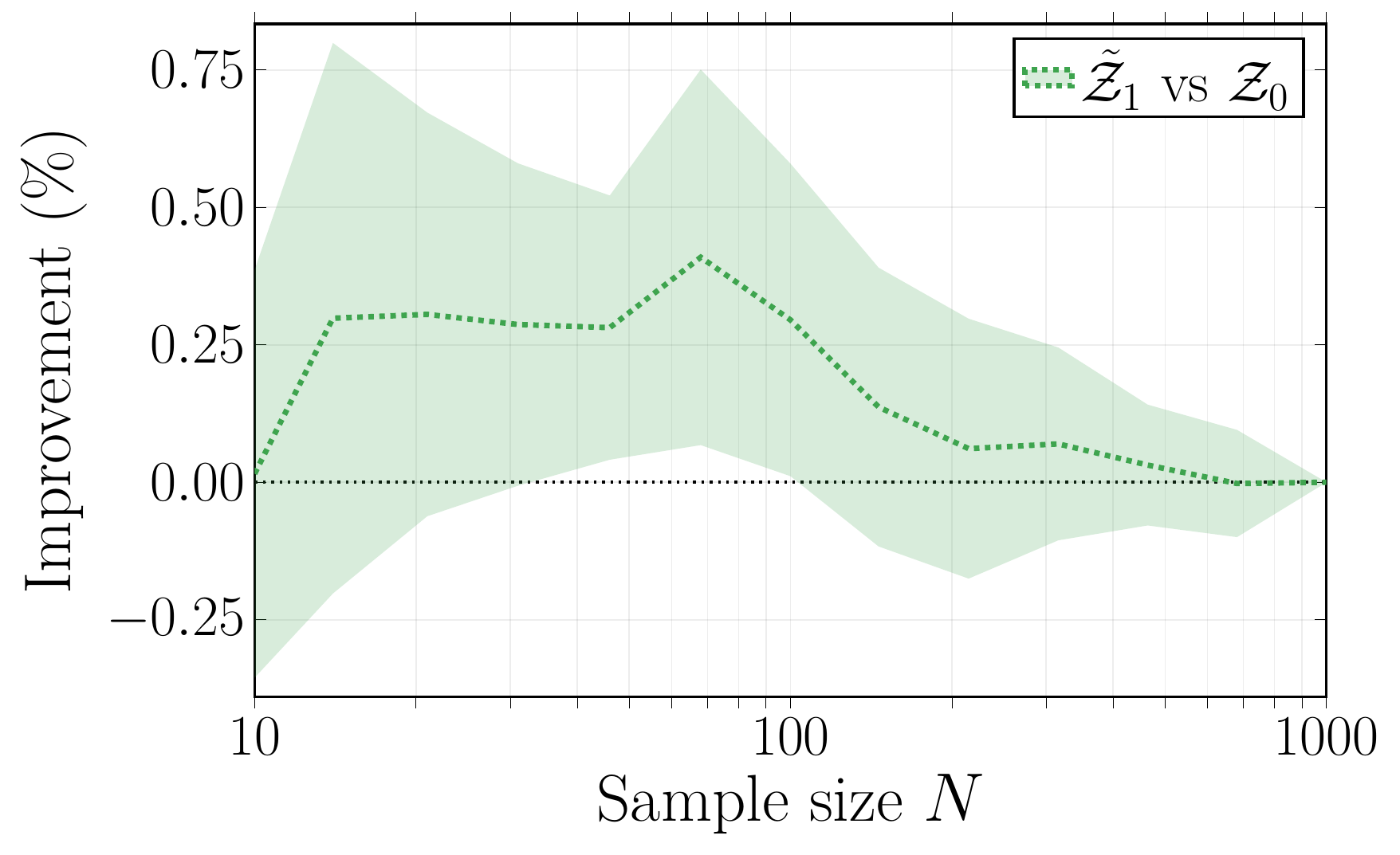}
        \caption{Relative improvement}\label{fig:improvement_c33}
    \end{subfigure}
    \caption{\re{Reliability (left plot), and relative improvements in the out-of-sample performance of the level-1 Lov{\'a}sz-Schrijver relaxation $\tilde{\mathcal{Z}}^1$ when compared with the sample average approximation (middle plot), and the continuous relaxation $\mathcal{Z}^0$ (right plot), as a function of training sample size $N$.}\label{fig:c33}}
\end{figure}

We conclude this section by noting that \citet{xie2019} address problems where $\mathcal{Q}(\bm{x}, \bm{\xi})$ is the optimal value of a linear program, and the ambiguity set $\mathcal{P}$ is the type-${\infty}$ Wasserstein ball which is closely related to yet distinct from \eqref{eq:ambiguity_set}.
They provide formulations that can serve as an alternative means to generate first-stage decisions.
Interestingly, when applied to the multi-commodity network design problem, they reduce to one of the following: \textit{(i)} sample average approximation ($\varepsilon < 1$) or deterministic problem under the worst-case realization where all nodes have failed ($\varepsilon \geq 1$), when $d$ is induced by the $\infty$-norm \citep[Theorem~4]{xie2019}, or \textit{(ii)} classical robust optimization~\eqref{eq:two_stage_ro} with $K=1$ when $d$ is induced by the $p$-norm with $p \in [1, \infty)$ and $\varepsilon < \sqrt[p]{2}$ \citep[Theorem~6]{xie2019}.
}

\section{Conclusions}\label{sec:conclusions}

Despite their ubiquity in real-world networks, optimization problems affected by rare high-impact uncertainties have not received much attention.
This is partly because of the lack of available data given their rare nature, and partly because of the incapability of classical sample average approximations to address them effectively.
This paper takes a step toward addressing these limitations by motivating a distributionally robust approach to the problem using Wasserstein ambiguity sets.
Notably, we extend the state of the art in data-driven optimization by encompassing not only two-stage conic problems but also high-dimensional discrete uncertainties.
By exploiting ideas from nonlinear penalty methods and lift-and-project techniques in global optimization, we present a simple, tractable, and tight approximation of the problem that can be efficiently computed and iteratively improved.
We use our method to tackle optimal power flow problems with random transmission line outages \re{and multi-commodity network design problems with random node failures}.
We find that the method can strongly outperform classical sample average and \re{robust optimization} approaches, especially when failures are rare but can lead to high costs associated with loss of electric power or commodity flows.

\section*{Acknowledgments}
This material is based upon work supported in part by the U.S. Department of Energy, Office of Science and Office of Electricity Delivery \& Energy Reliability, Advanced Grid Research and Development (under contract number DE-AC02-06CH11357), and in part by the Office of Naval Research (under Award number N00014-18-1-2075).

\begingroup
\setlength\bibitemsep{0pt}
\printbibliography
\endgroup

\newpage

\begin{appendices}

\section{Extended discussion of Example~1}\label{sec:examaple_1}

In Section~\ref{sec:example_1_optimal_solution_true}, we analyze the optimal solution of the network optimization problem in Example~\ref{ex:network} under the true failure distribution.
In Section~\ref{sec:example_1_optimal_solution_saa_dro}, we analyze the solutions of the sample average and distributionally robust formulations when the empirical distribution puts all its weight on the realization $\bm{\xi} = \bm{0}$, where none of the nodes fail.

\subsection{Optimal solution under the true distribution}\label{sec:example_1_optimal_solution_true}
Problem symmetry implies that the optimal arc capacities have the structure shown in Figure~\ref{figure:network_flow_optimal_structure}.
Table~\ref{table:example_1_detailed} presents a calculation of the second-stage loss function $\mathcal{Q}(\bm{x}, \bm{\xi})$ for each $\bm{\xi} \in \{0, 1\}^3$.
The optimal arc capacities can then be determined by solving the two-dimensional piecewise linear convex optimization problem (note that $0 < \epsilon \ll 1$ can be any small positive constant):
\[
\mathop{\text{minimize}}_{\bm{x} \in \mathbb{R}^2_{+}} \; 4x_1 + (4 - \epsilon)x_2 + \sum_{\bm{\xi} \in \{0, 1\}^3} \mathbb{P}[\tilde{\bm{\xi}} = \bm{\xi}] \mathcal{Q}(\bm{x}, \bm{\xi})
\]
A straightforward calculation shows that the objective function minimized at $(x_1, x_2) = (100, 100)$.

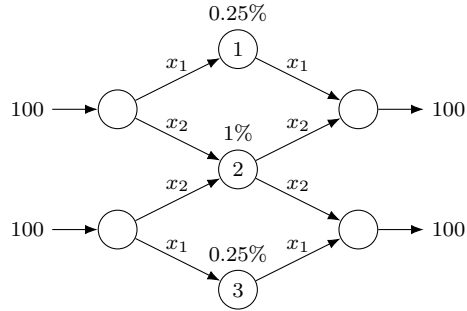
\begin{figure}[!htb]
    \centering
        \begin{tikzpicture}[scale=0.8,%
            every label/.append style={rectangle, font=\scriptsize},
            ed/.style = {-Latex},%
            cr/.style = {circle, draw, minimum size = 0.5},%
            crX/.style = {cr,pattern={north east lines},pattern color=gray!80}]
            \scriptsize
            \node[cr]                   (1) at (0,  1) {\phantom{1}};
            \node[cr]                   (2) at (0, -1) {\phantom{2}};
            \node[cr,label={$0.25\%$}] (3) at (2,  2) {1};
            \node[cr,label={$1\%$}]    (4) at (2,  0) {2};
            \node[cr,label={$0.25\%$}] (5) at (2, -2) {3};
            \node[cr]                   (6) at (4,  1) {\phantom{6}};
            \node[cr]                   (7) at (4, -1) {\phantom{7}};
            \node (o1) at (-1.5,  1) {100};
            \node (o2) at (-1.5, -1) {100};
            \node (d6) at ( 5.5,  1) {100};
            \node (d7) at ( 5.5, -1) {100};
            \draw[ed] (o1) -- (1);
            \draw[ed] (o2) -- (2);
            \draw[ed] (1) --node[above] {$x_1$} (3);
            \draw[ed] (1) --node[above] {$x_2$} (4);
            \draw[ed] (2) --node[above] {$x_2$} (4);
            \draw[ed] (2) --node[above] {$x_1$} (5);
            \draw[ed] (3) --node[above] {$x_1$} (6);
            \draw[ed] (4) --node[above] {$x_2$} (6);
            \draw[ed] (4) --node[above] {$x_2$} (7);
            \draw[ed] (5) --node[above] {$x_1$} (7);
            \draw[ed] (6) -- (d6);
            \draw[ed] (7) -- (d7);
        \end{tikzpicture}
    \caption{The structure of the optimal solution, with arc capacities $x_1$ and $x_2$ indicated above each arc.\label{figure:network_flow_optimal_structure}}
\end{figure}

\begin{table}[!htbp]
    \caption{The second-stage loss function $\mathcal{Q}(\bm{x}, \bm{\xi})$ for each $\bm{\xi} \in \{0, 1\}^3$. Here, we use $q = 1000$ to denote the penalty cost per unit of supply shortfall, and we use $[\cdot]_{+} \coloneqq \max\{\cdot, 0\}$.\label{table:example_1_detailed}}
    \begin{tabularx}{0.9\textwidth}{cccC}
            \toprule
            $\bm{\xi} = (\xi_1, \xi_2, \xi_3)$  & $\norm{\bm{\xi}}_1$ & $\mathbb{P}[\tilde{\bm{\xi}} = \bm{\xi}]$ & $\mathcal{Q}(\bm{x}, \bm{\xi})$  \\ \midrule
            $(1,1,1)$ & $3$ & $6.3 \times 10^{-8}$ & $200q$ \\
            $(1,0,1)$ & $2$ & $6.2 \times 10^{-6}$ & $2[100 - x_2]_{+}q$ \\
            $(1,1,0)$ & $2$ & $2.5 \times 10^{-5}$ & $\left(100 + [100 - x_1]_{+}\right)q$ \\
            $(0,1,1)$ & $2$ & $2.5 \times 10^{-5}$ & $\left(100 + [100 - x_1]_{+}\right)q$ \\
            $(1,0,0)$ & $1$ & $2.5 \times 10^{-3}$ & $\left(100 - x_2 + [100 - x_1 - x_2]_{+}\right)q$ \\
            $(0,0,1)$ & $1$ & $2.5 \times 10^{-3}$ & $\left(100 - x_2 + [100 - x_1 - x_2]_{+}\right)q$ \\
            $(0,1,0)$ & $1$ & $1.0 \times 10^{-2}$ & $2[100 - x_1]_{+}q$ \\
            $(0,0,0)$ & $0$ & $9.6 \times 10^{-1}$ & $2\left[100 - x_1 - x_2\right]_{+}q$ \\
            \bottomrule
    \end{tabularx}
\end{table}

\subsection{Optimal solutions using the sample average and distributionally robust formulations}\label{sec:example_1_optimal_solution_saa_dro}
Suppose now that the empirical distribution $\hat{\mathbb{P}}_N = \delta_{\bm{0}}$; that is, it puts all its weight on the realization $\bm{\xi}=(0, 0, 0)$, where none of the nodes fail.
The problem symmetry carries forth to the sample average approximation, and its optimal solution has the same symmetry as Figure~\ref{figure:network_flow_optimal_structure}.
The solution can be determined by solving the following problem:
\[
\mathop{\text{minimize}}_{\bm{x} \in \mathbb{R}^2_{+}} \; 4x_1 + (4 - \epsilon)x_2 + \mathcal{Q}(\bm{x}, \bm{0})
\;=\;
\mathop{\text{minimize}}_{\bm{x} \in \mathbb{R}^2_{+}} \; 4x_1 + (4 - \epsilon)x_2 + 2\left[100 - x_1 - x_2\right]_{+}q
\]
Since $\epsilon > 0$, the objective function is minimized at $(x_1, x_2) = (0, 100)$; this is plotted in Figure~\ref{figure:network_flow_saa:a}.

Consider now the distributionally robust formulation~\eqref{eq:two_stage_dro} using a Wasserstein ambiguity set~\eqref{eq:ambiguity_set} of radius $\varepsilon > 0$, and defined with respect to the metric $d(\bm{\xi}, \bm{\xi'}) = \norm{\bm{\xi} - \bm{\xi'}}_1$.
Again, problem symmetry leads to the same optimal solution structure as Figure~\ref{figure:network_flow_optimal_structure}.
The optimal solution can be determined by solving formulation~\eqref{eq:GK}:
\[
\mathop{\text{minimize}}_{(x_1, x_2, \alpha) \in \mathbb{R}^3_{+}} \; 4x_1 + (4 - \epsilon)x_2 + \alpha \varepsilon + \max_{\bm{\xi} \in \{0, 1\}^3} \left\{\mathcal{Q}(\bm{x}, \bm{\xi}) - \alpha \norm{\bm{\xi}}_1 \right\}
\]
For any $\varepsilon \in (0, 3)$, the minimizer is $(x_1, x_2, \alpha) = (\frac23 100, \frac13 100, \frac23 100q)$; this is plotted in Figure~\ref{figure:network_flow_dro:a}.

If we change the definition of the metric to be $d(\bm{\xi}', \bm{\xi}'') = 1$ whenever $\bm{\xi}' \neq \bm{\xi}''$ and $0$ otherwise, then the ambiguity set is equivalent to the $\phi$-divergence ambiguity set based on the {total variation distance}.
The corresponding optimal solution can be determined by solving formulation~\eqref{eq:GK}:
\[
\mathop{\text{minimize}}_{(x_1, x_2, \alpha) \in \mathbb{R}^3_{+}} \; 4x_1 + (4 - \epsilon)x_2 + \alpha \varepsilon + \max\left\{ \mathcal{Q}(\bm{x}, \bm{0}), \max_{\bm{\xi} \in \{0, 1\}^3\setminus \{\bm{0}\}} \mathcal{Q}(\bm{x}, \bm{\xi}) - \alpha \right\}
\]
The innermost maximum is attained at $\bm{\xi} = \one = (1, 1, 1)$ irrespective of $\varepsilon, \bm{x}, \alpha$.
Therefore, the outer maximum becomes 
$
\max\left\{ \mathcal{Q}(\bm{x}, \bm{0}), \mathcal{Q}(\bm{x}, \one) - \alpha \right\}
$
and it is minimized at its break-point where $\mathcal{Q}(\bm{x}, \bm{0}) = \mathcal{Q}(\bm{x}, \one) - \alpha$.
The problem then becomes equivalent to the sample average approximation
\[
\mathop{\text{minimize}}_{(x_1, x_2, \alpha) \in \mathbb{R}^3_{+}} \; 4x_1 + (4 - \epsilon)x_2 + \alpha \varepsilon + \mathcal{Q}(\bm{x}, \bm{0})
\]
and its minimum is attained at $(x_1, x_2, \alpha) = (0, 100, 0)$ for any $\varepsilon \geq 0$.

\section{Complexity analysis}\label{sec:complexity}

The feasible region of the inner optimization problem in Theorem~\ref{thm:convex_reformulation} is the convex hull of an MICP-representable set.
This allows us to exploit existing tools on characterizing the convex hulls of MICP sets.
Section~\ref{sec:tractable_cases} highlights several problem instances (i.e., sufficient conditions) where this convex hull is efficiently computable and hence the distributionally robust two-stage problem~\eqref{eq:two_stage_dro} is tractable.
In general, however, the presence of discrete random parameters makes the problem intractable even when the second-stage problem $\mathcal{Q}(\bm{x}, \bm{\xi})$ is benign.
This is proved in Section~\ref{sec:complexity_analysis}.

\subsection{Tractable cases}\label{sec:tractable_cases}
We preface this subsection by noting that any notion of tractability of the distributionally robust two-stage problem~\eqref{eq:two_stage_dro} requires that optimizing $c(\bm{x})$ over $\bm{x} \in \mathcal{X}$ can be done in a tractable manner. We therefore assume this to be the case throughout this subsection.

Suppose that $\Xi = \{\bm{v}^{(1)}, \ldots, \bm{v}^{(K)}\}$ has a known inner description, where the number of vertices $K$ of $\Xi$ grows polynomially with the dimension $M$.
For example, this is the case of budget supports of the form $\Xi = \{\bm{\xi} \in \{0, 1\}^M: \bm{e}^\top \bm{\xi} \leq k\}$, where $k$ is a small, fixed input (say, $\leq 3$) and $K = O(M^k)$.
Now, if $\bm{\xi}$ is fixed to one of $\bm{v}^{(k)}$, $k \in [K]$, then the set $\mathcal{Z}_i$ in~\eqref{eq:Z_set_definition} becomes a closed convex set.
In other words, $\mathcal{Z}_i$ is the union of $K$ closed convex sets, and its convex hull can be described compactly.
\begin{proposition}
    Assume $\Xi = \{\bm{v}^{(1)}, \ldots, \bm{v}^{(K)}\}$, where $K$ is a polynomial function of $M$.
    Then, the distributionally robust two-stage problem~\eqref{eq:two_stage_dro} is equivalent to the following tractable problem:
    \begin{equation*}
    \begin{array}{l@{\;\;}l}
    \displaystyle\mathop{\text{minimize}} & \displaystyle c(\mb{x}) + \alpha \varepsilon + \frac{1}{N}\sum_{i = 1}^N \sigma_i \\
    \text{subject to} & \mb{x} \in \mathcal{X}, \; \alpha \geq 0 \\
    & \begin{rcases}
    \sigma_i \in \mathbb{R}, \; \bm{y}^{(k)} \in \mathcal{Y} \\
    \sigma_i \geq \bm{q}(\bm{v}^{(k)})^\top \bm{y}^{(k)} - \alpha d(\bm{v}^{(k)}, \hat{\mb{\xi}}^{(i)}) \\
    \bm{W}(\bm{v}^{(k)}) \bm{y}^{(k)} \geq \bm{T}(\bm{x}) \bm{v}^{(k)} + \bm{h}(\bm{x})
    \end{rcases} \; \forall k \in [K], i \in [N].
    \end{array}
    \end{equation*}
\end{proposition}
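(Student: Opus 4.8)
The plan is to start from the reformulation established in Lemma~\ref{prop:GK}, namely
\[
\mathop{\text{minimize}}_{\bm{x} \in \mathcal{X}, \alpha \geq 0} \; c(\bm{x}) + \alpha \varepsilon + \frac{1}{N}\sum_{i=1}^N \max_{\bm{\xi} \in \Xi}\left\{ \mathcal{Q}(\bm{x}, \bm{\xi}) - \alpha d(\bm{\xi}, \hat{\bm{\xi}}^{(i)})\right\},
\]
and simply exploit the fact that $\Xi$ is now an explicitly listed finite set $\{\bm{v}^{(1)}, \ldots, \bm{v}^{(K)}\}$ with $K$ polynomial in $M$. First I would replace each inner maximization $\max_{\bm{\xi} \in \Xi}$ by an explicit maximum over the $K$ vertices, so the $i$-th inner term becomes $\max_{k \in [K]} \{ \mathcal{Q}(\bm{x}, \bm{v}^{(k)}) - \alpha d(\bm{v}^{(k)}, \hat{\bm{\xi}}^{(i)})\}$. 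Introducing an epigraph variable $\sigma_i \in \mathbb{R}$ for this maximum, I would rewrite the objective as $c(\bm{x}) + \alpha\varepsilon + \frac1N\sum_i \sigma_i$ subject to $\sigma_i \geq \mathcal{Q}(\bm{x}, \bm{v}^{(k)}) - \alpha d(\bm{v}^{(k)}, \hat{\bm{\xi}}^{(i)})$ for all $k \in [K]$, $i \in [N]$.

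The next step is to eliminate the implicit minimization inside $\mathcal{Q}(\bm{x}, \bm{v}^{(k)})$. Since $\mathcal{Q}(\bm{x}, \bm{\xi}) = \inf_{\bm{y} \in \mathcal{Y}}\{ \bm{q}(\bm{\xi})^\top \bm{y} : \bm{W}(\bm{\xi})\bm{y} \geq \bm{T}(\bm{x})\bm{\xi} + \bm{h}(\bm{x})\}$ appears on the \emph{right-hand side} of the constraint $\sigma_i \geq \mathcal{Q}(\bm{x},\bm{v}^{(k)}) - \alpha d(\bm{v}^{(k)}, \hat{\bm{\xi}}^{(i)})$, the infimum can be pulled out: the constraint $\sigma_i \geq \inf_{\bm{y}}\{\cdots\} - (\text{const})$ holds if and only if there \emph{exists} a feasible $\bm{y}^{(k)} \in \mathcal{Y}$ with $\bm{W}(\bm{v}^{(k)})\bm{y}^{(k)} \geq \bm{T}(\bm{x})\bm{v}^{(k)} + \bm{h}(\bm{x})$ and $\sigma_i \geq \bm{q}(\bm{v}^{(k)})^\top \bm{y}^{(k)} - \alpha d(\bm{v}^{(k)}, \hat{\bm{\xi}}^{(i)})$; here I would invoke assumptions (A1)--(A2) to guarantee that $\mathcal{Q}(\bm{x}, \bm{v}^{(k)})$ is finite and its infimum is attained, so the existential rewrite is lossless. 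Introducing one copy $\bm{y}^{(k)}$ of the second-stage variables per vertex $k$ (and per $i$, as written) yields exactly the claimed formulation. Finally I would argue tractability: the quantities $\bm{q}(\bm{v}^{(k)})$, $\bm{W}(\bm{v}^{(k)})$, $\bm{T}(\bm{x})\bm{v}^{(k)}$, $\bm{h}(\bm{x})$ are affine or convex in $\bm{x}$ (per the standing assumptions on $\bm{T}, \bm{h}$), the cone $\mathcal{Y}$ is proper, $\mathcal{X}$ admits tractable optimization by the preface to the subsection, and $d(\bm{v}^{(k)}, \hat{\bm{\xi}}^{(i)})$ is a precomputable constant; the total number of variables and constraints is $O(NK \cdot \text{poly})$, which is polynomial in the input size because $K$ is polynomial in $M$.

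The main obstacle — though a mild one — is justifying the interchange that moves the inner $\inf_{\bm{y}}$ into an existential constraint while preserving equivalence of \emph{optimal values}. One has to check that at an optimal $(\bm{x}, \alpha)$ the epigraph constraints are tight in the right way, i.e. that choosing $\bm{y}^{(k)}$ to be a minimizer of $\mathcal{Q}(\bm{x}, \bm{v}^{(k)})$ is both feasible and objective-optimal for the enlarged problem, and conversely that any feasible point of the enlarged problem gives an upper bound consistent with the original. This is routine once (A1)--(A2) are in hand (they rule out $\mathcal{Q} = \pm\infty$ and ensure attainment), but it is the one place where the argument is not purely syntactic. A secondary point worth a sentence is that the reformulation in the proposition carries an index $i$ on $\bm{y}^{(k)}$; since $\bm{q}, \bm{W}, \bm{T}, \bm{h}$ do not depend on $i$, one could share a single $\bm{y}^{(k)}$ across all $i$, but keeping the $i$-dependent copies is harmless and matches the stated formulation.
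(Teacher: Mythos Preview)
Your proposal is correct and follows essentially the same route as the paper: invoke Lemma~\ref{prop:GK}, replace $\max_{\bm{\xi}\in\Xi}$ by $\max_{k\in[K]}$, introduce epigraph variables $\sigma_i$, and then represent each $\mathcal{Q}(\bm{x},\bm{v}^{(k)})$ by an explicit minimizer $\bm{y}^{(k)}$. Your treatment is in fact more careful than the paper's (you explicitly justify attainment via (A1)--(A2) and spell out the tractability count), and your observation that a single $\bm{y}^{(k)}$ can be shared across all $i$ is correct and matches how the paper writes the formulation.
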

\begin{proof}
    From Lemma~\ref{prop:GK} and the stated hypothesis, problem~\eqref{eq:two_stage_dro} is equivalent to 
    \begin{equation*}
    \begin{array}{l@{\;\;\;}l}
    \phantom{=}\displaystyle\mathop{\text{minimize}}_{\mb{x} \in \mathcal{X}, \alpha \geq 0} & \displaystyle c(\mb{x}) + \alpha \varepsilon + \frac{1}{N}\sum_{i = 1}^N \max_{k \in [K]} \left\{ \mathcal{Q}(\mb{x}, \bm{v}^{(k)}) - \alpha d(\bm{v}^{(k)}, \hat{\mb{\xi}}^{(i)}) \right\}.
    \end{array}
    \end{equation*}
    We can then introduce the epigraphical variables $\sigma_i$, $i \in [N]$ to model the $i$th inner maximization in the above sum.
    The stated reformulation then follows after introducing second-stage variables $\bm{y}^{(k)}$ to represent a minimizer of $\mathcal{Q}(\bm{x}, \bm{v}^{(k)})$, $k \in [K]$.
\end{proof}

Suppose now that we have an outer description of $\Xi = \{\bm{\xi} \in \mathbb{Z}^M: \bm{E} \bm{\xi} \leq \bm{f}\}$
and the second-stage problem $\mathcal{Q}(\bm{x}, \bm{\xi})$ is a linear program $\mathcal{Y} = \mathbb{R}^{N_2}_{+}$ with objective uncertainty so that Corollary~\ref{coro:two_stage_dro_reform_obj_uncertainty} is applicable.
The next observation relies on the concept of an \textit{ideal formulation}~\citep{Padberg2001,Vielma2015:SIREV}.
A mixed-integer linear set $\{\bm{x} \in \mathbb{Z}^{p} \times \mathbb{R}^{n-p}: \bm{A}\bm{x} \leq \bm{b}\}$ is said to be \textit{ideal} if its convex hull has at least one vertex and it is equal to its linear relaxation.
We show that the convex hull reformulation~\eqref{eq:two_stage_dro_reform} becomes tractable if the set $\mathcal{Z}_i$ defined in~\eqref{eq:Z_set_definition_obj_unc} is mixed-integer linear and ideal.
\begin{proposition}\label{obs:ideal}
    Suppose that $\Xi = \{\xi \in \mathbb{Z}^M : \bm{E} \bm{\xi} \leq \bm{f}\}$ and $\mathcal{Q}(\bm{x}, \bm{\xi})$ is the optimal value of a linear program %
    with uncertain objective coefficients: $\bm{W}(\bm{\xi}) = \bm{W}_0$ and $\bm{T}(\bm{x}) = \bm{0}$.
    Suppose also that the mixed-integer linear formulation $\{(\bm{\xi}, \bm{\lambda}) \in \mathbb{Z}^M \times \mathbb{R}^L_{+}: \bm{E} \bm{\xi} \leq \bm{f}, \; \bm{W}_0^\top\bm{\lambda} -\bm{Q}\bm{\xi} \leq \bm{q}_0 \}$ is ideal.
    \begin{enumerate}
        \item[(i)] If the reference metric $d(\bm{\xi}, \bm{\xi'}) = \norm{\bm{\xi} - \bm{\xi'}}_1$ is the 1-norm, then the two-stage problem~\eqref{eq:two_stage_dro} is equivalent to the following tractable optimization problem:
        \begin{equation*}
        \begin{array}{l@{\;\;}l}
        \displaystyle\mathop{\text{minimize}} & \displaystyle c(\mb{x}) + \alpha \varepsilon + \frac{1}{N}\sum_{i = 1}^N \left[\bm{q}_0^\top \bm{y}^{(i)} + \bm{f}^\top \bm{\eta}^{(i)} - \alpha \one^\top \hat{\bm{\xi}}^{(i)}\right] \\
        \text{subject to} & \mb{x} \in \mathcal{X}, \; \alpha \geq 0 \\
        & \begin{rcases}
        \bm{y}^{(i)} \in \mathcal{Y}, \; \bm{\eta}^{(i)} \in \mathbb{R}^F_{+} \\
        -\bm{Q}^\top \bm{y}^{(i)} + \bm{E}^\top \bm{\eta}^{(i)} \geq -\alpha (\one - 2\hat{\bm{\xi}}^{(i)}) \\
        \bm{W}_0 \bm{y}^{(i)} \geq \bm{h}(\bm{x})
        \end{rcases} \; \forall i \in [N].
        \end{array}
        \end{equation*}
        
        \item[(ii)] If $\varepsilon \geq \max_{\bm{\xi}, \bm{\xi}' \in \Xi} d(\bm{\xi}, \bm{\xi}')$, then the two-stage (classical robust optimization) problem~\eqref{eq:two_stage_dro} is equivalent to the following tractable optimization problem:
        \begin{equation*}
        \begin{array}{l@{\;\;}l}
        \displaystyle\mathop{\text{minimize}} & \displaystyle c(\mb{x}) + \bm{q}_0^\top \bm{y} + \bm{f}^\top \bm{\eta} \\
        \text{subject to} & \displaystyle \mb{x} \in \mathcal{X}, \; \bm{y} \in \mathcal{Y}, \; \bm{\eta} \in \mathbb{R}^F_{+} \\
        & \displaystyle
        -\bm{Q}^\top \bm{y} + \bm{E}^\top \bm{\eta} \geq \bm{0} \\
        & \displaystyle
        \bm{W}_0 \bm{y} \geq \bm{h}(\bm{x})
        \end{array}
        \end{equation*}
    \end{enumerate}
\end{proposition}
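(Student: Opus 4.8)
The plan is to start from the objective-uncertainty convex hull reformulation of Corollary~\ref{coro:two_stage_dro_reform_obj_uncertainty}, eliminate the auxiliary variable $\tau$ using the zero-one structure of $\bm{\xi}$, and then invoke the ideal-formulation hypothesis together with linear programming duality. Since $\mathcal{Q}(\bm{x},\bm{\xi})$ is a linear program we have $\mathcal{Y}=\mathcal{Y}^*=\mathbb{R}^{N_2}_+$, so the conic membership in~\eqref{eq:Z_set_definition_obj_unc} becomes $\bm{W}_0^\top\bm{\lambda}-\bm{Q}\bm{\xi}\le\bm{q}_0$; and because the objective $\bm{h}(\bm{x})^\top\bm{\lambda}-\alpha\tau$ in~\eqref{eq:Z_function_definition_obj_unc} is linear, the maximum over $\conv{\mathcal{Z}_i}$ equals the maximum over $\mathcal{Z}_i$ itself. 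Next, for $\bm{\xi},\hat{\bm{\xi}}^{(i)}\in\{0,1\}^M$ one has the affine identity $\norm{\bm{\xi}-\hat{\bm{\xi}}^{(i)}}_1=(\one-2\hat{\bm{\xi}}^{(i)})^\top\bm{\xi}+\one^\top\hat{\bm{\xi}}^{(i)}$; since $d$ is the $1$-norm, the constraint $(\bm{\xi}-\hat{\bm{\xi}}^{(i)},\tau)\in\mathcal{C}^{M+1}$ is $\tau\ge\norm{\bm{\xi}-\hat{\bm{\xi}}^{(i)}}_1$, and this lower bound is always attained inside $\mathcal{Z}_i$ because the constraints on $\bm{\xi},\bm{\lambda}$ do not involve $\tau$. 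As $\alpha\ge0$, an optimal $\tau$ equals this lower bound, so
\[
Z_i(\bm{x},\alpha)= -\alpha\,\one^\top\hat{\bm{\xi}}^{(i)} + \max\Big\{\bm{h}(\bm{x})^\top\bm{\lambda}-\alpha(\one-2\hat{\bm{\xi}}^{(i)})^\top\bm{\xi} : (\bm{\xi},\bm{\lambda})\in\mathcal{Z}^\sharp\Big\},
\]
where $\mathcal{Z}^\sharp:=\{(\bm{\xi},\bm{\lambda})\in\Xi\times\mathbb{R}^L_+:\bm{W}_0^\top\bm{\lambda}-\bm{Q}\bm{\xi}\le\bm{q}_0\}$ is the same for every $i$.

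Now the objective above is linear in $(\bm{\xi},\bm{\lambda})$, and by hypothesis $\mathcal{Z}^\sharp$ is an ideal mixed-integer set, so the maximum over it coincides with the maximum over its linear relaxation $\{(\bm{\xi},\bm{\lambda})\in\mathbb{R}^M_+\times\mathbb{R}^L_+:\bm{E}\bm{\xi}\le\bm{f},\ \bm{W}_0^\top\bm{\lambda}-\bm{Q}\bm{\xi}\le\bm{q}_0\}$. This linear program is feasible --- every $\bm{\xi}\in\Xi$ admits a feasible dual vector $\bm{\lambda}$ by assumptions (A1)--(A2) --- and bounded, since the $\bm{\lambda}$ variables are bounded (as argued in Section~\ref{sec:linearization}) and $\Xi$ is compact; hence strong linear programming duality applies. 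Assigning multipliers $\bm{\eta}^{(i)}\ge\bm{0}$ to $\bm{E}\bm{\xi}\le\bm{f}$ and $\bm{y}^{(i)}\ge\bm{0}$ to $\bm{W}_0^\top\bm{\lambda}-\bm{Q}\bm{\xi}\le\bm{q}_0$, the dual reads $\min\{\bm{q}_0^\top\bm{y}^{(i)}+\bm{f}^\top\bm{\eta}^{(i)} : -\bm{Q}^\top\bm{y}^{(i)}+\bm{E}^\top\bm{\eta}^{(i)}\ge-\alpha(\one-2\hat{\bm{\xi}}^{(i)}),\ \bm{W}_0\bm{y}^{(i)}\ge\bm{h}(\bm{x})\}$, the first inequality arising from $\bm{\xi}\ge\bm{0}$ and the second from $\bm{\lambda}\ge\bm{0}$. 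Substituting this value of $Z_i(\bm{x},\alpha)$ into~\eqref{eq:two_stage_dro_reform} and pulling the $N$ independent inner minimizations into the outer one yields exactly the formulation in part~(i); it is tractable because $c$ is minimized over $\mathcal{X}$ tractably and the remaining data describe a polynomially sized convex program.

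For part~(ii), the hypothesis $\varepsilon\ge\max_{\bm{\xi},\bm{\xi}'\in\Xi}d(\bm{\xi},\bm{\xi}')$ puts the problem in the classical robust regime, so by Remark~\ref{rem:reduction_to_ro_and_sp} its optimal value equals that of~\eqref{eq:two_stage_dro_reform} with $\alpha$ fixed to $0$. Setting $\alpha=0$ in the part~(i) formula removes all dependence on $\hat{\bm{\xi}}^{(i)}$, so the $N$ inner blocks become identical and collapse into a single one, giving the stated program; equivalently, one may apply the same dualization directly to $\max_{\bm{\xi}\in\Xi}\mathcal{Q}_d(\bm{x},\bm{\xi})=\max\{\bm{h}(\bm{x})^\top\bm{\lambda}:(\bm{\xi},\bm{\lambda})\in\mathcal{Z}^\sharp\}$ after invoking idealness. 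I expect the main obstacle to be the $\tau$-elimination step: one must check that the affine identity for $\norm{\bm{\xi}-\hat{\bm{\xi}}^{(i)}}_1$ really holds componentwise over $\Xi\subseteq\{0,1\}^M$ and that the infimal $\tau$ is attained within $\mathcal{Z}_i$ (not merely over $\conv{\mathcal{Z}_i}$), after which idealness does the rest; confirming the feasibility and boundedness needed for strong LP duality is the only other point that requires care.
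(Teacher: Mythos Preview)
Your proposal is correct and follows essentially the same route as the paper: invoke Corollary~\ref{coro:two_stage_dro_reform_obj_uncertainty}, use the binary identity $\norm{\bm{\xi}-\hat{\bm{\xi}}^{(i)}}_1=(\one-2\hat{\bm{\xi}}^{(i)})^\top\bm{\xi}+\one^\top\hat{\bm{\xi}}^{(i)}$ to eliminate $\tau$ and obtain a sample-independent mixed-integer set, then apply the idealness hypothesis followed by strong LP duality; for part~(ii), set $\alpha=0$ via Remark~\ref{rem:reduction_to_ro_and_sp}. Your writeup is in fact slightly more explicit than the paper's in spelling out the dual multipliers and the feasibility/boundedness justification for strong duality.
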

\begin{proof}
    The conditions of this proposition allow us to use the result of Corollary~\ref{coro:two_stage_dro_reform_obj_uncertainty}.
    
    \textit{(i)} When the reference metric $d(\bm{\xi}, \bm{\xi'}) = \norm{\bm{\xi} - \bm{\xi'}}_1$ is the 1-norm, the condition $(\bm{\xi} - \hat{\bm{\xi}}^{(i)}, \tau) \in \mathcal{C}^{M+1}$ appearing in~\eqref{eq:Z_set_definition_obj_unc} is equivalent to $\tau \geq \norm{\bm{\xi} - \hat{\bm{\xi}}^{(i)}}_1$. If we exploit the fact that $\bm{\xi}$ and $\hat{\bm{\xi}}^{(i)}$ are both binary-valued, then the right-hand side of this inequality is equal to $(\one - 2\hat{\bm{\xi}}^{(i)})^\top \bm{\xi} + \one^\top \hat{\bm{\xi}}^{(i)}$ and hence, linear in $\bm{\xi}$.
    Since the objective function of~\eqref{eq:Z_function_definition_obj_unc} is linear in $\tau$ and $\tau$ only appears in a single constraint in $\mathcal{Z}_i$ that is linear in $\bm{\xi}$, we can reformulate~\eqref{eq:Z_function_definition_obj_unc}--\eqref{eq:Z_set_definition_obj_unc} as follows:
    \begin{align*}
    &\displaystyle Z_i(\mb{x}, \alpha) = 
    \mathop{\text{maximize}}_{(\bm{\xi}, \bm{\lambda}) \in \mathop{\mathrm{cl\,conv}}(\hat{\mathcal{Z}})}  \displaystyle \left\{\bm{h}(\bm{x})^\top \bm{\lambda}
    - \alpha \left((\one - 2\hat{\bm{\xi}}^{(i)})^\top \bm{\xi} + \one^\top \hat{\bm{\xi}}^{(i)}\right) \right\}, \\
    &\displaystyle \hat{\mathcal{Z}} = \left\{
    (\bm{\xi}, \bm{\lambda}) \in \mathbb{Z}^M \times \mathbb{R}^L_{+}: 
    \displaystyle \bm{E} \bm{\xi} \leq \bm{d}, \; \bm{q}_0 + \bm{Q}\mb{\xi} - \bm{W}_0^\top \bm{\lambda} \geq \bm{0}
    \right\},
    \end{align*}
    where $\Xi = \{\xi \in \mathbb{Z}^M : \bm{E} \bm{\xi} \leq \bm{d}\}$ and $\mathcal{Y} = \mathbb{R}^{N_2}_{+}$.
    By hypothesis, this  formulation of $\hat{\mathcal{Z}}$ is ideal, and hence $\conv{\hat{\mathcal{Z}}} = \{(\bm{\xi}, \bm{\lambda}) \in \mathbb{R}^M \times \mathbb{R}^L_{+}: \bm{E} \bm{\xi} \leq \bm{f}, \; \bm{W}_0^\top\bm{\lambda} -\bm{Q}\bm{\xi} \leq \bm{q}_0 \}$. We then obtain the stated result by exploiting strong linear programming duality.
    
    \textit{(ii)} When $\varepsilon \geq \max_{\bm{\xi}, \bm{\xi}' \in \Xi} d(\bm{\xi}, \bm{\xi}')$, problem~\eqref{eq:two_stage_dro} reduces to a classical two-stage robust optimization as per Remark~\ref{rem:reduction_to_ro_and_sp}. In this case, the optimal value of $\alpha$ in the convex hull reformulation~\eqref{eq:two_stage_dro_reform} is $0$ and the two-stage problem~\eqref{eq:two_stage_dro} is equivalent to
    \begin{equation*}
        \displaystyle\mathop{\text{minimize}}_{\bm{x} \in \mathcal{X}} \;\; \displaystyle c(\mb{x}) + \hat{Z}(\mb{x}),
    \end{equation*}
    where the set $\hat{\mathcal{Z}}$ is defined as above and we define the function $\hat{Z} :\mathcal{X} \mapsto \mathbb{R}$ as follows:
    \begin{align*}
     &\displaystyle \hat{Z}(\mb{x}) = 
    \mathop{\text{maximize}}_{(\bm{\xi}, \bm{\lambda}) \in \mathop{\mathrm{cl\,conv}}(\hat{\mathcal{Z}})}  \displaystyle \bm{h}_0(\bm{x})^\top \bm{\lambda} .
    \end{align*}
    By the same argument as in part \textit{(i)}, the above formulation of $\hat{\mathcal{Z}}$ is ideal, and we obtain the stated result using strong linear programming duality.
\end{proof}
A well-known sufficient condition that guarantees idealness of a mixed-integer linear formulation is \textit{total unimodularity} of the constraint matrices.
In particular, if $\bm{W}_0$, $\bm{Q}$ and $\bm{E}$ are totally unimodular (e.g., they are network matrices), then the conditions of Proposition~\ref{obs:ideal} can be satisfied.
We refer to~\citet{Vielma2015:SIREV} for a more general overview of ideal formulations.

\subsection{Computational complexity}\label{sec:complexity_analysis}
Even though the preceding subsection presented some sufficient conditions for tractability, the distributionally robust two-stage problem~\eqref{eq:two_stage_dro} is intractable even in benign settings. %
\begin{theorem}[NP-hardness]
    The distributionally robust two-stage problem~\eqref{eq:two_stage_dro} is strongly NP-hard even if there are no first-stage decisions ($N_1 = 0$), the support $\Xi = \{0, 1\}^M$ is the zero-one hypercube, $\mathcal{Q}(\bm{x}, \bm{\xi})$ is the optimal value of a linear program ($\mathcal{Y} = \mathbb{R}^{N_2}_{+}$), and either
    \begin{enumerate}
        \item[(i)] uncertainty affects only the objective function, $\bm{W}(\bm{\xi}) = \bm{W}_0$ and $\bm{T}(\bm{x}) = \bm{0}$, or
        \item[(ii)] uncertainty affects only the constraint right-hand sides: $\bm{W}(\bm{\xi}) = \bm{W}_0$, $\bm{Q} = \bm{0}$ and $\bm{T}(\bm{x}) = \bm{T}_0$.
    \end{enumerate}
    It remains NP-hard even if $N_2 = 2$ in case {(i)}.
\end{theorem}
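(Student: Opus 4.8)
The plan is to prove hardness for a family of instances with no first‑stage decisions. Fix $N_1=0$ (so $\mathcal{X}$ is a single point and $c\equiv 0$), $\Xi=\{0,1\}^M$, one data point $\hat{\bm{\xi}}^{(1)}=\bm{0}$, the metric $d(\bm{\xi},\bm{\xi}')=\norm{\bm{\xi}-\bm{\xi}'}_1$, and Wasserstein radius $\varepsilon=M$. Since $M$ is the $d$‑diameter of $\Xi$, Remark~\ref{rem:reduction_to_ro_and_sp} (equivalently, the optimal $\alpha$ in~\eqref{eq:GK} is then $0$) shows that the optimal value of~\eqref{eq:two_stage_dro} on such an instance equals $\max_{\bm{\xi}\in\{0,1\}^M}\mathcal{Q}(\bm{\xi})$. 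Hence, in each case it suffices to design a loss function $\mathcal{Q}(\bm{\xi})$ that (a) is the value of a linear program over $\mathcal{Y}=\mathbb{R}^{N_2}_{+}$ with the prescribed uncertainty pattern, (b) is finite for every $\bm{\xi}\in\Xi$, and (c) encodes an NP‑hard quantity when maximized over the hypercube; the instances can be brought into the problem class (so that (A1)--(A3) hold) by adding penalized slack variables in the standard way, without changing the optimal value.

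For case~(i) (objective uncertainty, $\bm{W}(\bm{\xi})=\bm{W}_0$, $\bm{T}(\bm{x})=\bm{0}$) with general $N_2$, I would reduce from $0$--$1$ integer feasibility with polynomially bounded data, e.g.\ \textsc{Vertex Cover}, which asks whether $\{\bm{\xi}\in\{0,1\}^M:\bm{A}\bm{\xi}\le\bm{b}\}\neq\emptyset$ for an integer system with entries in $\{-1,0,1,\dots,M\}$. Take
\[
\mathcal{Q}(\bm{\xi})\;=\;\min\Bigl\{\textstyle\sum_{j=1}^{m}(b_j-\bm{a}_j^\top\bm{\xi})\,y_j\ :\ \bm{y}\in\mathbb{R}^m_{+},\ \one^\top\bm{y}\le 1\Bigr\}\;=\;\min\bigl\{0,\ \min_{j\in[m]}(b_j-\bm{a}_j^\top\bm{\xi})\bigr\},
\]
a linear program over $\mathcal{Y}=\mathbb{R}^m_{+}$ whose objective coefficients $\bm{q}(\bm{\xi})=\bm{b}-\bm{A}\bm{\xi}$ are affine in $\bm{\xi}$ while its feasible region is fixed and bounded; thus only the objective is uncertain and $\mathcal{Q}$ is finite. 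By integrality, $\max_{\bm{\xi}\in\{0,1\}^M}\mathcal{Q}(\bm{\xi})=0$ if the system is feasible and $\le-1$ otherwise, so any algorithm for~\eqref{eq:two_stage_dro} decides feasibility; since the reduction uses only small numbers, this yields strong NP‑hardness for case~(i).

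For case~(ii) (right‑hand‑side uncertainty, $\bm{W}(\bm{\xi})=\bm{W}_0$, $\bm{Q}=\bm{0}$, $\bm{T}(\bm{x})=\bm{T}_0$) with general $N_2$, I would reduce from \textsc{Max-Cut}. Given a graph $G=(V,E)$, set $M=|V|$ and let $\bm{B}\in\mathbb{R}^{E\times V}$ have, for each edge $\{u,v\}$, the row $\one_u-\one_v$; then for any $\bm{\xi}\in\{0,1\}^V$ one has $\norm{\bm{B}\bm{\xi}}_1=\abs{\delta(S)}$ with $S=\{v:\xi_v=1\}$. Take $\mathcal{Q}(\bm{\xi})=\norm{\bm{B}\bm{\xi}}_1=\min\{\one^\top\bm{s}:\bm{s}\ge\bm{B}\bm{\xi},\ \bm{s}\ge-\bm{B}\bm{\xi},\ \bm{s}\ge\bm{0}\}$, a linear program over $\mathcal{Y}=\mathbb{R}^{|E|}_{+}$ with fixed objective ($\bm{Q}=\bm{0}$), fixed recourse matrix, uncertainty confined to the right‑hand side via $\bm{T}_0=[\bm{B}^\top\ -\bm{B}^\top]^\top$, and value $\norm{\bm{B}\bm{\xi}}_1<\infty$. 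Hence $\max_{\bm{\xi}\in\{0,1\}^M}\mathcal{Q}(\bm{\xi})$ is exactly the max‑cut value of $G$, so~\eqref{eq:two_stage_dro} solves \textsc{Max-Cut}, which is strongly NP‑hard.

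Finally, for the strengthening that case~(i) is already NP‑hard with $N_2=2$, the obstacle — and the only delicate part — is that two nonnegative second‑stage variables let $\mathcal{Q}(\bm{\xi})$ depend on $\bm{\xi}$ only through a single linear form, so one cannot re‑run the multi‑constraint reduction above. I would instead reduce from \textsc{Partition}: given $\bm{a}\in\mathbb{Z}^M_{+}$ with $b=\tfrac12\one^\top\bm{a}$, take
\[
\mathcal{Q}(\bm{\xi})\;=\;\min\bigl\{(\bm{a}^\top\bm{\xi}-b)(y_1-y_2)\ :\ \bm{y}\in\mathbb{R}^2_{+},\ y_1+y_2\le 1\bigr\}\;=\;-\,\abs{\bm{a}^\top\bm{\xi}-b},
\]
which has the required form (objective uncertainty, $N_2=2$, $\mathcal{Y}=\mathbb{R}^2_{+}$, bounded feasible region) because $y_1-y_2$ ranges over $[-1,1]$ on $\{y_1+y_2\le1,\ \bm{y}\ge\bm{0}\}$. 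Then $\max_{\bm{\xi}\in\{0,1\}^M}\mathcal{Q}(\bm{\xi})=0$ if and only if the instance admits an equal‑sum partition, so~\eqref{eq:two_stage_dro} is NP‑hard; the conclusion here is only ``NP‑hard'' rather than ``strongly NP‑hard'' precisely because routing $\bm{\xi}$ through a single unbounded linear form forces large numbers.
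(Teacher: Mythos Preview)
Your proof is correct. For part~(i) and for the $N_2=2$ strengthening, your constructions are essentially identical to the paper's: the paper also reduces from integer programming feasibility using the simplex constraint $\one^\top\bm{y}=1$ (you use $\one^\top\bm{y}\le 1$, which yields $\min\{0,\min_j(\cdot)\}$ instead of $\min_j(\cdot)$, a cosmetic difference), and for $N_2=2$ the paper reduces from \textsc{Subset Sum} via the same absolute-value trick $(y_1-y_2)\in[-1,1]$ that you use for \textsc{Partition}.

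For part~(ii) your route genuinely differs. The paper takes $\mathcal{Q}(\bm{\xi})=\min\{\one^\top\bm{y}:\bm{y}\ge\bm{T}_0\bm{\xi},\ \bm{y}\ge\bm{0}\}$ for a symmetric PSD matrix $\bm{T}_0$, dualizes to obtain the bilinear program $\max_{\bm{\xi}\in\{0,1\}^M,\ \bm{\lambda}\in[0,1]^M}\bm{\xi}^\top\bm{T}_0\bm{\lambda}$, and then invokes two results of Konno on bilinear programming to identify this with convex quadratic maximization over $[0,1]^M$, which is strongly NP-hard. Your \textsc{Max-Cut} reduction via the signed incidence matrix and the $\ell_1$-norm LP $\min\{\one^\top\bm{s}:\bm{s}\ge\bm{B}\bm{\xi},\ \bm{s}\ge-\bm{B}\bm{\xi},\ \bm{s}\ge\bm{0}\}$ is more direct and self-contained: it avoids LP duality and the external bilinear/quadratic lemmas, and the small $\{-1,0,1\}$ entries make the strong NP-hardness transparent. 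The paper's route, on the other hand, highlights a structural link between right-hand-side uncertainty and bilinear programming that may be informative in its own right.
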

\begin{proof}
    We prove part \textit{(i)} by describing a polynomial reduction of the strongly NP-hard integer programming feasibility problem~\citep{Garey:1979}:
    \begin{quote}
        Given $\bm{Q} \in \mathbb{Z}^{N_2 \times M}$ and $\bm{q}_0 \in \mathbb{Z}^{N_2}$, is there a vector $\bm{\xi} \in \{0, 1\}^{M}$ such that $\bm{Q}\bm{\xi} \geq -\bm{q}_0$?
    \end{quote}
    We show that this problem has an affirmative answer if and only if the problem
    \begin{equation*}
    \max_{\bm{\xi} \in \{0, 1\}^M} \min_{\bm{y} \in \mathbb{R}^{N_2}_{+}} \left\{
    (\bm{Q}\bm{\xi} + \bm{q}_0)^\top \bm{y} : \one^\top \bm{y} = 1
    \right\}
    \end{equation*}
    has an optimal value greater than or equal to $0$. This can be viewed as an instance of the two-stage problem~\eqref{eq:two_stage_dro} without first-stage decisions $(N_1 = 0)$, %
    see Remark~\ref{rem:reduction_to_ro_and_sp}.
    For fixed $\bm{\xi}$, the inner minimization evaluates to $\min \left\{\one_1^\top(\bm{Q}\bm{\xi} + \bm{q}_0), \ldots, \one_{N_2}^\top(\bm{Q}\bm{\xi} + \bm{q}_0)\right\}$, where $\one_j$ denotes the $j$th unit canonical vector in $\mathbb{R}^{N_2}$. Therefore, the optimal value of the above problem is greater than or equal to $0$ if and only if there exists $\bm{\xi} \in \{0, 1\}^M$ such that $\min \left\{\one_1^\top(\bm{Q}\bm{\xi} + \bm{q}_0), \ldots, \one_{N_2}^\top(\bm{Q}\bm{\xi} + \bm{q}_0)\right\} \geq 0$, that is, if and only if $\one_j^\top(\bm{Q}\bm{\xi} + \bm{q}_0) \geq 0$ for all $j \in [N_2]$, that is, if and only if $\bm{Q}\bm{\xi} + \bm{q}_0 \geq \bm{0}$.
    
    To prove part \textit{(ii)}, we recall the strongly NP-hard problem of convex quadratic maximization over the unit hypercube~\citep{deKlerk2008complexity}:
    \begin{quote}
        Given a symmetric positive semidefinite matrix $\bm{T}_0 \in \mathbb{R}^{M \times M}$ and a scalar $t_0 \geq 0$, is there a vector $\bm{\xi} \in [0, 1]^{M}$ such that $\bm{\xi}^\top \bm{T}_0 \bm{\xi} \geq t_0$?
    \end{quote}
    We show that this problem has an affirmative answer if and only if the problem
    \begin{equation*}
    \max_{\bm{\xi} \in \{0, 1\}^M} \min_{\bm{y} \in \mathbb{R}^{M}_{+}} \left\{
    \one^\top \bm{y} : \bm{y} \geq \bm{T}_0\bm{\xi}
    \right\}
    \end{equation*}
    has an optimal value greater than or equal to $h_0$. As before, this problem is an instance of the two-stage problem~\eqref{eq:two_stage_dro} with $\bm{W}_0 = \eye$ and where the uncertainty affects only the right-hand sides of the second-stage problem. By strong linear programming duality, it is equivalent to
    \begin{equation*}
    \max_{\substack{\bm{\xi} \in \{0, 1\}^M\\ \bm{\lambda} \in [0, 1]^M}} \bm{\xi}^\top \bm{T}_0 \bm{\lambda} = \max_{\substack{\bm{\xi} \in [0, 1]^M\\ \bm{\lambda} \in [0, 1]^M}} \bm{\xi}^\top \bm{T}_0 \bm{\lambda},
    \end{equation*}
    where the equality follows from the fact that there always exists an optimal vertex solution of the bilinear program on the right-hand side of the equality~\citep[Theorem~2.1]{konno1976maximization}.
    The claim now follows from the fact that the optimal value of the right-hand side bilinear program is greater than or equal to $t_0$ if and only if there exists a vector $\bm{\xi} \in [0, 1]^{M}$ such that $\bm{\xi}^\top \bm{T}_0 \bm{\xi} \geq t_0$ \citep[Theorem~2.2]{konno1976maximization}.
    
    To show NP-hardness in part \textit{(i)} even when $N_2 = 2$, we describe a polynomial reduction from the weakly NP-hard subset sum problem~\citep{Garey:1979}:
    \begin{quote}
        Given $\bm{q} \in \mathbb{Z}^M$ and $q_0 \in \mathbb{Z}$, is there a subset $J \subseteq [M]$ such that $\sum_{j \in J} q_j = q_0$?
    \end{quote}
    We show that this problem has an affirmative answer if and only if the problem
    \begin{equation*}
    \max_{\bm{\xi} \in \{0, 1\}^M} \min_{\bm{y} \in \mathbb{R}^{2}_{+}} \left\{
    (\bm{q}^\top \bm{\xi} - q_0) (y_1 - y_2) : y_1 + y_2 = 1
    \right\}
    \end{equation*}
    has an optimal value greater than or equal to $0$. As before, this problem can be viewed as a special case of the two-stage problem~\eqref{eq:two_stage_dro} with objective uncertainty. For fixed $\bm{\xi}$, the inner minimization evaluates to $-\abs{\bm{q}^\top \bm{\xi} - q_0}$ which is always non-positive. Therefore, the optimal value is greater than or equal to $0$ if and only if there exists a vector $\bm{\xi} \in \{0, 1\}^M$ such that $\abs{\bm{q}^\top \bm{\xi} - q_0} = 0$, that is, if and only if there exists a subset $J = \{j \in [M] : \xi_j = 1\}$ such that $\sum_{j \in J} q_j = q_0$.
\end{proof}

\section{Benders decomposition}\label{sec:benders}

The central idea in Benders decomposition is to solve the convex hull reformulation~\eqref{eq:two_stage_dro_reform} by iteratively refining an inner approximation of the value function $Z_i(\bm{x}, \alpha)$, for each $i \in [N]$.
We generically write the latter as $Z_i(\bm{x}, \alpha) = \max_{\bm{z} \in \mathcal{Z}_i} \bm{\gamma}(\bm{x}, \alpha)^\top \bm{z}$.
Recall that the latter optimization problem can be formulated as an MICP, in one of two equivalent forms, by using the linearized reformulation (see Section~\ref{sec:linearization}) or the penalty reformulation (see Section~\ref{sec:penalty_based_formulation}).
To present the Benders decomposition algorithm, we re-write~\eqref{eq:two_stage_dro_reform} as a semi-infinite program:
\begin{equation*}
\begin{array}{l@{\;\;}l}
\displaystyle\mathop{\text{minimize}}_{\mb{x} \in \mathcal{X}, \; \alpha \geq 0} & \displaystyle c(\mb{x}) + \alpha \varepsilon + \frac{1}{N}\sum_{i = 1}^N \sigma_i \\
\text{subject to} & \displaystyle \sigma_i \geq \bm{\gamma}(\bm{x}, \alpha)^\top \bm{z}, \quad \forall \bm{z} \in \mathcal{Z}_i, i \in [N].
\end{array}
\end{equation*}
Observe that, in both cases where an MICP representation is possible, $\bm{\gamma}(\bm{x}, \alpha)$ is componentwise convex and $\mathcal{Z}_i \subseteq \mathbb{R}^n_{+}$; therefore, each of the semi-infinite constraints in the above problem defines a convex feasible region (in $\bm{x}$, $\alpha$ and $\bm{\sigma}$).
We present the algorithm next.
\begin{enumerate}
    \item Initialize $\hat{\mathcal{Z}}_i = \emptyset$, for each $i \in [N]$.
    \item Solve the following \textit{master problem}.
    \begin{equation}\label{eq:benders:master}
    \begin{array}{l@{\;\;}l}
    \displaystyle\mathop{\text{minimize}}_{\mb{x} \in \mathcal{X}, \; \alpha \geq 0, \; \bm{\sigma} \in \mathbb{R}^N} & \displaystyle c(\mb{x}) + \alpha \varepsilon + \frac{1}{N}\sum_{i = 1}^N \sigma_i \\
    \text{subject to} & \displaystyle \sigma_i \geq \max\left\{\mathcal{Q}(\bm{x}, \hat{\bm{\xi}}^{(i)}), \bm{\gamma}(\bm{x}, \alpha)^\top \bm{z} \right\}, \quad \forall \bm{z} \in \hat{\mathcal{Z}}_i, i \in [N].
    \end{array}
    \end{equation}
    Let $(\bm{x}^\star, \alpha^\star, \bm{\sigma}^\star)$ denote an optimal solution.
    \item Solve the following \textit{subproblem}, for each $i \in [N]$:
    \begin{equation}\label{eq:benders:slave}
    \mathop{\text{maximize}}_{\bm{z} \in \mathcal{Z}_i}\, \bm{\gamma}(\bm{x}^\star, \alpha^\star)^\top \bm{z}
    \end{equation}
    Let $\bm{z}^{\star,i}$ denote an optimal solution.
    \item For each $i \in [N]$, if $\bm{\gamma}(\bm{x}^\star, \alpha^\star)^\top \bm{z}^{\star,i} > \sigma_i^\star$, add $\bm{z}^{\star,i}$ to $\hat{\mathcal{Z}}_i$.
    
    If $\hat{\mathcal{Z}}_i$ was not updated for any $i \in [N]$, stop. Otherwise, go to Step~2.    
\end{enumerate}

We make some remarks about the algorithm next.
\begin{itemize}
    \item The master problem~\eqref{eq:benders:master} is always feasible. Indeed, its optimal value always constitutes a lower bound to the optimal value of the distributionally robust two-stage problem, which always exists and is finite (see Section~\ref{sec:reformulation}).
    Similarly, the term $\mathcal{Q}(\bm{x}, \hat{\bm{\xi}}^{(i)})$ in the constraint ensures that it is also always bounded.
    
    \item The optimal value of the subproblem~\eqref{eq:benders:slave} also always exists and is finite, because of the assumption of complete recourse.
    
    \item The subproblem~\eqref{eq:benders:slave} can be solved as an MICP by using either the McCormick linearization or the penalty-based formulation from Section~\ref{sec:reformulation}.
    
    \item The computational efficiency of the algorithm can be improved in several ways. First, the solution of the subproblems in Step~3 can be carried out in parallel, if desired. Second, we don't need to solve the subproblems to global optimality; we can stop as soon as we find a solution $\bm{z}^{\dagger,i}$ that satisfies $\bm{\gamma}(\bm{x}^\star, \alpha^\star)^\top \bm{z}^{\dagger,i} > \sigma_i^\star$. Third, for the same reason, we can employ $\sigma^\star$ as a lower bound when solving the subproblem for $i \in [N]$.
\end{itemize}

We note that convergence of the algorithm is guaranteed only asymptotically in general. Finite convergence is guaranteed only if the feasible region of the second-stage problem is a linear program, that is, $\mathcal{Y}= \mathbb{R}^{N_2}_{+}$; and Step~3 solves the resulting mixed-integer linear programs to global optimality. The reason is the finite number of extreme points of the sets $\mathcal{Z}_i$, $i \in [N]$, in this case.

\section{Extension to risk-averse objective functions}\label{sec:cvar}
The objective function of the distributionally robust two-stage problem~\eqref{eq:two_stage_dro} minimizes the worst-case expectation of the loss and reflects a risk-neutral approach.
In the context of rare high-impact events, where the loss function increases sharply with extreme realizations of the uncertainty, it might be preferable to adopt a risk-averse approach and minimize the tail of the distribution of the random loss.
A natural risk measure in this case is the conditional value-at-risk, which is the conditional expectation above the $(1 - p)$-quantile of the random loss function $\mathcal{Q}(\bm{x}, \tilde{\bm{\xi}})$:
\begin{equation}\label{eq:cvar_definition}
\cvar_p \left[
\mathcal{Q}(\bm{x}, \tilde{\bm{\xi}})
\right] =
\inf_{w \in \mathbb{R}} w + \frac{1}{p}\mathbb{E}_\mathbb{P} \left[
\max\left\{\mathcal{Q}(\bm{x}, \tilde{\bm{\xi}}) - w, 0\right\}
\right].
\end{equation}
In this subsection, we show that convex hull reformulation of Theorem~\ref{thm:convex_reformulation} also extends to this setting.
Specifically, we study the following distributionally robust two-stage risk-averse stochastic program:
\begin{equation}\label{eq:two_stage_dro_cvar}
\mathop{\text{minimize}}_{\bm{x} \in \mathcal{X}} \;
c(\bm{x})
+
\sup_{\mathbb{P} \in \mathcal{P}}
\cvar_p \left[
\mathcal{Q}(\bm{x}, \tilde{\bm{\xi}})
\right]
\end{equation}
\begin{theorem}[Convex hull reformulation for conditional-value-at-risk]\label{thm:convex_reformulation_cvar}
    For any $p \in (0, 1]$, the distributionally robust two-stage risk-averse stochastic program~\eqref{eq:two_stage_dro_cvar} admits the following reformulation, 
    \begin{equation}\label{eq:two_stage_dro_cvar_reform}
    \displaystyle\mathop{\text{minimize}}_{\bm{x} \in \mathcal{X}, \alpha \geq 0, w \in \mathbb{R}} \;\; \displaystyle c(\bm{x}) + \alpha \varepsilon + w + \frac{1}{N} \sum_{i = 1}^N Z_i(\bm{x}, \alpha, w),
    \end{equation}
    where, for each $i \in [N]$, we define the function $Z_i :\mathcal{X} \times \mathbb{R}_{+} \times \mathbb{R} \mapsto \mathbb{R}$ and the set $\mathcal{Z}_i$ as follows:
    \begin{subequations}
        \begin{align}
        &\displaystyle Z_i(\bm{x}, \alpha, w) = 
        \mathop{\text{maximize}}_{(\bm{\xi}, \bm{\lambda}, \bm{\Lambda}, \tau, \theta) \in \mathop{\mathrm{cl\,conv}}(\mathcal{Z}_i)}  \displaystyle \left\{\inner{\bm{T}(\bm{x})}{\bm{\Lambda}} + \bm{h}(\bm{x})^\top \bm{\lambda} - w \theta %
        - \alpha \tau \right\} \label{eq:Z_function_definition_cvar} \\
        &\displaystyle \mathcal{Z}_i = \left\{
        (\bm{\xi}, \bm{\lambda}, \bm{\Lambda}, \tau, \theta) \in \Xi \times \mathbb{R}^L_{+} \times \mathbb{R}^{L \times M} \times \mathbb{R}_{+} \times [0, p^{-1}]: 
        \begin{array}{l}
        \displaystyle \bm{\Lambda} = \bm{\lambda} \bm{\xi}^\top, \; (\bm{\xi} - \hat{\bm{\xi}}^{(i)}, \tau) \in \mathcal{C}^{M+1} \\
        \displaystyle (\bm{q}_0 + \bm{Q}\bm{\xi})\theta - \bm{W}_0^\top \bm{\lambda} - \sum_{j \in [M]} \bm{W}_j^\top \bm{\Lambda}\one_{j}  \in \mathcal{Y}^*
        \end{array}
        \right\}. \label{eq:Z_set_definition_cvar}
        \end{align}
    \end{subequations}
\end{theorem}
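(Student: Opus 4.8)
The plan is to remove the inner infimum in the definition~\eqref{eq:cvar_definition} of the conditional value-at-risk by moving it into the first stage, and then to replay the dualization argument behind Theorem~\ref{thm:convex_reformulation} with one extra scalar variable, $\theta$, playing the role of the CVaR multiplier. First I would substitute~\eqref{eq:cvar_definition} into~\eqref{eq:two_stage_dro_cvar}, so that for fixed $\bm{x}$ the second-stage term becomes $\sup_{\mathbb{P}\in\mathcal{P}}\inf_{w\in\mathbb{R}}\bigl\{ w + p^{-1}\,\mathbb{E}_{\mathbb{P}}[(\mathcal{Q}(\bm{x},\tilde{\bm{\xi}})-w)_{+}] \bigr\}$, and then interchange the supremum over $\mathbb{P}$ and the infimum over $w$. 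The bracketed functional is affine --- hence concave and upper semicontinuous --- in $\mathbb{P}$, and convex --- hence lower semicontinuous --- in $w$; moreover, since $\Xi\subseteq\{0,1\}^{M}$ is finite, $\mathcal{P}$ is a compact convex subset of the finite-dimensional simplex $\mathcal{M}(\Xi)$. Sion's minimax theorem therefore permits the exchange, and pulling the resulting $\inf_{w}$ outside the minimization over $\bm{x}\in\mathcal{X}$ turns~\eqref{eq:two_stage_dro_cvar} into $\min_{\bm{x}\in\mathcal{X},\,w\in\mathbb{R}}\bigl\{ c(\bm{x}) + w + \sup_{\mathbb{P}\in\mathcal{P}}\mathbb{E}_{\mathbb{P}}[ p^{-1}(\mathcal{Q}(\bm{x},\tilde{\bm{\xi}})-w)_{+} ] \bigr\}$.

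Next I would treat $\mathcal{Q}'(\bm{x},w,\bm{\xi}) := p^{-1}(\mathcal{Q}(\bm{x},\bm{\xi})-w)_{+}$ as a new loss function --- it is finite (indeed bounded over the finite set $\Xi$) for every fixed $(\bm{x},w)$ by assumption~(A2) --- and invoke Lemma~\ref{prop:GK} with $\mathcal{Q}'$ in place of $\mathcal{Q}$ and $(\bm{x},w)$ in the role of the first-stage decision; the proof of that lemma uses only compactness of $\Xi$ and the definition of the Wasserstein ball, so it applies verbatim. The objective then becomes
\[
c(\bm{x}) + w + \alpha\varepsilon + \frac{1}{N}\sum_{i=1}^{N}\max_{\bm{\xi}\in\Xi}\Bigl\{ p^{-1}\bigl(\mathcal{Q}(\bm{x},\bm{\xi})-w\bigr)_{+} - \alpha\, d(\bm{\xi},\hat{\bm{\xi}}^{(i)}) \Bigr\},
\]
to be minimized over $\bm{x}\in\mathcal{X}$, $w\in\mathbb{R}$, $\alpha\ge0$. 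Using the identity $p^{-1}(a)_{+} = \max_{\theta\in[0,p^{-1}]}\theta a$, the $i$-th inner maximum equals $\max\bigl\{ \theta\,\mathcal{Q}(\bm{x},\bm{\xi}) - \theta w - \alpha\, d(\bm{\xi},\hat{\bm{\xi}}^{(i)}) : \bm{\xi}\in\Xi,\ \theta\in[0,p^{-1}] \bigr\}$. From here I would follow the proof of Theorem~\ref{thm:convex_reformulation}: introduce $\tau\ge0$ with $(\bm{\xi}-\hat{\bm{\xi}}^{(i)},\tau)\in\mathcal{C}^{M+1}$ to represent $d(\bm{\xi},\hat{\bm{\xi}}^{(i)})$ as $\min_{\tau}\tau$ (legitimate since $\alpha\ge0$); replace $\mathcal{Q}(\bm{x},\bm{\xi})$ by its conic dual $\mathcal{Q}_{d}(\bm{x},\bm{\xi})$ from~\eqref{eq:Q_dual}, valid by strong duality under~(A1)--(A2); rescale $\bm{\lambda}\leftarrow\theta\bm{\lambda}$, so that $\theta\,\mathcal{Q}(\bm{x},\bm{\xi})$ becomes $\max\{[\bm{T}(\bm{x})\bm{\xi}+\bm{h}(\bm{x})]^{\top}\bm{\lambda} : \theta\bm{q}(\bm{\xi}) - \bm{W}(\bm{\xi})^{\top}\bm{\lambda}\in\mathcal{Y}^{*},\ \bm{\lambda}\ge\bm{0}\}$; and linearize by setting $\bm{\Lambda} = \bm{\lambda}\bm{\xi}^{\top}$, which by the affine parametrizations $\bm{q}(\bm{\xi}) = \bm{q}_{0}+\bm{Q}\bm{\xi}$ and $\bm{W}(\bm{\xi}) = \bm{W}_{0}+\sum_{j\in[M]}\xi_{j}\bm{W}_{j}$ yields $[\bm{T}(\bm{x})\bm{\xi}]^{\top}\bm{\lambda} = \inner{\bm{T}(\bm{x})}{\bm{\Lambda}}$ and $\theta\bm{q}(\bm{\xi}) - \bm{W}(\bm{\xi})^{\top}\bm{\lambda} = (\bm{q}_{0}+\bm{Q}\bm{\xi})\theta - \bm{W}_{0}^{\top}\bm{\lambda} - \sum_{j\in[M]}\bm{W}_{j}^{\top}\bm{\Lambda}\one_{j}$. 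Collecting the variables $(\bm{\xi},\bm{\lambda},\bm{\Lambda},\tau,\theta)$ and these constraints reproduces exactly the set $\mathcal{Z}_{i}$ of~\eqref{eq:Z_set_definition_cvar} with linear objective $\inner{\bm{T}(\bm{x})}{\bm{\Lambda}} + \bm{h}(\bm{x})^{\top}\bm{\lambda} - w\theta - \alpha\tau$; since this objective is linear and bounded above on $\mathcal{Z}_{i}$ (the values of $\mathcal{Q}(\bm{x},\cdot)$ over finite $\Xi$ and the range of $\theta$ are bounded), its supremum is unchanged when $\mathcal{Z}_{i}$ is replaced by $\mathrm{conv}(\mathcal{Z}_{i})$ and then by $\conv{\mathcal{Z}_{i}}$, which delivers~\eqref{eq:two_stage_dro_cvar_reform}.

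The two delicate points I anticipate are the following. The first is the limit $\theta\to0$ in the rescaling $\bm{\lambda}\leftarrow\theta\bm{\lambda}$: one must verify that the homogenized dual $\max\{[\bm{T}(\bm{x})\bm{\xi}+\bm{h}(\bm{x})]^{\top}\bm{\lambda} : -\bm{W}(\bm{\xi})^{\top}\bm{\lambda}\in\mathcal{Y}^{*},\ \bm{\lambda}\ge\bm{0}\}$ has value $0$, attained at $\bm{\lambda}=\bm{0}$ --- which follows from a recession-cone argument using finiteness of $\mathcal{Q}(\bm{x},\bm{\xi})$ under~(A2) --- and it is precisely this degeneracy at the boundary $\theta=0$ that forces the \emph{closure} of the convex hull, rather than the convex hull itself, to appear in~\eqref{eq:Z_function_definition_cvar}. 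The second is the minimax exchange: its hypotheses go through here only because the finiteness of $\Xi$ makes $\mathcal{P}$ compact, and this is the one step where the structure of the ambiguity set is genuinely used; without it one would need an additional compactification argument or a direct Lagrangian-duality treatment of the CVaR infimum.
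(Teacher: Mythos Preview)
Your proposal is correct and follows essentially the same route as the paper: substitute the CVaR definition, swap $\sup_{\mathbb{P}}$ and $\inf_{w}$, treat $p^{-1}(\mathcal{Q}-w)_{+}$ as a new loss, express it via $\max_{\theta\in[0,p^{-1}]}\theta(\mathcal{Q}-w)$, and then rerun the dualization and linearization of Theorem~\ref{thm:convex_reformulation} with the extra variable $\theta$ --- the paper simply writes $\theta\mathcal{Q}(\bm{x},\bm{\xi})=\inf_{\bm{y}\in\mathcal{Y}}\{\theta\bm{q}(\bm{\xi})^{\top}\bm{y}:\bm{W}(\bm{\xi})\bm{y}\ge\bm{T}(\bm{x})\bm{\xi}+\bm{h}(\bm{x})\}$ and invokes Theorem~\ref{thm:convex_reformulation} on that instance, omitting the details you spell out. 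Your explicit justification of the $\sup$--$\inf$ exchange via Sion's theorem (using compactness of $\mathcal{P}$ for finite $\Xi$) and your treatment of the degenerate case $\theta=0$ through the recession cone of the dual are more careful than the paper, which elides both points; in particular, dualizing the primal with objective $\theta\bm{q}(\bm{\xi})^{\top}\bm{y}$ directly yields your rescaled dual, so the paper's shortcut and your rescaling $\bm{\lambda}\leftarrow\theta\bm{\lambda}$ are the same computation viewed from opposite sides.
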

\begin{proof}
    Substituting~\eqref{eq:cvar_definition} into~\eqref{eq:two_stage_dro_cvar}, we obtain
    \begin{equation*}
    \mathop{\text{minimize}}_{\bm{x} \in \mathcal{X}, w \in \mathbb{R}} \;
    c(\bm{x})
    +
    w
    +
    \sup_{\mathbb{P} \in \mathcal{P}}
    \mathbb{E}_\mathbb{P} \left[
    \frac{1}{p}\max\left\{\mathcal{Q}(\bm{x}, \tilde{\bm{\xi}}) - w, 0\right\}
    \right].
    \end{equation*}
    We can thus interpret the above problem as an instance of the two-stage problem~\eqref{eq:two_stage_dro} where the second-stage loss function is given by $\frac{1}{p}\max\left\{\mathcal{Q}(\bm{x}, \bm{\xi}) - w, 0\right\}$.
    This, in turn, is equivalent to $\max_{\theta \in [0, p^{-1}]} \theta \cdot \left\{\mathcal{Q}(\bm{x}, \bm{\xi}) - w\right\} = \max_{\theta \in [0, p^{-1}]} \left\{\theta \mathcal{Q}(\bm{x}, \bm{\xi}) - \theta w\right\}$.
    By definition of the loss function, the first term is equal to $\inf_{\bm{y} \in \mathcal{Y}}
    \left\{
    \theta \bm{q}(\bm{\xi})^\top \bm{y} : \bm{W}(\bm{\xi}) \bm{y} \geq %
    \bm{T}(\bm{x}) \bm{\xi} + \bm{h}(\bm{x})
    \right\}$.
    An application of Theorem~\ref{thm:convex_reformulation} to this instance of problem~\eqref{eq:two_stage_dro} leads to the stated result. Details are omitted for brevity.
\end{proof}

We note that the sets $\mathcal{Z}_i$, $i \in [N]$ appearing in the statement of Theorem~\ref{thm:convex_reformulation_cvar} are similar to those appearing in Theorem~\ref{thm:convex_reformulation} except for the additional variable $\theta$ that multiplies the objective coefficients $(\bm{q}_0 + \bm{Q} \bm{\xi})$.
This can be exactly linearized by using McCormick inequalities since $\bm{\xi}$ is binary-valued and $\theta$ is bounded.
The other considerations remain exactly the same as in Sections~\ref{sec:linearization} and~\ref{sec:penalty_based_formulation}.
Therefore, we obtain MICP-representable sets $\mathcal{Z}_i$ even in the risk-averse setting.

\section{Two-stage optimal power flow model}\label{appendix:model}
Our presentation of the first-stage model closely follows~\citet{kocuk2016strong}, whereas the second-stage model is inspired by~\citet{gocompetition}.
Conceptually, the first-stage problem determines power generation levels in the so-called \emph{base case}, where there are no line outages.
After transmission lines fail, the second-stage model may adjust the first-stage power generation levels subject to physical and engineering constraints where failed lines cannot be used.

Let $\mathcal{G}, \mathcal{B}$, and $\mathcal{M}$ be the set of generators, buses, and transmission lines, respectively, and let $\mathcal{G}_i$ be the set of generators associated with bus $i \in \mathcal{B}$. We define $\delta(i) \coloneqq \{j \in \mathcal{B}: (i,j) \in \mathcal{M} \textrm{ or } (j,i) \in \mathcal{M}\}$ to be the set of neighbors of bus $i \in \mathcal{B}$. Let $p_k^g$ and $q_k^g$ be the real and reactive power output of generator $k \in \mathcal{G}$, respectively, with lower and upper bounds denoted by $p^{\min}_k,p^{\max}_k$ and $q^{\min}_k,q^{\ max}_k$. We assume a linear cost $c_k$ of power generation for generator $k \in \mathcal{G}$. Real load and reactive load at bus $i \in \mathcal{B}$ are denoted by $p^d_i$ and $q^d_i$, respectively, and are known data. Let $p_{ij}^F$ and $q_{ij}^F$ be the real and reactive power flow on line $(i,j)$, respectively, defined for $(i,j) \in \mathcal{M}$ and $(j,i) \in \mathcal{M}$, with line rating limit $f_{ij}^{\max}$ (note that $f_{ij}^{\max} = f_{ji}^{\max}$). Let $\bm{Y}$ be the $\abs{B} \times \abs{B}$ complex-valued nodal admittance matrix, whose components are $Y_{ij} = G_{ij} + \i B_{ij}$, $\i = \sqrt{-1}$, where $G_{ij}$ and $B_{ij}$ are the conductance and susceptance of line $(i,j) \in \mathcal{M}$, respectively (see~\citet{zimmerman2016matpower} for details on computing $\bm{Y}$). We denote the real and imaginary parts of the complex voltage by $e_i$ and $f_i$, respectively. As in~\citet{kocuk2016strong}, we define new variables such that $c_{ii} = e_i^2 + f_i^2$, $c_{ij} = e_ie_j + f_if_j$ and $s_{ij} = e_if_j - e_jf_i$.
We define $\tilde{\bm{\xi}}$ to be a random binary vector with support $\Xi = \{0,1\}^{|\mathcal{M}|}$, where $\tilde{\xi}_{ij} =  1$ if line $(i,j) \in \mathcal{M}$ fails and 0 otherwise. \re{We have $\bm{x} = \left(\bm{p^g}, \ \bm{q^g}, \ \bm{p^F}, \ \bm{q^F}, \ \bm{c}, \ \bm{s}, \ \bm{\sigma^{p+}}, \ \bm{\sigma^{p-}}, \ \bm{\sigma^{q+}}, \ \bm{\sigma^{q-}}\right)$ as first-stage variables and $\bm{y} = \left(\bm{{\tilde{\delta}}}, \ \bm{{\tilde{p}^g}}, \ \bm{{\tilde{q}^g}}, \ \bm{{\tilde{p}^F}}, \ \bm{{\tilde{q}^F}}, \ \bm{{\tilde{c}}}, \ \bm{{\tilde{s}}}, \ \bm{{\tilde{\sigma}^{p+}}}, \ \bm{{\tilde{\sigma}^{p-}}}, \ \bm{{\tilde{\sigma}^{q+}}}, \ \bm{{\tilde{\sigma}^{q-}}}, \ \bm{{\tilde{\sigma}^{pF}}}, \ \bm{{\tilde{\sigma}^{qF}}}\right)$ as second-stage variables.} The two-stage model can be written as follows:
{\small%
\begin{subequations}
\label{eq:stage1}
\begin{align}
\re{\mathop{\text{minimize}}_{\substack{\bm{p^g}, \bm{q^g}, \bm{p^F}, \bm{q^F}, \bm{c}, \bm{s}\\ \bm{\sigma^{p+}}, \bm{\sigma^{p-}}, \bm{\sigma^{q+}}, \bm{\sigma^{q-}}}}} & \sum_{k \in \mathcal{G}} c_k p_k^g + \sum_{i \in \mathcal{B}} g_i \left( \sigma_i^{p+} + \sigma_i^{p-} + \sigma_i^{q+} + \sigma_i^{q-} \right) + \E_\P\left[ \mathcal{Q}(\bm{p^g},\tilde{\bm{\xi}})\right] \nonumber\\
\text{subject to} \quad & \sum_{k \in \mathcal{G}_i}p_k^g - p_i^d + \sigma_i^{p+} - \sigma_i^{p-} =  g_{ii}c_{ii} + \sum_{j \in \delta(i)} p_{ij}^F, && \forall i \in \mathcal{B}, \label{subeq:stage1.balance_p}\\
&\sum_{k \in \mathcal{G}_i}q_k^g - q_i^d + \sigma_i^{q+} - \sigma_i^{q-} = -b_{ii}c_{ii} + \sum_{j \in \delta(i)} q_{ij}^F, && \forall i \in \mathcal{B}, \label{subeq:stage1.balance_q}\\
&p^F_{ij} = -G_{ij}c_{ii} + G_{ij}c_{ij}+B_{ij}s_{ij}, && \forall (i,j), (j, i) \in \mathcal{M}, \label{subeq:stage1.pF}\\
&q^F_{ij} = B_{ij}c_{ii}-B_{ij}c_{ij} + G_{ij}s_{ij}, && \forall (i,j), (j, i) \in \mathcal{M},  \label{subeq:stage1.qF}\\
&c_{ij} = c_{ji}, \ s_{ij} = -s_{ji}, && \forall (i,j) \in \mathcal{M}, \label{subeq:stage1.var_change}\\
&c_{ij}^2 + s_{ij}^2 + \left( \frac{c_{ii} - c_{jj}}{2} \right)^2 \leq \left( \frac{c_{ii} + c_{jj}}{2} \right)^2, && \forall (i,j) \in \mathcal{M}, \label{subeq:stage1.socc}\\
&\underline{V}_i^2 \leq c_{ii} \leq \bar{V}_i^2, && \forall i \in \mathcal{B}, \label{subeq:stage1.bounds_c}\\
&p_k^{\min} \leq p_k^g \leq p_k^{\max}, && \forall k \in \mathcal{G}, \label{subeq:stage1.bounds_p}\\
&q_k^{\min} \leq q_k^g \leq q_k^{\max}, && \forall k \in \mathcal{G}, \label{subeq:stage1.bounds_q}\\
&(p^F_{ij})^2 + (q^F_{ij})^2 \leq (f^{\max}_{ij})^2, && \forall (i,j), (j,i) \in \mathcal{M}, \label{subeq:stage1.bounds_flow} \\
&\sigma_i^{p+}, \sigma_i^{p-}, \sigma_i^{q+}, \sigma_i^{q-} \geq 0, && \forall i \in \mathcal{B},
\end{align}
\end{subequations}}%
where $\mathcal{Q}(\bm{p^g},\tilde{\bm{\xi}})$ is the optimal value of
{\small
\begin{subequations}
\label{eq:stage2}
\begin{align}
\re{\mathop{\text{minimize}}_{\substack{%
\bm{{\tilde{\delta}}}, \bm{{\tilde{p}^g}}, \bm{{\tilde{q}^g}}, \bm{{\tilde{p}^F}}, \bm{{\tilde{q}^F}}, \bm{{\tilde{c}}}, \bm{{\tilde{s}}}\\ \bm{{\tilde{\sigma}^{p+}}}, \bm{{\tilde{\sigma}^{p-}}}, \bm{{\tilde{\sigma}^{q+}}}, \bm{{\tilde{\sigma}^{q-}}}\\ \bm{{\tilde{\sigma}^{pF}}}, \bm{{\tilde{\sigma}^{qF}}}}%
}} & \sum_{i \in \mathcal{B}} g_i \left( \tilde{\sigma}_i^{p+} + \tilde{\sigma}_i^{p-} + \tilde{\sigma}_i^{q+} + \tilde{\sigma}_i^{q-} \right) \nonumber\\
\text{subject to} \quad &\tilde{p}_k^g = p_k^g + \Delta_k\tilde{\delta} && \forall k \in \mathcal{G}, \label{subeq:stage2.correction}\\ 
& \sum_{k \in \mathcal{G}_i}\tilde{p}_k^g - p_i^d + \tilde{\sigma}_i^{p+} - \tilde{\sigma}_i^{p-} =  g_{ii} \tilde{c}_{ii} + \sum_{j \in \delta(i)} \tilde{p}^F_{ij}, && \forall i \in \mathcal{B}, \label{subeq:stage2.balance_p} \\
&\sum_{k \in \mathcal{G}_i}\tilde{q}_k^g - q_i^d + \tilde{\sigma}_i^{q+} - \tilde{\sigma}_i^{q-} =  -b_{ii}\tilde{c}_{ii} + \sum_{j \in \delta(i)} \tilde{q}^F_{ij}, && \forall i \in \mathcal{B}, \label{subeq:stage2.balance_q} \\
&\tilde{p}^F_{ij} = -G_{ij}\tilde{c}_{ii} + G_{ij}\tilde{c}_{ij}+B_{ij}\tilde{s}_{ij}   + \tilde{\sigma}_{ij}^{pF}, && \forall (i,j),  (j, i) \in \mathcal{M},  \label{subeq:stage2.pF} \\
&\tilde{q}^F_{ij} = B_{ij} \tilde{c}_{ii}-B_{ij}\tilde{c}_{ij} + G_{ij}\tilde{s}_{ij} + \tilde{\sigma}_{ij}^{qF}, && \forall (i,j), (j, i) \in \mathcal{M},  \label{subeq:stage2.qF} \\
&\tilde{c}_{ij} = \tilde{c}_{ji}, \ \tilde{s}_{ij} = -\tilde{s}_{ji}, && \forall (i,j) \in \mathcal{M}, \nonumber \\
&\tilde{c}_{ij}^2 + \tilde{s}_{ij}^2 + \left( \frac{\tilde{c}_{ii} - \tilde{c}_{jj}}{2} \right)^2 \leq \left( \frac{\tilde{c}_{ii} + \tilde{c}_{jj}}{2} \right)^2, && \forall (i,j) \in \mathcal{M}, \nonumber\\
&\underline{V}_i^2 \leq \tilde{c}_{ii} \leq \bar{V}_i^2, && \forall i \in \mathcal{B},  \nonumber \\
&p_k^{\min} \leq \tilde{p}_k^g \leq p_k^{\max},  && \forall k \in \mathcal{G}, \nonumber \\
&q_k^{\min} \leq \tilde{q}_k^g \leq q_k^{\max},  && \forall k \in \mathcal{G}, \nonumber \\
&(\tilde{p}_{ij}^{F})^2+(\tilde{q}_{ij}^{F})^2 \leq (f^{\max}_{ij})^2, && \forall (i,j),  (j, i) \in \mathcal{M}, \nonumber \\
& \re{\tilde{\xi}_{ij} = 1  \implies \left[\tilde{p}_{ij}^{F} = 0, \; \tilde{q}_{ij}^{F} = 0 \right],} && \re{\forall (i,j),  (j, i) \in \mathcal{M},} \label{subeq:stage2.obj_unc1}\\
& \re{\tilde{\xi}_{ij} = 0  \implies \left[\tilde{\sigma}_{ij}^{pF} = 0, \; \tilde{\sigma}_{ij}^{qF} = 0 \right],} && \re{\forall (i,j),  (j, i) \in \mathcal{M},} \label{subeq:stage2.obj_unc3}\\
&\tilde{\sigma}_i^{p+}, \tilde{\sigma}_i^{p-}, \tilde{\sigma}_i^{q+}, \tilde{\sigma}_i^{q-} \geq 0, && \forall i \in \mathcal{B} \nonumber .
\end{align}
\end{subequations}}%
Constraints \eqref{subeq:stage1.balance_p} and \eqref{subeq:stage1.balance_q} are the real and reactive power balance equations, respectively. %
Constraints \eqref{subeq:stage1.pF} and \eqref{subeq:stage1.qF} define the real and reactive power flow in both directions of all lines, respectively. Constraints \eqref{subeq:stage1.var_change} and \eqref{subeq:stage1.socc} model the change of variables (see~\citet{kocuk2016strong} for details), where the latter is the result of convexifying the original constraint $c_{ij}^2 + s_{ij}^2 = c_{ii}c_{jj}$. Constraints \eqref{subeq:stage1.bounds_c}, \eqref{subeq:stage1.bounds_p} and \eqref{subeq:stage1.bounds_q} enforce bounds on the voltage magnitude, and the real and reactive power generation, respectively. In the first stage, each generator $k \in \mathcal{G}$ has an associated generation cost $c_k$, and we penalize violating constraints \eqref{subeq:stage1.balance_p} and \eqref{subeq:stage1.balance_q} by $g_i$.

The second stage involves the same constraints with a few modifications. Namely, constraint \eqref{subeq:stage2.correction} adjusts the first-stage real power generation, where all generators are adjusted by a constant $\tilde{\delta}$, scaled by their predefined automatic generation control participation factor $\Delta_k$, also known as the droop control policy. We set participation factors $\Delta_k$ to be proportional to the generation capacity for each generator $k \in \mathcal{G}$. Constraints \eqref{subeq:stage2.obj_unc1} %
ensure that no power can flow through lines under contingency ($\xi_{ij} = 1$). %
Note that if line $(i,j)$ fails, then we must have $\tilde{p}_{ij}^F = \tilde{p}_{ji}^F = 0$ and $\tilde{q}_{ij}^F = \tilde{q}_{ji}^F = 0$, but variables $\tilde{c}_{ii}, \tilde{c}_{ij}$ and $\tilde{s}_{ij}$ should not be affected. Thus, unlike in the first stage, slacks $\tilde{\sigma}_{ij}^{pF}$ and  $\tilde{\sigma}_{ij}^{qF}$ are added in constraints \eqref{subeq:stage2.pF} and \eqref{subeq:stage2.qF}, respectively, so that \eqref{subeq:stage2.pF} and \eqref{subeq:stage2.qF} become redundant for $(i,j)$ and $(j,i)$ if line $(i,j)$ has failed. These slacks are active only if $\xi_{ij} = 1$, enforced by constraints \eqref{subeq:stage2.obj_unc3}. %
As in the first stage, the absolute values of the real and reactive power balance violation $\tilde{\sigma}_i^{p+} + \tilde{\sigma}_i^{p-}$ and $\tilde{\sigma}_i^{q+} + \tilde{\sigma}_i^{q-}$ for bus $i \in \mathcal{B}$, respectively, are penalized by $g_i$. Note that there is not cost on power generation in the second stage. We set the cost $g_i$ of violating the balance equations to be $\phi \cdot \max_{k \in \mathcal{G}} c_k$ (see Section~\ref{sec:test_instances}). %

\section{Two-stage multi-commodity network design model}\label{appendix:mmcf_model}
\re{
Our model is based on the fixed-charge multi-commodity network design problem which has been extensively studied in the literature, such as in~\citet{boland2016proximity} and~\citet{crainic2001bundle}. We are given a directed network with nodes $\mathcal{V}$, arcs $\mathcal{A}$, and a set of commodities $\mathcal{K}$ with known demands $d^k$. For commodity $k \in \mathcal{K}$, let $O_k$ be the origin node and $D_k$ the destination node. Each arc $(i,j)$ has an associated maximum capacity $u_{ij}$, per-unit cost of installing capacity $f_{ij}$ and per-unit cost of flow $c_{ij}$. For each node $i \in \mathcal{V}$, we define the set of neighboring nodes incident to outgoing and incoming arcs as $\mathcal{V}^+(i) = \{j \ | \ (i, j) \in \mathcal{A}\}$ and $\mathcal{V}^-(i) =\{j \ | \ (j, i) \in \mathcal{A}\}$, respectively. 

In the first stage, let $x_{ij}$ be a variable between 0 and 1 which determines the fraction of capacity of arc $(i, j)$ that can be used in the second stage. Note that this differs from a typical fixed-charge multi-commodity network design model, where $x_{ij}$ is a binary variable determining whether arc $(i, j)$ can be used, but becomes the same problem considered in Example \ref{ex:network}. In the second stage, let $y_{ij}^k$ be the amount of flow of commodity $k$ on arc $(i,j)$, and let $\sigma_k$ be the unsatisfied demand of commodity $k$ which we penalize by $g_k$. We consider node failures, and define $\bm{\xi}$ to be a binary vector where $\xi_i = 1$ indicates that node $i \in \mathcal{V}$ has failed. As in Example \ref{ex:network}, if a node fails, all arcs incident to it cannot be used. The model can be as written as follows:
\begin{align*}
    \mathop{\text{minimize}}_x \quad &\sum_{(i,j) \in \mathcal{A}} f_{ij} x_{ij} + 
    \sup_{\mathbb{P} \in \mathcal{P}}
    \mathbb{E}_\mathbb{P}\left[ \mathcal{Q}(\bm{x},\tilde{\bm{\xi}})\right] \\
    \text{subject to} \quad & x_{ij} \in [0, 1], \ \forall (i, j) \in \mathcal{A}.
\end{align*}
where $\mathcal{Q}(\bm{x},\tilde{\bm{\xi}})$ is defined as the optimal objective value of the following linear program:
\begin{subequations}
    \label{eq:mmcf_stage2}
    \begin{align}
        \mathop{\text{minimize}}_{y, \sigma} \quad & \sum_{(i,j) \in \mathcal{A}}\sum_{k \in \mathcal{K}} c_{ij} y_{ij}^k + \sum_{k \in \mathcal{K}}g_k \sigma_k \nonumber\\
        \text{subject to} \quad & \sum_{j \in \mathcal{V}^+(i)} y_{ij}^k - \sum_{j \in \mathcal{V}^-(i)} y_{ji}^k = \span\span \begin{cases}
                                                                                                    d^k - \sigma_k, &\ \text{if} \ i = O_k \\
                                                                                                    -d^k + \sigma_k, &\ \text{if} \ i = D_k \\
                                                                                                    0, &\ \text{otherwise} \\
                                                                                                \end{cases}, \ \forall k \in \mathcal{K}, \ \forall i \in \mathcal{V}, \label{subeq:mmcf_stage2.flow}\\
        &\sum_{k \in \mathcal{K}} y_{ij}^k \leq u_{ij}x_{ij}, && \forall (i, j) \in \mathcal{A}, \label{subeq:mmcf_stage2.capacity}\\
        &y_{ij}^k\leq d^k (1 - \tilde{\xi}_i), &&\forall (i,j) \in \mathcal{A}, \ \forall k \in \mathcal{K}, %
        \label{subeq:mmcf_stage2.contingency1}\\
        &y_{ij}^k\leq d^k (1 - \tilde{\xi}_j), &&\forall (i,j) \in \mathcal{A}, \ \forall k \in \mathcal{K}, %
        \label{subeq:mmcf_stage2.contingency2}\\
        &y_{ij}^k \geq 0 \ &&\forall (i,j) \in \mathcal{A}, \ \forall k \in \mathcal{K}, \nonumber\\
        &\sigma_k \geq 0 &&\forall k \in \mathcal{K}. \nonumber
    \end{align}
\end{subequations}
In the second stage, constraints \eqref{subeq:mmcf_stage2.flow} are typical network flow constraints, with added non-negative variables $\sigma_k$ which model unsatisfied demand. If we have a positive amount $\sigma_k$ of unsatisfied demand for commodity $k$, then an amount of $d^k - \sigma_k$ would leave the origin node and enter the destination node. Note that variables $\sigma_k$ ensure feasibility in the second stage for any $\bm{x}$ and $\bm{\xi}$. For each commodity $k$, unsatisfied demand $\sigma_k$ is penalized by $g_k$. In our experiments, $g_k$ is constant and set to $\phi \cdot \max_{(i,j) \in A} c_{ij}$, where $\phi = 1000$ is a pre-defined non-negative parameter. Constraint \eqref{subeq:mmcf_stage2.capacity} ensures the total flow on any arc $(i, j)$ does not exceed the available capacity, which is determined by the first-stage decision $x_{ij}$. Finally, constraints \eqref{subeq:mmcf_stage2.contingency1} and \eqref{subeq:mmcf_stage2.contingency2} ensure there is no flow through arcs incident to a failed node.
}

\section{Proofs}\label{sec:proofs}

\begin{proof}[Proof of Theorem~\ref{thm:finite_sample_guarantee}.]
    First, note that for any $\mathbb{P}, \mathbb{Q} \in \mathcal{M}(\Xi)$, we have
    \begin{equation*}
    d_W(\mathbb{Q}, \mathbb{P}) \leq D d_{TV} (\mathbb{Q}, \mathbb{P}) \leq D \sqrt{d_{KL} (\mathbb{Q}, \mathbb{P})/2},
    \end{equation*}
    where $d_{TV}(\mathbb{Q}, \mathbb{P}) \coloneqq \frac12 \sup_{\bm{\xi} \in \Xi} \abs{\mathbb{P}(\bm{\xi}) - \mathbb{Q}(\bm{\xi})}$ and $d_{KL} (\mathbb{Q}, \mathbb{P}) \coloneqq \sum_{\bm{\xi} \in \Xi} \mathbb{Q}(\bm{\xi}) \log\left(\mathbb{Q}(\bm{\xi}) / \mathbb{P}(\bm{\xi}) \right) $ are the \emph{total variation} and \emph{Kullback-Liebler} distance between $\mathbb{Q}$ and $\mathbb{P}$, respectively.
    The first inequality follows from $d(\bm{\xi}', \bm{\xi}'') \leq D \mathbb{I}[\bm{\xi}' \neq \bm{\xi}'']$, whereas the second inequality is Pinkser's inequality.
    Now define $\Pi \coloneqq \left\{\mathbb{Q} \in \mathcal{M}(\Xi) : d_W(\mathbb{Q}, \mathbb{P}) > \varepsilon_N(\beta) \right\}$, where $\mathbb{P}$ is the true (unknown) distribution, and $\varepsilon_N(\beta)$ satisfies~\eqref{eq:epsilon_finite_sample}.
    Since all $\mathbb{Q} \in \Pi$ satisfy $d_W(\mathbb{Q}, \mathbb{P}) > D \sqrt{(2N)^{-1} \big( \abs{\Xi} \log(N+1) + \log \beta^{-1} \big)}$ by construction, we have:
    \begin{align}
    & d_{KL} (\mathbb{Q}, \mathbb{P}) > N^{-1} \big( \abs{\Xi} \log(N+1) + \log \beta^{-1} \big) \quad \forall \mathbb{Q} \in \Pi, \notag \\
    \iff & \inf_{\mathbb{Q} \in \Pi} d_{KL} (\mathbb{Q}, \mathbb{P})  > N^{-1} \big( \abs{\Xi} \log(N+1) + \log \beta^{-1} \big) \notag \\
    \iff & (N+1)^{\abs{\Xi}} \exp\left[-N \inf_{\mathbb{Q} \in \Pi} d_{KL} (\mathbb{Q}, \mathbb{P}) \right] < \beta \label{eq:finite_sample_temp_1}
    \end{align}
    An application of Sanov's theorem~\citep[inequality~2.1.12]{dembo_zeitouni} gives:
    \begin{align}
    \mathbb{P}^N\left[\hat{\mathbb{P}}_N \in \Pi \right]
    \leq 
    (N+1)^{\abs{\Xi}} \exp\left[-N \inf_{\mathbb{Q} \in \Pi} d_{KL} (\mathbb{Q}, \mathbb{P}) \right] \label{eq:finite_sample_temp_2}
    \end{align}
    Combining inequalities~\eqref{eq:finite_sample_temp_1} and~\eqref{eq:finite_sample_temp_2} along with the definition of $\Pi$, we have:
    \[
    \mathbb{P}^N\left[d_W(\hat{\mathbb{P}}_N, \mathbb{P}) > \varepsilon_N(\beta) \right] < \beta,
    \]
    which is equivalent to the probabilistic guarantee~\eqref{eq:epsilon_choice}.
\end{proof}

\begin{proof}[Proof of Theorem~\ref{thm:convex_reformulation}.]
    Under the stated assumptions of (A1) complete and (A2) sufficiently expensive recourse, strong duality holds between $\mathcal{Q}(\mb{x}, \mb{\xi})$ and its dual $\mathcal{Q}_d(\mb{x}, \mb{\xi})$.
    Along with the fact that $d(\bm{\xi}, \hat{\bm{\xi}}^{(i)}) = \norm{\bm{\xi} - \hat{\bm{\xi}}^{(i)}}$ is induced by a norm, the result from Lemma~\ref{prop:GK} allows us to equivalently reformulate the distributionally robust two-stage problem~\eqref{eq:two_stage_dro} in the form~\eqref{eq:two_stage_dro_reform}, where 
    \begin{equation*}
    \displaystyle Z_i(\mb{x}, \alpha) = 
    \sup_{\bm{\xi} \in \Xi}  \displaystyle \left\{\mathcal{Q}_d(\bm{x}, \alpha) - \alpha \norm{\bm{\xi} - \hat{\bm{\xi}}^{(i)}} \right\}.
    \end{equation*}
    By substituting the expression for $\mathcal{Q}_d(\bm{x}, \bm{\xi})$ from~\eqref{eq:Q_dual} and introducing the epigraphical variable $\tau$, we obtain
    \begin{equation*}
    \displaystyle Z_i(\mb{x}, \alpha) = 
    \mathop{\text{maximize}}_{\bm{\xi} \in \Xi, \bm{\lambda} \in \mathbb{R}^L_{+}, \tau\in\mathbb{R}_+}  \displaystyle \left\{
    \left[\bm{T}(\mb{x})\mb{\xi} + \bm{h}(\mb{x}) \right]^\top \bm{\lambda} - \alpha \tau : 
    \norm{\bm{\xi} - \hat{\bm{\xi}}^{(i)}} \leq \tau, \; \bm{q}(\mb{\xi}) - \bm{W}(\mb{\xi})^\top \bm{\lambda} \in \mathcal{Y}^*  \right\}.
    \end{equation*}
    Next, we \emph{(i)} use the affinity of $\bm{q}$ and $\bm{W}$: $\bm{q}(\mb{\xi}) = \bm{q}_0 + \bm{Q} \mb{\xi}$ and $\bm{W}(\mb{\xi}) = \bm{W}_0 + \sum_{j \in [M]} \xi_j \bm{W}_j$, \emph{(ii)} linearize the products $\bm{\lambda}\bm{\xi}^\top$ by setting them equal to (the new variable) $\bm{\Lambda}$, and \emph{(iii)} use the definition of the norm cone $\mathcal{C}^{M+1} = \left\{(\bm{\xi}, \tau) \in \mathbb{R}^{M} \times \mathbb{R}: \norm{\bm{\xi}} \leq \tau \right\}$ to obtain
    \begin{equation*}
    Z_i(\mb{x}, \alpha) = 
    \mathop{\text{maximize}}_{(\bm{\xi}, \bm{\lambda}, \bm{\Lambda}, \tau) \in \mathcal{Z}_i}  \displaystyle \left\{\inner{\bm{T}(\bm{x})}{\bm{\Lambda}} + \bm{h}(\bm{x})^\top \bm{\lambda} %
    - \alpha \tau \right\},
    \end{equation*}
    where $\mathcal{Z}_i$ is defined in~\eqref{eq:Z_set_definition}.
    The objective function of this maximization problem is linear in its decision variables.
    Therefore, we can equivalently replace the feasible region with the closure of its convex hull to obtain the stated reformulation.
\end{proof}

\begin{proof}[Proof of Theorem~\ref{thm:two_stage_dro_obj}.]
    First, we observe that under Assumption~(A3), the second-stage loss function, $\mathcal{Q}(\bm{x}, \bm{\xi})$, can be equivalently reformulated as follows:
    \begin{equation}\label{eq:q_temp}
    \mathcal{Q}(\bm{x}, \bm{\xi}) =
    \inf_{\bm{y} \in \mathcal{Y}, \bm{z} \in [0, 1]^M}
    \left\{
    \bm{q}(\bm{\xi})^\top \bm{y}: 
    \bm{W}_0 \bm{y} \geq \bm{T}_0 \bm{z} + \bm{h}(\bm{x}), \; (\bm{\mathrm{e}} - 2 \bm{\xi})^\top \bm{z} + \bm{\mathrm{e}}^\top \bm{\xi} \leq 0
    \right\}.
    \end{equation}
    To see this, observe that satisfaction of the last inequality is equivalent to satisfying $\bm{z} = \bm{\xi}$ since
    \begin{align*}
    (\bm{\mathrm{e}} - 2 \bm{\xi})^\top \bm{z} + \bm{\mathrm{e}}^\top \bm{\xi} \leq 0
    &\iff
    \sum_{i\in[M]} \left[\xi_i(1 - z_i) + (1 - \xi_i)z_i \right] \leq 0 \\
    &\overset{(\#)}{\iff}
    \sum_{i\in[M]} \abs{z_i - \xi_i} \leq 0 \\
    &\iff
    \norm{\bm{z} - \bm{\xi}}_1 \leq 0 \\
    &\iff
    \bm{z} = \bm{\xi} ,
    \end{align*}
    where the equivalence $(\#)$ follows from the fact that $\bm{z} \in [0, 1]^M$ and $\bm{\xi} \in \Xi \subseteq \{0, 1\}^M$.
    
    Next, we construct the Lagrangian dual of the problem~\eqref{eq:q_temp} with respect to the last inequality. Strong duality holds since the second-stage problem $\mathcal{Q}(\bm{x}, \bm{\xi})$ is strictly feasible and convex, under Assumption~(A2):
    \begin{equation*}
    \mathcal{Q}(\bm{x}, \bm{\xi}) =
    \sup_{\rho \geq 0} \mathcal{Q}^\rho(\bm{x}, \bm{\xi}).
    \end{equation*}
    As a function of the penalty parameter $\rho$, $\mathcal{Q}^\rho(\bm{x}, \bm{\xi})$ is concave and nondecreasing since $(\bm{\mathrm{e}} - 2 \bm{\xi})^\top \bm{z} + \bm{\mathrm{e}}^\top \bm{\xi} \geq 0$ whenever $\bm{z} \in [0, 1]^M$ and $\bm{\xi} \in \{0, 1\}^M$.
    Therefore, for a fixed choice of $\bm{x} \in \mathcal{X}$ and $\bm{\xi} \in \Xi$, it suffices to choose any value of $\rho$ that is greater than or equal to the optimal Lagrange multiplier of the last constraint in~\eqref{eq:q_temp}.
    The claim then follows from the compactness of $\mathcal{X}$ and $\Xi$.
\end{proof}

The proof of Theorem~\ref{thm:rho_computation} relies on the following technical lemma.
\begin{lemma}\label{lem:equality}
    For each $i\in[N]$, let $f_i :  \mathbb{R}_{+} \mapsto \mathbb{R}$ be a concave and non-decreasing function such that the supremum $\sup_{\rho \geq 0} f_i(\rho)$ is achieved for some finite $\rho$. Then, the following equality holds:
    \begin{align}
    \sum_{i\in[N]} \max_{\rho\geq 0} f_i(\rho) = \max_{\rho\geq 0} \sum_{i\in[N]} f_i(\rho).
    \end{align}
\end{lemma}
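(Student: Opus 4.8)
The plan is to exploit the elementary fact that a non-decreasing function on $\mathbb{R}_{+}$ whose supremum is attained at a finite point is constant (and equal to that supremum) on the entire ray to the right of the maximizer. Granting this, the lemma reduces to evaluating the sum $\sum_{i\in[N]} f_i$ at a single, sufficiently large value of $\rho$, and comparing with the trivial bound $\max_{\rho\geq 0}\sum_{i\in[N]} f_i(\rho) \leq \sum_{i\in[N]}\max_{\rho\geq 0} f_i(\rho)$.

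First I would fix, for each $i \in [N]$, a finite point $\rho_i^\star \geq 0$ at which the supremum $\sup_{\rho\geq 0} f_i(\rho)$ is attained; such a point exists by hypothesis. I claim $f_i(\rho) = f_i(\rho_i^\star)$ for every $\rho \geq \rho_i^\star$: since $f_i$ is non-decreasing we have $f_i(\rho) \geq f_i(\rho_i^\star)$, while $\rho_i^\star$ being a global maximizer gives $f_i(\rho) \leq f_i(\rho_i^\star)$. (Concavity plays no role here beyond guaranteeing, via the structure in Theorem~\ref{thm:two_stage_dro_obj}, that such functions arise; it could be dropped from the statement of this lemma.) Now set $\bar{\rho} \coloneqq \max_{i\in[N]} \rho_i^\star$, which is finite. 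For every $i\in[N]$ we have $\bar{\rho} \geq \rho_i^\star$, hence $f_i(\bar{\rho}) = f_i(\rho_i^\star) = \max_{\rho\geq 0} f_i(\rho)$; summing over $i$ yields
\[
\max_{\rho\geq 0}\sum_{i\in[N]} f_i(\rho) \;\geq\; \sum_{i\in[N]} f_i(\bar{\rho}) \;=\; \sum_{i\in[N]}\max_{\rho\geq 0} f_i(\rho).
\]
The reverse inequality is immediate, since for any fixed $\rho \geq 0$ one has $\sum_{i\in[N]} f_i(\rho) \leq \sum_{i\in[N]}\max_{\rho'\geq 0} f_i(\rho')$; combining the two inequalities gives the asserted equality.

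I do not expect a genuine obstacle here: the only point that needs care is the "eventually constant" claim, which is a two-line consequence of monotonicity and attainment of the supremum. It may be worth remarking why the identity is not automatic — in general $\sum_i \max f_i \geq \max_\rho \sum_i f_i$ can be strict — and that equality holds precisely because monotonicity lets a \emph{common} large value of $\rho$ (namely $\bar{\rho}$) be simultaneously optimal for all the $f_i$, something that is used immediately afterward to justify the single exact penalty parameter $\rho^r$ in Theorem~\ref{thm:rho_computation}.
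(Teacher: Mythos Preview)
Your proof is correct. The paper's argument is slightly different in mechanics: rather than constructing a common maximizer $\bar{\rho} = \max_{i} \rho_i^\star$ directly, it takes a maximizer $\rho^\star$ of the sum $\sum_{i} f_i$ and argues by contradiction that $\rho^\star$ must already maximize each $f_i$ individually --- if not, monotonicity would produce a strictly larger value of the sum at some $\hat{\rho} > \rho^\star$. Both arguments rest on the same observation (monotonicity forces the individual maximizers to be compatible), and both correctly leave concavity unused; your version is arguably more transparent because it exhibits the common maximizer explicitly, which is exactly what Theorem~\ref{thm:rho_computation} needs downstream.
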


\begin{proof}
    The inequality $\geq$ is trivially true, and it implies that the maximization on the right-hand side is attained. 
    Next, we show that the inequality $\leq$ also holds.
    Let $\rho^* \in \mathop{\arg\max}_{\rho \geq 0} \sum_{i \in [N]} f_i(\rho)$.
    If $\rho^* \notin \mathop{\arg\max}_{\rho \geq 0} f_{i'}(\rho)$ for some $i' \in [N]$, then there exists $\hat{\rho} > \rho^*$ such that $f_{i'}(\hat{\rho}) > f_{i'}(\rho^*)$; and it follows from their monotonicity that $f_j(\hat{\rho}) \geq f_j(\rho^*)$ for all $j \in [N] \setminus \{i'\}$. This implies that $\sum_{i \in [N]} f_i(\hat{\rho}) > \sum_{i \in [N]} f_i(\rho^*)$, contradicting that $\rho^*$ is a maximizer of the right-hand side.
\end{proof}

\begin{proof}[Proof of Theorem~\ref{thm:rho_computation}.]
    The proof of Theorem~\ref{thm:two_stage_dro_obj} established that $\mathcal{Q}(\bm{x}, \bm{\xi}) = \max_{\rho \geq 0} \mathcal{Q}^\rho(\bm{x}, \bm{\xi})$. In conjunction with Lemma~\ref{prop:GK}, we can conclude that the distributionally robust two-stage problem~\eqref{eq:two_stage_dro} is equivalent to 
    \begin{subequations}
        \begin{align}
        \displaystyle&\min_{\bm{x} \in \mathcal{X}, \alpha \geq 0} \; \displaystyle c(\bm{x}) + \alpha \varepsilon + \sum_{i \in [N]} \max_{\rho \geq 0} \underbrace{\max_{\bm{\xi} \in \Xi} \frac{1}{N}\left\{ \mathcal{Q}^{\rho}(\bm{x}, \bm{\xi}) - \alpha \norm{\bm{\xi} - \hat{\bm{\xi}}^{(i)}} \right\}}_\text{\normalsize$\coloneqq f_i(\bm{x}, \alpha, \rho)$} \notag \\
        =&\min_{\bm{x} \in \mathcal{X}, \alpha \geq 0} \; \displaystyle c(\bm{x}) + \alpha \varepsilon + \max_{\rho \geq 0} \sum_{i \in [N]} f_i(\bm{x}, \alpha, \rho), \label{eq:thm3:1}
        \end{align}
        where the equality follows by applying Lemma~\ref{lem:equality} to $f_i(\bm{x}, \alpha, \cdot)$, $i \in [N]$: indeed, the mapping $\rho \mapsto \mathcal{Q}^\rho (\bm{x}, \bm{\xi})$ is concave and nondecreasing (see proof of Theorem~\ref{thm:two_stage_dro_obj}), and hence so is $f_i(\bm{x}, \alpha, \cdot)$.
        
        Theorem~\ref{thm:two_stage_dro_obj} also shows that there exists a finite $\bar{\rho} > 0$ such that $\mathcal{Q}^{\rho}(\bm{x}, \bm{\xi}) = \mathcal{Q}(\bm{x}, \bm{\xi})$ for all $\rho \geq \bar{\rho}$ and all $\bm{x}$ and $\bm{\xi}$.
        This implies that $\sum_{i \in [N]} f_i(\bm{x}, \alpha, \cdot)$ is maximized at $\bar{\rho}$, and thus we have
        \begin{align}
        & \eqref{eq:thm3:1} \notag \\
        \leq & \min_{\bm{x} \in \mathcal{X}, \alpha \geq 0} \; \displaystyle c(\bm{x}) + \alpha \varepsilon + \sum_{i \in [N]} f_i(\bm{x}, \alpha, \bar{\rho}) \label{eq:thm3:c} \\
        \leq &\max_{\rho \geq 0} \min_{\bm{x} \in \mathcal{X}, \alpha \geq 0} \; \displaystyle c(\bm{x}) + \alpha \varepsilon + \sum_{i \in [N]} f_i(\bm{x}, \alpha, \rho) \label{eq:thm3:d} \\
        = & \max_{\rho \geq 0} \min_{\bm{x} \in \mathcal{X}, \alpha \geq 0} \; \displaystyle c(\bm{x}) + \alpha \varepsilon + \frac{1}{N} \sum_{i \in [N]} \max_{\bm{\xi} \in \Xi} \left\{ \mathcal{Q}^{\rho}(\bm{x}, \bm{\xi}) - \alpha \norm{\bm{\xi} - \hat{\bm{\xi}}^{(i)}} \right\} . \label{eq:thm3:e}
        \end{align}
    \end{subequations}
    The inequality $\eqref{eq:thm3:c} \leq \eqref{eq:thm3:d}$ follows by treating \eqref{eq:thm3:c} as a function that is evaluated at $\rho = \bar{\rho}$ and \eqref{eq:thm3:d} as maximizing this function. %
    The max-min inequality implies $\eqref{eq:thm3:d} \leq \eqref{eq:thm3:1}$, and therefore, we have $\eqref{eq:thm3:1} = \eqref{eq:thm3:c} = \eqref{eq:thm3:d} = \eqref{eq:thm3:e}$.
    We point out that, unlike the classical minimax theorem, we did not exploit convexity of $\mathcal{X}$. Indeed, we only exploited the fact that each $f_i(\bm{x}, \alpha, \rho)$ is monotone in $\rho$ and the feasible region of $\rho$ is essentially compact because of Theorem~\ref{thm:two_stage_dro_obj}.
    
    We now show that it suffices to choose $\bar{\rho}$ as per the statement of the theorem.
    The key observation is that for any $\varepsilon \geq 0$, the expression inside $\max_{\rho \geq 0}$ in \eqref{eq:thm3:e} is bounded from above by the optimal value of the classical robust optimization problem with any uncertainty set $\Xi^0 \supseteq \Xi$:
    \begin{align*}
    \displaystyle& \max_{\rho \geq 0} \min_{\bm{x} \in \mathcal{X}} \; \displaystyle c(\bm{x}) + \max_{\bm{\xi} \in \Xi^0} \mathcal{Q}^{\rho}(\bm{x}, \bm{\xi})  \\
    \geq& \max_{\rho \geq 0} \min_{\bm{x} \in \mathcal{X}, \alpha \geq 0} \; \displaystyle c(\bm{x}) + \alpha \varepsilon + \frac{1}{N} \sum_{i \in [N]} \max_{\bm{\xi} \in \Xi} \left\{ \mathcal{Q}^{\rho}(\bm{x}, \bm{\xi}) - \alpha \norm{\bm{\xi} - \hat{\bm{\xi}}^{(i)}} \right\} \qquad \forall \varepsilon' \geq \varepsilon \geq 0.
    \end{align*}
    By a similar argument as before, the nondecreasing nature of the mapping $\rho \mapsto \mathcal{Q}^\rho (\bm{x}, \bm{\xi})$ guarantees that any $\rho$ of maximizing the left-hand side (i.e., the classical robust problem) must also maximize the right-hand side (i.e., the distributionally robust problem).
    The proof of Theorem~\ref{thm:two_stage_dro_obj} then shows that it suffices to choose a value that is at least as large as the optimal Lagrange multiplier $\rho^r$.
\end{proof}

\begin{proof}[Proof of Proposition~\ref{lem:disrupt_all_lines}.]
    Observe that $\mathcal{Q}(\bm{x}, \bm{\xi}^r) \geq \mathcal{Q}(\bm{x}, \bm{\xi})$ for all $\bm{\xi} \in \Xi$ and all $\bm{x} \in \mathcal{X}$. Indeed, the objective function of the problem on the left-hand side is greater than that on the right-hand side: $(\bm{q}_0 + \bm{Q}\bm{\xi}^r)^\top \bm{y} \geq (\bm{q}_0 + \bm{Q}\bm{\xi})^\top \bm{y}$ for all $\bm{y} \in \mathcal{Y} \subseteq \mathbb{R}^{N_2}_{+}$. Also, the feasible region of the problem on the left-hand side is a superset of the one on the right: $\bm{W}_0 \bm{y} \geq \bm{T}_0 \bm{\xi}^r + \bm{h}(\bm{x}) \geq  \bm{T}_0 \bm{\xi} + \bm{h}(\bm{x})$. Therefore, $\bm{\xi}^r$ is a worst-case realization of the parameters independent of the first-stage decision $\bm{x} \in \mathcal{X}$.
\end{proof}

\begin{proof}[Proof of Theorem~\ref{thm:exactness_for_zero_radius}.]
Let $v^\star_{\varepsilon, N}$ denote the optimal value of the distributionally robust two-stage problem~\eqref{eq:two_stage_dro} for a given sample $\{\hat{\bm{\xi}}^{(1)}, \ldots, \hat{\bm{\xi}}^{(N)}\}$ of size $N > 0$, and radius $\varepsilon \geq 0$.
Let $v^0_{\varepsilon, N}$ denote the optimal value of the convex hull reformulation~\eqref{eq:two_stage_dro_reform} when the convex hulls $\conv{\mathcal{Z}_i}$ in~\eqref{eq:Z_function_definition}--\eqref{eq:Z_set_definition} are approximated using the continuous relaxation $\mathcal{Z}_i^0$.
For simplicity, denote $\bm{z} = (\bm{\xi}, \bm{\lambda}, \bm{\Lambda}, \tau)$ in~\eqref{eq:Z_function_definition}--\eqref{eq:Z_set_definition}.
Then, observe that:
\begin{align*}
v^0_{\varepsilon_N, N} &= \min_{\bm{x} \in \mathcal{X}} c(\mb{x}) + \inf_{\alpha \geq 0} \alpha \varepsilon_N + \frac{1}{N} \sum_{i = 1}^N \sup_{\bm{z} \in \mathcal{Z}^0_i} \left\{\inner{\bm{T}(\bm{x})}{\bm{\Lambda}} + \bm{h}(\bm{x})^\top \bm{\lambda}
- \alpha \tau \right\} \\
&= \min_{\bm{x} \in \mathcal{X}} c(\mb{x}) + \inf_{\alpha \geq 0} \alpha \varepsilon_N + \sup_{(\bm{z}_1, \ldots, \bm{z}_N) \in \mathcal{Z}^0_1 \times \ldots \times \mathcal{Z}^0_N} \frac{1}{N} \sum_{i = 1}^N \left\{\inner{\bm{T}(\bm{x})}{\bm{\Lambda}_i} + \bm{h}(\bm{x})^\top \bm{\lambda}_i
- \alpha \tau_i \right\} \\
&= \min_{\bm{x} \in \mathcal{X}} c(\mb{x}) + \sup_{(\bm{z}_1, \ldots, \bm{z}_N) \in \mathcal{Z}^0_1 \times \ldots \times \mathcal{Z}^0_N} \frac{1}{N} \sum_{i = 1}^N \left\{\inner{\bm{T}(\bm{x})}{\bm{\Lambda}_i} + \bm{h}(\bm{x})^\top \bm{\lambda}_i\right\} + \inf_{\alpha \geq 0} \alpha \left[\varepsilon_N - \frac{1}{N} \sum_{i = 1}^N \tau_i \right] \\
&= \min_{\bm{x} \in \mathcal{X}} c(\mb{x}) + \sup_{(\bm{z}_1, \ldots, \bm{z}_N) \in \mathcal{Z}^0_1 \times \ldots \times \mathcal{Z}^0_N} \left\{ \frac{1}{N} \sum_{i = 1}^N \left\{\inner{\bm{T}(\bm{x})}{\bm{\Lambda}_i} + \bm{h}(\bm{x})^\top \bm{\lambda}_i \right\} : \sum_{i = 1}^N \tau_i \leq N \varepsilon_N\right\},
\end{align*}
where the equality on the first line follows by definition of $v^0_{\varepsilon, N}$; and
the second equation is obtained by interchanging the order of the summation and the supremum.
The third equality follows from a non-compact variant of Sion's minimax theorem~\cite[Theorem~2]{ha1981noncompact}, which is applicable since \textit{(i)} the objective function is linear in both $(\bm{z}_1, \ldots, \bm{z}_N)$ and $\alpha$, \textit{(ii)} their corresponding feasible regions are convex, and \textit{(iii)} the term $\sup_{(\bm{z}_1, \ldots, \bm{z}_N) \in \mathcal{Z}^0_1 \times \ldots \times \mathcal{Z}^0_N} \inf_{\alpha \geq 0}$ is bounded from below by
$
\sup_{(\bm{z}_1, \ldots, \bm{z}_N) \in \mathcal{K}^0_1 \times \ldots \times \mathcal{K}^0_N} \frac{1}{N} \sum_{i = 1}^N \big\{\inner{\bm{T}(\bm{x})}{\bm{\Lambda}_i} + \bm{h}(\bm{x})^\top \bm{\lambda}_i \big\} + \inf_{\alpha \in \{0\}} \alpha \big[\varepsilon_N - \frac{1}{N} \sum_{i = 1}^N \tau_i \big]
= \frac{1}{N} \sum_{i=1}^N \mathcal{Q}_d(\mb{x}, \hat{\mb{\xi}}^{(i)}),
$
where $\mathcal{K}^0_i \coloneqq \{(\bm{\xi}, \bm{\lambda}, \bm{\Lambda}, \tau) \in \mathcal{Z}^0_i : \bm{\xi} = \hat{\mb{\xi}}^{(i)}, \tau = 0 \}$ is a convex compact subset of $\mathcal{Z}^0_i$.
The last equation is due to $v^0_{\varepsilon_N, N}$ being finite; see Assumptions~\textbf{(A1)} and~\textbf{(A2)} from Section~\ref{sec:reformulation}.
Therefore, we have
\begin{align*}
\limsup_{N \to \infty} v^0_{\varepsilon_N, N} &=
\limsup_{N \to \infty} \left\{
\min_{\bm{x} \in \mathcal{X}} c(\mb{x}) + \sup_{(\bm{z}_1, \ldots, \bm{z}_N) \in \mathcal{Z}^0_1 \times \ldots \times \mathcal{Z}^0_N} \left\{ \frac{1}{N} \sum_{i = 1}^N \left\{\inner{\bm{T}(\bm{x})}{\bm{\Lambda}_i} + \bm{h}(\bm{x})^\top \bm{\lambda}_i \right\} : \sum_{i = 1}^N \tau_i \leq N \varepsilon_N\right\} \right\} \\
&= \limsup_{N \to \infty} \left\{
\min_{\bm{x} \in \mathcal{X}} c(\mb{x}) + \sup_{(\bm{z}_1, \ldots, \bm{z}_N) \in \mathcal{K}^0_1 \times \ldots \times \mathcal{K}^0_N} \frac{1}{N} \sum_{i = 1}^N \left\{\inner{\bm{T}(\bm{x})}{\bm{\Lambda}_i} + \bm{h}(\bm{x})^\top \bm{\lambda}_i \right\} \right\} \\
&= \lim_{N \to \infty} v^0_{0, N} = v^\star_{0, \infty},
\end{align*}
where $v^\star_{0, N}$ is the optimal value of the sample average approximation, and $v^\star_{0, \infty} \coloneqq \lim_{N \to \infty} v^\star_{0, N}$ is the optimal value of the original two-stage stochastic problem, which is guaranteed to exist because of \cite[Theorem~5.4]{shapiro2009lectures} along with Assumptions~\textbf{(A1)} and~\textbf{(A2)} from Section~\ref{sec:reformulation}.
Note that the second equation follows from the assumption that $N\varepsilon_N \to 0$ as $N \to \infty$, which further implies that $\sum_{i=1}^N \tau_i \leq 0$, leading to $\mb{\xi}_i = \hat{\mb{\xi}}^{(i)}$ for all $i \in [N]$.

Moreover, for any $N > 0$ and any $\varepsilon \geq 0$, it holds that
$
v^0_{\varepsilon, N} \geq v^\star_{\varepsilon, N} \geq v^\star_{0, N}.
$
Therefore, we have
\[
\liminf_{N \to \infty} v^0_{\varepsilon, N} \geq \liminf_{N \to \infty} v^\star_{\varepsilon, N} \geq \lim_{N \to \infty}  v^\star_{0, N} = v^\star_{0, \infty}.
\]

Combining these observations, we obtain
$
\lim_{N \to \infty} v^0_{\varepsilon_N, N} = \lim_{N \to \infty} v^\star_{\varepsilon_N, N} = v^\star_{0, \infty},
$
which proves the statement of the theorem.
\end{proof}

\end{appendices}

\newpage

\noindent\fbox{\parbox{0.97\textwidth}{
The submitted manuscript has been created by UChicago Argonne, LLC, Operator of Argonne National Laboratory (``Argonne''). Argonne, a U.S. Department of Energy Office of Science laboratory, is operated under Contract No. DE-AC02-06CH11357. The U.S. Government retains for itself, and others acting on its behalf, a paid-up nonexclusive, irrevocable worldwide license in said article to reproduce, prepare derivative works, distribute copies to the public, and perform publicly and display publicly, by or on behalf of the Government. The Department of Energy will provide public access to these results of federally sponsored research in accordance with the DOE Public Access Plan (http://energy.gov/downloads/doe-public-access-plan).}
}

\end{document}